\theoremstyle{plain}
\newtheorem{theorem}{Theorem}[chapter]
\newtheorem{proposition}[theorem]{Proposition}
\newtheorem{corollary}[theorem]{Corollary}
\newtheorem{conjecture}[theorem]{Conjecture}
\theoremstyle{definition}
\newtheorem{definition}[theorem]{Definition}
\newtheorem{example}[theorem]{Example}
\theoremstyle{remark}
\newtheorem{remark}[theorem]{Remark}
\begin{document}

% Title page

\begin{titlepage}
%\begin{sffamily}
%   \begin{center}
%     \vbox to0pt{%
%     \vbox to\textheight{\vfil
%     \includegraphics[width=\textwidth]{polloPallido}%
%     \vfil}\vss}
%   \end{center}
    \center
    \begin{Large}
    \begin{figure}[h]
    \centering
    \includegraphics[width=20mm]{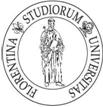}
    \end{figure}
    \textsc{ Universit\`a degli Studi di Firenze}
    \end{Large}\\
    \rule{9cm}{.4pt}\\
   \smallskip
 Dipartimento di Matematica ``Ulisse Dini''\\[1.1 cm]
\vfill
  \begin{Large} Dottorato di Ricerca in Matematica - XXV ciclo\end{Large}\\
  \bigskip
  \bigskip
 % \bigskip
  {\vfil
    \begin{Huge}
{The Torelli problem for Logarithmic bundles of hypersurface arrangements in the projective space}
    \end{Huge}%
    }
    \\[0.5 cm]
    \begin{Large}
    SSD MAT/03
    \end{Large}
    \\[1 cm]
    \begin{Large}
   {Elena Angelini}
    \end{Large}
\\[1cm]
\vfill
\begin{minipage}{0.49\textwidth}
\begin{center} \large
{\bf Coordinatore del Dottorato}\\
\smallskip
Prof. Alberto Gandolfi\\
Universit\`a degli Studi di Firenze 
\\[1.2cm]
{\bf Tutore}\\
\smallskip
Prof. Graziano Gentili\\
Universit\`a degli Studi di Firenze
\end{center}
\end{minipage}
\begin{minipage}{0.49\textwidth}
\begin{center}  \large
{\bf Cotutori}\\
\smallskip
Prof. Giorgio Ottaviani \\
Universit\`a degli Studi di Firenze 
\\[1.2cm]
Prof. Daniele Faenzi \\
Universit\'e de Pau et des Pays de l'Adour
\end{center}
\end{minipage}
\\[1 cm]

{\large Anni 2010/2012}\\
%\end{sffamily}
\end{titlepage}

% switch to double spacing
\renewcommand{\baselinestretch}{1.66}

\pagenumbering{roman}
\tableofcontents
\listoffigures

\chapter*{Acknowledgements}

This Ph.D. thesis would never have been completed without the guidance of Professor Giorgio Ottaviani, who took me as his student since I was undergraduate and introduced me to Algebraic Geometry. I have an enormous debt of gratitude to him for all the time spent on me and all his very useful suggestions. He introduced me to the \emph{Torelli problem for logarithmic bundles of hyperplanes} and suggested to me to think about the higher degree situations. \\
In this sense, I would like to thank Professor Vincenzo Ancona, since my results are based on some handwritten unpublished notes of a talk that he gave in 1998 and that he allowed me to examine. \\
I am also in great debt to my second advisor, Professor Daniele Faenzi: in September 2010 he came to Florence to give a course on logarithmic bundles, which turned to be very helpful to me. I thank him for the hospitality that he offered to me and for the hours that he spent in spring 2011,  when I was a visiting student in Pau at the ``Laboratoire de Math\'ematiques et de leurs applications"; without its interesting hints, the descriptions of the arrangements with a sufficently large number of ``objects" would never have been possible. Moreover I am grateful to him for the opportunity of presenting in a poster some results of mine during the school ``Arrangements in Pyr\'en\'ees" which was organized in Pau in June 2012. \\
A special thank goes to Jean Vall\`es, Masahiko Yoshinaga, Paolo Aluffi, Hal Schenck and Christophe Ritzenthaler, for interesting discussions. Another particular thank goes to the director of the PhD school in Florence, Professor Alberto Gandolfi, for his support and his help.\\
Finally I would like to mention the  Scuola Normale Superiore and the Centro di Ricerca Matematica Ennio de Giorgi in Pisa, the CIRM and the Fondazione Bruno Kessler in Trento: I am grateful to these organizations for giving me the opportunity of attending many interesting courses, schools and workshops during these years.  

\newpage
\thispagestyle{empty} 
\begin{flushright}
\null\vspace{\stretch{1}}
{\sl ... E se vi capita di passare di l\`a, vi supplico, non vi affrettate,} \\
{\sl fermatevi un momento sotto le stelle! ... } \\ 
\vspace{0.3cm}
Antoine de Saint-Exup\'ery 
\null\vspace{\stretch{2}}\null
\end{flushright}

\newpage
\thispagestyle{empty} 
\begin{flushright}
\null\vspace{\stretch{1}}
{\sl To Lorenzo\\
\rightline{To my family}} 
\null\vspace{\stretch{2}}\null
\end{flushright}

\pagenumbering{arabic}
\setcounter{page}{0}

\chapter{Introduction}
\label{ch:intro}

Let $ X $ be a non singular algebraic variety of dimension $ n $ and let $ \mathcal{D} $ be a union of $ \ell $ distinct irreducible hypersurfaces on $ X $, which we call an \emph{arrangement} on $ X $.  We can associate to $ \mathcal{D} $ the \emph{sheaf of differential $ 1 $-forms with logarithmic poles on $ \mathcal{D} $}, denoted by $ \Omega^{1}_{X}(\log \mathcal{D}) $. This sheaf was introduced by Deligne in~\cite{De} for an arrangement with normal crossings. In this case, for all $ x \in X $, the space of sections of $ \Omega^{1}_{X}(\log \mathcal{D}) $ near $ x $ is defined by 
$$ <d\log z_{1}, \ldots, d\log z_{k}, dz_{k+1}, \ldots, dz_{n}>_{\mathcal{O}_{X,x}} $$
where $ z_{1}, \ldots, z_{n} $ are local coordinates such that $ \mathcal{D} = \{z_{1} \cdots z_{k} = 0\} $. Moreover $ \Omega^{1}_{X}(\log \mathcal{D}) $ turns out to be a vector bundle over $ X $, which is simply called \emph{logarithmic bundle}. If $ \mathcal{D} $ has not normal crossings, there is a more general definition of $ \Omega^{1}_{X}(\log \mathcal{D}) $ given by Saito in~\cite{Sa}. \\
Once we construct the correspondence
\begin{equation}\label{eq:Tmap}
\mathcal{D} \longrightarrow \Omega^{1}_{X}(\log \mathcal{D})
\end{equation}
a natural, interesting question is whether $ \Omega^{1}_{X}(\log \mathcal{D}) $ contains enough information to recover $ \mathcal{D} $. For this reason we can talk about the \emph{Torelli problem} for $ \Omega^{1}_{X}(\log \mathcal{D}) $, since the injectivity of the map in (\ref{eq:Tmap}) is investigated. In particular, if the isomorphism class of $ \Omega^{1}_{X}(\log \mathcal{D}) $ determines $ \mathcal{D} $, then $ \mathcal{D} $ is called a \emph{Torelli arrangement}. \newline
The first situation that has been analyzed is the case of hyperplanes in the complex projective space $ \mathbf{P}^{n} $. Hyperplane arrangements play a central role in geometry, topology and combinatorics (\cite{OT},~\cite{Arn}). In 1993 Dolgachev and Kapranov gave an answer to the Torelli problem when $ \mathcal{H} = \{H_{1}, \ldots, H_{\ell}\} $ is an arrangement of hyperplanes with normal crossings,~\cite{Do-Ka}. They proved that if $ \ell \leq n+2 $ then two different arrangements give always the same logarithmic bundle, moreover if $ \ell \geq 2n+3 $ then we can reconstruct $ \mathcal{H} $ from $ \Omega^{1}_{\mathbf{P}^n}(\log \mathcal{H}) $ unless the hyperplanes in $ \mathcal{H} $ don't osculate a rational normal curve $ \mathcal{C}_{n} $ of degree $ n $ in $ \mathbf{P}^n $, in which case $ \Omega^{1}_{\mathbf{P}^n}(\log \mathcal{H}) $ is isomorphic to $ E_{\ell - 2}(\mathcal{C}_{n}^{\vee}) $, the \emph{Schwarzenberger bundle of degree $ \ell-2 $ associated to $ \mathcal{C}_{n}^{\vee} $}. In 2000 Vall\`es extended the latter result to $ \ell \geq n+3 $,~\cite{V}: while Dolgachev and Kapranov studied the set of \emph{jumping lines} (\cite{Ba},~\cite{Hu}) of $ \Omega^{1}_{\mathbf{P}^n}(\log \mathcal{H}) $, Vall\`es characterized $ \mathcal{H} $ as the set of \emph{unstable hyperplanes} of the logarithmic bundle, i.e.
$$ \{ H \subset \mathbf{P}^{n} \, hyperplane \, | \, H^{0}(H, \Omega^{1}_{\mathbf{P}^n}(\log \mathcal{H})_{|_{H}}^{\vee}) \not = \{0\} \}. $$    
A few years ago, hyperplane arrangements without normal crossings have been investigated, in particular Faenzi-Matei-Vall\`es in~\cite{FMV} studied the Torelli problem for the subsheaf $ \widetilde{\Omega}^{1}_{\mathbf{P}^n}(\log \mathcal{H}) $ of $ \Omega^{1}_{\mathbf{P}^n}(\log \mathcal{H}) $ and proved that $ \mathcal{H} $ is a Torelli arrangement if and only if $ H_{1}, \ldots, H_{\ell} $, seen as point in the dual projective space, don't belong to a \emph{Kronecker-Weierstrass variety of type $ (d, s) $}, which is essentially the union of a smooth rational curve of degree $ d $ with s linear subspaces. \\
In this thesis, after recalling the fundamental definitions and the main classical results on the subject, we consider arrangements of higher degree hypersurfaces with normal crossings on $ \mathbf{P}^n $. \\
In chapter $ 4 $ we describe some important properties of the logarithmic bundle $ \Omega^{1}_{\mathbf{P}^n}(\log \mathcal{D}) $, in particular in theorem~\ref{T:Ancona} is proved that $ \Omega^{1}_{\mathbf{P}^n}(\log \mathcal{D}) $ admits a resolution of lenght $ 1 $ which is a very important tool for our investigations. Moreover, this resolution allows us to find again the Torelli type results in the case of hyperplanes. \\
The main results of this thesis are collected in chapters $ 5, \,6, \, 7 $ and $ 8 $. \\
Precisely, chapter $ 5 $ is devoted to arrangements of conics in $ \mathbf{P}^2 $: if $ \ell \geq 9 $, then we can recover the conics in $ \mathcal{D} $ as \emph{unstable conics} of $ \Omega^{1}_{\mathbf{P}^2}(\log \mathcal{D}) $, unless the hyperplanes in $ \mathbf{P}^5 $ corresponding to $ \mathcal{D} $ through the \emph{quadratic Veronese map} satisfy further hypothesis (theorem~\ref{t54}). The notion of unstable conic is inspired to the one of unstable hyperplane given for a Steiner bundle. \\
On the contrary, if $ \ell = 1 $ or $ \ell = 2 $ then we find arrangements which are not of Torelli type (theorems~\ref{T:1conica} and~\ref{T:coppieconiche}), in particular in the second case, by using the \emph{simultaneous diagonalization}, we prove that two pairs of conics are associated to isomorphic logarithmic bundles if and only if they have the same four tangent lines. \\
In chapter $ 6 $ we extend theorem~\ref{t54} to arrangements of a \emph{sufficiently large} number of hypersurfaces of higher degree in $ \mathbf{P}^n $ (theorem \ref{t64}) and in chapter $ 7 $ we generalize the methods used for one conic and pairs of conics to quadrics in $ \mathbf{P}^n $ (theorems \ref{T:1quadric} and \ref{T:pairquad}). \\
Finally, chapter $ 8 $ is devoted to arrangements made of lines and conics in $ \mathbf{P}^2 $, in particular the cases of a conic and a line (corollary \ref{C:line-conic}), of a conic and two lines (corollary \ref{C:2lines-conic}) and of a conic and three lines (theorem \ref{T:3lines-conic}) are investigated.

\chapter{Preliminaries}
\label{ch:preliminaries}

\section{Arrangements with normal crossings}

Let $ X $ be a smooth algebraic variety, we give the following:

\begin{definition}\label{def:logsheafde}
A \emph{reduced and effective divisor} on $ X $ is a family
$$ \mathcal{D} = \{D_{1}, \ldots, D_{\ell}\} $$
of irreducible hypersurfaces of $ X $ such that $ D_{i} \not= D_{j} $ for all $ i,j \in \{1, \ldots, \ell\} $, $ i \not= j $. $ \mathcal{D} $ is also called \emph{arrangement} on $ X $.
\end{definition}

\begin{example}
Let $ \mathcal{D} $ be an arrangement on the $ n $-dimensional complex projective space, which we simply denote by $ \mathbf{P}^n $. Each hypersurface $ D_{i} \in \mathcal{D} $ is defined as the zero locus of a homogeneous polynomial $ f_{i} $ of degree $ d_{i} $ in the variables $ x_{0}, \ldots, x_{n} $. Thus $ \mathcal{D} $ is given by the set of zeroes of $ $$\displaystyle \prod_{i=1}^{\ell}$$ f_{i} $, which is a polynomial of degree $ $$\displaystyle \sum_{i=1}^{\ell}$$ d_{i} $. In particular, if $ d_{i} = 1 $ for all $ i $, we talk about a \emph{hyperplane arrangement}, if all $ d_{i} $'s are equal to $ 2 $ we have an \emph{arrangement of quadrics} and so on. In particular, when $ n = 2 $ in the two previous cases we have to deal, respectively, with lines and conics. Also arrangements of hypersurfaces of different degrees are allowed.
\end{example}

In this setting \emph{arrangements with normal crossings} play a very special role. We have the following:
 
\begin{definition}
Let $ \mathcal{D} $ be an arrangement on $ X $. We say that $ \mathcal{D} $ has \emph{normal crossings} if it is locally isomorphic (in the sense of holomorphic local coordinates changes) to a union of coordinate hyperplanes of $ \mathbf{C}^n $.
\end{definition}

\begin{example}
Let $ \mathcal{H} = \{H_{1}, \ldots, H_{\ell}\} $ be a hyperplane arrangement in $ \mathbf{P}^n $. $ \mathcal{H} $ has normal crossings if and only if $ codim(H_{i_{1}} \cap \ldots \cap H_{i_{k}}) = k $ for any $ k \leq n+1 $ and any $ 1 \leq i_{1} < \ldots < i_{k} \leq \ell $. In particular, if $ n = 2 $ a pair of distinct lines has always normal crossings but three lines meeting in point don't. 
\end{example}

\begin{example}
In the complex projective plane let $ \mathcal{D} = \{r, C\} $, where $ r $ is a line and $ C $ is a smooth conic. Then $ \mathcal{D} $ has normal crossings if and only if $ r $ is not tangent to $ C $ (see figure $ 8.1 $). Moreover, if $ \mathcal{D} $ is made of a cubic with a node, then it has normal crossings, but if the cubic has a cusp it doesn't.
\end{example}

\begin{figure}[h]
    \centering
    \includegraphics[width=10mm]{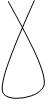}
    $\quad\quad\quad\quad\quad\quad$
    \includegraphics[width=20mm]{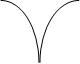}
    \caption{Nodal cubic and cusp cubic}
\end{figure}

\vfill\eject

\section{Logarithmic bundles}

Let $ X $ be a smooth algebraic variety and let $ \mathcal{D} $ be an arrangement with normal crossings on $ X $. In order to introduce the notion of \emph{sheaf of logarithmic forms} on $ \mathcal{D} $ we will refer to Deligne (\cite{De0},~\cite{De}). This is not the unique way to describe these sheaves, there are also other definitions for more general divisors (\cite{Sa},~\cite{Schenck}) that are equivalent to this one for arrangements with normal crossings. In this sense see also section $ 3.3 $.

Let's start with some notations. Let $ U = X - \mathcal{D} $ be the complement of $ \mathcal{D} $ in $ X $ and let $ j: U \hookrightarrow X $ be the embedding of $ U $ in $ X $. We denote by $ \Omega_{U}^1$ the sheaf of holomorphic differential $ 1 $-forms on $ U $ and by $ j_{\ast}\Omega_{U}^1 $ its direct image sheaf on $ X $. We remark that, since $ \mathcal{D} $ has normal crossings, then for all $ x \in X $ there exists a neighbourhood $ I_{x} \subset X $ such that $ I_{x} \cap \mathcal{D} = \{z_{1} \cdots z_{k} = 0 \} $, where $ \{z_{1}, \ldots, z_{k}\} $ is a part of a system of local coordinates. 

We have the following:

\begin{definition}\label{d:2.6}
We call \emph{sheaf of differential $ 1 $-forms on $ X $ with logarithmic poles on $ \mathcal{D} $} the subsheaf of $ j_{\ast}\Omega_{U}^1 $, denoted by $ \Omega_{\mathbf{X}}^{1}(\log \mathcal{D}) $, such that, for all $ x \in X $,
$ \Gamma(I_{x},\Omega_{\mathbf{X}}^{1}(\log \mathcal{D})) $ is given by
$$ \{s \in \Gamma(I_{x},j_{\ast}\Omega_{U}^1) \,|\, s = \displaystyle{\sum_{i=1}^k u_{i} d\log z_{i}} \, +  \displaystyle{\sum_{i=k+1}^n v_{i} dz_{i}} \} $$
where $ u_{i}, v_{i} $ are locally holomorphic functions and $ d \log z_{i} = \displaystyle{{dz_{i}} \over {z_{i}}} $.
\end{definition}

\begin{remark}
Every $ s \in \Gamma(I_{x},\Omega_{\mathbf{X}}^{1}(\log \mathcal{D})) $ is a meromorphic differential $ 1 $- form with at most simple poles on $ \mathcal{D} $, namely
$$ (z_{1} \cdots z_{k})s = (z_{1} \cdots z_{k}) \displaystyle{\sum_{i=1}^k u_{i} d\log z_{i}} + (z_{1} \cdots z_{k})\displaystyle{\sum_{i=k+1}^n v_{i} dz_{i}} $$
is holomorphic on $ I_{x} $. So we are allowed to call $ s $ a \emph{logarithmic form} on $ \mathcal{D} $. It's not hard to check that also $ ds $ has this property. As we can see in~\cite{De}, it holds also that if $ s $ is a local section of $ j_{\ast}\Omega_{U}^1 $ such that $ s $ and $ ds $ have at most simple poles on $ \mathcal{D} $, then $ s \in \Gamma(I_{x},\Omega_{\mathbf{X}}^{1}(\log \mathcal{D})) $.
\end{remark}

\begin{remark}
Let $ X $ be a smooth algebraic variety and let $ \mathcal{D} $ be an arrangement with normal crossings on $ X $. Then $ \Omega_{\mathbf{X}}^{1}(\log \mathcal{D}) $ is a locally free sheaf of rank $ n = dim X $,~\cite{De}. So, $ \Omega_{\mathbf{X}}^{1}(\log \mathcal{D}) $ can be regarded as a rank-$ n $ vector bundle on $ X $ and it is called \emph{logarithmic bundle attached to $ \mathcal{D} $}.
\end{remark}

\section{Torelli problem for logarithmic bundles}

Given a smooth algebraic variety $ X $, we are able to map an arrangement with normal crossings on $ X $ to a logarithmic vector bundle on $ X $:
\begin{equation}\label{eq:Torellimap}
\mathcal{D} \longmapsto \Omega_{\mathbf{X}}^{1}(\log \mathcal{D}). 
\end{equation}
A natural question arises from this contruction: is it true that isomorphic logarithmic bundles come from the same arrangement? If the answer is positive, then we say that $ \mathcal{D} $ is an \emph{arrangement of Torelli type}, or a \emph{Torelli arrangement}. This is the so called Torelli problem for logarithmic bundles. 

Actually this is not the ``original'' Torelli problem: we talk about \emph{problem of Torelli type} whenever we have to deal with the injectivity of certain map.  The history of this kind of problems goes back to 1913, when Torelli asked wether two curves are isomorphic if they have the same periods,~\cite{To}. 

The mathematical literature on this topic is enormous, we will focus our attention on the case of logarithmic bundles. 

In the next chapter we will discuss the main results concerning arrangements of hyperplanes in the complex projective space: a large number of mathematicians worked and are still investigating on this subject, I will mainly refer to Dolgachev, Kapranov (\cite{Do},~\cite{Do-Ka}), Ancona, Ottaviani (\cite{AppuntiAncona},~\cite{AO}), Faenzi, Matei, Vall\`es (\cite{FMV},~\cite{V}).

\chapter{The case of hyperplanes in the projective space}
\label{ch:the case of hyperplanes in the projective space}

\section{Logarithmic bundles of hyperplanes with normal crossings: Steiner and Schwarzenberger bundles}

Let $ \mathcal{H} = \{H_{1}, \ldots, H_{\ell} \} $ be an arrangement of $ \ell $ hyperplanes with normal crossings on $ \mathbf{P}^n $. Let's describe the main features of the corresponding logarithmic bundle $ \Omega_{\mathbf{\mathbf{P}^n}}^{1}(\log \mathcal{H}) $. First of all we have the following:

\begin{proposition}
Let $ x \in \mathbf{P}^n $ and let $ I_{x} \subset X $ a neighbourhood of $ x $ such that $ I_{x} \cap \mathcal{H} = \{z_{1} \cdots z_{k} = 0\} $, as in definition~\ref{d:2.6}. 
We denote by 
$$ res : \Omega_{\mathbf{P}^n}^{1}(\log \mathcal{H}) \longrightarrow \, \displaystyle\bigoplus_{i=1}^\ell \mathcal{O}_{H_{i}} $$
the \emph{Poincar\'e residue morphism}, that is the map defined locally by
$$ \displaystyle\sum_{i=1}^k u_{i} d\log z_{i}\,+ \displaystyle\sum_{i=k+1}^n v_{i} dz_{i}\,\longmapsto \, (u_{1}(x), \ldots, u_{k}(x),0, \ldots,0). $$
Then
\begin{equation}\label{eq:resiperpiani}
0 \longrightarrow \Omega_{\mathbf{P}^n}^1 \longrightarrow \Omega_{\mathbf{P}^n}^{1}(\log \mathcal{H}) \buildrel \rm res \over \longrightarrow \, \displaystyle\bigoplus_{i=1}^{\ell} \mathcal{O}_{H_{i}} \longrightarrow 0 
\end{equation}
is a short exact sequence of sheaves on $ \mathbf{P}^n $.
\end{proposition}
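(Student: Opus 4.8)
The plan is to verify exactness stalk by stalk, since exactness of a complex of sheaves is a local property. Fix $ x \in \mathbf{P}^n $ and a neighbourhood $ I_x $ as in Definition~\ref{d:2.6}, with local coordinates $ z_1, \ldots, z_n $ such that $ I_x \cap \mathcal{H} = \{z_1 \cdots z_k = 0\} $; after relabelling, $ H_1, \ldots, H_k $ are the hyperplanes passing through $ x $, while the remaining $ H_i $ do not meet $ I_x $ and so contribute a zero stalk to $ \bigoplus_i \mathcal{O}_{H_i} $ at $ x $. The first observation is that on $ I_x $ every regular form $ \sum_{i=1}^n a_i\, dz_i \in \Gamma(I_x, \Omega^1_{\mathbf{P}^n}) $ can be rewritten as $ \sum_{i=1}^k (a_i z_i)\, d\log z_i + \sum_{i=k+1}^n a_i\, dz_i $, which is exactly the shape prescribed in Definition~\ref{d:2.6}. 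Hence the left-hand arrow is simply the inclusion of $ \Omega^1_{\mathbf{P}^n} $ as a subsheaf of $ \Omega^1_{\mathbf{P}^n}(\log \mathcal{H}) $ inside $ j_*\Omega^1_U $, and in particular it is injective.

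Next I would check that the image of this inclusion lies in $ \ker(\mathrm{res}) $. Applying the local formula for $ \mathrm{res} $ to the rewritten form produces the residues $ (a_i z_i)|_{H_i} $, which vanish because $ z_i $ is a local equation of $ H_i $; thus the composition $ \mathrm{res} $ after the inclusion is zero. The core of the argument is the reverse inclusion, i.e. exactness in the middle. Let $ s = \sum_{i=1}^k u_i\, d\log z_i + \sum_{i=k+1}^n v_i\, dz_i $ be a germ of logarithmic form with $ \mathrm{res}(s) = 0 $, meaning $ u_i|_{H_i} = 0 $ for each $ i \le k $. Here I would invoke the key local fact: because $ \mathcal{H} $ has normal crossings, each $ H_i = \{z_i = 0\} $ is smooth with $ z_i $ part of a coordinate system, so a holomorphic $ u_i $ vanishing on $ H_i $ is divisible by $ z_i $ in $ \mathcal{O}_{\mathbf{P}^n,x} $, say $ u_i = z_i w_i $. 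Then $ u_i\, d\log z_i = w_i\, dz_i $ is regular, and $ s = \sum_{i=1}^k w_i\, dz_i + \sum_{i=k+1}^n v_i\, dz_i $ lies in the image of the inclusion. This divisibility is the step I expect to be the main obstacle, and it is precisely where the normal crossing hypothesis enters.

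Finally, for surjectivity of $ \mathrm{res} $ I would argue by lifting. Near $ x $ only $ H_1, \ldots, H_k $ occur, and each restriction $ \mathcal{O}_{\mathbf{P}^n} \to \mathcal{O}_{H_i} $ is surjective, being the quotient by the ideal generated by $ z_i $. Given a target germ $ (g_1, \ldots, g_\ell) $ with $ g_i \in \mathcal{O}_{H_i,x} $ (and $ g_i = 0 $ for $ i > k $ at this stalk), I would choose holomorphic lifts $ u_i $ of $ g_i $ for $ i \le k $; then $ \sum_{i=1}^k u_i\, d\log z_i $ is a logarithmic germ whose residue is $ (g_1, \ldots, g_k, 0, \ldots, 0) $. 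Since the Poincar\'e residue along each $ H_i $ is intrinsic, independent of the chosen local equation, these local computations patch to a genuine morphism of sheaves; combining the three verifications then yields the asserted short exact sequence.
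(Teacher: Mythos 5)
Your proof is correct and follows essentially the same route as the paper's: the key point in both is the local rewriting $a_i\,dz_i = (a_i z_i)\,d\log z_i$, which identifies $\Omega^1_{\mathbf{P}^n}$ inside the kernel of the residue map. The paper stops there and dismisses the converse as ``not hard to see,'' whereas you supply the remaining details (divisibility of $u_i$ by $z_i$ for exactness in the middle, and lifting for surjectivity), all of which are accurate.
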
 

\begin{proof}
Every local section $ \omega = \displaystyle\sum_{i=1}^n a_{i} dz_{i} $ of $ \Omega_{\mathbf{P}^n}^1 $ can be considered as a local section of $ \Omega_{\mathbf{\mathbf{P}^n}}^{1}(\log \mathcal{H}) $ by writing: 
$$ \omega = \displaystyle\sum_{i=1}^k a_{i}z_{i}d\log z_{i}\,+ \displaystyle\sum_{i=k+1}^n a_{i} dz_{i}. $$
So the holomorphic differential $ 1 $-forms on $ \mathbf{P}^n $ belong to the kernel of the Poincar\'e residue map. It's not hard to see that also the converse is true.
\end{proof}

\begin{remark}
In chapter $ 4 $ we will see that also logarithmic bundles attached to arrangements of smooth hypersurfaces with normal crossings on $ \mathbf{P}^n $ admit an exact sequence similar to (\ref{eq:resiperpiani}). 
\end{remark}

If we consider an arrangement $ \mathcal{H} $ made of a \emph{sufficiently large} number of hyperplanes, the corresponding logarithmic bundle satisfies a further condition: it is a \emph{Steiner bundle} on $ \mathbf{P}^n $. In this sense we have the following:

\begin{definition}\label{d:3.3}
Let $ S $ be a rank-$ n $ vector bundle over $ \mathbf{P}^n $, we say that $ S $ is a \emph{Steiner bundle} if it appears in a short exact sequence
\begin{equation}\label{eq:steinerseq}
0 \longrightarrow I \otimes \mathcal{O}_{\mathbf{P}^n}(-1) \buildrel \rm \tau \over \longrightarrow W \otimes \mathcal{O}_{\mathbf{P}^n}  \longrightarrow S \longrightarrow 0 
\end{equation}
where $ I $ and $ W $ are complex vector spaces of dimension $ k $ and $ n+k $ respectively. The map $ \tau $ is uniquely determined by a tensor $ t  \in (\mathbf{C}^{n+1})^{\vee} \otimes I^{\vee} \otimes W $ in such a way that $ \tau $ is injective on each fiber.
\end{definition}

\begin{remark}
The short exact sequence (\ref{eq:steinerseq}) associated to a Steiner bundle $ S $ can be also represented as 
\begin{equation}\label{eq:steinerfacile}
0 \longrightarrow \mathcal{O}_{\mathbf{P}^n}(-1)^{k} \longrightarrow \mathcal{O}_{\mathbf{P}^n}^{n+k} \longrightarrow S \longrightarrow 0
\end{equation}
where $ k $ is a positive integer. 
\end{remark}

We denote by $ S_{n,k} $ the family of Steiner bundles with parameters $ n $ and $ k $. The elements of $ S_{n,k} $ have a very important property: they are \emph{stable} in the sense of Mumford-Takemoto (slope-stability). Let's recall this notion of stability.

\begin{definition}
For a torsion-free coherent sheaf $ E $ over $ \mathbf{P}^n $ let
$$ \mu(E) = \displaystyle{{c_{1}(E)} \over {rk(E)}} $$
be the \emph{slope} of $ E $. \\
We say that $ E $ is \emph{stable} (resp. \emph{semistable}) if for all coherent subsheaves $ F \subset E $ such that $ 0 < rk F < rk E $ we have
\begin{equation}\label{eq:stabilitycondition}
\mu(F) < \mu(E) \quad\quad\quad (resp. \, \leq). 
\end{equation}
\end{definition}

The theorem of Bohnhorst-Spindler (\cite{BS}) gives a very useful criterion in order to check condition (\ref{eq:stabilitycondition}) in the case of rank-$ n $ vector bundles over $ \mathbf{P}^n $ with homological dimension equal to $ 1 $. We have the following:

\begin{theorem}\label{T:3.5}$($\emph{Bohnhorst-Spindler 1992,~\cite{BS}}$)$\\
Let $ E $ be a rank-$ n $ vector bundle on $ \mathbf{P}^n $ with minimal resolution
$$ 0 \longrightarrow \bigoplus_{i=1}^{k} \mathcal{O}_{\mathbf{P}^n}(a_{i}) \longrightarrow \bigoplus_{j=1}^{n+k} \mathcal{O}_{\mathbf{P}^n}(b_{j}) \longrightarrow E \longrightarrow 0  $$
where $ a_{1} \geq \ldots \geq a_{k} $, $ b_{1} \geq \ldots \geq b_{n+k} $ and $ a_{1} < b_{n+1}, \ldots, a_{k} < b_{n+k} $. \\
The following facts are equivalent:
\begin{itemize}
\item[$1)$] $ E $ is stable $($resp. semistable$)$;
\item[$2)$] $ b_{1} < $ $($resp $ \leq $$)$ $ \mu(E) = \displaystyle{{1} \over {n}}\left(\displaystyle{\sum_{j=1}^{n+k} b_{j} - \sum_{i=1}^{k} a_{i}}\right) $. 
\end{itemize}
Moreover, if $ b_{1} = \ldots = b_{n} $ then $ E $ is stable in any case.
\end{theorem}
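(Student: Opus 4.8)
The plan is to prove the equivalence $1)\Leftrightarrow 2)$ by testing the stability condition against destabilizing subsheaves, reducing everything to line subbundles. The key observation is that, because $E$ sits in the given short exact sequence with $E$ of homological dimension $1$, any coherent subsheaf $F \subset E$ with $0 < \operatorname{rk} F < n$ can be analyzed via the saturation and the restriction of the two-term resolution. Since $\mu(F) \le \mu(F^{\mathrm{sat}})$ and the quotient $E/F^{\mathrm{sat}}$ is torsion-free, it suffices to check the slope inequality $\mu(F) < \mu(E)$ (resp.\ $\le$) only for saturated subsheaves, and among these the most destabilizing ones are the rank-one subsheaves, i.e.\ the line subbundles $\mathcal{O}_{\mathbf{P}^n}(c) \hookrightarrow E$. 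So the first step is to reduce the stability test to line subbundles.

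Next I would compute the maximal degree $c$ of a line subbundle of $E$. A nonzero map $\mathcal{O}_{\mathbf{P}^n}(c) \to E$ comes, after composing with the surjection $\bigoplus_j \mathcal{O}_{\mathbf{P}^n}(b_j) \to E$ on an open set, from sections controlled by the $b_j$; the point is that the largest twist $\mathcal{O}_{\mathbf{P}^n}(c)$ that injects as a subbundle has $c = b_1$, the top term on the right of the resolution. Here the hypotheses $b_1 \ge \cdots \ge b_{n+k}$ and $a_1 < b_{n+1}, \ldots, a_k < b_{n+k}$ (which guarantee the resolution is minimal and that the map has maximal rank) ensure that $\mathcal{O}_{\mathbf{P}^n}(b_1)$ does embed and that no line subbundle of strictly larger degree exists. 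Computing $\mu(E) = \tfrac{1}{n}\bigl(\sum_{j=1}^{n+k} b_j - \sum_{i=1}^{k} a_i\bigr)$ directly from $c_1(E)$ via additivity of the first Chern class on the sequence, the stability inequality $\mu(F) < \mu(E)$ for the worst subsheaf becomes exactly $b_1 < \mu(E)$, which is condition $2)$. The semistable case is identical with $\le$.

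For the final assertion, when $b_1 = \cdots = b_n$ I would argue that $E$ is automatically stable. In this case the maximal destabilizing line subbundle has degree $b_1$, but one computes $\mu(E) - b_1 = \tfrac{1}{n}\bigl(\sum_{j=1}^{n+k} b_j - \sum_{i=1}^{k} a_i\bigr) - b_1 = \tfrac{1}{n}\bigl(\sum_{j=n+1}^{n+k}(b_j - a_{j-n}) + \text{(nonnegative terms)}\bigr)$, and each summand $b_{n+i} - a_i > 0$ by the hypothesis $a_i < b_{n+i}$. Hence $\mu(E) > b_1$ strictly, so the strict inequality in $2)$ holds unconditionally and $E$ is stable.

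The main obstacle I expect is the reduction step: proving carefully that it is enough to test \emph{line} subbundles, and that the maximal-degree line subbundle has degree exactly $b_1$. The slope argument passing from a general rank-$r$ saturated subsheaf to a rank-one piece requires controlling how the resolution restricts, and showing no line subbundle of degree exceeding $b_1$ can exist uses minimality of the resolution together with the strict inequalities $a_i < b_{n+i}$ in an essential way. Once that structural fact is in place, the Chern class bookkeeping in the second and third paragraphs is routine.
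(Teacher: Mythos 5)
The paper does not prove this statement at all: Theorem~\ref{T:3.5} is quoted from Bohnhorst--Spindler~\cite{BS} and used as a black box, so there is no internal proof to compare your proposal against. Judged on its own terms, your outline has a genuine gap at exactly the point you flag as ``the main obstacle'': the reduction of the stability test to line subbundles of $E$. It is simply false for a bundle of rank $n\geq 3$ that the maximal destabilizing subsheaf can be taken of rank one, or that $\mu_{\max}(E)$ equals the largest degree of a line subsheaf. For instance, if $N$ is a stable rank-$2$ bundle on $\mathbf{P}^3$ with $\mu(N)=1$ whose line subsheaves all have degree $\leq 0$, then $E=N\oplus\mathcal{O}$ has $\mu(E)=2/3$, every line subsheaf of $E$ has slope $0<2/3$, and yet $E$ is destabilized by the rank-$2$ subsheaf $N$. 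So passing from an arbitrary saturated subsheaf $F$ of rank $r$ to a rank-one piece without losing slope is not a formal step; nothing in your sketch supplies it, and the Chern-class bookkeeping that follows rests entirely on it.

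What makes the theorem true is precisely the interplay of the hypotheses you never actually use: $\operatorname{rk}E=n=\dim\mathbf{P}^n$, homological dimension $\leq 1$, and the inequalities $a_i<b_{n+i}$. The argument in \cite{BS} treats subsheaves of every rank $r\in\{1,\dots,n-1\}$: one passes to the torsion-free quotient $Q=E/F$ of rank $s=n-r$, dualizes the resolution to get $Q^{\vee}\subset E^{\vee}\subset\bigoplus_j\mathcal{O}_{\mathbf{P}^n}(-b_j)$, and bounds $c_1$ of rank-$s$ subsheaves of a sum of line bundles via $\wedge^{s}$; the numerical hypotheses are then what convert these bounds into $\mu(F)\leq b_1$ for \emph{all} ranks $r$. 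The parts of your sketch that are sound are the easy ones: minimality gives a nonzero, hence injective, map $\mathcal{O}_{\mathbf{P}^n}(b_1)\to E$ (so $1)\Rightarrow 2)$), the vanishing $H^0(E(-c))=0$ for $c>b_1$ follows from the cohomology of the resolution, and the arithmetic $\mu(E)-b_1=\frac{1}{n}\sum_{i=1}^{k}(b_{n+i}-a_i)>0$ when $b_1=\cdots=b_n$ is correct. But the converse implication $2)\Rightarrow 1)$, which is the substance of the theorem, is not established by your outline.
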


\begin{remark}\label{r:3.7}
Theorem~\ref{T:3.5} implies that if $ S \in S_{n,k} $ then $ S $ is stable. 
\end{remark}

Now we are ready to state and to give a sketch of the proof of the result, due to Dolgachev and Kapranov (\cite{Do-Ka}), that we mentioned previously.

\begin{theorem}\label{T:3.8}$($\emph{Dolgachev-Kapranov 1993,~\cite{Do-Ka}}$)$ \\
Let $ \mathcal{H} = \{H_{1}, \ldots, H_{\ell}\} $ be an arrangement of hyperplanes with normal crossings such that $ \ell \geq n+2 $. Then $ \Omega_{\mathbf{P}^n}^{1}(\log \mathcal{H}) \in S_{n, \ell-n-1} $. 
\end{theorem}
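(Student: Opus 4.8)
The plan is to realise $ \Omega^1_{\mathbf{P}^n}(\log\mathcal{H}) $ directly as the cokernel of an explicit bundle map assembled from the defining linear forms, thereby producing a resolution of the shape (\ref{eq:steinerfacile}) with $ k = \ell - n - 1 $. Write $ \mathbf{P}^n = \mathbf{P}(V) $ with $ V = \mathbf{C}^{n+1} $, and let $ H_i = \{\ell_i = 0\} $, $ \ell_i \in V^\vee $. Since $ \mathcal{H} $ has normal crossings and $ \ell \geq n+2 $, any $ n+1 $ of the $ \ell_i $ are linearly independent, so the forms span $ V^\vee $. I would then introduce $ W = \{a \in \mathbf{C}^\ell \mid \sum_{i=1}^\ell a_i = 0\} $ of dimension $ \ell - 1 $, and the space of linear syzygies $ R = \{c \in \mathbf{C}^\ell \mid \sum_{i=1}^\ell c_i \ell_i = 0 \text{ in } V^\vee\} $, which by the spanning property has dimension $ \ell - (n+1) = \ell - n - 1 $. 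Note that $ \ell - 1 = n + (\ell - n - 1) $ and $ \dim R = \ell - n - 1 $ match exactly the Steiner parameters $ n+k $ and $ k $. The claim is that
\[ 0 \longrightarrow R \otimes \mathcal{O}_{\mathbf{P}^n}(-1) \buildrel M \over \longrightarrow W \otimes \mathcal{O}_{\mathbf{P}^n} \buildrel ev \over \longrightarrow \Omega^1_{\mathbf{P}^n}(\log\mathcal{H}) \longrightarrow 0 \]
is exact, where $ ev(a) = \sum_{i=1}^\ell a_i\, d\log\ell_i $ and $ M(c) = (c_i \ell_i)_{i} $, the latter being a tuple of linear forms whose coordinates sum to $ \sum_i c_i \ell_i = 0 $, hence landing in $ W $.

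First I would check that the two maps are well defined and compose to zero. The form $ ev(a) $ has at worst simple poles on $ \mathcal{H} $, and contracting with the Euler field gives $ \sum_i a_i = 0 $, so $ ev(a) $ descends to a genuine section of $ \Omega^1_{\mathbf{P}^n}(\log\mathcal{H}) $; this is consistent with the residue sequence (\ref{eq:resiperpiani}), whose cohomology (using $ H^0(\Omega^1_{\mathbf{P}^n}) = 0 $ and $ H^1(\Omega^1_{\mathbf{P}^n}) = \mathbf{C} $) independently identifies $ W \cong H^0(\Omega^1_{\mathbf{P}^n}(\log\mathcal{H})) $. The composite vanishes because $ ev(M(c)) = \sum_i c_i \ell_i\, d\log\ell_i = \sum_i c_i\, d\ell_i = d\big(\sum_i c_i \ell_i\big) = 0 $. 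The key structural step is fiberwise injectivity of $ M $: if $ M(c)(x) = (c_i \ell_i(x))_i = 0 $, then $ c_i = 0 $ for every $ i $ with $ \ell_i(x) \neq 0 $; the surviving indices are those with $ x \in H_i $, and here normal crossings is essential, since it forces $ x $ to lie on at most $ n $ of the hyperplanes, whose forms are linearly independent, so $ c \in R $ together with $ \sum_{x \in H_i} c_i \ell_i = 0 $ yields $ c = 0 $.

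It then remains to prove surjectivity of $ ev $ (equivalently, global generation of $ \Omega^1_{\mathbf{P}^n}(\log\mathcal{H}) $), after which the sequence is forced: $ \ker(ev) $ becomes a bundle of rank $ (\ell-1) - n = \ell - n - 1 $, and the fiberwise injective map $ M $ of equal rank $ \ell - n - 1 $ into it is automatically a fiberwise isomorphism, hence an isomorphism onto $ \ker(ev) $ (the matching $ c_1 = -(\ell-n-1)H $ on both sides serves as a check). Surjectivity I would verify fiberwise: away from $ \mathcal{H} $ the evaluations $ \ell_i/\ell_i(x) $ all take the value $ 1 $ at $ x $, and since the $ \ell_i $ span $ V^\vee $ their differences span the cotangent space $ T^*_x\mathbf{P}^n $; along $ \mathcal{H} $ one must additionally hit the residue direction, which is possible by choosing a section supported on one incident and one non‑incident index. \textbf{The main obstacle} I anticipate is precisely this verification along $ \mathcal{H} $: showing that the natural map from $ \mathrm{coker}(M) $ to $ \Omega^1_{\mathbf{P}^n}(\log\mathcal{H}) $ is an isomorphism of sheaves there, i.e. that $ ev $ surjects onto both the regular part and the residue part of the fiber simultaneously — this is the step where the normal crossings hypothesis does the real work, and it is what prevents the argument from degenerating when hyperplanes become too special. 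Once the Steiner sequence is established, stability is immediate from Remark~\ref{r:3.7} and Theorem~\ref{T:3.5}.
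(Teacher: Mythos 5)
Your proposal follows essentially the same route as the paper: your $R$ and $W$ are exactly the paper's syzygy space $I_{\mathcal{H}}$ and its space $W$, your map $M$ is the fundamental tensor $t_{\mathcal{H}}$ of (\ref{eq:fundamentaltensor}), your $ev$ is the paper's map $\pi_{v}$, and the normal-crossings hypothesis enters in the same place to give fiberwise injectivity. The only divergence is in the last step: the paper settles surjectivity of the evaluation map by a dimension count on the global sections vanishing at $[v]$ and checks $\mathrm{Im}\,t_{\mathcal{H}}(v)\subset\ker\pi_{v}$ by a tangent-vector computation, whereas you propose a direct fiberwise verification along $\mathcal{H}$ and observe more cleanly that $ev\circ M$ vanishes because $\sum_{i}c_{i}\,d\ell_{i}=d\bigl(\sum_{i}c_{i}\ell_{i}\bigr)=0$ — so the step you flag as the main obstacle is precisely the one the paper disposes of by counting sections.
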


\begin{proof}
We want to construct a map of sheaves $ \tau_{\mathcal{H}} $ as the one in (\ref{eq:steinerseq}) (with $ k = \ell -n -1 $) whose cokernel is isomorphic to $ \Omega_{\mathbf{P}^n}^{1}(\log \mathcal{H}) $. \\
In order to do that, let $ f_{1}, \ldots, f_{\ell} $ be  homogeneous forms of degree $ 1 $ in $ x_{0}, \ldots, x_{n} $ such that $ H_{i} = \{f_{i} = 0\} $ for all $ i \in \{1, \ldots, \ell \} $ and let 
\begin{equation}\label{eq:IH}
I_{\mathcal{H}} = \{ (\lambda_{1}, \ldots, \lambda_{\ell}) \in \mathbf{C}^{\ell}\, | \, \displaystyle{\sum_{i=1}^{\ell} \lambda_{i} f_{i} = 0}  \}
\end{equation}
\begin{equation}\label{eq:W}
W = \{ (\lambda_{1}, \ldots, \lambda_{\ell}) \in \mathbf{C}^{\ell}\, | \, \displaystyle{\sum_{i=1}^{\ell} \lambda_{i} = 0} \}. 
\end{equation}
Since $ \ell \geq n+2 $ we have that $ I_{\mathcal{H}} $ is non trivial. By using linear algebra computations we can see that, since $ \mathcal{H} $ is an arrangement with normal crossings, then $ dim(I_{\mathcal{H}}) = \ell - n -1 $. Moreover we have that $ dim (W) = \ell -1 $. \\
Now, let $ t_{\mathcal{H}}\in (\mathbf{C}^{n+1})^{\vee} \otimes I_{\mathcal{H}}^{\vee} \otimes W $ defined by
\begin{equation}\label{eq:fundamentaltensor}
t_{\mathcal{H}}(\lambda_{1}, \ldots, \lambda_{\ell}, v) = (\lambda_{1}f_{1}(v), \ldots, \lambda_{\ell}f_{\ell}(v))
\end{equation}
for all $ (\lambda_{1}, \ldots, \lambda_{\ell}) \in I_{\mathcal{H}} $, $ v \in \mathbf{C}^{n+1} $ and let $ \tau_{\mathcal{H}} $ the corresponding map of sheaves, $t_{\mathcal{H}}$ is called the \emph{fundamental tensor of} $ \mathcal{H} $. By using the hypothesis of normal crossings, it's not hard to prove that, for all $ v \in (\mathbf{C^{\ast}})^{n+1} $, the fiber $ t_{\mathcal{H}}(v) $ of $ \tau_{\mathcal{H}} $ over $ [v] \in \mathbf{P}^n $ is an injective map, so that $ dim (Im\,t_{\mathcal{H}}(v)) = \ell-n-1 $. We want to construct an isomorphism between the vector spaces $ Coker\,t_{\mathcal{H}}(v) $ and $ \Omega_{\mathbf{P}^n}^{1}(\log \mathcal{H})_{[v]} $. So consider the map
$$ \pi_{v} : W \longrightarrow \Omega_{\mathbf{P}^n}^{1}(\log \mathcal{H})_{[v]} $$
$$ (\lambda_{1}, \ldots, \lambda_{\ell}) \longmapsto \displaystyle{\sum_{i=1}^{\ell} \lambda_{i}(d\log f_{i})_{|_{[v]}}}. $$
Since the subspace of $ H^{0}(\mathbf{P}^n,\Omega_{\mathbf{P}^n}^{1}(\log \mathcal{H})) $ made of all sections that vanish at $ [v] $ has dimension $ \ell - n -1 $, we have that $ \pi_{v} $ is surjective; in particular $ dim (Ker\, \pi_{v}) = dim (Im\,t_{\mathcal{H}}(v)) $. In order to conclude the proof it suffices to show that $ Im\,t_{\mathcal{H}}(v) \subset Ker\, \pi_{v} $. So, let $ (\mu_{1}, \ldots, \mu_{\ell}) \in Im\,t_{\mathcal{H}}(v) $, that is there exists $ (\lambda_{1}, \ldots, \lambda_{\ell}) \in I_{\mathcal{H}} $ such that $ \mu_{i} = \lambda_{i} f_{i}(v) $. We have that
$$ \displaystyle{\sum_{i=1}^{\ell} \mu_{i} (d \log f_{i})_{|_{[v]}}} = \displaystyle{\sum_{i \,  s.t. \, [v] \notin H_{i}}} \mu_{i} \left({{df_{i}} \over {f_{i}}}\right)_{|_{[v]}}  $$
is a regular $1$-form in a neighbourhood of $ [v] $ and, for all tangent vectors $ \xi \in T_{[v]} \left(\bigcap_{i \,  s.t. \, [v] \in H_{i}} H_{i}\right) $,
$$ \displaystyle{\sum_{i=1}^{\ell} \mu_{i} (d \log f_{i})_{|_{[v]}}}(\xi) = \displaystyle{\sum_{i\,  s.t. \, [v] \notin H_{i}}}\mu_{i}{{f_{i}(\xi)} \over {f_{i}(v)}} = \displaystyle{\sum_{i\,  s.t. \, [v] \notin H_{i}}} \lambda_{i}f_{i}(\xi) = 0. $$ 
These two conditions imply that $ \displaystyle{\sum_{i=1}^{\ell}} \mu_{i} (d \log f_{i}) $ is zero at $ [v] $, as desired.
\end{proof}

Logarithmic bundles attached to arrangements of hyperplanes with normal crossings are strictly related to another class of vector bundles over $ \mathbf{P}^n $: the family of \emph{Schwarzenberger bundles},~\cite{Sch2} and~\cite{Sch1}. 

Let's introduce some preliminary notations: we denote by $ (\mathbf{P}^n)^{\vee} $ the dual variety of $ \mathbf{P}^n $ and by $ \mathbf{F} $ the incidence variety point-hyperplane of $ \mathbf{P}^n $, that is
$$ \mathbf{F} = \{ (x , \, y) \in \mathbf{P}^n \times {(\mathbf{P}}^n)^{\vee} \, | \, x \in H_{y} \} $$
$$ \quad\quad\ \buildrel \rm p \over \swarrow \quad \buildrel \rm q \over \searrow $$
$$ \quad\quad \, \, \,\,  \mathbf{P}^n \quad\quad \,\,\, {(\mathbf{P}}^n)^{\vee} $$
where $ H_{y} \subset \mathbf{P}^n $ is the hyperplane ``defined" by the point $ y \in {(\mathbf{P}}^n)^{\vee} $ and $ p,q $ are the canonical projection morphisms. \\
Let $ \mathcal{C}_{n} $ be a \emph{rational normal curve} of degree $ n $ in $ {\mathbf{P}}^n $, that is the image of the map
$$ \nu_{n} : {\mathbf{P}}^{1} \longrightarrow {\mathbf{P}}^{n} $$
$$ [x_{0}, x_{1}] \longmapsto [A_{0}, \ldots, A_{n}] $$
where $ \{A_{0}, \ldots, A_{n}\} $ is an arbitrary basis for the space of homogeneous polynomials of degree $ n $ in the variables $ x_{0},x_{1} $. To give such a curve is equivalent to make a choice of an isomorphism between the vector spaces $ {\mathbf{C}}^{n+1} $ and $ S^{n}{\mathbf{C}}^{2} $. Moreover, denote by $ {\mathcal{C}_{n}^{\vee}} \subset ({\mathbf{P}}^{n})^{\vee} $ the dual curve of $ \mathcal{C}_{n} $. \\
Consider the following diagram
$$ q^{-1}({\mathcal{C}_{n}^{\vee}}) = \{(x , \, y) \in \mathbf{F} \, | \,x \in H_{y} \wedge y \in {\mathcal{C}_{n}^{\vee}} \} $$
$$ \quad\ \buildrel \rm {\overline{p}} \over \swarrow \quad \buildrel \rm {\overline{q}} \over \searrow $$ 
$$ \quad \, \, \,  \mathbf{P}^n \quad\quad\quad {\mathcal{C}_{n}^{\vee}} $$
where $ \overline{p},\overline{q} $ are the restrictions of $ p,q $ to $ q^{-1}({\mathcal{C}_{n}^{\vee}}) \subset \mathbf{F} $. \\
We are ready to give the following:
 
\begin{definition}$($Schwarzenberger 1961, $\cite{Sch1})$ \\
Let $ m \in \mathbf{N} $, we call \emph{Schwarzenberger bundle of degree} $ m $ \emph{associated to} $ \mathcal{C}_{n}^{\vee} $ the rank-$ n $ vector bundle over $ \mathbf{P}^n $ given by
$$ E_{m}(\mathcal{C}_{n}^{\vee}) = \overline{p}_{\ast}\overline{q}^{\ast} \mathcal{O}_{\mathcal{C}_{n}^{\vee}}({{m} \over {n}}) $$
where $ \mathcal{O}_{\mathcal{C}_{n}^{\vee}}({{m} \over {n}}) $ denotes the line bundle over $ \mathcal{C}_{n}^{\vee} $ that corresponds to $ \mathcal{O}_{\mathbf{P}^{1}}(m) $ through the isomorphism $ \nu_{n} $ between $ \mathbf{P}^{1} $ and $ \mathcal{C}_{n}^{\vee} $. 
\end{definition}

If the degree of the Schwarzenberger bundle is sufficiently large then we get a Steiner bundle. In this sense we have the following: 

\begin{proposition}$($\emph{Schwarzenberger 1961,~\cite{Sch1}}$)$\\
If $ m \geq n $ then $ E_{m}(\mathcal{C}_{n}^{\vee}) \in S_{n, m-n+1} $.
\end{proposition}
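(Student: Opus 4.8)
The plan is to exhibit $E_{m}(\mathcal{C}_{n}^{\vee})$ as the cokernel of an explicit fiberwise-injective map $\mathcal{O}_{\mathbf{P}^n}(-1)^{m-n+1} \to \mathcal{O}_{\mathbf{P}^n}^{m+1}$, which is precisely a Steiner sequence of the form~(\ref{eq:steinerfacile}) with $k = m-n+1$. Note that this forces $n+k = m+1$ and that the cokernel has rank $(m+1)-(m-n+1)=n$, exactly as required.

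First I would fix the identification $\mathbf{P}^n = \mathbf{P}(S^{n}\mathbf{C}^{2})$, under which $\mathcal{C}_{n}^{\vee}$ is parametrized by $\nu_{n}\colon \mathbf{P}^1 \to \mathcal{C}_{n}^{\vee}$ and the hyperplane attached to $\nu_{n}([a,b])$ is $\{[F]\,|\,F(a,b)=0\}$, the locus of binary forms of degree $n$ vanishing at $[a,b]$. Viewing $q^{-1}(\mathcal{C}_{n}^{\vee})$ inside $\mathbf{P}^n \times \mathbf{P}^1$, the incidence condition $F(a,b)=0$ is then bihomogeneous of degree $1$ in the coordinates of $\mathbf{P}^n$ and degree $n$ in those of $\mathbf{P}^1$. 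Hence $Z := q^{-1}(\mathcal{C}_{n}^{\vee})$ is a divisor of bidegree $(1,n)$, cut out by this single equation, and its Koszul resolution reads
\[ 0 \longrightarrow \mathcal{O}_{\mathbf{P}^n \times \mathbf{P}^1}(-1,-n) \longrightarrow \mathcal{O}_{\mathbf{P}^n \times \mathbf{P}^1} \longrightarrow \mathcal{O}_{Z} \longrightarrow 0, \]
where $\mathcal{O}(a,b)$ abbreviates the tensor product of the pullbacks of $\mathcal{O}_{\mathbf{P}^n}(a)$ and $\mathcal{O}_{\mathbf{P}^1}(b)$.

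Next I would tensor by the pullback of $\mathcal{O}_{\mathbf{P}^1}(m)$, which corresponds to $\mathcal{O}_{\mathcal{C}_{n}^{\vee}}(m/n)$ through $\nu_{n}$, to obtain
\[ 0 \longrightarrow \mathcal{O}_{\mathbf{P}^n \times \mathbf{P}^1}(-1,m-n) \longrightarrow \mathcal{O}_{\mathbf{P}^n \times \mathbf{P}^1}(0,m) \longrightarrow \mathcal{O}_{Z}(0,m) \longrightarrow 0, \]
and then push forward along $\overline{p}$. The morphism $\overline{p}$ is finite and flat of degree $n$, since its fiber over $[F]$ is the length-$n$ divisor of zeroes of the binary form $F$; consequently $\overline{p}_{\ast}\mathcal{O}_{Z}(0,m) = E_{m}(\mathcal{C}_{n}^{\vee})$ is locally free of rank $n$ and $R^{1}\overline{p}_{\ast}\mathcal{O}_{Z}(0,m)=0$. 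By the projection formula and the cohomology of the $\mathbf{P}^1$-fibers one computes $\overline{p}_{\ast}\mathcal{O}(0,m) = H^{0}(\mathbf{P}^1,\mathcal{O}(m)) \otimes \mathcal{O}_{\mathbf{P}^n} = \mathcal{O}_{\mathbf{P}^n}^{m+1}$ and $\overline{p}_{\ast}\mathcal{O}(-1,m-n) = H^{0}(\mathbf{P}^1,\mathcal{O}(m-n)) \otimes \mathcal{O}_{\mathbf{P}^n}(-1) = \mathcal{O}_{\mathbf{P}^n}(-1)^{m-n+1}$. The hypothesis $m \geq n$ enters exactly here: $m-n \geq 0$ gives $H^{1}(\mathbf{P}^1,\mathcal{O}(m-n))=0$, so the higher direct image of the left-hand term vanishes and the long exact sequence of $\overline{p}_{\ast}$ collapses to
\[ 0 \longrightarrow \mathcal{O}_{\mathbf{P}^n}(-1)^{m-n+1} \longrightarrow \mathcal{O}_{\mathbf{P}^n}^{m+1} \longrightarrow E_{m}(\mathcal{C}_{n}^{\vee}) \longrightarrow 0, \]
which is the Steiner presentation with $k=m-n+1$.

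I expect the only genuinely delicate points to be the two structural facts used above: that $Z$ is a divisor of bidegree $(1,n)$ cut out by the single incidence equation, so that the Koszul complex is a resolution; and that $\overline{p}$ is finite flat of degree $n$, so that its pushforward is a bona fide rank-$n$ bundle and no $R^{1}$ term survives. Both follow from the concrete description of $Z$ as the universal family of binary $n$-forms together with one of their roots (flatness by the miracle-flatness criterion, the fibers having constant length $n$). Once these are secured, the cohomology bookkeeping on $\mathbf{P}^1$ is routine, and $m \geq n$ is used solely to kill $H^{1}(\mathbf{P}^1,\mathcal{O}(m-n))$.
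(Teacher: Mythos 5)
Your argument is correct, and it lands on exactly the presentation the paper writes down, but it gets there by a genuinely different (and more self-contained) route. The paper's proof consists of exhibiting the explicit $(m+1)\times(m-n+1)$ matrix $M$ of linear forms and asserting that the resulting sequence defines $E_{m}(\mathcal{C}_{n}^{\vee})$; it does not connect that sequence back to the definition $E_{m}(\mathcal{C}_{n}^{\vee})=\overline{p}_{\ast}\overline{q}^{\ast}\mathcal{O}_{\mathcal{C}_{n}^{\vee}}(m/n)$. You instead derive the sequence from the definition: you resolve the incidence divisor $Z=q^{-1}(\mathcal{C}_{n}^{\vee})$ of bidegree $(1,n)$ by its Koszul complex, twist by $\mathcal{O}(0,m)$, and push forward, using $m\geq n$ precisely to kill $H^{1}(\mathbf{P}^1,\mathcal{O}(m-n))$ and finiteness of $\overline{p}$ to kill the $R^{1}$ of the right-hand term. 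The map $\mathcal{O}_{\mathbf{P}^n}(-1)\otimes S^{m-n}\mathbf{C}^2\to\mathcal{O}_{\mathbf{P}^n}\otimes S^{m}\mathbf{C}^2$ you obtain is, in coordinates, exactly the paper's matrix $M$, and it is the multiplication tensor recorded in the remark following the proposition. What your approach buys is a proof that the asserted sequence really computes the bundle defined by the pushforward, plus fiberwise injectivity for free (the cokernel is locally free of the expected rank $n$ because $\overline{p}$ is finite flat of degree $n$); what the paper's approach buys is brevity and an explicit matrix one can compute with. The two delicate points you flag (that $Z$ is cut out by the single bihomogeneous incidence equation, and that $\overline{p}$ is finite flat of degree $n$) are handled correctly: $Z$ is smooth, being a $\mathbf{P}^{n-1}$-bundle over $\mathbf{P}^1$ via $\overline{q}$, so miracle flatness applies to $\overline{p}$.
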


\begin{proof}
It suffices to observe that, if $ m \geq n $, then $ E_{m}(\mathcal{C}_{n}^{\vee}) $ is defined by the short exact sequence
$$ 0 \longrightarrow \mathcal{O}_{\mathbf{P}^n}(-1)^{m-n+1} \buildrel \rm M \over \longrightarrow \mathcal{O}_{\mathbf{P}^n}^{m+1} \longrightarrow E_{m}(\mathcal{C}_{n}^{\vee}) \longrightarrow 0 $$
where $ M $ is the following $ (m+1) \times (m-n+1) $ matrix:
$$ M = \pmatrix{ 
0 & \cdots & x_{0} \cr
\vdots & \iddots & \vdots \cr
x_{0} & {} & x_{n} \cr
\vdots & \iddots & \vdots \cr
x_{n} & \cdots & 0 \cr}. $$
\end{proof}

\begin{remark}
We can find detailed descriptions of the previous result also in~\cite{Do-Ka} and~\cite{Wykno}. In particular, according to definition~\ref{d:3.3}, the vector spaces that characterize $ E_{m}(\mathcal{C}_{n}^{\vee}) $ are, respectively, $ S^{m-n}\mathbf{C}^2 $ and $ S^{m}\mathbf{C}^2 $ and the tensor $ t $ is the multiplication map
\begin{equation}\label{eq:Schwtensor}
t: S^{n}\mathbf{C}^2 \otimes S^{m-n}\mathbf{C}^2 \longrightarrow S^{m}\mathbf{C}^2.
\end{equation}
\end{remark}

The link between logarithmic bundles of hyperplanes with normal crossings on $ \mathbf{P}^n $ and Schwarzenberger bundles is described by the following result:

\begin{theorem}$($\emph{Dolgachev-Kapranov 1993,~\cite{Do-Ka}}$)$\\
If $ m \geq n $ then 
\begin{equation}\label{eq:isoSchw}
E_{m}(\mathcal{C}_{n}^{\vee}) \cong \Omega_{\mathbf{P}^n}^{1}(\log \mathcal{H})
\end{equation}
where $ \mathcal{H} = \{H_{1}, \ldots, H_{\ell}\} $ is an arrangement with $ \ell = m+2 $ hyperplanes with normal crossings such that $ H_{1}, \ldots, H_{\ell} $ osculate $ \mathcal{C}_{n} \subset \mathbf{P}^n $.
\end{theorem}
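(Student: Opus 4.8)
The plan is to present both bundles by a Steiner resolution of the same numerical type and to match their fundamental tensors; the isomorphism then follows from the uniqueness of the minimal free resolution. Taking $\ell=m+2$ in Theorem~\ref{T:3.8} gives $\Omega^1_{\mathbf{P}^n}(\log\mathcal{H})\in S_{n,\,m-n+1}$, and the Schwarzenberger presentation recalled above gives $E_m(\mathcal{C}_n^\vee)\in S_{n,\,m-n+1}$ with the same $k=m-n+1$; thus both fit in a sequence
\[ 0 \longrightarrow \mathcal{O}_{\mathbf{P}^n}(-1)^{\,m-n+1} \longrightarrow \mathcal{O}_{\mathbf{P}^n}^{\,m+1} \longrightarrow (\,\cdot\,) \longrightarrow 0. \]
Because the entries of the presenting map are linear forms, this is the minimal graded free resolution of the bundle; as minimal resolutions are unique up to isomorphism of complexes, the two bundles are isomorphic if and only if the two presenting maps are carried one into the other by isomorphisms of the end terms, that is, if and only if their fundamental tensors are equivalent under $GL(I)\times GL(W)$ while $\mathbf{C}^{n+1}$ is kept fixed. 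Hence it suffices to produce such an equivalence.

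Next I would fix the identifications coming from the curve. Write $V=\mathbf{C}^2$ and $U=V^\vee$. The choice of $\mathcal{C}_n$ identifies $\mathbf{C}^{n+1}$ with $S^nV$, so that $\mathcal{C}_n=\{[\ell^n]:\ell\in V\}$ and $(\mathbf{C}^{n+1})^\vee=S^nU$; by apolarity the dual curve is $\mathcal{C}_n^\vee=\{[\phi^n]:\phi\in U\}$, and $\{\phi^n=0\}$ is precisely the osculating hyperplane of $\mathcal{C}_n$ at the point $[\ell^n]$ with $\phi(\ell)=0$. Consequently the hypothesis that $H_1,\dots,H_\ell$ osculate $\mathcal{C}_n$ means that their equations can be chosen as $f_i=\phi_i^{\,n}$ for $\ell=m+2$ distinct $\phi_i\in U$. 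Since any $k\le n+1$ points of the rational normal curve $\mathcal{C}_n^\vee$ are linearly independent, the $f_i$ automatically define an arrangement with normal crossings, so that Theorem~\ref{T:3.8} indeed applies. In this notation $W=\{\lambda:\sum_i\lambda_i=0\}$, $I_\mathcal{H}=\{\lambda:\sum_i\lambda_i\phi_i^{\,n}=0\}$, and the fundamental tensor is $t_\mathcal{H}(\lambda,v)=(\lambda_i\,\phi_i^{\,n}(v))_i$.

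The heart of the argument, and the step I expect to be the main obstacle, is to construct isomorphisms $I_\mathcal{H}\cong S^{m-n}V$ and $W\cong S^mV$ under which $t_\mathcal{H}$ becomes the Schwarzenberger multiplication tensor~(\ref{eq:Schwtensor}),
\[ S^nV\otimes S^{m-n}V\longrightarrow S^mV,\qquad v\otimes w\longmapsto v\,w. \]
The point is that, for $v=\ell^n$ on the curve, the fiber map $\lambda\mapsto(\lambda_i\,\phi_i(\ell)^n)_i$ should be intertwined with multiplication by $\ell^n$, after which one extends by linearity in $v\in S^nV$. The natural framework is the apolarity pairing together with the (confluent) Vandermonde non-degeneracy of the powers $\phi_i^{\,d}$ of the $m+2$ distinct points: the spaces of linear relations among the $\phi_i^{\,d}$ at degrees $d=n$ and $d=0$ are exactly $I_\mathcal{H}$ and $W$, and the $S^{\bullet}V$-module structure carried by these relation spaces is what reproduces the multiplication map. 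The genuinely computational difficulty is to use the freedom to rescale each $\phi_i$ (which alters neither the hyperplane nor the bundle) so as to normalise the two isomorphisms simultaneously and match the banded catalecticant matrix $M$ of the Schwarzenberger presentation.

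Once the two fundamental tensors are identified, the resulting commutative ladder between the two Steiner sequences yields the isomorphism $E_m(\mathcal{C}_n^\vee)\cong\Omega^1_{\mathbf{P}^n}(\log\mathcal{H})$. An alternative, more geometric route would avoid the tensor bookkeeping by working on the incidence variety: since $\mathcal{O}_{\mathcal{C}_n^\vee}(m/n)$ corresponds to $\omega_{\mathbf{P}^1}(p_1+\dots+p_\ell)$ under $\nu_n$, one can try to identify $\overline p_*\overline q^*$ of this line bundle with the logarithmic bundle through the relative residue; but making this precise requires a careful analysis of $\overline p,\overline q$ and seems no less delicate than the tensor computation above.
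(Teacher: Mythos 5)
Your reduction is exactly the paper's: present both bundles as Steiner bundles in $S_{n,m-n+1}$ and show the fundamental tensor $t_{\mathcal{H}}$ of (\ref{eq:fundamentaltensor}) is carried to the multiplication tensor (\ref{eq:Schwtensor}) by isomorphisms $I_{\mathcal{H}}\cong S^{m-n}\mathbf{C}^2$ and $W\cong S^{m}\mathbf{C}^2$. (Your appeal to uniqueness of minimal resolutions is more than you need: the easy direction -- equivalent tensors give a commutative ladder, hence isomorphic cokernels -- is all that is used.) But you then declare the construction of these two isomorphisms to be ``the main obstacle'' and a ``genuinely computational difficulty,'' and you do not carry it out; rescaling the $\phi_i$ and matching the catalecticant matrix is precisely the bookkeeping you never do. Since everything before that point is formal, this is the entire content of the theorem, so as written the proposal has a genuine gap at the one step that matters.

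The device the paper uses to close this gap is worth knowing, because it replaces the Vandermonde/apolarity computation you anticipate with a one-line residue argument. Identify $S^{m}\mathbf{C}^2$ with $H^{0}(\mathbf{P}^1,\Omega^{1}_{\mathbf{P}^1}([u_1]+\cdots+[u_{m+2}]))$, the forms with at most simple poles at the $m+2$ points $[u_i]$ with $f_i=u_i^n$, and define
$$ \beta(\omega)=\bigl(\mathrm{res}_{[u_1]}(\omega),\ldots,\mathrm{res}_{[u_{m+2}]}(\omega)\bigr). $$
The residue theorem on $\mathbf{P}^1$ forces $\sum_i \mathrm{res}_{[u_i]}(\omega)=0$, so $\beta$ lands in $W$, and it is an isomorphism because a form with no residues is holomorphic, hence zero, while both spaces have dimension $m+1$. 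Then $\alpha$ is simply the restriction of $\beta$ to the subspace $H^{0}(\mathbf{P}^1,\Omega^{1}_{\mathbf{P}^1}([u_1]+\cdots+[u_{m+2}]-n[v]))$ of forms vanishing to order $n$ at an auxiliary point $[v]$, which has dimension $m-n+1=\dim I_{\mathcal{H}}$ and maps into $I_{\mathcal{H}}$; the intertwining with the multiplication map $S^{n}\mathbf{C}^2\otimes S^{m-n}\mathbf{C}^2\to S^{m}\mathbf{C}^2$ is then immediate from this description. If you want to salvage your write-up, this residue construction (or a fully executed version of your catalecticant normalisation) must be supplied; your closing remark that the incidence-variety route ``seems no less delicate'' does not discharge the obligation either.
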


\begin{proof}
Let $ \mathcal{H} $ be a hyperplane arrangement satisfying the properties listed in the statement of the theorem; let $ I_{\mathcal{H}} $ and $ W $ be the vector spaces defined in (\ref{eq:IH}) and (\ref{eq:W}). In order to get (\ref{eq:isoSchw}), we have to construct two isomorphisms of vector spaces 
$$ \alpha : S^{m-n}{\mathbf{C}^2} \longrightarrow I_{\mathcal{H}} $$
$$ \beta: S^{m}{\mathbf{C}^2} \longrightarrow W $$
such that the tensor $ t $ defined in (\ref{eq:Schwtensor}) is sent to the tensor $ t_{\mathcal{H}} $ introduced in (\ref{eq:fundamentaltensor}).
Since $ H_{i} = \{ f_{i} = 0\} $ osculates $ \mathcal{C}_{n} $ for all $ i \in \{1, \ldots, m+2\} $, then there exists $ u_{i} \in ({\mathbf{C}^2})^{\ast} $ such that $ f_{i} = u_{i}^{n} $. By identifying $ S^{m}{\mathbf{C}^2} $ with $ H^{0}(\mathbf{P}^1, \Omega_{\mathbf{P}^1}^{1}([u_{1}] + \ldots + [u_{m+2}])) $, i.e. the space of forms with simple poles at $ [u_{1}], \ldots, [u_{m+2}] $, we get a well-defined map
$$ \beta: H^{0}(\mathbf{P}^1, \Omega_{\mathbf{P}^1}^{1}([u_{1}] + \ldots + [u_{m+2}])) \longrightarrow W $$
$$ \omega \longmapsto (res_{[u_{1}]}(\omega), \ldots, res_{[u_{m+2}]}(\omega)). $$ 
Now, let $ [v] \not= [u_{i}] $ and identify $ S^{m-n}{\mathbf{C}^2} $ with $ H^{0}(\mathbf{P}^1, \Omega_{\mathbf{P}^1}^{1}([u_{1}] + \ldots + [u_{m+2}] - n[v])) $, the space of forms with at most simple poles at $ [u_{i}] $ and a zero of order $ \leq n $ at $ [v] $. We define $ \alpha $ as the restriction of $ \beta $ to this space, which concludes the proof.
\end{proof}

\vfill\eject

\section{Torelli type theorems for the normal crossings case}

Let $ \mathcal{H} = \{H_{1}, \ldots, H_{\ell}\} $ be a hyperplane arrangement with normal crossings on $ \mathbf{P}^n $. If $ \mathcal{H} $ is made of \emph{few} hyperplanes then it is not a \emph{Torelli} arrangement. In this sense we have the following results:

\begin{theorem}\label{T:pochiiperpiani}$($\emph{Dolgachev-Kapranov 1993,~\cite{Do-Ka}}$)$\\
If $ 1 \leq \ell \leq n+1 $ then $ \Omega_{\mathbf{P}^n}^{1}(\log \mathcal{H}) \cong \mathcal{O}_{\mathbf{P}^n}^{\ell-1} \oplus \mathcal{O}_{\mathbf{P}^n}(-1)^{n+1-\ell}.  $
\end{theorem}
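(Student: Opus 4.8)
The plan is to exploit the normal crossing hypothesis to reduce $\mathcal{H}$ to a coordinate arrangement and then realize $\Omega_{\mathbf{P}^n}^{1}(\log\mathcal{H})$ as the kernel of an explicit, split map of sheaves. First I would observe that normal crossings together with $\ell\leq n+1$ forces the defining linear forms $f_{1},\ldots,f_{\ell}$ to be linearly independent: taking $k=\ell\leq n+1$ in the normal crossing condition gives $\mathrm{codim}(H_{1}\cap\cdots\cap H_{\ell})=\ell$, which is exactly linear independence of $f_{1},\ldots,f_{\ell}$. Completing these to a system of homogeneous coordinates, and using that both the formation of the logarithmic bundle and the right-hand side of the claimed isomorphism are compatible with projective changes of coordinates, I may assume $H_{i}=\{x_{i-1}=0\}$ for $i=1,\ldots,\ell$, so that $\mathcal{H}$ is the coordinate divisor $D=\{x_{0}\cdots x_{\ell-1}=0\}$.

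Next I would set up a logarithmic version of the Euler sequence. Starting from $0\to\Omega_{\mathbf{P}^n}^{1}\to\mathcal{O}(-1)^{n+1}\buildrel (x_{0},\ldots,x_{n}) \over \longrightarrow\mathcal{O}\to0$, in which the $i$-th copy of $\mathcal{O}(-1)$ is the generator $dx_{i}$, I would pass to log forms along $D$ by replacing the first $\ell$ generators $dx_{0},\ldots,dx_{\ell-1}$ with $d\log x_{0},\ldots,d\log x_{\ell-1}$, each of degree $0$. Writing a log form as $\sum_{j<\ell}b_{j}\,d\log x_{j}+\sum_{i\geq\ell}a_{i}\,dx_{i}$ and substituting $b_{j}=x_{j}a_{j}$ into the Euler relation $\sum_{i}x_{i}a_{i}=0$ turns it into $\sum_{j<\ell}b_{j}+\sum_{i\geq\ell}x_{i}a_{i}=0$. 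This exhibits $\Omega_{\mathbf{P}^n}^{1}(\log\mathcal{H})$ as the kernel of the map $\psi:\mathcal{O}^{\ell}\oplus\mathcal{O}(-1)^{n+1-\ell}\to\mathcal{O}$ given by $(b_{0},\ldots,b_{\ell-1},a_{\ell},\ldots,a_{n})\mapsto\sum_{j<\ell}b_{j}+\sum_{i\geq\ell}x_{i}a_{i}$.

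Finally I would compute $\ker\psi$ by a splitting argument. The restriction $\psi_{1}$ of $\psi$ to the summand $\mathcal{O}^{\ell}$ is the ``sum of coordinates'' map, which is split surjective (hence so is $\psi$, so $\ker\psi$ is a subbundle); using the section $c\mapsto(c,0,\ldots,0)$ one checks directly that $\ker\psi\cong\ker\psi_{1}\oplus\mathcal{O}(-1)^{n+1-\ell}$, while $\ker\psi_{1}\cong\mathcal{O}^{\ell-1}$ is the trivial subbundle $\{\sum_{j<\ell}b_{j}=0\}$. This yields $\Omega_{\mathbf{P}^n}^{1}(\log\mathcal{H})\cong\mathcal{O}^{\ell-1}\oplus\mathcal{O}(-1)^{n+1-\ell}$, as claimed; the extreme cases $\ell=1$ (giving $\mathcal{O}(-1)^{n}$) and $\ell=n+1$ (giving the trivial bundle $\mathcal{O}^{n}$, consistent with $\mathbf{P}^n$ being toric with toric boundary $D$) serve as sanity checks.

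The step I expect to be the main obstacle is the rigorous justification of the logarithmic Euler sequence, namely the identification of $\Omega_{\mathbf{P}^n}^{1}(\log\mathcal{H})$ with $\ker\psi$ under the substitution above. This requires a local verification at points of $D$ that replacing $dx_{j}$ by $d\log x_{j}$ produces exactly the sheaf of forms with at most simple poles on $D$, matching Definition~\ref{d:2.6}; once this identification is secured, the remainder is elementary linear algebra over $\mathcal{O}_{\mathbf{P}^n}$. Should the direct construction prove delicate, an alternative is to induct on $\ell$ via the residue sequences $0\to\Omega_{\mathbf{P}^n}^{1}(\log\mathcal{H}')\to\Omega_{\mathbf{P}^n}^{1}(\log\mathcal{H})\to\mathcal{O}_{H_{\ell}}\to0$, with $\mathcal{H}'=\mathcal{H}\setminus\{H_{\ell}\}$, peeling off one summand at a time and invoking the analogue of the sequence in (\ref{eq:resiperpiani}).
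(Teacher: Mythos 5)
Your proposal is correct and follows essentially the same route as the paper: both reduce to coordinate hyperplanes and realize $\Omega_{\mathbf{P}^n}^{1}(\log\mathcal{H})$ as the kernel of the map $\mathcal{O}_{\mathbf{P}^n}^{\ell}\oplus\mathcal{O}_{\mathbf{P}^n}(-1)^{n+1-\ell}\to\mathcal{O}_{\mathbf{P}^n}$ sending $(g_{1},\ldots,g_{n+1})$ to $\sum_{j\leq\ell}g_{j}+\sum_{j>\ell}x_{j}g_{j}$, then observe that this kernel is split free. The only cosmetic difference is that the paper phrases this at the level of the associated graded $\mathbf{C}[x_{0},\ldots,x_{n}]$-module via Serre's theorem, whereas you work directly with the sheaf map and make the splitting explicit; your version actually supplies more justification (the reduction to coordinates and the logarithmic Euler relation) than the paper's one-line assertion.
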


\begin{proof}
Let $ M = \displaystyle{\bigoplus_{i}} H^{0}(\mathbf{P}^n, \Omega_{\mathbf{P}^n}^{1}(\log \mathcal{H})(i)) $ be the graded $ \mathbf{C}[x_{0}, \ldots, x_{n}] $-module associated to $ \Omega_{\mathbf{P}^n}^{1}(\log \mathcal{H}) $ thanks to Serre's theorem,~\cite{Ha}. It comes out that $ M $ is the kernel of the homomorphism 
$$ \mathbf{C}[x_{0}, \ldots, x_{n}]^{\ell} \oplus \mathbf{C}[x_{0}, \ldots, x_{n}]^{n+1-\ell}(-1) \longrightarrow \mathbf{C}[x_{0}, \ldots, x_{n}] $$
$$ (g_{1}, \ldots, g_{n+1}) \longmapsto \displaystyle{\sum_{j=1}^{\ell} g_{j}} + \displaystyle{\sum_{j=\ell +1}^{n+1} g_{j}x_{j} }  $$
that is $ M \cong \mathbf{C}[x_{0}, \ldots, x_{n}]^{\ell-1} \oplus \mathbf{C}[x_{0}, \ldots, x_{n}](-1)^{n+1-\ell} $, as desired.
\end{proof}

\begin{proposition}\label{P:3.14}
If $ \ell = n+2 $ then $ \Omega_{\mathbf{P}^n}^{1}(\log \mathcal{H}) \cong \mathbf{TP}^{n}(-1). $
\end{proposition}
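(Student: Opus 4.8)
The plan is to identify both $ \Omega_{\mathbf{P}^n}^{1}(\log \mathcal{H}) $ and $ \mathbf{TP}^{n}(-1) $ as members of the same family $ S_{n,1} $ of Steiner bundles, and then to observe that when $ k = 1 $ this family consists of a single isomorphism class.

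First I would invoke Theorem~\ref{T:3.8}: since $ \ell = n+2 $, the logarithmic bundle belongs to $ S_{n, \ell-n-1} = S_{n,1} $, so by the presentation (\ref{eq:steinerfacile}) it sits in a short exact sequence
$$ 0 \longrightarrow \mathcal{O}_{\mathbf{P}^n}(-1) \buildrel \rm \tau_{\mathcal{H}} \over \longrightarrow \mathcal{O}_{\mathbf{P}^n}^{n+1} \longrightarrow \Omega_{\mathbf{P}^n}^{1}(\log \mathcal{H}) \longrightarrow 0. $$
On the other hand, twisting the Euler sequence of $ \mathbf{P}^n $ by $ \mathcal{O}_{\mathbf{P}^n}(-1) $ yields
$$ 0 \longrightarrow \mathcal{O}_{\mathbf{P}^n}(-1) \longrightarrow \mathcal{O}_{\mathbf{P}^n}^{n+1} \longrightarrow \mathbf{TP}^{n}(-1) \longrightarrow 0, $$
which exhibits $ \mathbf{TP}^{n}(-1) $ as a rank-$ n $ bundle in $ S_{n,1} $ as well. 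It therefore suffices to show that the cokernel of any such injection is isomorphic to the Euler cokernel, i.e. that $ S_{n,1} $ has a single element up to isomorphism.

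The key step is the analysis of the map $ \tau $. Since $ \dim I = k = 1 $, the tensor in Definition~\ref{d:3.3} reduces to an element of $ (\mathbf{C}^{n+1})^{\vee} \otimes W $ with $ \dim W = n+1 $; concretely $ \tau $ is given by a column of $ n+1 $ linear forms, and the injectivity-on-fibers requirement forces these forms to have no common zero, hence to be linearly independent and to constitute a basis of $ H^{0}(\mathbf{P}^n, \mathcal{O}_{\mathbf{P}^n}(1)) $. For the fundamental tensor (\ref{eq:fundamentaltensor}) this is transparent: choosing a generator $ \lambda^{0} = (\lambda_{1}^{0}, \ldots, \lambda_{\ell}^{0}) $ of the one-dimensional space $ I_{\mathcal{H}} $, the entries of $ \tau_{\mathcal{H}} $ are the $ \ell $ forms $ \lambda_{i}^{0} f_{i} $ landing in $ W $ (their sum vanishes, as $ \sum_{i} \lambda_{i}^{0} f_{i} = 0 $), and these span an $ (n+1) $-dimensional space of independent linear forms precisely because $ \mathcal{H} $ has normal crossings.

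Finally I would conclude with a \emph{constant} change of basis. Writing the column of linear forms defining $ \tau_{\mathcal{H}} $ as $ A(x_{0}, \ldots, x_{n}) $ for a unique $ A \in GL(n+1, \mathbf{C}) $, the matrix $ A $ acts as an automorphism of the target $ \mathcal{O}_{\mathbf{P}^n}^{n+1} $ and fits into a commutative ladder between the two displayed sequences over the identity of $ \mathbf{P}^n $; it therefore induces the desired isomorphism $ \Omega_{\mathbf{P}^n}^{1}(\log \mathcal{H}) \cong \mathbf{TP}^{n}(-1) $ on cokernels. The main obstacle is the bookkeeping: checking that the injectivity-on-fibers of $ \tau_{\mathcal{H}} $ (equivalently, the linear independence of the $ \lambda_{i}^{0} f_{i} $ inside $ W $) genuinely follows from normal crossings, and that the comparison of tensors is realized by a constant matrix acting on the target $ \mathcal{O}_{\mathbf{P}^n}^{n+1} $ rather than by a coordinate change on $ \mathbf{P}^n $ itself, so that the resulting isomorphism covers the identity.
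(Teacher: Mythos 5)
Your proposal is correct and follows the same route as the paper: invoke Theorem~\ref{T:3.8} to place $\Omega_{\mathbf{P}^n}^{1}(\log\mathcal{H})$ in $S_{n,1}$ and identify the resulting presentation with the twisted Euler sequence. The paper states this identification in one line; you have merely supplied the (correct) justification that injectivity on fibers forces the $n+1$ linear forms to be a basis, so a constant change of basis on $\mathcal{O}_{\mathbf{P}^n}^{n+1}$ carries any such sequence to the Euler sequence and induces the isomorphism on cokernels.
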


\begin{proof}
Theorem~\ref{T:3.8} implies that $ \Omega_{\mathbf{P}^n}^{1}(\log \mathcal{H}) \in S_{n,1} $, that is it admits an exact sequence of the form
$$ 0 \longrightarrow \mathcal{O}_{\mathbf{P}^n}(-1) \longrightarrow \mathcal{O}_{\mathbf{P}^n}^{n+1} \longrightarrow \Omega_{\mathbf{P}^n}^{1}(\log \mathcal{H}) \longrightarrow 0 $$
which is the Euler sequence for $ \mathbf{TP}^{n}(-1) $.
\end{proof}

\begin{remark}
All Steiner bundles in $ S_{n,1} $ are isomorphic to  $ \mathbf{TP}^{n}(-1) $.
\end{remark}

If we consider arrangements with a \emph{sufficiently large} number of hyperplanes, then the \emph{Torelli correspondence} defined in (\ref{eq:Torellimap})  is very closed to be an injective map. The main result on this topic is the following:

\begin{theorem}\label{T:Do-Ka}$($\emph{Vall\`es 2000,~\cite{V}}$)$\\
Let $ \mathcal{H} = \{H_{1}, \ldots, H_{\ell}\} $ and $ \mathcal{K} = \{K_{1}, \ldots, K_{\ell}\} $ be arrangements of $ \ell \geq n+3 $ hyperplanes with normal crossings on $ \mathbf{P}^n $ such that 
\begin{equation}\label{eq:iperlogiso}
\Omega_{\mathbf{P}^n}^{1}(\log \mathcal{H}) \cong \Omega_{\mathbf{P}^n}^{1}(\log \mathcal{K}).
\end{equation}
Then we have one of these possibilities:
\begin{itemize}
\item[$1)$] $ \mathcal{H} = \mathcal{K} $;
\item[$2)$] there exists a rational normal curve $ \mathcal{C}_{n} \subset \mathbf{P}^n $ such that $ H_{1}, \ldots, H_{\ell}, \\
K_{1}, \ldots, K_{\ell} $ osculate $ \mathcal{C}_{n} $ and $ \Omega_{\mathbf{P}^n}^{1}(\log \mathcal{H}) \cong \Omega_{\mathbf{P}^n}^{1}(\log \mathcal{K}) \cong E_{\ell - 2}(\mathcal{C}_{n}^{\vee}) $.
\end{itemize}
\end{theorem}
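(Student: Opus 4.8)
The plan is to reduce the whole statement to a classification of the \emph{unstable hyperplanes} of the common bundle. For a Steiner bundle $S \in S_{n,k}$ on $\mathbf{P}^n$ I consider the locus
$$ \Sigma(S) = \{ H \subset \mathbf{P}^{n} \, hyperplane \, | \, H^{0}(H, S_{|_{H}}^{\vee}) \not= \{0\} \} \subset (\mathbf{P}^{n})^{\vee}. $$
Since $\Sigma(S)$ is manifestly an isomorphism invariant of $S$, hypothesis (\ref{eq:iperlogiso}) yields immediately $\Sigma(\Omega_{\mathbf{P}^n}^1(\log\mathcal{H})) = \Sigma(\Omega_{\mathbf{P}^n}^1(\log\mathcal{K}))$, and by Theorem~\ref{T:3.8} both bundles belong to $S_{n,\ell-n-1}$. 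Thus the theorem will follow from a sufficiently precise description of $\Sigma(S)$ for $S \in S_{n,\ell-n-1}$, together with the fact that the arrangement is recorded inside $\Sigma$.

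Next I translate membership in $\Sigma(S)$ into a condition on the fundamental tensor. Since $\tau$ in (\ref{eq:steinerseq}) is fibrewise injective, it remains so after restriction to any hyperplane $H_{y}$ with $y \in (\mathbf{P}^n)^{\vee}$, so $S_{|_{H_y}}$ is again Steiner and the dual sequence
$$ 0 \longrightarrow S^{\vee} \longrightarrow W^{\vee} \otimes \mathcal{O}_{\mathbf{P}^n} \longrightarrow I^{\vee} \otimes \mathcal{O}_{\mathbf{P}^n}(1) \longrightarrow 0 $$
restricts cleanly to $H_y$. Taking global sections and using $H^{0}(H_y, \mathcal{O}(1)) \cong (\mathbf{C}^{n+1})^{\vee}/\langle y\rangle$, one finds that $H_y \in \Sigma(S)$ if and only if there is a nonzero $w^{\ast} \in W^{\vee}$ whose image $t^{\ast}(w^{\ast}) \in (\mathbf{C}^{n+1})^{\vee}\otimes I^{\vee}$ is decomposable of the form $y \otimes \phi$. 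Applying this to $S = \Omega_{\mathbf{P}^n}^1(\log\mathcal{H})$ with the explicit tensor (\ref{eq:fundamentaltensor}): choosing $w^{\ast}$ to be the $i$-th coordinate functional gives $t_{\mathcal{H}}^{\ast}(w^{\ast}) = f_{i}\otimes\phi$ with $\phi(\lambda)=\lambda_{i}$, and normal crossings together with $\ell\geq n+2$ guarantee $\phi\not=0$; hence every $H_{i}$ is unstable and $\mathcal{H}\subseteq\Sigma(\Omega_{\mathbf{P}^n}^1(\log\mathcal{H}))$.

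The heart of the proof is the dichotomy: for $S\in S_{n,k}$ the locus $\Sigma(S)$ is either finite with at most $n+k+1$ points, or $S$ is a Schwarzenberger bundle and $\Sigma(S)=\mathcal{C}_n^{\vee}$ is a rational normal curve of degree $n$. I would establish this by induction on $n$. Fixing one unstable hyperplane $H_{0}$, the restricted bundle $S_{|_{H_0}}$ lies in $S_{n-1,k}$ on $H_{0}\cong\mathbf{P}^{n-1}$, and the decomposability description above lets one match the remaining points of $\Sigma(S)$ with the unstable hyperplanes of this restriction; the inductive bound $(n-1)+k+1=n+k$ on $\mathbf{P}^{n-1}$ then produces the bound $n+k+1$ on $\mathbf{P}^{n}$. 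When the maximal value is exceeded, the decomposability conditions become so numerous that the tensor is forced to be the multiplication map (\ref{eq:Schwtensor}), i.e. $S$ is Schwarzenberger and $\Sigma(S)$ fills out the whole curve $\mathcal{C}_n^{\vee}$. Controlling this inductive restriction and, above all, identifying the extremal tensor with the Schwarzenberger multiplication, is the main obstacle.

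Finally I assemble the theorem. For $S=\Omega_{\mathbf{P}^n}^1(\log\mathcal{H})$ we have $k=\ell-n-1$, so the finite bound $n+k+1$ equals $\ell$; combined with $\mathcal{H}\subseteq\Sigma(S)$ and $|\mathcal{H}|=\ell$ this leaves exactly two possibilities. If $S$ is not Schwarzenberger, then $\Sigma(S)$ is finite with at most $\ell$ elements and already contains the $\ell$ hyperplanes of $\mathcal{H}$, so $\Sigma(S)=\mathcal{H}$; by the same argument $\Sigma(S)=\mathcal{K}$, and therefore $\mathcal{H}=\mathcal{K}$, which is alternative $1)$. If instead $S$ is Schwarzenberger, then $\Sigma(S)=\mathcal{C}_n^{\vee}$ for some rational normal curve $\mathcal{C}_n$; the inclusions $\mathcal{H},\mathcal{K}\subseteq\Sigma(S)$ say that all the $H_{i}$ and $K_{j}$ osculate $\mathcal{C}_n$, while matching Steiner parameters ($m-n+1=\ell-n-1$ forces $m=\ell-2$) gives $S\cong E_{\ell-2}(\mathcal{C}_n^{\vee})$, which is alternative $2)$. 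The hypothesis $\ell\geq n+3$ is precisely what excludes the degenerate range $\ell\leq n+2$, where by Proposition~\ref{P:3.14} the bundle equals $\mathbf{TP}^{n}(-1)$ and is Schwarzenberger for every arrangement, so that $\Sigma$ can never single out the hyperplanes.
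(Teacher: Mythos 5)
Your overall strategy coincides with the paper's: both arguments consist of (i) showing that every hyperplane of the arrangement is unstable for its logarithmic bundle, (ii) observing that the set of unstable hyperplanes is an isomorphism invariant, so that $\mathcal{H}\neq\mathcal{K}$ forces at least $\ell+1$ distinct unstable hyperplanes of a bundle in $S_{n,\ell-n-1}$, and (iii) invoking the dichotomy according to which such a bundle either has at most $\ell$ unstable hyperplanes or is a Schwarzenberger bundle whose unstable locus is a rational normal curve. Your final assembly of these pieces, including the role of the hypothesis $\ell\geq n+3$, is correct and is essentially the paper's. For step (i) the paper argues more directly from the residue exact sequence (\ref{eq:resiperpiani}): restricting it to $H_i$ yields a surjection $\Omega^1_{\mathbf{P}^n}(\log\mathcal{H})_{|H_i}\to\mathcal{O}_{H_i}$, hence a nonzero section of the restricted dual. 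Your reformulation via decomposability of $t_{\mathcal{H}}^{\ast}(w^{\ast})$ is a legitimate alternative (and your appeal to normal crossings and $\ell\geq n+2$ to get $\phi\neq 0$ is the right check), though it is heavier than necessary.

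The one genuine problem is in what you call the heart of the proof. The dichotomy is precisely Theorem~\ref{T:V} (Vall\`es), which the paper cites rather than reproves; you instead sketch a proof by induction on $n$, restricting $S$ to an unstable hyperplane $H_0$. That induction does not set up: $S_{|H_0}$ is a \emph{rank-$n$} bundle on $H_0\cong\mathbf{P}^{n-1}$ with resolution $0\to\mathcal{O}(-1)^{k}\to\mathcal{O}^{n+k}\to S_{|H_0}\to 0$, so it does not belong to $S_{n-1,k}$ (whose members have rank $n-1$), and the inductive hypothesis cannot be applied to it. The reduction that actually works (sketched in the remark following Theorem~\ref{T:V}) keeps $n$ fixed and decreases $k$: one passes to the kernel $T_1$ of the surjection $S\to\mathcal{O}_{H_1}$ determined by a nonzero section of $S^{\vee}_{|H_1}$, which lies in $S_{n,k-1}$ and whose unstable hyperplanes control those of $S$; after $k-2$ such steps one reaches a bundle in $S_{n,2}$, identified with $E_{n+1}(\mathcal{C}_n^{\vee})$, whose unstable locus is known to be $\mathcal{C}_n^{\vee}$. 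Since you yourself flag this step as the main obstacle, the honest verdict is that your proof is complete modulo Theorem~\ref{T:V}, which should be cited as such rather than re-derived by hyperplane restriction.
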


\begin{remark}
Since one can always find a rational normal curve $ \mathcal{C}_{n}^{\vee} \subset ({\mathbf{P}^n})^{\vee} $ of degree $ n $ passing through $ n+3 $ points of $ ({\mathbf{P}^n})^{\vee} $, the previous theorem becomes important when $ \ell \geq n+4 $.
\end{remark}

\begin{remark}
In 1993 Dolgachev and Kapranov in~\cite{Do-Ka} proved the same result of Vall\`es when $ \ell \geq 2n+3 $, focusing their attention on the set of \emph{jumping lines} of $ \Omega_{\mathbf{P}^n}^{1}(\log \mathcal{H}) $.
In the following we will see a sketch of the proof of theorem~\ref{T:Do-Ka}, which is based on the following idea: recover the hyperplanes of $ \mathcal{H} $ as \emph{unstable hyperplanes} of $ \Omega_{\mathbf{P}^n}^{1}(\log \mathcal{H}) $.
\end{remark}

\begin{definition}\label{d:unsthyp}$($Vall\`es 2000, $\cite{V})$ \\
Let $ S \in S_{n,k} $ be a Steiner bundle and let $ H \subset \mathbf{P}^n $ be a hyperplane. \\
We say that $ H $ is a \emph{unstable hyperplane} for $ S $ if the following condition holds:
$$ H^{0}(H, S^{\vee}_{|_{H}}) \not= \{0\}. $$
\end{definition}

\begin{remark}
The notion of unstable hyperplane for a Steiner bundle $ S \in S_{n,k} $ is justified from the fact that $ S^{\vee} $ has not global sections different from the zero one. Indeed, $ S^{\vee} $ is a stable bundle because of remark~\ref{r:3.7} and $ c_{1}(S^{\vee}) = -k < 0 $. 
\end{remark}

The \emph{Torelli theorem} for hyperplanes proved by Vall\`es is a consequence of the following result:

\begin{theorem}\label{T:V}$($\emph{Vall\`es 2000,~\cite{V}}$)$\\
Let $ \ell \geq n+2 $ and let $ S \in S_{n,\ell-n-1} $. If $ S $ has $ H_{1}, \ldots, H_{\ell+1} $ distinct unstable hyperplanes, then there exists a rational normal curve $ \mathcal{C}_{n}^{\vee} \subset (\mathbf{P}^n)^{\vee} $ such that $ H_{1}, \ldots, H_{\ell+1} $ osculate $ \mathcal{C}_{n} $ $($or, equivalently, $ H_{i} \in \mathcal{C}_{n}^{\vee} $ for all $ i \in \{1, \ldots, \ell+1\} $$)$ and $ S \cong E_{\ell - 2}(\mathcal{C}_{n}^{\vee}) $.
\end{theorem}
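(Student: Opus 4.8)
The plan is to read the unstable hyperplanes off the fundamental tensor of $S$ and then to exploit a numerical coincidence that forces the data attached to them onto a rational normal curve. Write the defining sequence of $S\in S_{n,\ell-n-1}$ as $0\to I\otimes\mathcal O_{\mathbf P^n}(-1)\buildrel\tau\over\longrightarrow W\otimes\mathcal O_{\mathbf P^n}\to S\to 0$, with $\dim I=\ell-n-1$ and $\dim W=\ell-1$, determined by a tensor $t\in(\mathbf C^{n+1})^\vee\otimes I^\vee\otimes W$. \textbf{Step 1 (tensor description of unstability).} Dualizing and restricting to a hyperplane $H=\{f=0\}$, with $V=\ker f$, the sequence $0\to S^\vee|_H\to W^\vee\otimes\mathcal O_H\to I^\vee\otimes\mathcal O_H(1)\to 0$ identifies $H^0(H,S^\vee|_H)$ with the kernel of the induced map $W^\vee\to V^\vee\otimes I^\vee$. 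A short computation shows this kernel is nonzero exactly when there exist $0\neq\phi\in W^\vee$ and $\psi\in I^\vee$ with $\langle t,\phi\rangle=f\otimes\psi$ in $(\mathbf C^{n+1})^\vee\otimes I^\vee$. So I would record: $H_f$ is unstable for $S$ if and only if the contraction $\tilde t\colon\phi\mapsto\langle t,\phi\rangle$ sends some $[\phi]\in\mathbf P(W^\vee)$ into the cone over the Segre variety $\Sigma=\mathbf P((\mathbf C^{n+1})^\vee)\times\mathbf P(I^\vee)$, the first Segre factor being $[f]=H_f$ itself.

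\textbf{Step 2 (the numerical coincidence).} Since $S$ is stable it has no trivial summand (remark~\ref{r:3.7}), so $\tilde t\colon W^\vee\to(\mathbf C^{n+1})^\vee\otimes I^\vee$ is injective and embeds $\mathbf P(W^\vee)=\mathbf P^{\ell-2}$ in $\mathbf P((\mathbf C^{n+1})^\vee\otimes I^\vee)$; because a nonzero decomposable tensor determines its factors up to scalars, the $\ell+1$ distinct unstable hyperplanes yield $\ell+1$ distinct points $[\phi_1],\dots,[\phi_{\ell+1}]$ of $\mathbf P^{\ell-2}$ landing on $\Sigma$. The decisive observation is the numerology $\ell+1=(\ell-2)+3$: that many points in general position in $\mathbf P^{\ell-2}$ lie on a \emph{unique} rational normal curve $\Gamma$ of degree $\ell-2$. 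Such a $\Gamma$ furnishes an isomorphism $W^\vee\cong S^{\ell-2}\mathbf C^2$ under which $\phi_i=u_i^{\ell-2}$ for suitable $u_i\in\mathbf C^2$, so out of pure combinatorics I obtain the $SL_2$-structure $W\cong S^{\ell-2}\mathbf C^2$ that any Schwarzenberger bundle must carry.

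\textbf{Step 3 (rigidity: the tensor becomes a multiplication).} It remains to propagate this structure through $\tilde t$. I would show that the morphism $u\mapsto\tilde t(u^{\ell-2})$ from $\Gamma\cong\mathbf P^1$ into $\mathbf P((\mathbf C^{n+1})^\vee\otimes I^\vee)$, which meets $\Sigma$ in the $\ell+1$ points above, is in fact entirely contained in $\Sigma$; its two projections then equip $(\mathbf C^{n+1})^\vee$ with the structure $S^n\mathbf C^2$ and $I^\vee$ with $S^{\ell-n-2}\mathbf C^2$, for which $t$ is forced to be the multiplication map $S^n\mathbf C^2\otimes S^{\ell-n-2}\mathbf C^2\to S^{\ell-2}\mathbf C^2$. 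By the description of the Schwarzenberger tensor in (\ref{eq:Schwtensor}) this says precisely $S\cong E_{\ell-2}(\mathcal C_n^\vee)$, while the family of first Segre factors $[f(u)]=[u^n]$ sweeps out exactly the dual rational normal curve $\mathcal C_n^\vee$ through which all the $H_i$ pass.

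\textbf{Main obstacle.} Two points carry the argument and are where I expect the real work. The first is the general position of the $[\phi_i]$ (that no $\ell-1$ of them lie on a hyperplane of $\mathbf P(W^\vee)$), which the classical uniqueness of the rational normal curve requires; it must be squeezed out of the injectivity of $t$ on fibers together with the distinctness of the $H_i$, and it is genuinely tied to the threshold $\ell+1=(\ell-2)+3$, since only $\ell$ points would lie on a positive-dimensional family of rational normal curves and pin down no $SL_2$-structure at all. The second, harder point is the rigidity in Step 3: upgrading ``$\tilde t(\Gamma)$ meets $\Sigma$ in $\ell+1$ points'' to ``$\tilde t(\Gamma)\subset\Sigma$'' is not a bare B\'ezout count, since a degree-$(\ell-2)$ curve can meet a quadric in up to $2(\ell-2)$ points. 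I would attack both by induction on $n$: restricting $S$ to one of its unstable hyperplanes $H_0$ yields, after splitting off a trivial summand, a Steiner bundle on $H_0\cong\mathbf P^{n-1}$ whose unstable hyperplanes are the traces of the remaining $H_i$, so the inductive hypothesis produces a rational normal curve downstairs; the base case $n=1$ is immediate because every bundle on $\mathbf P^1$ splits and the curve is $\mathbf P^1$ itself. Reassembling the curves on the various restrictions into the single $\mathcal C_n^\vee$ upstairs is the delicate heart of the induction.
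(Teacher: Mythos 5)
Your Step 1 is correct: it cleanly identifies $H^0(H,S^\vee|_H)$ with the set of $\phi\in W^\vee$ whose contraction $\tilde t(\phi)$ is a decomposable tensor with first factor $f$, and the numerology $\ell+1=(\ell-2)+3$ is indeed the reason the threshold sits where it does. But the two statements that actually carry the theorem are left unproved. First, uniqueness of a rational normal curve through $\ell+1$ points of $\mathbf{P}(W^\vee)=\mathbf{P}^{\ell-2}$ requires that no $\ell-1$ of the $[\phi_i]$ lie in a hyperplane; nothing in the injectivity of $\tilde t$, or of $t$ on fibres, yields this, and it is a genuine lemma about unstable hyperplanes of Steiner bundles (one must show that any $\dim W=\ell-1$ of the $\phi_i$ are linearly independent, which is itself proved by an induction, not a formal consequence of the setup). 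Second, and more seriously, the rigidity claim $\tilde t(\Gamma)\subset\Sigma$ is essentially the whole content of the theorem and you give no argument for it: as you observe, a B\'ezout count does not force containment since $\ell+1<2(\ell-2)$ once $\ell\geq 6$, and the fallback you propose --- induction on $n$ by restricting $S$ to an unstable hyperplane --- is a third strategy whose key step (``reassembling the curves on the various restrictions'') you explicitly leave open; it is also technically delicate because $S|_{H_0}$ has rank $n$ on $H_0\cong\mathbf{P}^{n-1}$, so one must first produce and split off a trivial quotient and then control which hyperplanes of $H_0$ remain unstable for the resulting bundle. As it stands this is a plan in which the two hardest steps are named but not closed.

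For comparison, the argument the paper sketches (following Vall\`es) keeps $n$ fixed and descends in $k=\ell-n-1$ instead: a nonzero element of $H^0(H_1,S^\vee|_{H_1})$ induces a map $S\to\mathcal{O}_{H_1}$ whose kernel $T_1$ is again a Steiner bundle on $\mathbf{P}^n$, now in $S_{n,\ell-n-2}$, and the unstable hyperplanes of $S$ other than $H_1$ remain unstable for $T_1$; after $\ell-n-3$ such reductions one lands in $S_{n,2}$, where the classification forces the Schwarzenberger bundle $E_{n+1}(\mathcal{C}_n^\vee)$, whose unstable hyperplanes are exactly the points of $\mathcal{C}_n^\vee$; permuting which hyperplanes are consumed by the reductions then places all $\ell+1$ of them on the same curve and identifies $S$ with $E_{\ell-2}(\mathcal{C}_n^\vee)$. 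That route avoids both of your obstacles --- no general-position lemma and no containment-in-the-Segre argument --- at the price of the classification of $S_{n,2}$ and of verifying that the reduction preserves the Steiner property and the unstable hyperplanes. Your tensor picture can be completed (it is essentially the route of Ancona--Ottaviani,~\cite{AO}), but the missing lemmas are the substance of that approach, not routine verifications, so the proposal has genuine gaps exactly where you flagged them.
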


\begin{remark}
Theorem~\ref{T:V} asserts that a Steiner bundle in $ S_{n,\ell-n-1} $ which is not a Schwarzenberger bundle has at most $ \ell $ different unstable hyperplanes. The proof of this result is based on ``reductions" and the main steps are the following:
\begin{itemize}
\item[$1)$] the kernel $ T_{1} $ of the homomorphism $ S \longrightarrow \mathcal{O}_{H_{1}} $ induced by a non zero element of $ H^{0}(H_{1}, S^{\vee}_{|_{H_{1}}}) $ is a Steiner bundle in $ S_{n,\ell-n-2} $ and the set of unstable hyperplanes of $ S $ is contained in
$$ \{H \subset \mathbf{P}^n hyperplane \, | \, H^{0}(H, {T_{1}^{\vee}}_{|_{H}}) \not= \{0\} \} \cup H_{1}; $$ 
\item[$2)$] by iterating this method, after $ \ell -n -3 $ reductions we get a Steiner bundle $ T_{\ell -n -3} \in S_{n,2} $ which actually is isomorphic to the Schwarzenberger bundle $ E_{n+1}(\mathcal{C}_{n}^{\vee}) $, for certain rational normal curve $ \mathcal{C}_{n}^{\vee} \subset (\mathbf{P}_{n})^{\vee} $, as we can see in~\cite{Do-Ka};
\item[$3)$] since for all $ m > n $ the set of unstable hyperplanes of $ E_{m}(\mathcal{C}_{n}^{\vee}) $ coincides with $ \mathcal{C}_{n}^{\vee} $ (\cite{V}), then $ H_{\ell-n-2}, \ldots, H_{\ell+1} $ are $ n+4 $ points of $ \mathcal{C}_{n}^{\vee} $;
\item[$4)$] by changing $ H_{\ell-n-2} $ with $ H_{i} $ for all $ i \in \{1, \ldots, \ell-n-3\} $ we get that $ H_{1}, \ldots, H_{\ell-n-3} $ osculate $ \mathcal{C}_{n} $ too, which implies that $ S \cong E_{\ell - 2}(\mathcal{C}_{n}^{\vee}) $ (\cite{V}). 
\end{itemize}
\end{remark}

\begin{remark} Further interesting results about Steiner bundles and unstable hyperplanes have been proved by Ancona and Ottaviani in~\cite{AO}.
\end{remark}
\bigskip
Now we have all the tools to prove theorem~\ref{T:Do-Ka}.

\begin{proof}
If $ H \in \mathcal{H} $ then, by using the residue exact sequence (\ref{eq:resiperpiani}), it's not hard to see that $ H $ is unstable for $ \Omega_{\mathbf{P}^n}^{1}(\log \mathcal{H}) $. \\
Now, assume that $ \mathcal{H} \not= \mathcal{K} $, for example let say that $ H_{1} \not= K_{1} $; we want to prove that the statement $ 2) $ of theorem~\ref{T:Do-Ka} holds. From the isomorphism (\ref{eq:iperlogiso}) we get that also $ K_{1} $ is unstable for $ \Omega_{\mathbf{P}^n}^{1}(\log \mathcal{H}) $. This implies that $ \Omega_{\mathbf{P}^n}^{1}(\log \mathcal{H}) $ is a Steiner bundle in $ S_{n, \ell-n-1} $ with at least $ \ell + 1 $ different unstable hyperplanes. So theorem~\ref{T:V} tells us that there exists a rational normal curve $ \mathcal{C}_{n}^{\vee} $ in $ (\mathbf{P}_{n})^{\vee} $ containing all the hyperplanes in $ \mathcal{H} $ and $ \mathcal{K} $ and such that $ \Omega_{\mathbf{P}^n}^{1}(\log \mathcal{H}) \cong \Omega_{\mathbf{P}^n}^{1}(\log \mathcal{K}) \cong E_{\ell - 2}(\mathcal{C}_{n}^{\vee}) $, as desired. 
\end{proof}

\begin{remark}
The theorem proved above asserts that, if $ \ell \geq n+2 $, then the set of unstable hyperplanes of $ \Omega_{\mathbf{P}^n}^{1}(\log \mathcal{H}) $ is equal to $ \mathcal{H} = \{H_{1}, \ldots, H_{\ell}\} $, unless the hyperplanes in $ \mathcal{H} $ osculate a rational normal curve $ \mathcal{C}_{n} $ of degree $ n $ in $ \mathbf{P}^{n} $, in which case all the hyperplanes corresponding to the points of $ \mathcal{C}_{n}^{\vee} \subset (\mathbf{P}^{n})^{\vee} $ are unstable for $ \Omega_{\mathbf{P}^n}^{1}(\log \mathcal{H}) $. In the latter situation all the arrangements made of $ \ell $ hyperplanes with normal crossings that osculate $ \mathcal{C}_{n} $ yield logarithmic bundles in the same class of isomorphism.
\end{remark}

\vfill\eject

\section{Torelli type theorems for the general case}

Recently hyperplane arrangements have been investigated by removing the hypothesis of normal crossings. In particular in this section we refer to the papers of Dolgachev (\cite{Do}) and Faenzi-Matei-Vall\`es (\cite{FMV}). \\

As we can see in section $ 2.2 $, we can't introduce the sheaf of differential $ 1 $-forms on $ \mathbf{P}^{n} $, with logarithmic poles along a family of hypersurfaces that not necessarily has normal crossings by using definition~\ref{d:2.6}. So, let refer for example to~\cite{Schenck}. We have the following: 

\begin{definition}
Let $ \mathcal{D} = \{D_{1}, \ldots, D_{\ell}\} $ be an arrangement of smooth hypersurfaces on $ \mathbf{P}^{n} $ and let $ f = \displaystyle{\prod_{i=1}^{\ell} f_{i}} $ be a polynomial of degree $ q $ in $ x_{0}, \ldots, x_{n} $ defining $ \mathcal{D} $. Let $ \mathcal{T}(\log\mathcal{D}) $ the sheaf given as the kernel of the Gauss map, i.e.
$$ \mathcal{O}_{\mathbf{P}^{n}}^{n+1} \buildrel \rm (\partial_{0}{\it f}, \ldots, \partial_{{\it n}}{\it f}) \over \longrightarrow \mathcal{O}_{\mathbf{P}^{n}}(q-1). $$ 
We call \emph{sheaf of differential $ 1 $-forms on $ \mathbf{P}^{n} $ with logarithmic poles along} $ \mathcal{D} $
$$ \Omega_{\mathbf{P}^n}^{1}(\log \mathcal{D}) = \mathcal{T}(\log\mathcal{D})^{\vee}(-1). $$ 
\end{definition} 

\begin{remark}
From the previous definition we get that $ \Omega_{\mathbf{P}^n}^{1}(\log \mathcal{D}) $ is a \emph{reflexive} sheaf, that is $ \Omega_{\mathbf{P}^n}^{1}(\log \mathcal{D})^{\vee\vee} = \Omega_{\mathbf{P}^n}^{1}(\log \mathcal{D}) $. 
\end{remark}

\begin{remark}
If $ \mathcal{D} $ has normal crossings, this definition coincides with definition~\ref{d:2.6}.
\end{remark}

In this more general situation, Catanese-Hosten-Khetan-Sturmfels (\cite{CHKS}) and Dolgachev (\cite{Do}) studied a subsheaf of $ \Omega_{\mathbf{P}^n}^{1}(\log \mathcal{D}) $ instead of $ \Omega_{\mathbf{P}^n}^{1}(\log \mathcal{D}) $ itself. In this sense we have the following:

\begin{definition}
We denote by $ \widetilde{\Omega}_{\mathbf{P}^n}^{1}(\log \mathcal{D}) $ the rank $ n $ torsion free subsheaf of $ \Omega_{\mathbf{P}^n}^{1}(\log \mathcal{D}) $ that admits the short exact sequence
$$ 0 \longrightarrow \Omega_{\mathbf{P}^n} \longrightarrow \widetilde{\Omega}_{\mathbf{P}^n}^{1}(\log \mathcal{D}) \longrightarrow \displaystyle{\bigoplus_{i=1}^{\ell} \mathcal{O}_{D_{i}}} \longrightarrow 0 $$
which is called \emph{residue exact sequence}, just like the normal crossings case.
\end{definition}

\begin{remark}
If we don't assume that all the $ D_{i} $'s are smooth, then the residue exact sequence becomes
$$ 0 \longrightarrow \Omega_{\mathbf{P}^n} \longrightarrow \widetilde{\Omega}_{\mathbf{P}^n}^{1}(\log \mathcal{D}) \longrightarrow \nu_{\ast} \mathcal{O}_{\mathcal{D}'} \longrightarrow 0 $$
where $ \nu : \mathcal{D}' \longrightarrow \mathcal{D}$ is a resolution of singularities of $ \mathcal{D} $.
\end{remark}

\begin{remark}\label{r:omegatildeproperties}
A detailed description of $ \widetilde{\Omega}_{\mathbf{P}^n}^{1}(\log \mathcal{D}) $ is given in~\cite{Do}. \\
In particular we stress the following facts:
\begin{itemize}
\item[$1)$] $ \widetilde{\Omega}_{\mathbf{P}^n}^{1}(\log \mathcal{D})^{\vee\vee} \cong \Omega_{\mathbf{P}^n}^{1}(\log \mathcal{D}); $
\item[$2)$] if the codimension of the set where $ \mathcal{D} $ has not normal crossings is at least $ 3 $, then $ \widetilde{\Omega}_{\mathbf{P}^n}^{1}(\log \mathcal{D}) \cong \Omega_{\mathbf{P}^n}^{1}(\log \mathcal{D}) $ (in particular if $ \mathcal{D} $ has normal crossings, then $ \widetilde{\Omega}_{\mathbf{P}^n}^{1}(\log \mathcal{D}) $ is locally free);
\item[$3)$] if $ \mathcal{H} $ is a hyperplane arrangement such that $ \widetilde{\Omega}_{\mathbf{P}^n}^{1}(\log \mathcal{H}) $ is locally free, then $ \mathcal{H} $ has normal crossings;
\item[$4)$] if $ \mathcal{H} $ is an arrangement of $ \ell \geq n+2 $ hyperplanes, then  $ \widetilde{\Omega}_{\mathbf{P}^n}^{1}(\log \mathcal{D}) $ is a rank-$ n $ \emph{Steiner sheaf} over $ \mathbf{P}^n $ which appears in a short exact sequence like (\ref{eq:steinerfacile}) with $ k = \ell-n-1 $.
\end{itemize}
\end{remark}

In~\cite{Do} Dolgachev studied the \emph{Torelli problem} for $ \widetilde{\Omega}_{\mathbf{P}^n}^{1}(\log \mathcal{H}) $, where $ \mathcal{H} $ is a hyperplane arrangement on $ \mathbf{P}^{n} $. Statement 4) of the previous remark allowed Dolgachev to use Vall\`es' notion of \emph{unstable hyperplane} also for $ \widetilde{\Omega}_{\mathbf{P}^n}^{1}(\log \mathcal{H}) $. In order to state the conjecture that he formulated we recall the following:

\begin{definition}
Let $ E $ be a torsion-free coherent sheaf over $ \mathbf{P}^{n} $ and let $ \chi(E(k)) = \displaystyle{\sum_{i}(-1)^{i} \dim H^{i}(\mathbf{P}^{n}, E(k)) } $ the \emph{Euler characteristic} of $ E(k) $. We say that $ E $ is \emph{Gieseker-stable} (resp. \emph{Gieseker-semistable}) if for all coherent subsheaves $ F $ of $ E $ such that $ 0 < rk F < rk E $ we have that 
$$ \displaystyle{{{\chi(F(k))} \over {rk F}} < {{\chi(E(k))} \over {rk E}}} \quad\quad (resp. \,\,\displaystyle{{{\chi(F(k))} \over {rk F}} \leq {{\chi(E(k))} \over {rk E}}}) $$
for all integers $ k >>0 $.
\end{definition}

\begin{remark} 
Direct computations show that $ \displaystyle{{{\chi(E(k))} \over {rk E}} - {{\chi(F(k))} \over {rk F}}} $ is a polynomial in $ k $ with leading term given by a positive scalar multiple of $ \mu(E) -\mu(F) $ and so, for sufficiently large $ k \in \mathbf{Z} $, $ \displaystyle{{{\chi(E(k))} \over {rk E}} - {{\chi(F(k))} \over {rk F}}} $ and $ \mu(E) -\mu(F) $ have the same sign. We immediately get that the Gieseker-semistability implies the slope-semistability and the slope-stability implies the Gieseker-stability,~\cite{OSS}.
\end{remark}

\begin{definition}
Let $ \mathcal{C} $ be a connected curve of arithmetic genus $ 0 $. We say that $ \mathcal{C} $ is a \emph{stable normal rational curve of degree $ n $ in $ \mathbf{P}^n $} if $ \mathcal{C} = \displaystyle{\bigcup_{i=1}^{s} \mathcal{C}_{i}} $ where each $ \mathcal{C}_{i} $ is a smooth rational curve of degree $ d_{i} $ spanning a $ \mathbf{P}^{d_{i}} $, the degrees satisfy $ \displaystyle{\sum_{i=1}^{s} d_{i}} = n $ and $ \displaystyle{\bigcup_{i=1}^{s} \mathbf{P}^{d_{i}}} $ spans $ \mathbf{P}^{n} $.
\end{definition}

Now we are ready to state the following:

\begin{conjecture}$($\emph{Dolgachev 2007,~\cite{Do}}$)$ \\
Let $ \mathcal{H} = \{H_{1}, \ldots, H_{\ell}\} $ be an arrangement of $ \ell \geq n+2 $ hyperplanes on $ \mathbf{P}^{n} $ such that $ \widetilde{\Omega}_{\mathbf{P}^n}^{1}(\log \mathcal{H}) $ is \emph{Gieseker-semistable}. $ \mathcal{H} $ is a \emph{Torelli arrangement} if and only if $ H_{1}, \ldots, H_{\ell} $ don't osculate a \emph{stable normal rational curve of degree $ n $ in $ \mathbf{P}^n $}.
\end{conjecture}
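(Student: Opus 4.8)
The plan is to transpose Vall\`es' strategy behind Theorem~\ref{T:V} to the present setting, replacing the smooth rational normal curve by a \emph{stable} normal rational curve and working throughout with the torsion-free Steiner sheaf rather than a locally free bundle. By statement 4) of Remark~\ref{r:omegatildeproperties}, the hypothesis $\ell\geq n+2$ guarantees that $\widetilde{\Omega}_{\mathbf{P}^n}^{1}(\log\mathcal{H})$ is a rank-$n$ Steiner sheaf sitting in a sequence of type~(\ref{eq:steinerfacile}) with $k=\ell-n-1$, so that Vall\`es' notion of unstable hyperplane (Definition~\ref{d:unsthyp}) applies without change. The Gieseker-semistability hypothesis is precisely what lets us run the reduction steps even when the sheaf fails to be locally free, since the kernels produced along the way are only torsion-free.

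First I would check that each $H_i\in\mathcal{H}$ is automatically an unstable hyperplane: dualizing the residue exact sequence for $\widetilde{\Omega}_{\mathbf{P}^n}^{1}(\log\mathcal{H})$ and restricting to $H_i$ produces a nonzero section of $\widetilde{\Omega}_{\mathbf{P}^n}^{1}(\log\mathcal{H})^{\vee}_{|_{H_i}}$, exactly as in the proof of Theorem~\ref{T:Do-Ka}. Thus the set $U$ of unstable hyperplanes always contains $\{H_1,\dots,H_\ell\}$, and the whole conjecture reduces to pinning down when $U$ is strictly larger. Both implications will then follow from the dichotomy $U=\mathcal{H}$ unless the $H_i$ osculate a stable normal rational curve $\mathcal{C}$, in which case $U$ fills out the dual $\mathcal{C}^{\vee}$.

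For the direction ``osculating $\Rightarrow$ not Torelli'' I would argue that, when $H_1,\dots,H_\ell$ osculate a stable normal rational curve $\mathcal{C}=\bigcup_{i=1}^{s}\mathcal{C}_i$, the sheaf $\widetilde{\Omega}_{\mathbf{P}^n}^{1}(\log\mathcal{H})$ is isomorphic to a Schwarzenberger-type sheaf attached to $\mathcal{C}^{\vee}$, hence depends only on $\mathcal{C}$ and on $\ell$ and not on the particular choice of osculating points. Concretely, the multiplication tensor~(\ref{eq:Schwtensor}) and the osculation description underlying the isomorphism~(\ref{eq:isoSchw}) degenerate in a controlled way along each component $\mathcal{C}_i\subset\mathbf{P}^{d_i}$; replacing any of the $\ell$ points by another point of $\mathcal{C}^{\vee}$ yields a genuinely different arrangement $\mathcal{K}\neq\mathcal{H}$ with $\widetilde{\Omega}_{\mathbf{P}^n}^{1}(\log\mathcal{K})\cong\widetilde{\Omega}_{\mathbf{P}^n}^{1}(\log\mathcal{H})$, so $\mathcal{H}$ is not of Torelli type.

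The converse ``not Torelli $\Rightarrow$ osculating'' is where the real work lies. If $\widetilde{\Omega}_{\mathbf{P}^n}^{1}(\log\mathcal{H})\cong\widetilde{\Omega}_{\mathbf{P}^n}^{1}(\log\mathcal{K})$ with, say, $K_1\notin\mathcal{H}$, then $K_1$ is a further unstable hyperplane and $U$ contains at least $\ell+1$ points. I would then establish the analogue of Theorem~\ref{T:V} for Steiner sheaves: taking the kernel of the map $\widetilde{\Omega}_{\mathbf{P}^n}^{1}(\log\mathcal{H})\to\mathcal{O}_{H_1}$ cut out by a nonzero residue section drops $k$ by one and, crucially, preserves Gieseker-semistability, so after $\ell-n-3$ reductions one reaches a semistable Steiner sheaf in $S_{n,2}$. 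The hard part will be the base case: whereas in Vall\`es' locally free argument the sheaf in $S_{n,2}$ is forced to be $E_{n+1}(\mathcal{C}_n^{\vee})$ for a \emph{smooth} $\mathcal{C}_n$, here the limit may only be the Schwarzenberger-type sheaf of a \emph{reducible} stable normal rational curve. Controlling this degeneration---showing that the accumulated unstable hyperplanes still lie on a single connected arithmetic-genus-zero curve $\mathcal{C}=\bigcup\mathcal{C}_i$ with $\sum d_i=n$ and spanning $\mathbf{P}^n$---and verifying that the successive kernels do not acquire torsion that would destroy semistability are the two technical obstacles I expect to dominate the proof.
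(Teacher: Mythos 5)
The statement you are trying to prove is a \emph{conjecture} (Dolgachev 2007), and the paper does not prove it: it only records that Dolgachev verified it for $n=2$, $\ell\leq 6$, that the ``only if'' implication follows from theorem~\ref{T:FMV}, and --- crucially --- that for $n\geq 3$ Faenzi--Matei--Vall\`es exhibit \emph{counterexamples} to the ``if'' implication. So the direction you identify as ``where the real work lies'' (not Torelli $\Rightarrow$ the $H_i$ osculate a stable normal rational curve) is not merely hard, it is false in general. The correct classification, theorem~\ref{T:FMV}, says that $\mathcal{H}$ fails to be Torelli if and only if the dual points lie on a \emph{Kronecker--Weierstrass variety} of type $(d;s)$; such a variety is the union of a rational curve of degree $d\leq n$ with $s$ linear subspaces $L_i$ of dimensions $n_i\geq 1$, and when some $n_i\geq 2$ it is not contained in any connected arithmetic-genus-zero curve of degree $n$ spanning $\mathbf{P}^n$. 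Your proposed reduction scheme therefore cannot close at the base case: the degenerate limits of the Schwarzenberger picture that actually occur include configurations with positive-dimensional linear components, not only reducible stable curves, and no amount of control over torsion in the successive kernels will rule these out, because they genuinely give non-Torelli arrangements outside the conjectured locus.

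Your first direction (osculating a stable normal rational curve $\Rightarrow$ not Torelli) is sound in outline and is exactly how the paper derives the ``only if'' half: a stable normal rational curve is a special Kronecker--Weierstrass variety, so theorem~\ref{T:FMV} applies. Likewise your preliminary observations (each $H_i$ is unstable via the residue sequence, and Definition~\ref{d:unsthyp} makes sense for the Steiner sheaf by statement 4) of remark~\ref{r:omegatildeproperties}) are correct. If you want a provable statement, you should either restrict to $n=2$, where the paper asserts the equivalence does hold, or replace ``stable normal rational curve of degree $n$'' by ``Kronecker--Weierstrass variety of type $(d;s)$'' and follow the Faenzi--Matei--Vall\`es classification of unstable hyperplanes rather than Vall\`es' reduction to a smooth rational normal curve.
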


We remark that Dolgachev proved the truth of this conjecture in the case of $ n = 2 $ and $ \ell \leq 6 $. \\

Faenzi, Matei and Vall\`es in~\cite{FMV} investigated the set of unstable hyperplanes of $ \widetilde{\Omega}_{\mathbf{P}^n}^{1}(\log \mathcal{H}) $ and proved Dolgachev's conjecture by changing \emph{stable normal rational curve of degree $ n $} with \emph{Kronecker-Weierstrass variety of type} $ (d;s) $. In this sense we have the following:

\begin{definition}
Let $ (d, n_{1}, \ldots, n_{s}) \in \mathbf{N}^{s+1} $ such that $ 1 \leq d \leq n $ and $ n = d + \displaystyle{\sum_{i=1}^{s} n_{i}} $. $ Y \subset (\mathbf{P}^n)^{\vee} $ is called a \emph{Kronecker-Weierstrass variety of type} $ (d;s) $ if $ Y = \mathcal{C} \cup L_{1} \cup \ldots \cup L_{s} $, where $ \mathcal{C} $ is a smooth rational curve of degree $ d $ that spans a linear space $ L $ of dimension $ d $ ($ \mathcal{C} $ is said to be the \emph{curve part} of $ Y $) and $ L_{i} $ is a linear subspace of dimension $ 1 \leq n_{i} \leq n-1 $ for all $ i \in \{1, \ldots, s \} $, with the following properties:
\begin{itemize}
\item[$1)$] $ L \cap L_{i} = \{p_{i}\} \in \mathcal{C} $ for all $ i $;
\item[$2)$] $ L_{i} \cap L_{j} = \emptyset $ for all $ i \not= j $.
\end{itemize}
In the case of $ d = 0 $, $ \mathcal{C} $ reduces to a single point $ \{p\} $, which is called the \emph{distinguished point} of $ Y $ and all the $ L_{i} $'s meet only at $ \{p\} $.
\end{definition}

\begin{remark}
The name given by Faenzi-Matei-Vall\`es to the varieties described above comes from the isomorphism classes of these varieties which are given by the Kronecker-Weierstrass form of a matrix of homogeneous linear forms in two variables.
\end{remark}
 
\begin{example}
If $ r_{1} $ and $ r_{2} $ are lines in $ (\mathbf{P}^2)^{\vee} $, then $ Y = r_{1} \cup r_{2} $ can be a Kronecker-Weierstrass variety of type $ (1;1) $ (in two ways, simply by interchanging the lines) or of type $ (0; 2) $ (in particular the distinguished point of $ Y $ is the point of intersection of $ r_{1} $ and $ r_{2} $). \\
If $ {\mathcal{C}}_{n}^{\vee} $ is a rational normal curve of degree $ n $ in $ (\mathbf{P}^n)^{\vee} $, then $ Y = {\mathcal{C}}_{n}^{\vee} $ is a Kronecker-Weierstrass variety of type $ (n;0) $. 
\end{example}

We have the following:

\begin{theorem}\label{T:FMV}$($\emph{Faenzi-Matei-Vall\`es 2010,~\cite{FMV}}$)$\\
Let $ \mathcal{H} = \{H_{1}, \ldots, H_{\ell}\} $ be an arrangement of hyperplanes in $ \mathbf{P}^n $ and let $ \mathcal{Z} = \{z_{1}, \ldots, z_{\ell}\} $ the corresponding set of points in $ (\mathbf{P}^n)^{\vee} $. Then $ \mathcal{H} $ is not a \emph{Torelli arrangement} if and only if $ \mathcal{Z} \subset Y $, where $ Y $ is a Kronecker-Weierstrass variety of type $ (d; s) $ in $ (\mathbf{P}^n)^{\vee} $. In particular, if $ d = 0 $, then the distinguished point of $ Y $ doesn't belong to $ \mathcal{Z} $.
\end{theorem}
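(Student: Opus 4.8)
The plan is to recover the arrangement from its logarithmic sheaf via the set of unstable hyperplanes, and to show that this set is exactly a Kronecker-Weierstrass variety. Throughout I restrict to the interesting range $\ell \geq n+2$, so that by statement 4) of Remark~\ref{r:omegatildeproperties} the sheaf $\widetilde{\Omega}^1_{\mathbf{P}^n}(\log \mathcal{H})$ is a Steiner sheaf carrying an analogue of the fundamental tensor $t_{\mathcal{H}} \in (\mathbf{C}^{n+1})^\vee \otimes I_{\mathcal{H}}^\vee \otimes W$ of (\ref{eq:fundamentaltensor}); the range $\ell \leq n+1$ is handled directly from the splitting in Theorem~\ref{T:pochiiperpiani}. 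Exactly as in the proof of Theorem~\ref{T:Do-Ka}, the residue sequence produces, for each $z_i \in \mathcal{Z}$, a nonzero element of the restricted dual supported on $H_i$, so every point of $\mathcal{Z}$ is unstable. Hence $\mathcal{Z}$ is always contained in the unstable locus $U(\mathcal{H}) := \{z \in (\mathbf{P}^n)^\vee : H^0(H_z, \widetilde{\Omega}^1_{\mathbf{P}^n}(\log\mathcal{H})^\vee_{|H_z}) \neq 0\}$, and the whole point is that the Torelli property is equivalent to the equality $U(\mathcal{H}) = \mathcal{Z}$: if $U(\mathcal{H})$ is strictly larger one can hope to realize the same sheaf from a different arrangement, and conversely any $\mathcal{K}$ with isomorphic sheaf satisfies $\mathcal{K} \subseteq U(\mathcal{K}) = U(\mathcal{H})$.

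The heart of the argument is to compute $U(\mathcal{H})$ directly from $t_{\mathcal{H}}$. Viewing the tensor as a matrix $M$ of linear forms, equivalently as a representation of the generalized Kronecker quiver on the spaces $I_{\mathcal{H}}$ and $W$, I would translate the cohomological condition defining $U(\mathcal{H})$ into a rank-drop condition for the contraction of $M$ along the point $z$, using the dualized and restricted Steiner sequence together with the reduction technique behind Theorem~\ref{T:V}. I would then invoke the Kronecker-Weierstrass classification of such modules up to $GL(I_{\mathcal{H}}) \times GL(W)$ equivalence: the preprojective/preinjective indecomposable blocks produce a Schwarzenberger piece, i.e. a rational normal curve recovering the curve part $\mathcal{C}$ of degree $d$, while the regular Jordan-type blocks produce the linear subspaces $L_1, \ldots, L_s$. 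Matching the meeting pattern of these blocks to conditions 1) and 2) of the definition shows that $U(\mathcal{H})$ is precisely a Kronecker-Weierstrass variety $Y$ of some type $(d;s)$, the distinguished point in the case $d=0$ being the common degeneration point of the linear blocks.

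Granting $U(\mathcal{H}) = Y$, both implications follow. If $\mathcal{Z}$ lies on no Kronecker-Weierstrass variety, then $Y$ must be zero-dimensional and equal to $\mathcal{Z}$, so $U(\mathcal{H}) = \mathcal{Z}$; any $\mathcal{K}$ with $\widetilde{\Omega}^1_{\mathbf{P}^n}(\log \mathcal{K}) \cong \widetilde{\Omega}^1_{\mathbf{P}^n}(\log \mathcal{H})$ then satisfies $\mathcal{K} \subseteq U(\mathcal{H}) = \mathcal{Z}$ with $|\mathcal{K}| = |\mathcal{Z}|$, forcing $\mathcal{K} = \mathcal{H}$, so $\mathcal{H}$ is Torelli. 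Conversely, if $\mathcal{Z} \subset Y$ with $\dim Y \geq 1$, I would reconstruct the Steiner sheaf directly from the module data attached to $Y$ (as in the Schwarzenberger situation, cf. (\ref{eq:isoSchw})), and conclude that any admissible choice of $\ell$ points on $Y$ yields the same isomorphism class; selecting a different admissible set $\mathcal{K} \neq \mathcal{H}$ exhibits the failure of Torelli. Here the distinguished point must be excluded from $\mathcal{Z}$ when $d=0$, which is exactly the extra clause in the statement.

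The main obstacle I expect is the middle step: converting the cohomological unstable-hyperplane condition into the clean rank condition on $t_{\mathcal{H}}$, and then carrying out the Kronecker-Weierstrass reduction carefully enough to read off the exact type $(d;s)$ together with the incidence relations 1)--2). The subtlest point is the bookkeeping of the distinguished point in the $d=0$ case and, for the non-Torelli direction, the determination of precisely which $\ell$-point subconfigurations of $Y$ remain admissible, i.e. still correspond to honest arrangements giving a Steiner sheaf; this is where the smoothness and distinguished-point hypotheses have to be used with care.
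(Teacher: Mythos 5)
The thesis does not actually prove this statement: it is quoted from Faenzi--Matei--Vall\`es~\cite{FMV}, and the only ``proof'' in the text is the remark that follows it, recording the strategy --- every $H_{i}$ is unstable for $\widetilde{\Omega}_{\mathbf{P}^n}^{1}(\log \mathcal{H})$, and the substance of~\cite{FMV} is to show that the existence of an unstable hyperplane outside $\mathcal{H}$ forces $\mathcal{Z}$ onto a Kronecker--Weierstrass variety. Your outline follows exactly this strategy and correctly identifies the underlying mechanism (the Kronecker--Weierstrass normal form of the matrix of linear forms attached to the Steiner module, which is indeed where the name of these varieties comes from). So the plan is the right one; but as a proof it has genuine gaps, essentially the ones you flag yourself.

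Concretely: (i) the claim that $U(\mathcal{H})$ ``is precisely a Kronecker--Weierstrass variety'' cannot hold as stated, since in the Torelli case $U(\mathcal{H})=\mathcal{Z}$ is a finite set, which is not a KW variety under the definition given (no curve part, no positive-dimensional linear parts); what must be proved is the conditional statement of the paper's remark, namely that $U(\mathcal{H})\supsetneq\mathcal{Z}$ forces $\mathcal{Z}\subset Y$, and your final paragraph silently switches to that formulation. (ii) The implication ``$U(\mathcal{H})\supsetneq\mathcal{Z}$ implies not Torelli'' is not automatic: you write that one ``can hope to realize the same sheaf from a different arrangement,'' but this requires reconstructing $\widetilde{\Omega}_{\mathbf{P}^n}^{1}$ from the module data of $Y$ and showing that moving the $\ell$ points along $Y$ (avoiding the distinguished point when $d=0$ and preserving admissibility) does not change the isomorphism class --- the analogue of the Schwarzenberger isomorphism (\ref{eq:isoSchw}), which is a theorem, not an observation. (iii) The two steps you defer --- converting $H^{0}(H_{z},\widetilde{\Omega}_{\mathbf{P}^n}^{1}(\log\mathcal{H})^{\vee}_{|H_{z}})\neq 0$ into a rank condition on the tensor, and running the Kronecker--Weierstrass reduction so as to read off the type $(d;s)$, the incidence conditions 1)--2), and the position of the distinguished point --- constitute essentially the whole of~\cite{FMV}. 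In short: a correct plan, faithful to the cited source and to the thesis's own sketch, but not a proof.
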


\begin{remark} 
As in the normal crossing case, all the hyperplanes of $ \mathcal{H} $ are unstable for $ \widetilde{\Omega}_{\mathbf{P}^n}^{1}(\log \mathcal{H}) $. Thus, in order to get the previous theorem the authors proved that if $ H $ is a unstable hyperplane such that $ H \not= H_{i} $ for all $ i \in \{1, \ldots, \ell\} $, then there exists a Kronecker-Weierstrass variety $ Y \subset (\mathbf{P}^n)^{\vee} $ of type $ (d;s) $ containing $ \mathcal{Z} $. Y plays the role of the rational normal curve $ {\mathcal{C}}_{n}^{\vee} $ of degree $ n $ that appears in theorem~\ref{T:Do-Ka}. In particular, this theorem can be proved also with the arguments used in~\cite{FMV}. 
\end{remark}

\begin{remark}
As a direct consequence of theorem~\ref{T:FMV} we get the ``only if" implication of Dolgachev's conjecture. The ``if" implication holds only in $ \mathbf{P}^2 $, even if $ \widetilde{\Omega}_{\mathbf{P}^n}^{1}(\log \mathcal{H}) $ is not Gieseker-semistable, but in the case of $ n \geq 3 $ Faenzi-Matei-Vall\`es provide some counterexamples to it.
\end{remark}

\chapter{The higher degree case in the projective space}
\label{ch:the higher degree case in the projective space}

\section{Logarithmic bundles of hypersurfaces with normal crossings}

Let $ \mathcal{D} = \{D_{1}, \ldots, D_{\ell}\} $ be an arrangement of smooth hypersurfaces with normal crossings on $ \mathbf{P}^n $ and let $ \Omega_{\mathbf{P}^n}^{1}(\log \mathcal{D}) $ be the associated logarithmic bundle. We have the following:

\begin{proposition}
$ \Omega_{\mathbf{P}^n}^{1}(\log \mathcal{D}) $ admits the residue exact sequence
\begin{equation}\label{eq:resgeneral}
0 \longrightarrow \Omega_{\mathbf{P}^n}^{1} \longrightarrow \Omega_{\mathbf{P}^n}^{1}(\log \mathcal{D}) \buildrel \rm res \over \longrightarrow \displaystyle\bigoplus_{i=1}^\ell \mathcal{O}_{D_{i}}  \longrightarrow 0.
\end{equation} 
\end{proposition}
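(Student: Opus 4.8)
The plan is to construct the Poincar\'e residue morphism purely locally and to verify the exactness of (\ref{eq:resgeneral}) on stalks, so that the higher degree of the $ D_{i} $'s never enters: because $ \mathcal{D} $ has normal crossings, around every point the arrangement is, in suitable holomorphic coordinates, a union of coordinate hyperplanes, and the computation becomes formally identical to the hyperplane residue sequence (\ref{eq:resiperpiani}). First I would fix $ x \in \mathbf{P}^n $ and a neighbourhood $ I_{x} $ with local coordinates $ z_{1}, \ldots, z_{n} $ such that $ I_{x} \cap \mathcal{D} = \{z_{1} \cdots z_{k} = 0\} $, each $ \{z_{j} = 0\} $ being the local branch of exactly one hypersurface, say $ D_{i_{j}} $. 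By definition~\ref{d:2.6} a local section of $ \Omega_{\mathbf{P}^n}^{1}(\log \mathcal{D}) $ has the shape $ \sum_{j=1}^{k} u_{j}\, d\log z_{j} + \sum_{j=k+1}^{n} v_{j}\, dz_{j} $, and I define $ res $ on $ I_{x} $ by sending this section to the tuple whose $ D_{i_{j}} $-component is $ u_{j}|_{D_{i_{j}}} $ and whose other components vanish, exactly as in the hyperplane case.

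The first genuine point to settle is that this local prescription is independent of the chosen coordinates and therefore glues to a global morphism $ \Omega_{\mathbf{P}^n}^{1}(\log \mathcal{D}) \to \bigoplus_{i} \mathcal{O}_{D_{i}} $. The key computation is that if $ z_{j} $ is replaced by another local equation $ z_{j}' = g_{j} z_{j} $ for the same smooth branch $ D_{i_{j}} $, with $ g_{j} $ a local unit, then $ d\log z_{j}' = d\log z_{j} + d\log g_{j} $ where $ d\log g_{j} $ is holomorphic; hence $ u_{j}\, d\log z_{j} $ and $ u_{j}\, d\log z_{j}' $ produce the same residue $ u_{j}|_{D_{i_{j}}} $ along $ D_{i_{j}} $. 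Since each $ D_{i} $ is smooth and irreducible and the crossings are normal, at most one branch is attributed to a given $ D_{i} $ near any point, so the components are assigned consistently and the glued $ res $ is well defined.

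It then remains to check exactness of (\ref{eq:resgeneral}) on stalks. Surjectivity is local: the form $ d\log z_{j} $ maps to the standard generator of the $ D_{i_{j}} $-summand and to zero elsewhere, so $ res $ is onto. For the kernel, I would run verbatim the argument behind (\ref{eq:resiperpiani}): every holomorphic $ 1 $-form $ \sum_{i} a_{i}\, dz_{i} $ is a logarithmic section via $ \sum_{j \le k} a_{j} z_{j}\, d\log z_{j} + \sum_{j > k} a_{j}\, dz_{j} $ and thus has zero residue, placing $ \Omega_{\mathbf{P}^n}^{1} $ in $ \ker(res) $; conversely, if $ res $ vanishes on $ \sum u_{j}\, d\log z_{j} + \sum v_{j}\, dz_{j} $, then each $ u_{j} $ vanishes on $ \{z_{j}=0\} $, hence is divisible by $ z_{j} $, so $ u_{j}\, d\log z_{j} = (u_{j}/z_{j})\, dz_{j} $ is holomorphic and the whole form lies in $ \Omega_{\mathbf{P}^n}^{1} $. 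Injectivity of $ \Omega_{\mathbf{P}^n}^{1} \hookrightarrow \Omega_{\mathbf{P}^n}^{1}(\log \mathcal{D}) $ is automatic, as the latter is a subsheaf of $ j_{\ast}\Omega_{U}^{1} $ and a global $ 1 $-form vanishing on the dense open $ U $ is zero.

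I expect the main obstacle to be precisely the gluing step, that is the global well-definedness of $ res $: one must keep separate the fact that the $ d\log $ decomposition is coordinate-dependent while the residues themselves are not, and verify that the local branches are correctly matched to the irreducible hypersurfaces $ D_{i} $. Everything else is local and, thanks to the normal crossings hypothesis and Deligne's local description (which also guarantees that $ \Omega_{\mathbf{P}^n}^{1}(\log \mathcal{D}) $ is locally free of rank $ n $), is identical to the hyperplane situation; the higher degree of the $ D_{i} $ plays no role because the local model is forced to be a union of coordinate hyperplanes.
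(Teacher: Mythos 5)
Your proof is correct, but it takes a genuinely different route from the paper's. The paper's proof is a one-line reduction: it invokes statement $2)$ of remark~\ref{r:omegatildeproperties}, namely that for a normal crossing arrangement the sheaf $ \widetilde{\Omega}_{\mathbf{P}^n}^{1}(\log \mathcal{D}) $ --- which is \emph{defined} as the rank-$n$ subsheaf fitting into the residue exact sequence --- coincides with $ \Omega_{\mathbf{P}^n}^{1}(\log \mathcal{D}) $, so (\ref{eq:resgeneral}) is inherited from the general framework of section $3.3$. You instead build the Poincar\'e residue morphism directly from Deligne's local description (definition~\ref{d:2.6}) and check exactness on stalks, generalizing the argument given for (\ref{eq:resiperpiani}). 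Your construction is sound: the decomposition $ \sum u_{j}\, d\log z_{j} + \sum v_{j}\, dz_{j} $ is unique once coordinates are fixed, because the displayed forms are a local frame of the locally free sheaf $ \Omega_{\mathbf{P}^n}^{1}(\log \mathcal{D}) $; the residue $ u_{j}|_{\{z_{j}=0\}} $ is invariant under replacing $ z_{j} $ by $ g_{j}z_{j} $ with $ g_{j} $ a unit, since the holomorphic correction $ -u_{j}\, d\log g_{j} $, when re-expanded in the new frame, contributes to the coefficient of $ d\log z_{j}' $ only a multiple of $ z_{j}' $; since each $ D_{i} $ is smooth, exactly one local branch is attached to each $ D_{i} $ through a given point, so the components glue consistently; and the kernel computation uses that a function vanishing on the smooth reduced hypersurface $ \{z_{j}=0\} $ is divisible by $ z_{j} $. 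What your approach buys is self-containedness --- it needs only definition~\ref{d:2.6} and the local freeness of the logarithmic sheaf, not the comparison with $ \widetilde{\Omega}_{\mathbf{P}^n}^{1}(\log \mathcal{D}) $ borrowed from Dolgachev --- at the price of the coordinate-independence verification, which is exactly the step the paper's citation sidesteps.
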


\begin{proof}
It is a direct consequence of statement $ 2) $ of remark~\ref{r:omegatildeproperties}.
\end{proof}

\begin{theorem}\label{T:Ancona}$($\emph{Ancona,~\cite{AppuntiAncona}}$)$\\
Let assume that $ D_{i} = \{f_{i} = 0\} $, where $ f_{i} $ is a homogeneous polynomial of degree $ d_{i} $ in the variables $ x_{0}, \ldots, x_{n} $, for all $ i \in \{1, \ldots, \ell\} $.\\
Then the logarithmic bundle $ \Omega_{\mathbf{P}^n}^{1}(\log \mathcal{D}) $ has the following resolution:
\begin{equation}\label{eq:Anconas}
0 \longrightarrow \Omega_{\mathbf{P}^n}^{1}(\log \mathcal{D})^{\vee} \longrightarrow \mathcal{O}_{\mathbf{P}^n}(1)^{n+1} \oplus \mathcal{O}_{\mathbf{P}^n}^{\ell-1} \buildrel \rm N \over \longrightarrow \displaystyle{\bigoplus_{i=1}^{\ell} \mathcal{O}_{\mathbf{P}^n}(d_{i})} \longrightarrow 0
\end{equation}
where $ N $ is the $ \ell \times (n+\ell) $ matrix 
\begin{equation}\label{eq:matrixAncona}
N = \pmatrix{ 
\partial_{0}f_{1} & \cdots & \partial_{n}f_{1} & f_{1} & 0 & \cdots & 0 \cr 
\partial_{0}f_{2} & \cdots & \partial_{n}f_{2} & 0 & f_{2} & {} & \vdots \cr
\vdots & {} & \vdots & \vdots & {} & \ddots & 0 \cr 
\partial_{0}f_{\ell-1} & \cdots & \partial_{n}f_{\ell-1} & 0 & \cdots & 0 & f_{\ell-1} \cr
\partial_{0}f_{\ell} & \cdots & \partial_{n}f_{\ell} & 0 & \cdots & \cdots & 0 \cr
}. 
\end{equation}
\end{theorem}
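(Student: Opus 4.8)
The plan is to derive the resolution $(\ref{eq:Anconas})$ by first dualizing the residue sequence $(\ref{eq:resgeneral})$ and then resolving the two terms that appear. First I would apply $\mathcal{H}om(-,\mathcal{O}_{\mathbf{P}^n})$ to $(\ref{eq:resgeneral})$. Since $\bigoplus_i\mathcal{O}_{D_i}$ is a torsion sheaf we have $\mathcal{H}om(\bigoplus_i\mathcal{O}_{D_i},\mathcal{O}_{\mathbf{P}^n})=0$, while the Koszul resolution $0\to\mathcal{O}_{\mathbf{P}^n}(-d_i)\to\mathcal{O}_{\mathbf{P}^n}\to\mathcal{O}_{D_i}\to0$ gives $\mathcal{E}xt^1(\mathcal{O}_{D_i},\mathcal{O}_{\mathbf{P}^n})\cong\mathcal{O}_{D_i}(d_i)$. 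As $\Omega^1_{\mathbf{P}^n}(\log\mathcal{D})$ is locally free, its $\mathcal{E}xt^1$ vanishes, so the long exact $\mathcal{E}xt$-sequence collapses to the \emph{dual residue sequence}
$$ 0\longrightarrow\Omega^1_{\mathbf{P}^n}(\log\mathcal{D})^\vee\longrightarrow T_{\mathbf{P}^n}\buildrel\rm\delta\over\longrightarrow\bigoplus_{i=1}^\ell\mathcal{O}_{D_i}(d_i)\longrightarrow0, $$
with $T_{\mathbf{P}^n}=(\Omega^1_{\mathbf{P}^n})^\vee$ and $\delta$ surjective (the cokernel being $\mathcal{E}xt^1(\Omega^1_{\mathbf{P}^n}(\log\mathcal{D}),\mathcal{O})=0$).

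Next I would identify $\delta$ geometrically. Composing with the dual Euler surjection $\mathcal{O}_{\mathbf{P}^n}(1)^{n+1}\to T_{\mathbf{P}^n}$, $(g_0,\dots,g_n)\mapsto\sum_j g_j\partial_j$, the $i$-th component of $\delta$ sends $(g_j)$ to $\sum_j g_j\,\partial_j f_i$ reduced modulo $f_i$; that this descends from $\mathcal{O}(1)^{n+1}$ to $T_{\mathbf{P}^n}$ is exactly Euler's identity $\sum_j x_j\partial_j f_i=d_i f_i\equiv0\pmod{f_i}$. Hence, along the twisted Koszul surjections $\mathcal{O}(d_i)\to\mathcal{O}_{D_i}(d_i)$, the composite $\mathcal{O}(1)^{n+1}\to\bigoplus_i\mathcal{O}_{D_i}(d_i)$ lifts to the Gauss block $(\partial_j f_i)$.

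I would then compute $\ker\delta$ by taking a mapping cone of the two two-term presentations
$$ 0\to\mathcal{O}\buildrel\rm(x_j)\over\longrightarrow\mathcal{O}(1)^{n+1}\to T_{\mathbf{P}^n}\to0,\qquad 0\to\mathcal{O}^\ell\buildrel\rm\mathrm{diag}(f_i)\over\longrightarrow\bigoplus_i\mathcal{O}(d_i)\to\bigoplus_i\mathcal{O}_{D_i}(d_i)\to0. $$
Lifting $\delta$ produces the three-term complex $\mathcal{O}\to\mathcal{O}(1)^{n+1}\oplus\mathcal{O}^\ell\to\bigoplus_i\mathcal{O}(d_i)$ whose only nonzero cohomology is $\Omega^1_{\mathbf{P}^n}(\log\mathcal{D})^\vee$, sitting in the middle. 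The left differential forced by commutativity of the lift is $\mathcal{O}\to\mathcal{O}^\ell$, $a\mapsto(d_1a,\dots,d_\ell a)$, and the complex condition $(\partial_j f_i)\circ(x_j)=\mathrm{diag}(f_i)\circ(d_1,\dots,d_\ell)$ is again precisely Euler's relation $\sum_j x_j\partial_j f_i=d_if_i$. Since $d_\ell\neq0$, the induced map $\mathcal{O}\to\mathcal{O}$ onto the $\ell$-th summand is an isomorphism, so Gaussian elimination of complexes cancels the leftmost $\mathcal{O}$ against the $\ell$-th copy of $\mathcal{O}$. Because nothing maps into the leftmost term, no correction term appears, and the surviving differential is just the restriction of $\big(\,(\partial_j f_i)\mid\mathrm{diag}(f_i)\,\big)$ to $\mathcal{O}(1)^{n+1}\oplus\mathcal{O}^{\ell-1}$ — that is, exactly the matrix $N$ of $(\ref{eq:matrixAncona})$, with $f_1,\dots,f_{\ell-1}$ on the diagonal and a vanishing last $f$-row. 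Exactness of $(\ref{eq:Anconas})$ is then inherited: $N$ is surjective because $\delta$ is, and $\ker N\cong\Omega^1_{\mathbf{P}^n}(\log\mathcal{D})^\vee$.

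The main obstacle I expect is the geometric identification of the connecting map $\delta$ with the Gauss–restriction map, i.e.\ verifying that dualizing the Poincar\'e residue yields ``apply the vector field to $f_i$ and restrict to $D_i$''; once this is settled, the remaining cancellation is mechanical, and the single computation that makes both the complex condition and the final shape of $N$ work is Euler's identity $\sum_j x_j\partial_j f_i=d_if_i$.
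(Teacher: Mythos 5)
Your proof is correct, and it reaches the paper's resolution by a genuinely different (more homological) route. The paper never touches the residue sequence: it describes $\Gamma(U,\Omega^{1}_{\mathbf{P}^n}(\log\mathcal{D})^{\vee})$ directly as the vector fields $v=\sum_j b_j\partial_j$ with $\langle d\log g_i,v\rangle$ holomorphic, and by an explicit chain-rule computation in an affine chart shows this means $\sum_j b_j\partial_j f_i=\alpha_i f_i$ modulo the Euler derivation; from this it reads off that $\Omega^{1}_{\mathbf{P}^n}(\log\mathcal{D})^{\vee}$ is the middle cohomology of the monad $0\to\mathcal{O}\xrightarrow{\mathcal{M}}\mathcal{O}(1)^{n+1}\oplus\mathcal{O}^{\ell}\xrightarrow{\mathcal{N}}\bigoplus_i\mathcal{O}(d_i)\to 0$. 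You instead dualize the residue sequence (\ref{eq:resgeneral}), compute $\mathcal{E}xt^1(\mathcal{O}_{D_i},\mathcal{O})\cong\mathcal{O}_{D_i}(d_i)$, and build the \emph{same} monad as a mapping cone of the Euler and Koszul presentations; from there the two arguments coincide exactly, since the paper's multiplication by the explicit square matrix followed by the Euler formula is precisely your Gaussian elimination of the summand $\mathcal{O}\xrightarrow{d_\ell}\mathcal{O}$ (and your observation that no correction term arises is right, because the cancelled source is the leftmost term). The trade-off is where the local analysis sits: your route presupposes the residue sequence and, as you yourself flag, the identification of the connecting map $\delta$ with $v\mapsto(v(f_i)\bmod f_i)$ --- that identification is exactly the content of the paper's chain-rule computation, so it is true and standard but not free. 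What your version buys is functoriality and transparency (the monad is visibly the cone of a map of two-term resolutions, and surjectivity of $N$ falls out of surjectivity of $\delta$ rather than a pointwise rank argument); what the paper's version buys is self-containedness, deriving everything from Deligne's local definition without invoking $\mathcal{E}xt$ sheaves or the residue sequence.
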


\begin{proof}
As in \cite{Do}, let denote by $ S $ the polynomial algebra $ \mathbf{C}[x_{0}, \ldots, x_{n}] $ and let 
$$ \Omega^{1}_{S} = <dx_{0}, \ldots, dx_{n}>_{S} \cong S(-1)^{n+1} $$
$$ Der_{S} = <{{\partial} \over {\partial x_{0}}}, \ldots, {{\partial} \over {\partial x_{n}}}>_{S} \cong S(1)^{n+1} $$
be, respectively, the graded $ S $-module of differentials and the graded $ S $-module of derivations. The Euler derivation $ \xi = \displaystyle{\sum_{i=0}^{n} x_{i} {{\partial} \over {\partial x_{i}}}} $ defines a homomorphism of graded $ S $-modules 
$$ \Omega^{1}_{S} \longrightarrow S \quad\,\,\,\, \,\, \omega \longmapsto \omega(\xi) $$
whose kernel corresponds to the sheaf $ \Omega^{1}_{\mathbf{P}^{n}} $. Moreover the cokernel of the homomorphism
$$ S \longrightarrow Der_{S} \quad\,\,\,\, p \longmapsto p \xi $$
corresponds to $ \mathbf{TP}^{n} $, which is the dual sheaf of $ \Omega^{1}_{\mathbf{P}^{n}} $.
So we have a pairing
$$ \Omega^{1}_{\mathbf{P}^{n}} \times \mathbf{TP}^{n} \buildrel \rm < \cdot, \cdot > \over \longrightarrow \mathcal{O}_{\mathbf{P}^{n}} $$
$$ \left(\displaystyle{\sum_{i=0}^{n} h_{i}dx_{i}}, \displaystyle{\sum_{i=0}^{n} b_{i}{{\partial} \over {\partial x_{i}}}}\right) \longmapsto \displaystyle{\sum_{i=0}^{n} h_{i}b_{i}} $$
and, if $ U $ is an open subset of $ \mathbf{P}^{n} $, then $ \Gamma(U, \Omega_{\mathbf{P}^n}^{1}(\log \mathcal{D})^{\vee}) $ is given by
$$  \{ v \in \Gamma(U, \mathbf{TP}^{n}) \, | \, \forall\,local\,equation\, g_{i}\,of\,D_{i}\,in\,U <d\log g_{i},v> \, is \, holomorphic \} $$
where we recall that $ <d\log g_{i},v> = <\displaystyle{{d g_{i}} \over {g_{i}}}, v> $.  \\
Assume that $ x_{0} \not = 0 $ and let $ z_{j} = \displaystyle{{x_{j}} \over {x_{0}}} $ for all $ j \in \{1, \ldots, n\} $.
Since for all $ i \in \{1, \ldots, \ell\} $ we have that $ f_{i}(x_{0}, \ldots, x_{n}) = x_{0}^{d_{i}}f_{i}(1, \displaystyle{{x_{1}} \over {x_{0}}}, \ldots, \displaystyle{{x_{n}} \over {x_{0}}}) = x_{0}^{d_{i}} g_{i}(z_{1}, \ldots, z_{n}) $, the chain rule implies that, for all $ j \in \{1, \ldots, n\} $,
$$ \displaystyle{{\partial g_{i}} \over {\partial z_{j}}} = \displaystyle{{1} \over {x_{0}^{d_{i}-1}}} {{\partial f_{i}} \over {\partial x_{j}}}. $$
So we get that
$$ dg_{i} = \displaystyle{\sum_{j=1}^{n} {{\partial g_{i}} \over {\partial z_{j}}} dz_{j}} = \displaystyle{\sum_{j=1}^{n}} {{1} \over {x_{0}^{d_{i}-1}}} {{\partial f_{i}} \over {\partial x_{j}}} \left( \displaystyle{{dx_{j}} \over {x_{0}}} - \displaystyle{{x_{j}} \over {x_{0}^{2}}} dx_{0} \right) =   $$
$$ = \displaystyle{{1} \over {x_{0}^{d_{i}}}} \displaystyle{\sum_{j=1}^{n} {{\partial f_{i}} \over {\partial x_{j}}} dx_{j} } - \displaystyle{{dx_{0}} \over {x_{0}^{d_{i}+1}}} \displaystyle{\sum_{j=1}^{n}} {{\partial f_{i}} \over {\partial x_{j}}} x_{j} = \displaystyle{{1} \over {x_{0}^{d_{i}}}} \displaystyle{\sum_{j=1}^{n} {{\partial f_{i}} \over {\partial x_{j}}} dx_{j} } - \displaystyle{{dx_{0}} \over {x_{0}^{d_{i}+1}}} (d_{i})(f_{i}). $$
Thus implies that $ v = \displaystyle{\sum_{j=0}^{n} b_{j}{{\partial} \over {\partial x_{j}}}} \in \Gamma(U, \Omega_{\mathbf{P}^n}^{1}(\log \mathcal{D})^{\vee}) $ if and only if for all $ i \in \{1, \ldots, \ell\} $ there exists a holomorphic function $ \alpha_{i} $ such that 
$$ \displaystyle{\sum_{j=1}^{n} {{\partial f_{i}} \over {\partial x_{j}}} b_{j}} = \alpha_{i} f_{i} \quad\quad modulo \,\, \xi = 0. $$
$ \Omega_{\mathbf{P}^n}^{1}(\log \mathcal{D})^{\vee} $ turns to be the cohomology of the monad given by
$$ 0 \longrightarrow \mathcal{O}_{\mathbf{P}^{n}} \buildrel \rm \mathcal{M} \over \longrightarrow \mathcal{O}_{\mathbf{P}^{n}}(1)^{n+1} \oplus \mathcal{O}_{\mathbf{P}^{n}}^{\ell} \buildrel \rm \mathcal{N} \over \longrightarrow \displaystyle{\bigoplus_{i=1}^{\ell}\mathcal{O}_{\mathbf{P}^{n}}(d_{i})} \longrightarrow 0 $$
where $ \mathcal{M} $ is the $ (n+1+\ell) \times 1 $ matrix
$$ \mathcal{M} = \,\displaystyle{^{t}}\pmatrix{ x_{0} & \cdots & x_{n} & d_{1} & \cdots & d_{\ell} \cr} $$
and $ \mathcal{N} $ is the $ \ell \times (n+1+\ell) $ matrix
$$ \mathcal{N} = \pmatrix{ 
\partial_{0}f_{1} & \cdots & \partial_{n}f_{1} & f_{1} & 0 & \cdots & \cdots & 0 \cr 
\partial_{0}f_{2} & \cdots & \partial_{n}f_{2} & 0 & f_{2} & 0 & \cdots & 0 \cr
\vdots & {} & \vdots & \vdots & {} & \ddots & {} & \vdots \cr 
\vdots & {} & \vdots & \vdots & {} & {} &  \ddots & \vdots \cr
\partial_{0}f_{\ell} & \cdots & \partial_{n}f_{\ell} & 0 & {} & \cdots & {} & f_{\ell} \cr
}. $$
We remark that if we multiply $ \mathcal{N} $ with the square matrix of order $ n+\ell+1 $ 
$$ \pmatrix{ 
{} & {} & {} & {} & {} & x_{0} \cr
{} & {} & {} & {} & {} & \vdots \cr
{} & {} & {} & {}  & {} & x_{n} \cr
{} & {} & I_{n+\ell} & {} & {} & -d_{1} \cr
{} & {} & {} & {} & {} & \vdots \cr
{} & {} & {} & {} & {} & -d_{\ell - 1} \cr
0 & {} & \cdots & {} & 0 & -d_{\ell} \cr
} $$
and we apply the Euler formula, then we can remove the last column of $ \mathcal{N} $ so that $ \mathcal{N} $ takes the form of $ N $, the matrix in (\ref{eq:matrixAncona}). In particular $ \Omega_{\mathbf{P}^n}^{1}(\log \mathcal{D})^{\vee} $ admits the short exact sequence (\ref{eq:Anconas}), which concludes the proof.   
\end{proof}

\begin{remark}
Theorem~\ref{T:Ancona} holds in particular when $ \mathcal{D} $ is a hyperplane arrangement, that is when $ d_{i} = 1 $ for all $ i $. Indeed, if $ \ell \geq n+2 $ then (\ref{eq:Anconas}) becomes the dualized sequence of the Steiner sequence (\ref{eq:steinerfacile}) and if $ \ell \leq n+1 $ then (\ref{eq:Anconas}) implies theorem~\ref{T:pochiiperpiani}.   
\end{remark}

\vfill\eject

\section{A Torelli type result for one hypersurface}
 
According to chapter $ 3 $, the \emph{Torelli problem} for hyperplane arrangements has been completely solved, but in the higher degree case it still represents an open question. \\

In~\cite{UY1} Ueda and Yoshinaga studied this problem for one smooth cubic $ D $ in $ \mathbf{P}^2 $, focusing their attention on the set of jumping lines of the corresponding logarithmic bundle. In this sense they proved the following:

\begin{theorem}\label{T:UY1}$($\emph{Ueda-Yoshinaga 2008,~\cite{UY1}}$)$ \\
Let $ D $ and $ D' $ be smooth cubics in $ \mathbf{P}^2 $ with non-vanishing $ j $-invariant. Then the \emph{Torelli map} in (\ref{eq:Torellimap}) is injective.
\end{theorem}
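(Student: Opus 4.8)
The plan is to exploit Ancona's resolution (Theorem~\ref{T:Ancona}), which in the case $\ell=1$, $d_{1}=3$, $n=2$ reads
$$ 0 \longrightarrow \Omega_{\mathbf{P}^2}^{1}(\log D)^{\vee} \longrightarrow \mathcal{O}_{\mathbf{P}^2}(1)^{3} \buildrel \nabla f \over \longrightarrow \mathcal{O}_{\mathbf{P}^2}(3) \longrightarrow 0, $$
so that $ E^{\vee} := \Omega_{\mathbf{P}^2}^{1}(\log D)^{\vee} $ is the syzygy bundle of the partials $ \partial_{0}f, \partial_{1}f, \partial_{2}f $. Since $ D $ is smooth these quadrics have no common zero, the Gauss map is surjective, and $ E^{\vee} $ is a genuine rank-$2$ bundle. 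From the residue sequence~(\ref{eq:resgeneral}) I would compute $ c_{1}(E)=0 $ and $ c_{2}(E)=3 $; being rank $2$ with trivial determinant, $ E \cong E^{\vee} $. To prove stability I would observe that $ H^{0}(E^{\vee}) $ is the space of \emph{linear} syzygies of $ \partial_{0}f, \partial_{1}f, \partial_{2}f $, which vanishes because for a smooth cubic the three quadratic partials form a regular sequence and hence admit only Koszul syzygies, generated in degree $2$. A rank-$2$ bundle with $ c_{1}=0 $ and no global sections is stable (cf.\ the criterion of Theorem~\ref{T:3.5}).

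Next I would translate the jumping-line condition into the language of the partials. Restricting the dual of Ancona's sequence to a line $ \ell $ and twisting by $ -1 $ gives $ 0 \to \mathcal{O}_{\ell}(-4) \to \mathcal{O}_{\ell}(-2)^{3} \to E_{|_{\ell}}(-1) \to 0 $, whence a cohomology computation yields $ h^{0}(\ell, E_{|_{\ell}}(-1)) = 3 - \dim_{\mathbf{C}}\langle \partial_{0}f_{|_{\ell}}, \partial_{1}f_{|_{\ell}}, \partial_{2}f_{|_{\ell}}\rangle $. Thus $ \ell $ is a jumping line if and only if the three restricted quadrics $ \partial_{i}f_{|_{\ell}} $ are linearly dependent as binary forms, equivalently if and only if some member of the net of first polars $ \langle \partial_{0}f, \partial_{1}f, \partial_{2}f\rangle $ contains $ \ell $. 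Since a smooth conic contains no line, the relevant polars are singular, and these are parametrized by the Hessian cubic of $ D $; the jumping lines are exactly the line-components of the singular polar conics. By the theory of jumping lines of stable rank-$2$ bundles with $ c_{1}=0 $ (\cite{Ba}) this locus is a plane cubic $ J(D) \subset (\mathbf{P}^2)^{\vee} $, and the description above identifies it with the \emph{Cayleyan} of $ D $.

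With this in hand the theorem reduces to a reconstruction statement: an isomorphism $ \Omega_{\mathbf{P}^2}^{1}(\log D) \cong \Omega_{\mathbf{P}^2}^{1}(\log D') $ forces $ J(D)=J(D') $, so the two smooth cubics share the same Cayleyan cubic, and one must deduce $ D=D' $. I would recover $ D $ from $ J(D) $ by reconstructing the net of polar conics from the jumping data, using the finer stratification in which the higher-order jumping lines (where $ h^{0}(E_{|_{\ell}}(-1)) \geq 2 $) pick out a distinguished finite set of double-line polars; Euler's identity $ 3f = \sum_{i} x_{i}\,\partial_{i}f $ then passes from the reconstructed partials back to $ f $ up to scalar.

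The hardest part is precisely this last reconstruction, and in particular isolating where the hypothesis $ j(D)\neq 0 $ enters. The assignment $ D \mapsto \mathrm{Cay}(D) $ is not injective on all smooth cubics: its failure is governed by the extra automorphisms of the underlying elliptic curve, which occur exactly for the equianharmonic cubic, whose $ j $-invariant vanishes. I therefore expect the proof to establish that, away from $ j=0 $, the Cayleyan together with its jumping stratification determines $ D $ uniquely, whereas at $ j=0 $ the additional symmetry produces a genuine ambiguity --- which is exactly the case the statement excludes.
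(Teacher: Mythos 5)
First, note that the paper does not actually prove Theorem~\ref{T:UY1}: it is quoted from~\cite{UY1}, with only the remark that Ueda and Yoshinaga argue via the jumping lines of the logarithmic bundle. Your strategy is therefore the right one in outline, and its first half is correct: Ancona's resolution with $\ell=1$, $d_{1}=3$ exhibits $\Omega^{1}_{\mathbf{P}^2}(\log D)^{\vee}$ as the syzygy bundle of the net of polar conics $\langle\partial_{0}f,\partial_{1}f,\partial_{2}f\rangle$; the Chern classes are $c_{1}=0$, $c_{2}=3$; stability follows either from the absence of linear syzygies of a regular sequence of quadrics or, more quickly, from the last clause of Theorem~\ref{T:3.5} applied to $0\to\mathcal{O}_{\mathbf{P}^2}(-3)\to\mathcal{O}_{\mathbf{P}^2}(-1)^{3}\to E\to 0$; and your formula $h^{0}(\ell,E_{|_{\ell}}(-1))=3-\dim\langle\partial_{0}f_{|_{\ell}},\partial_{1}f_{|_{\ell}},\partial_{2}f_{|_{\ell}}\rangle$ correctly identifies the jumping lines with the line components of the singular polar conics, i.e.\ with the Cayleyan of $D$.

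The gap is the reconstruction step, which is the actual content of the theorem and is only gestured at. Two concrete problems. (i) The proposed mechanism --- using lines with $h^{0}(E_{|_{\ell}}(-1))\geq 2$ to pick out distinguished ``double-line polars'' --- is empty for a general cubic: $h^{0}\geq 2$ means an entire pencil of polar conics contains the fixed line $\ell$, which does not occur for a generic net, so this stratification carries no information exactly where you need it. A cleaner route bypasses the Cayleyan altogether: the sequence $0\to\mathcal{O}_{\mathbf{P}^2}(-3)\to\mathcal{O}_{\mathbf{P}^2}(-1)^{3}\to E\to 0$ is the \emph{minimal} resolution of $E$, hence unique up to automorphisms of the two free terms, so the isomorphism class of $E$ already determines the net $\langle\partial_{0}f,\partial_{1}f,\partial_{2}f\rangle\subset H^{0}(\mathbf{P}^2,\mathcal{O}_{\mathbf{P}^2}(2))$; the theorem then reduces to showing that a smooth cubic with $j\neq 0$ is determined by its net of first polars. (ii) Your explanation of the exceptional locus via extra automorphisms of the elliptic curve is the wrong mechanism: it would predict failure at $j=1728$ as well, where the theorem asserts injectivity. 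The correct mechanism is the one recorded in Theorem~\ref{T:UY2} and the remark following it: $j=0$ is exactly the Fermat, i.e.\ Sebastiani--Thom, case, where the polar net degenerates to $\langle x_{0}^{2},x_{1}^{2},x_{2}^{2}\rangle$ and is shared by the two-parameter family of cubics $a x_{0}^{3}+b x_{1}^{3}+c x_{2}^{3}$, all with isomorphic logarithmic bundles. Carrying out (i) for $j\neq 0$ is the missing argument.
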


Afterwards, in~\cite{UY2}, Ueda and Yoshinaga extended theorem~\ref{T:UY1} for the case of one smooth hypersurface in $ \mathbf{P}^n $. In order to state this result we introduce the following:

\begin{definition}\label{d:sebthom}
Let $ D \subset \mathbf{P}^n $ be a smooth hypersurface of degree $ d $ such that $ D = \{f = 0\} $. We call $ f $ of \emph{Sebastiani-Thom type} if we can choose homogeneous coordinates $ x_{0}, \ldots, x_{n} $ of $ \mathbf{P}^n $ and $ k \in \{0, \ldots, n-1\} $ such that
$$ f(x_{0}, \ldots, x_{n}) = f(x_{0}, \ldots, x_{k}) + f(x_{k+1}, \ldots, x_{n}). $$
\end{definition}

With the notations of definition~\ref{d:sebthom} we have the following:

\begin{theorem}\label{T:UY2}$($\emph{Ueda-Yoshinaga 2009,~\cite{UY2}}$)$ \\
$ \mathcal{D} = \{D\} $ is a \emph{Torelli arrangement} if and only if $ f $ is not of \emph{Sebastiani-Thom type}. 
\end{theorem}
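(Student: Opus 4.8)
The plan is to reduce the Torelli question to a problem in linear algebra about the Hessian of $f$, by means of the length-one resolution of Theorem~\ref{T:Ancona}. For a single smooth hypersurface $D=\{f=0\}$ of degree $d$ in $\mathbf{P}^n$ ($n\ge2$) that resolution reads $0\to E^\vee\to\mathcal{O}_{\mathbf{P}^n}(1)^{n+1}\to\mathcal{O}_{\mathbf{P}^n}(d)\to0$, where $E:=\Omega^1_{\mathbf{P}^n}(\log D)$ and the surjection is the gradient $\nabla f=(\partial_0 f,\ldots,\partial_n f)$, surjective precisely because $D$ is smooth. Thus $E^\vee$ is the syzygy bundle of the Jacobian ideal, and I would first read off $d$ from $c_1(E)=d-(n+1)$, so that an isomorphism $E\cong E'=\Omega^1_{\mathbf{P}^n}(\log D')$ already forces $\deg D'=d$. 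I will assume $d\ge2$; the case $d=1$ is degenerate, every hyperplane giving $\mathcal{O}_{\mathbf{P}^n}(-1)^n$, in agreement with a linear form being trivially of Sebastiani--Thom type.

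Next I would \emph{rigidify} the isomorphism. Dualizing gives $E^\vee\cong E'^\vee$; to compare the two resolutions I apply $\mathrm{Hom}(-,\mathcal{O}_{\mathbf{P}^n}(1)^{n+1})$ to the sequence above. Since $\mathrm{Hom}(\mathcal{O}(d),\mathcal{O}(1)^{n+1})=H^0(\mathcal{O}(1-d))^{n+1}=0$ and $\mathrm{Ext}^1(\mathcal{O}(d),\mathcal{O}(1)^{n+1})=H^1(\mathcal{O}(1-d))^{n+1}=0$ for $d\ge2$, the restriction $\mathrm{Mat}_{n+1}(\mathbf{C})=\mathrm{Hom}(\mathcal{O}(1)^{n+1},\mathcal{O}(1)^{n+1})\to\mathrm{Hom}(E^\vee,\mathcal{O}(1)^{n+1})$ is an isomorphism. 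Hence the given isomorphism $E^\vee\to E'^\vee$ extends uniquely to a constant matrix $B\in\mathrm{Mat}_{n+1}(\mathbf{C})$; lifting the inverse as well and using uniqueness of extensions shows $B\in GL_{n+1}(\mathbf{C})$ and that $B$ induces a nonzero scalar $\lambda$ on the quotient $\mathcal{O}(d)$. The resulting commutative square is $\nabla f'\circ B=\lambda\,\nabla f$, i.e. $\partial_i f=\lambda^{-1}\sum_j b_{ji}\partial_j f'$. Because the left-hand side is an honest gradient, the symmetry $\partial_k\partial_i f=\partial_i\partial_k f$ becomes $B^{T}\mathrm{Hess}(f')=\mathrm{Hess}(f')\,B$; equivalently $B$ is self-adjoint with respect to every quadric $H(x):=\mathrm{Hess}(f')(x)$ of the Hessian linear system.

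The heart of the matter is then the following claim: a non-scalar such $B$ exists if and only if $f'$ is of Sebastiani--Thom type. Writing $\mathbf{C}^{n+1}=\bigoplus_\mu V_\mu$ for the generalized eigenspace decomposition of $B$ and viewing each $H(x)$ as an intertwiner of the $\mathbf{C}[t]$-modules $(\mathbf{C}^{n+1},B)$ and $((\mathbf{C}^{n+1})^{*},B^{T})$, the primary decomposition forces $H(x)(V_\mu)$ to annihilate $\bigoplus_{\nu\neq\mu}V_\nu$ for every $x$; thus in coordinates adapted to the $V_\mu$ the Hessian is block-diagonal and $f'=\sum_\mu f'_\mu$ with $f'_\mu$ a form in the $V_\mu$-variables only. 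If $B$ has at least two distinct eigenvalues this is a genuine Sebastiani--Thom splitting, while if $B=\mu\,\mathrm{Id}$ is scalar then $\partial_i f\propto\partial_i f'$ gives $f=cf'$ and $D=D'$. The remaining possibility is $B=\mu\,\mathrm{Id}+N$ with a single eigenvalue and $N\neq0$ nilpotent, where the condition reads $N^{T}H(x)=H(x)N$ and, by Euler, $f=\frac{\mu}{\lambda}f'+\frac{1}{d\lambda}\theta f'$ for the nilpotent linear vector field $\theta$ attached to $N^{T}$; here $D\neq D'$ exactly when $\theta f'\not\propto f'$.

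Putting this together, if $f$ is \emph{not} of Sebastiani--Thom type then only scalar $B$ survive, so $f=cf'$ and $D=D'$: the arrangement is Torelli. For the converse, if $f=g(x_0,\ldots,x_k)+h(x_{k+1},\ldots,x_n)$ I set $f_t=g+th$ and observe $\nabla f_t=\Delta_t\,\nabla f$ with $\Delta_t=\mathrm{diag}(1,\ldots,1,t,\ldots,t)\in GL_{n+1}(\mathbf{C})$; precomposing the resolution with $\Delta_t^{-1}$ yields $\Omega^1_{\mathbf{P}^n}(\log D_t)\cong\Omega^1_{\mathbf{P}^n}(\log D)$ while $D_t\neq D$ for generic $t$, so $\{D\}$ is not Torelli. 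I expect the single genuine obstacle to be the nilpotent sub-case of the claim: one must show that for $d\ge3$ a nonzero nilpotent operator self-adjoint for the \emph{entire} Hessian system cannot coexist with smoothness of $D'$ (a single--Jordan--block computation already produces a whole singular linear space among the common zeros of the partials of $f'$), whereas for $d=2$ the smooth quadric $f'$ is automatically of Sebastiani--Thom type. This step genuinely uses the variation of $\mathrm{Hess}(f')$ with $x$, since over $\mathbf{C}$ self-adjointness for a single non-degenerate form does not preclude nilpotents.
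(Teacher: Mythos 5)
The paper does not prove Theorem~\ref{T:UY2}; it is quoted from Ueda--Yoshinaga~\cite{UY2} without proof, so there is no in-paper argument to compare yours with. What you have written is essentially a correct reconstruction of the argument of~\cite{UY2}: the length-one resolution $0\to E^\vee\to\mathcal{O}_{\mathbf{P}^n}(1)^{n+1}\to\mathcal{O}_{\mathbf{P}^n}(d)\to0$ from Theorem~\ref{T:Ancona}, the rigidification of an isomorphism $E^\vee\cong E'^\vee$ to a constant matrix $B$ with $\nabla f'\circ B=\lambda\nabla f$ (your $\mathrm{Hom}$/$\mathrm{Ext}^1$ vanishing is valid for $d\ge2$ and $n\ge2$, and $\lambda\neq0$ because the partials of a smooth form of degree $\ge2$ are linearly independent), the symmetry condition $B^{T}\mathrm{Hess}(f')=\mathrm{Hess}(f')B$, and the analysis by generalized eigenspaces. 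The semisimple case, the scalar case, and the converse via $\Delta_t$ are all correct as you state them.

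The step you flag as open --- ruling out a nonzero nilpotent $N$ with $N^{T}H(x)=H(x)N$ when $d\ge3$ and $D'$ is smooth --- is indeed the crux, and your hint (``a single Jordan block computation'') does not by itself dispose of it: for $N$ with several Jordan blocks the Hessian may have cross terms between blocks, and the one-block computation does not directly apply. Here is a completion that avoids Jordan form altogether. Let $F$ be the symmetric $d$-linear form with $f'(x)=F(x,\dots,x)$. The identity $H(x)N=N^{T}H(x)$ reads $F(x^{d-2},Nv,w)=F(x^{d-2},v,Nw)$ for all $x,v,w$; polarizing in $x$ gives $F(v_1,\dots,v_{d-2},Nv,w)=F(v_1,\dots,v_{d-2},v,Nw)$, so by the full symmetry of $F$ a single factor of $N$ may be moved from any slot to any other without changing the value. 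Iterating, $F(N^{a_1}v_1,\dots,N^{a_d}v_d)=F(N^{a_1+\dots+a_d}v_1,v_2,\dots,v_d)$, which vanishes as soon as $a_1+\dots+a_d>m$, where $m\ge1$ is maximal with $N^{m}\neq0$. Now pick $u=N^{m}v\neq0$. For every $w$ the directional derivative is $\partial_w f'(u)=d\,F(u,\dots,u,w)$ with $d-1$ copies of $u=N^{m}v$, hence carries $m(d-1)\ge m+1$ factors of $N$ once $d\ge3$, and therefore vanishes; so $[u]$ is a singular point of $D'$, contradicting smoothness. For $d=2$ every smooth quadric is of Sebastiani--Thom type, so nothing is lost there. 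With this insertion your proof is complete.
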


\begin{remark}
If $ d = 2 $ then $ f $ is always of Sebastiani-Thom type, for all $ n $.
\end{remark}

\begin{remark}
A smooth plane cubic has a vanishing $ j $-invariant if and only if it is the zero locus of the \emph{Fermat polynomial} $ x_{0}^3+x_{1}^{3}+x_{2}^{3} $ which is equivalent to say that it is defined by a $ f $ of Sebastiani-Thom type.
\end{remark}

\begin{corollary}
Let $ D $ be a general hypersurface of degree $ d $ in $ \mathbf{P}^n $. Then $ \mathcal{D} = \{D\} $ is \emph{Torelli} if and only if $ d \geq 3 $.
\end{corollary}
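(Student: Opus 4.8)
The plan is to reduce everything to Theorem~\ref{T:UY2}, which says that $\mathcal{D}=\{D\}$ is Torelli if and only if the defining form $f$ is not of Sebastiani-Thom type; the corollary then splits into three cases according to the degree. I would dispose of the two low-degree cases first. If $d=1$ then $D$ is a hyperplane and Theorem~\ref{T:pochiiperpiani} (with $\ell=1$) gives $\Omega_{\mathbf{P}^n}^{1}(\log\mathcal{D})\cong\mathcal{O}_{\mathbf{P}^n}(-1)^{n}$, a bundle independent of the chosen hyperplane, so no hyperplane is Torelli (equivalently, a linear form is trivially of Sebastiani-Thom type). If $d=2$, the first remark after Theorem~\ref{T:UY2} records that every quadratic form is of Sebastiani-Thom type---Sylvester diagonalization writes $f=\lambda_{0}x_{0}^{2}+(\lambda_{1}x_{1}^{2}+\cdots+\lambda_{n}x_{n}^{2})$---so $\{D\}$ is never Torelli. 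In both cases a general $D$ fails to be Torelli, in agreement with the threshold $d\ge 3$.

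The substance is the case $d\ge 3$, where one must work with $n\ge 2$: this restriction is implicit, since for $n=1$ the logarithmic sheaf is the line bundle $\mathcal{O}_{\mathbf{P}^1}(d-2)$, which is independent of $D$ and hence never Torelli. For $d\ge 3$ and $n\ge 2$ I would show that being of Sebastiani-Thom type is a \emph{proper closed} condition on the space $V:=H^{0}(\mathbf{P}^n,\mathcal{O}_{\mathbf{P}^n}(d))$ of defining forms, so that a general $D$ avoids it and is therefore Torelli by Theorem~\ref{T:UY2}. Writing $V_{k}\subset V$ for the linear subspace of split forms $g(x_{0},\dots,x_{k})+h(x_{k+1},\dots,x_{n})$, of dimension $ {k+d \choose d}+{n-k-1+d \choose d}$, the Sebastiani-Thom locus is contained in the closed set $\bigcup_{k=0}^{n-1}\overline{GL_{n+1}\cdot V_{k}}$, and it suffices to establish $\dim\overline{GL_{n+1}\cdot V_{k}}<\dim V$ for every $k$.

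To bound the orbit dimension I would consider the action map $GL_{n+1}\times V_{k}\to V$ and observe that the block-diagonal subgroup $GL_{k+1}\times GL_{n-k}$ stabilizes $V_{k}$ setwise; hence it lies in the generic fibre, whose dimension is therefore at least $(k+1)^{2}+(n-k)^{2}$. This yields
$$ \dim\overline{GL_{n+1}\cdot V_{k}}\le (n+1)^{2}+\dim V_{k}-\left((k+1)^{2}+(n-k)^{2}\right)=2(k+1)(n-k)+\dim V_{k}. $$
The required strict inequality $2(k+1)(n-k)+\dim V_{k}<\dim V$ is exactly the statement that the number of degree-$d$ monomials mixing the two groups of $k+1$ and $n-k$ variables exceeds $2(k+1)(n-k)$.

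The hard part is this final monomial estimate, which is sharp precisely at the boundary $d=3$, $n=2$. For $d=3$ a direct count gives that the number of mixed monomials equals $\frac{1}{2}(k+1)(n-k)(n+3)$, so the inequality reduces to $\frac{1}{2}(n+3)>2$, i.e. $n\ge 2$; for $d\ge 4$ the mixed-monomial count only increases while the term $2(k+1)(n-k)$ is unchanged, so the inequality holds a fortiori. This would confirm that $\bigcup_{k}\overline{GL_{n+1}\cdot V_{k}}$ is a proper closed subvariety of $V$, its complement a dense open set of non-Sebastiani-Thom forms, and a general $D$ of degree $d\ge 3$ Torelli. The one delicate point is the orbit-dimension bound: I must be sure the block-diagonal subgroup genuinely sits inside the generic fibre (it does, since it preserves $V_{k}$), so that the estimate is an honest upper bound and not merely heuristic.
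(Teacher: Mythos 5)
Your proposal is correct and follows the route the paper intends: reduce to Theorem~\ref{T:UY2} together with the remark that every quadratic (and linear) form is of Sebastiani--Thom type, and then observe that for $d\ge 3$ a general form is not of that type. The paper states the corollary without proof, so the genuinely useful content you add is the verification of the last point via the orbit-dimension bound $\dim\overline{GL_{n+1}\cdot V_{k}}\le 2(k+1)(n-k)+\dim V_{k}$ and the mixed-monomial count $\tfrac{1}{2}(k+1)(n-k)(n+3)>2(k+1)(n-k)$ for $n\ge 2$; both computations check out (the monotonicity in $d$ follows from multiplying mixed monomials by a fixed variable), and your caveat that $n\ge 2$ is implicitly assumed is well taken.
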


\chapter{Arrangements of conics in the projective plane}
\label{ch:arrangements of conics in the projective plane}

\section{Many conics}
Arrangements made of a \emph{sufficiently large} number of conics with normal crossings can be studied by using the main results concerning arrangements of hyperplanes with normal crossings (\cite{Do-Ka},\cite{V}) that are recalled in chapter $ 3 $. \\

Let $ \mathcal{D} = \{C_{1}, \ldots, C_{\ell}\} $ be an arrangement of $ \ell $ smooth conics with normal crossings on $ \mathbf{P}^2 $ and let $ \Omega_{\mathbf{P}^2}^{1}(\log \mathcal{D}) $ be the corresponding logarithmic bundle.

\begin{remark}
Let $ \nu_{2} : \mathbf{P}^2 \longrightarrow \mathbf{P}^5 $ be the quadratic Veronese map, that is
$$ \nu_{2}([x_{0}, x_{1}, x_{2}]) = [x_{0}^2, x_{1}^2, x_{2}^2, x_{0}x_{1}, x_{0}x_{2}, x_{1}x_{2}] $$
and let $ V_{2} = \nu_{2}(\mathbf{P}^2) $ be its image. As we can see also in~\cite{H}, conics are exactly hyperplane sections of $ V_{2} \subset \mathbf{P}^5 $. 
\end{remark}

So we can associate to $ \mathcal{D} = \{C_{1}, \ldots, C_{\ell}\} $ an arrangement of hyperplanes $ \mathcal{H} = \{H_{1}, \ldots, H_{\ell}\} $ on $ \mathbf{P}^5 $. Let assume that $ \mathcal{H} $ has normal crossings and let $ \Omega_{\mathbf{P}^5}^{1}(\log \mathcal{H}) $ be the logarithmic bundle attached to it. We will see in the proof of theorem $ 5.4 $ (focus on exact sequence in (\ref{eq:prop211Dolg})) that the vector bundles $ \Omega_{\mathbf{P}^2}^{1}(\log \mathcal{D}) $ and $ \Omega_{\mathbf{P}^5}^{1}(\log \mathcal{H}) $ are strictly related one to the other. 

Given the logarithmic bundle $ \Omega_{\mathbf{P}^2}^{1}(\log \mathcal{D}) $, the key idea is to reconstruct the conics in $ \mathcal{D} $ as \emph{unstable} conics of $ \Omega_{\mathbf{P}^2}^{1}(\log \mathcal{D}) $, using the fact that we are able to deal with hyperplanes. The notion of unstable conic that we introduce in the following is very close to the one of unstable hyperplane (see definition~\ref{d:unsthyp}).

\begin{definition}\label{d52}
Let $ C \subset \mathbf{P}^2 $ be a conic. We say that $ C $ is \emph{unstable} for $ \Omega_{\mathbf{P}^2}^{1}(\log \mathcal{D}) $ if the following condition holds:
\begin{equation}\label{eq:conicainstabile}
H^{0}(C, {\Omega_{\mathbf{P}^2}^{1}(\log \mathcal{D})}^{\vee}_{|_{C}}) \not= \{0\}.
\end{equation}
\end{definition}

\begin{remark}\label{r53}
The previous definition is meaningful. Indeed, we recall that $ \Omega_{\mathbf{P}^2}^{1}(\log \mathcal{D}) $ admits the short exact sequence:
\begin{equation}\label{eq:Anconapermolteconiche}
0 \longrightarrow \mathcal{O}_{\mathbf{P}^2}(-2)^{\ell} \longrightarrow \mathcal{O}_{\mathbf{P}^2}(-1)^{3} \oplus \mathcal{O}_{\mathbf{P}^2}^{\ell -1} \longrightarrow \Omega_{\mathbf{P}^2}^{1}(\log \mathcal{D})  \longrightarrow 0.
\end{equation}
In particular the slope $ \mu(\Omega_{\mathbf{P}^2}^{1}(\log \mathcal{D})) = \displaystyle{{2 \ell - 3} \over {2}} > 0 $ for all $ \ell \geq 2 $, which implies, by using Bohnhorst-Spindler criterion, that $ \Omega_{\mathbf{P}^2}^{1}(\log \mathcal{D}) $ is stable. So $ \Omega_{\mathbf{P}^2}^{1}(\log \mathcal{D})^{\vee} $ is a stable bundle too (\cite{OSS}). We claim that $ \Omega_{\mathbf{P}^2}^{1}(\log \mathcal{D})^{\vee} $ has not global sections over $ \mathbf{P}^2 $ different from the zero one: if this is not the case there is a non trivial injective map $ \mathcal{O}_{\mathbf{P}^2} \hookrightarrow \Omega_{\mathbf{P}^2}^{1}(\log \mathcal{D})^{\vee} $, $ \mathcal{O}_{\mathbf{P}^2} $ can be regarded as a subsheaf of $ \Omega_{\mathbf{P}^2}^{1}(\log \mathcal{D})^{\vee} $ with $ 0 < rk\mathcal{O}_{\mathbf{P}^2} < rk\Omega_{\mathbf{P}^2}^{1}(\log \mathcal{D})^{\vee} $ and $ \mu(\mathcal{O}_{\mathbf{P}^2}) = 0 > \mu(\Omega_{\mathbf{P}^2}^{1}(\log \mathcal{D})^{\vee}) = \displaystyle{{3 - 2 \ell} \over {2}} $, which contradicts the stability of $ \Omega_{\mathbf{P}^2}^{1}(\log \mathcal{D})^{\vee} $.
\end{remark}

Now we can state and prove the main result concerning the \emph{Torelli problem} in the case of arrangements with a \emph{large} number of conics.

\begin{theorem}\label{t54}
Let $ \mathcal{D} = \{C_{1}, \ldots, C_{\ell}\} $ be an arrangement of smooth conics with normal crossings on $ \mathbf{P}^2 $ and let $ \mathcal{H} = \{H_{1}, \ldots, H_{\ell}\} $ be the corresponding arrangement of hyperplanes on $ \mathbf{P}^5 $ in the sense of remark $ 5.1 $. Assume that:
\begin{itemize}
\item[$1)$] $ \ell \geq 9 $;
\item[$2)$] $ \mathcal{H} = \{H_{1}, \ldots, H_{\ell}\} $ is an arrangement with normal crossings;
\item[$3)$] $ H_{1}, \ldots, H_{\ell} $ don't osculate a rational normal curve of degree $ 5 $ in $ \mathbf{P}^5 $.
\end{itemize}
Then 
$$ \mathcal{D} = \{ C \subset \mathbf{P}^2 \, \emph{smooth conic} \, | \, C \, \emph{satisfies} ~(\ref{eq:conicainstabile})\}. $$
\end{theorem}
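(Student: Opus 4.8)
The plan is to transfer the problem from conics in $\mathbf{P}^2$ to hyperplanes in $\mathbf{P}^5$ via the quadratic Veronese embedding $\nu_2$, and then to invoke Vall\`es' Theorem~\ref{T:Do-Ka}. Throughout write $V_2 = \nu_2(\mathbf{P}^2)$, identify $\mathbf{P}^2$ with $V_2$, and abbreviate $\Omega_2 = \Omega_{\mathbf{P}^2}^{1}(\log\mathcal{D})$ and $\Omega_5 = \Omega_{\mathbf{P}^5}^{1}(\log\mathcal{H})$. The first step is to produce the exact sequence linking the two logarithmic bundles. Since each $C_i = \nu_2^{-1}(H_i) = H_i \cap V_2$ and both $\mathcal{H}$ (hypothesis $2$) and $\mathcal{D}$ have normal crossings, pulling back logarithmic $1$-forms along $\nu_2$ gives a surjection whose kernel is the conormal bundle of the Veronese surface:
\[ 0 \longrightarrow N^{\vee}_{V_2/\mathbf{P}^5} \longrightarrow \Omega_5|_{V_2} \buildrel {\nu_2^{\ast}} \over \longrightarrow \Omega_2 \longrightarrow 0. \]
A rank count ($3+2=5$) confirms the shape, and dualizing yields $0 \to \Omega_2^{\vee} \to \Omega_5^{\vee}|_{V_2} \to N_{V_2/\mathbf{P}^5} \to 0$.

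Next I would record the two structural facts to feed into Vall\`es' machinery. Since $\mathcal{H}$ consists of $\ell \geq 9 > 5+2$ hyperplanes with normal crossings, Theorem~\ref{T:3.8} gives $\Omega_5 \in S_{5,\ell-6}$, so $\Omega_5$ is a Steiner bundle and the notion of \emph{unstable hyperplane} (Definition~\ref{d:unsthyp}) applies to it, with $H^0(\mathbf{P}^5,\Omega_5^{\vee})=0$; on the other side $\Omega_2$ is stable with $H^0(\mathbf{P}^2,\Omega_2^{\vee}) = 0$ by Remark~\ref{r53}, which is exactly what makes Definition~\ref{d52} meaningful. The core of the argument is then a dictionary asserting that $C$ is unstable for $\Omega_2$ if and only if $H_C$ is unstable for $\Omega_5$, where $H_C \subset \mathbf{P}^5$ is the unique hyperplane with $H_C \cap V_2 = \nu_2(C)$. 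The inclusion $\mathcal{D} \subseteq \{\text{unstable smooth conics}\}$ I would obtain directly, mimicking the hyperplane case in the proof of Theorem~\ref{T:Do-Ka}: restricting the residue sequence~(\ref{eq:resgeneral}) to $C_i$ and projecting onto the $i$-th summand produces a nonzero section of $\Omega_2^{\vee}|_{C_i}$, so each $C_i$ satisfies~(\ref{eq:conicainstabile}).

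For the reverse inclusion I would translate both unstability conditions into the failure of injectivity of a multiplication map. For a smooth conic $C = \{g=0\}$, twisting $0 \to \Omega_2^{\vee}(-2) \to \Omega_2^{\vee} \to \Omega_2^{\vee}|_C \to 0$ and using $H^0(\mathbf{P}^2,\Omega_2^{\vee})=0$ from Remark~\ref{r53} identifies $H^0(C,\Omega_2^{\vee}|_C)$ with $\ker\big(H^1(\mathbf{P}^2,\Omega_2^{\vee}(-2)) \buildrel {\cdot\, g} \over \longrightarrow H^1(\mathbf{P}^2,\Omega_2^{\vee})\big)$; likewise $H^0(H_C,\Omega_5^{\vee}|_{H_C})$ is the kernel of multiplication by the defining form $h$ of $H_C$ on $H^1(\mathbf{P}^5,\Omega_5^{\vee}(-1))$. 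Because $h$ restricts to $g$ on $V_2$ and $\nu_2^{\ast}\mathcal{O}(1) = \mathcal{O}(2)$, the dualized sequence above (twisted down) together with the resolution of $\mathcal{O}_{V_2}$ by quadrics in $\mathbf{P}^5$ lets me compare the two cohomology groups and the two multiplication maps, and hence identify the two kernels. Granting the dictionary, Theorem~\ref{T:Do-Ka} applies in $\mathbf{P}^5$: with $\ell \geq 9 > 5+3$, with $\mathcal{H}$ having normal crossings, and with $H_1,\dots,H_\ell$ not osculating a rational normal quintic (hypothesis $3$), the unstable hyperplanes of $\Omega_5$ are exactly $H_1,\dots,H_\ell$. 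Since $C \mapsto H_C$ is a bijection between conics in $\mathbf{P}^2$ and hyperplanes in $\mathbf{P}^5$ carrying $C_i$ to $H_i$, the unstable smooth conics are precisely $C_1,\dots,C_\ell$, namely $\mathcal{D}$.

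The step I expect to be the main obstacle is the dictionary: controlling the comparison of $H^1(\mathbf{P}^5,\Omega_5^{\vee}(-1))$ with $H^1(\mathbf{P}^2,\Omega_2^{\vee}(-2))$ across the Veronese. The conormal term $N_{V_2/\mathbf{P}^5}$ and the higher syzygies of $V_2 \subset \mathbf{P}^5$ contribute correction terms, and one must check that they do not interfere with the kernels of the two multiplication maps; this is presumably where the hypothesis $\ell \geq 9$ (rather than the bare $\ell \geq 8$ needed for Theorem~\ref{T:Do-Ka}) is consumed, namely to secure the vanishing that makes the comparison exact.
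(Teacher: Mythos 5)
Your overall strategy (transfer to $\mathbf{P}^5$ via the Veronese, use the conormal sequence of $V_2$, conclude with Vall\`es) is the paper's strategy, and your forward inclusion $\mathcal{D}\subseteq\{\text{unstable conics}\}$ via the residue sequence is exactly right. But the heart of the proof is the implication ``$C$ unstable for $\Omega_{\mathbf{P}^2}^{1}(\log\mathcal{D})$ $\Rightarrow$ $H_C$ unstable for $\Omega_{\mathbf{P}^5}^{1}(\log\mathcal{H})$'', and this is precisely the step you leave as an unproven ``dictionary'': your plan to identify $H^0(C,\Omega_2^{\vee}|_C)$ and $H^0(H_C,\Omega_5^{\vee}|_{H_C})$ as kernels of two multiplication maps on $H^1$ and then ``compare'' them across the Veronese is only a sketch, and you yourself flag it as the main obstacle without resolving how the conormal term and the syzygies of $V_2$ are controlled. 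That is a genuine gap: without it the reverse inclusion is not established. Note also that you do not need the full equivalence --- one implication suffices, since the other inclusion was already proved directly.

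The paper closes this gap with a concrete two-step lifting argument that you should be able to reconstruct. First, dualizing the conormal sequence restricted to $C$ gives
\[
0 \longrightarrow \Omega_{\mathbf{P}^2}^{1}(\log\mathcal{D})^{\vee}|_{C} \longrightarrow \Omega_{\mathbf{P}^5}^{1}(\log\mathcal{H})^{\vee}|_{C} \longrightarrow \bigl(\mathcal{N}^{\vee}_{V_2,\mathbf{P}^5}|_{C}\bigr)^{\vee} \longrightarrow 0,
\]
so taking $H^0$ injects $H^0(C,\Omega_2^{\vee}|_C)\neq 0$ into $H^0(C,\Omega_5^{\vee}|_C)$. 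Second, the sequence $0\to\mathcal{I}_{V_2\cap H,\,H}\otimes\Omega_5^{\vee}|_H\to\Omega_5^{\vee}|_H\to\Omega_5^{\vee}|_C\to 0$ shows that the restriction $H^0(H,\Omega_5^{\vee}|_H)\to H^0(C,\Omega_5^{\vee}|_C)$ is surjective provided $H^1(H,\mathcal{I}_{V_2\cap H,\,H}\otimes\Omega_5^{\vee}|_H)=0$; this vanishing is obtained by tensoring the dualized Steiner resolution of $\Omega_5$ with $\mathcal{I}_{V_2\cap H,\,H}$, observing that the relevant $H^0$ terms vanish (no linear or constant forms on $H$ vanish on the conic $V_2\cap H$), and reducing to $H^1(H,\mathcal{I}_{V_2\cap H,\,H})=\mathbf{C}^{k-1}=0$ because $V_2\cap H$ is connected. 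This replaces your speculative $H^1$-comparison entirely. Finally, your guess that the extra unit in $\ell\geq 9$ (versus the $\ell\geq 8$ of Vall\`es in $\mathbf{P}^5$) is spent on a cohomology vanishing is off: it is needed only so that hypothesis $3)$ is not vacuous, since any $8$ points of $(\mathbf{P}^5)^{\vee}$ lie on a rational normal quintic.
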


\begin{proof}
Let suppose that $ C \in \mathcal{D} $, then there exists $ j \in \{1, \ldots, \ell\} $ such that $ C = C_{j} $. If we consider the \emph{residue exact sequence} for $ \Omega_{\mathbf{P}^2}^{1}(\log \mathcal{D}) $, that is
$$ 0 \longrightarrow \Omega_{\mathbf{P}^2}^{1} \longrightarrow \Omega_{\mathbf{P}^2}^{1}(\log \mathcal{D}) \buildrel \rm res \over\longrightarrow \displaystyle\bigoplus_{i=1}^\ell \mathcal{O}_{C_{i}}  \longrightarrow 0 $$
and we restrict it to $ C_{j} $, we get the following exact sequence:
$$ 0 \rightarrow \mathcal{T}or_{1}^{\mathbf{P}^2}(\mathcal{O}_{C_{j}}, \mathcal{O}_{C_{j}}) \rightarrow \Omega_{\mathbf{P}^2|_{C_{j}}}^{1} \rightarrow \Omega_{\mathbf{P}^2}^{1}(\log \mathcal{D})_{|_{C_{j}}} \rightarrow \mathcal{O}_{C_{j}} \oplus \displaystyle\bigoplus_{i=1, i \not= j}^\ell \mathcal{O}_{C_{i} \cap C_{j}} \rightarrow 0. $$
Since the map 
$$ \Omega_{\mathbf{P}^2}^{1}(\log \mathcal{D})_{|_{C_{j}}} \longrightarrow \mathcal{O}_{C_{j}} \oplus \displaystyle\bigoplus_{i=1, i \not= j}^\ell \mathcal{O}_{C_{i} \cap C_{j}} $$
is surjective, we get a non zero map 
$$ \Omega_{\mathbf{P}^2}^{1}(\log \mathcal{D})_{|_{C_{j}}} \longrightarrow \mathcal{O}_{C_{j}} $$
and so 
$$ H^{0}(C_{j}, {\Omega_{\mathbf{P}^2}^{1}(\log \mathcal{D})}^{\vee}_{|_{C_{j}}}) = Hom(\mathcal{O}_{C_{j}}, {\Omega_{\mathbf{P}^2}^{1}(\log \mathcal{D})}^{\vee}_{|_{C_{j}}}) \not= \{0\} $$
that is $ C $ satisfies (\ref{eq:conicainstabile}). \\
Viceversa, let assume that $ C $ is a smooth unstable conic for $ \Omega_{\mathbf{P}^2}^{1}(\log \mathcal{D}) $, we want to prove that $ C \in \mathcal{D} $. It suffices to show that the hyperplane $ H \subset \mathbf{P}^5 $ associated to $ C $ by means of $ \nu_{2} $ is unstable for $ \Omega_{\mathbf{P}^5}^{1}(\log \mathcal{H}) $: namely, if this is the case, since hypothesis $ 1),\, 2), \, 3) $ hold, the \emph{Torelli type} result of Vall\`es assures us that $ H \in \mathcal{H} $, that is $ H = H_{i} $ for $ i \in \{1, \ldots, \ell\} $ and so $ C = C_{i} \in \mathcal{D} $. \\
Since $ V_{2} $ is a non singular subvariety of $ \mathbf{P}^5 $ which intersects transversally $ \mathcal{H} $, from proposition $ 2.11 $ of~\cite{Do} we get the following exact sequence:
\begin{equation}\label{eq:prop211Dolg}
0 \longrightarrow  \mathcal{N}_{V_{2}, \, \mathbf{P}^5}^{\vee} \longrightarrow \Omega_{\mathbf{P}^5}^{1}(\log \mathcal{H})_{|_{V_{2}} } \longrightarrow \Omega_{V_{2}}^{1}(\log \mathcal{H} \cap V_{2}) \longrightarrow 0 
\end{equation}
where $ \mathcal{N}_{V_{2}, \, \mathbf{P}^5}^{\vee} $ denotes the conormal sheaf of $ V_{2} $ in $ \mathbf{P}^5 $. \\
We remark that $ V_{2} \cong \mathbf{P}^2 $ and $ \mathcal{D} = \mathcal{H} \cap V_{2} $, so (\ref{eq:prop211Dolg}) becomes
\begin{equation}\label{eq:prop211Dolg2}
0 \longrightarrow  \mathcal{N}_{\mathbf{P}^2, \, \mathbf{P}^5}^{\vee} \longrightarrow \Omega_{\mathbf{P}^5}^{1}(\log \mathcal{H})_{|_{\mathbf{P}^2}} \longrightarrow \Omega_{\mathbf{P}^2}^{1}(\log \mathcal{D}) \longrightarrow 0. 
\end{equation}
Restricting (\ref{eq:prop211Dolg2}) to $ C $ and then applying $ \mathcal{H}om(\cdot, \, \mathcal{O}_{C}) $ we obtain the following short exact sequence:
\begin{equation}\label{eq:prop211Dolg3}
0 \longrightarrow {\Omega_{\mathbf{P}^2}^{1}(\log \mathcal{D})}^{\vee}_{|_{C}} \longrightarrow {\Omega_{\mathbf{P}^5}^{1}(\log \mathcal{H})}^{\vee}_{|_{C}} \longrightarrow ({\mathcal{N}_{\mathbf{P}^2, \, \mathbf{P}^5\,_{|_{C}}}^{\vee}})^{\vee} \longrightarrow 0. 
\end{equation}
Finally we apply $ \Gamma(C,\, \cdot) $ to (\ref{eq:prop211Dolg3}) and we get:
$$ 0 \longrightarrow H^{0}(C, {\Omega_{\mathbf{P}^2}^{1}(\log \mathcal{D})}^{\vee}_{|_{C}}) \longrightarrow H^{0}(C, {\Omega_{\mathbf{P}^5}^{1}(\log \mathcal{H})}^{\vee}_{|_{C}}). $$
By assumption, $ C $ is unstable for $ \Omega_{\mathbf{P}^2}^{1}(\log \mathcal{D}) $, that is condition (\ref{eq:conicainstabile}) holds. Necessarily it has to be 
\begin{equation}\label{eq:keyfact1}
H^{0}(C, {\Omega_{\mathbf{P}^5}^{1}(\log \mathcal{H})}^{\vee}_{|_{C}}) \not= 0.
\end{equation}
Now, let $ \mathcal{I}_{V_{2}, \, \mathbf{P}^5} $ be the ideal sheaf of $ V_{2} $ in $ \mathbf{P}^5 $; we have this exact sequence:
\begin{equation}\label{eq:idealsheafsequence}
0 \longrightarrow \mathcal{I}_{V_{2}, \, \mathbf{P}^5} \longrightarrow \mathcal{O}_{\mathbf{P}^5} \longrightarrow \mathcal{O}_{V_{2}} \longrightarrow 0.
\end{equation}
Since $ V_{2} \not\subset H $ we have
\begin{equation}\label{eq:idealsheafsequencebis}
0 \longrightarrow \mathcal{I}_{V_{2} \cap H, \, H} \longrightarrow \mathcal{O}_{H} \longrightarrow \mathcal{O}_{V_{2} \cap H} \longrightarrow 0.
\end{equation}
By tensor product with $ {\Omega_{\mathbf{P}^5}^{1}(\log \mathcal{H})}^{\vee} $, (\ref{eq:idealsheafsequencebis}) becomes:
\begin{equation}\label{eq:idealsheafsequence1}
0 \longrightarrow \mathcal{I}_{V_{2} \cap H, \, H} \otimes {\Omega_{\mathbf{P}^5}^{1}(\log \mathcal{H})}^{\vee}_{|_{H}} \longrightarrow {\Omega_{\mathbf{P}^5}^{1}(\log \mathcal{H})}^{\vee}_{|_{H}} \longrightarrow {\Omega_{\mathbf{P}^5}^{1}(\log \mathcal{H})}^{\vee}_{|_{C}} \longrightarrow 0.
\end{equation}
(\ref{eq:idealsheafsequence1}) induces the following long exact sequence in cohomology:
$$ 0 \longrightarrow H^{0}(H, \mathcal{I}_{V_{2} \cap H, \, H} \otimes {\Omega_{\mathbf{P}^5}^{1}(\log \mathcal{H})}^{\vee}_{|_{H}}) \longrightarrow H^{0}(H, {\Omega_{\mathbf{P}^5}^{1}(\log \mathcal{H})}^{\vee}_{|_{H}}) \longrightarrow $$
$$ \,\,\,\, \longrightarrow H^{0}(C, {\Omega_{\mathbf{P}^5}^{1}(\log \mathcal{H})}^{\vee}_{|_{C}}) \longrightarrow H^{1}(H, \mathcal{I}_{V_{2} \cap H, \, H} \otimes {\Omega_{\mathbf{P}^5}^{1}(\log \mathcal{H})}^{\vee}_{|_{H}}). \quad\quad  $$
To conclude the proof it suffices to show that 
\begin{equation}\label{eq:keyfact2}
H^{1}(H, \mathcal{I}_{V_{2} \cap H, \, H} \otimes {\Omega_{\mathbf{P}^5}^{1}(\log \mathcal{H})}^{\vee}_{|_{H}}) = \{0\}.
\end{equation}
Indeed, if (\ref{eq:keyfact2}) holds, then the map 
$$ H^{0}(H, {\Omega_{\mathbf{P}^5}^{1}(\log \mathcal{H})}^{\vee}_{|_{H}}) \longrightarrow H^{0}(C, {\Omega_{\mathbf{P}^5}^{1}(\log \mathcal{H})}^{\vee}_{|_{C}}) $$
is surjective and so, because of (\ref{eq:keyfact1}), we get 
$$ H^{0}(H, {\Omega_{\mathbf{P}^5}^{1}(\log \mathcal{H})}^{\vee}_{|_{H}}) \not= \{0\}, $$ 
that is $ H $ is unstable for $ \Omega_{\mathbf{P}^5}^{1}(\log \mathcal{H}) $.
In order to prove (\ref{eq:keyfact2}), we remark that, since $ \ell \geq 9 $, $ \Omega_{\mathbf{P}^5}^{1}(\log \mathcal{H}) $ is a \emph{Steiner} bundle over $ \mathbf{P}^5 $, i.e.
$$ 0 \longrightarrow \mathcal{O}_{\mathbf{P}^5}(-1)^{\ell-6} \longrightarrow \mathcal{O}_{\mathbf{P}^5}^{\ell-1} \longrightarrow \Omega_{\mathbf{P}^5}^{1}(\log \mathcal{H}) \longrightarrow 0 $$
is exact. Since in the previous sequence all the terms are vector bundles, applying $ \mathcal{H}om(\cdot, \, \mathcal{O}_{\mathbf{P}^5}) $ we get
$$ 0 \longrightarrow {\Omega_{\mathbf{P}^5}^{1}(\log \mathcal{H})}^{\vee} \longrightarrow \mathcal{O}_{\mathbf{P}^5}^{\ell-1} \longrightarrow \mathcal{O}_{\mathbf{P}^5}(1)^{\ell-6} \longrightarrow 0, $$
which, via tensor product with $ \mathcal{I}_{V_{2},\,\mathbf{P}^5\,|_{H}} $, becomes
\begin{equation}\label{eq:Steiner1}
0 \longrightarrow \mathcal{I}_{V_{2} \cap H, \, H} \otimes \, {\Omega_{\mathbf{P}^5}^{1}(\log \mathcal{H})}^{\vee}_{|_{H}} \longrightarrow \mathcal{I}_{V_{2} \cap H, \, H} \otimes \, \mathcal{O}_{\mathbf{P}^5 \, |_{H}}^{\ell-1} \longrightarrow
\end{equation}
$$ \longrightarrow \mathcal{I}_{V_{2} \cap H, \, H} \otimes \, \mathcal{O}_{\mathbf{P}^5}(1)^{\ell-6}_{|_{H}} \longrightarrow 0. \quad\quad\quad\quad\quad\quad\quad\quad\quad\quad\quad\quad $$ 
Applying $ \Gamma(H,\, \cdot) $ to (\ref{eq:Steiner1}) we obtain the following long exact sequence:
\begin{equation}\label{eq:coomolSteiner1}
0 \longrightarrow H^{0}(H, \mathcal{I}_{V_{2} \cap H, \, H} \otimes \, {\Omega_{\mathbf{P}^5}^{1}(\log \mathcal{H})}^{\vee}_{|_{H}}) \longrightarrow \quad\quad\quad\quad\quad\quad\quad\quad\quad\quad
\end{equation} 
$$ \longrightarrow H^{0}(H, \mathcal{I}_{V_{2} \cap H, \, H} \otimes \, \mathcal{O}_{\mathbf{P}^5 \, |_{H}}^{\ell-1}) \longrightarrow H^{0}(H, \mathcal{I}_{V_{2} \cap H, \, H} \otimes \, \mathcal{O}_{\mathbf{P}^5}(1)^{\ell-6}_{|_{H}}) \longrightarrow  $$
$$ \quad\, \longrightarrow H^{1}(H, \mathcal{I}_{V_{2} \cap H, \, H} \otimes \, {\Omega_{\mathbf{P}^5}^{1}(\log \mathcal{H})}^{\vee}_{|_{H}}) \longrightarrow H^{1}(H, \mathcal{I}_{V_{2} \cap H, \, H} \otimes \, \mathcal{O}_{\mathbf{P}^5 \, |_{H}}^{\ell-1}). \quad \,$$
We remark that
$$ H^{i}(H, \mathcal{I}_{V_{2} \cap H, \, H} \otimes \, \mathcal{O}_{\mathbf{P}^5}(t)^{s}_{|_{H}}) = H^{i}(H, \mathcal{I}_{V_{2} \cap H, \, H}(t))^{s} $$
for all $ i, s, t $ integers such that $ i, s \geq 0 $. We note also that, if $ t \geq 0 $, then $ H^{0}(H, \mathcal{I}_{V_{2} \cap H, \, H}(t)) $ is the set of all homogeneous forms of degree $ t $ over $ H $ which vanish at $ V_{2} \cap H $. So the second and the third term of (\ref{eq:coomolSteiner1}) are trivial. This implies also that
$$ H^{0}(H, \mathcal{I}_{V_{2} \cap H, \, H} \otimes \, {\Omega_{\mathbf{P}^5}^{1}(\log \mathcal{H})}^{\vee}_{|_{H}}) = \{ 0 \}. $$
In this way (\ref{eq:coomolSteiner1}) reduces to 
$$ 0 \longrightarrow H^{1}(H, \mathcal{I}_{V_{2} \cap H, \, H} \otimes \, {\Omega_{\mathbf{P}^5}^{1}(\log \mathcal{H})}^{\vee}_{|_{H}}) \longrightarrow H^{1}(H, \mathcal{I}_{V_{2} \cap H, \, H})^{\ell-1}. $$
If we restrict (\ref{eq:idealsheafsequence}) to $ H $ and we consider the induced cohomology sequence, we get that 
$$ H^{1}(H, \mathcal{I}_{V_{2} \cap H, \, H}) = {\mathbf{C}}^{k-1} $$
where $ k $ denotes the number of connected components of $ V_{2} \cap H $. Since $ V_{2} \cap H $ is connected, $ k = 1 $ and so (\ref{eq:keyfact2}) holds.
\end{proof}
Since isomorphic logarithmic bundles have the same set of unstable smooth conics, we have the following:

\begin{corollary}
If $ \ell \geq 9 $ then the map 
$$ \mathcal{D} \longmapsto \Omega_{\mathbf{P}^2}^{1}(\log \mathcal{D}) $$
is generically injective.
\end{corollary}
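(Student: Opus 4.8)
The plan is to combine Theorem~\ref{t54} with the elementary observation that the defining condition~(\ref{eq:conicainstabile}) for an unstable conic depends only on the isomorphism class of the logarithmic bundle. Indeed, if $\Phi : \Omega_{\mathbf{P}^2}^{1}(\log \mathcal{D}) \to \Omega_{\mathbf{P}^2}^{1}(\log \mathcal{D}')$ is an isomorphism of vector bundles, then for every smooth conic $C$ it restricts to an isomorphism ${\Omega_{\mathbf{P}^2}^{1}(\log \mathcal{D})}^{\vee}_{|_{C}} \cong {\Omega_{\mathbf{P}^2}^{1}(\log \mathcal{D}')}^{\vee}_{|_{C}}$, hence an isomorphism on the global sections $H^{0}(C, \cdot)$. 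Thus $C$ is unstable for the first bundle if and only if it is unstable for the second, so two isomorphic logarithmic bundles have exactly the same set of smooth unstable conics (this is the sentence preceding the corollary, which I would make precise here).

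First I would isolate the nonempty Zariski-open subset $\mathcal{U}$ of the parameter space of arrangements of $\ell$ smooth conics on which all hypotheses of Theorem~\ref{t54} are satisfied. Hypothesis 1) is automatic since $\ell \geq 9$. Hypotheses 2) and 3) each fail only on a proper closed subvariety: via the Veronese map $\nu_2$ the conics correspond to hyperplanes $H_1,\dots,H_\ell$ in $\mathbf{P}^5$, and the failure of normal crossings for $\mathcal{H}$ is an explicit rank/codimension degeneration, while the condition that $H_1,\dots,H_\ell$ osculate a rational normal curve of degree $5$ in $\mathbf{P}^5$ is likewise a closed condition. Hence $\mathcal{U}$ is open and dense, and \emph{generic injectivity} will be understood as injectivity on $\mathcal{U}$.

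Then, for $\mathcal{D}, \mathcal{D}' \in \mathcal{U}$ with $\Omega_{\mathbf{P}^2}^{1}(\log \mathcal{D}) \cong \Omega_{\mathbf{P}^2}^{1}(\log \mathcal{D}')$, I would apply Theorem~\ref{t54} to each arrangement separately: it identifies $\mathcal{D}$ with the set of smooth unstable conics of its bundle, and identifies $\mathcal{D}'$ with the set of smooth unstable conics of its bundle. By the first paragraph these two sets coincide, since the bundles are isomorphic. Therefore $\mathcal{D} = \mathcal{D}'$, which is precisely the asserted injectivity of the Torelli correspondence on $\mathcal{U}$.

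The hard part will be making rigorous that hypotheses 2) and 3) genuinely cut out a nonempty open locus, that is, that a sufficiently general arrangement of $\ell \geq 9$ smooth conics yields Veronese hyperplanes in $\mathbf{P}^5$ that simultaneously have normal crossings and avoid the osculating configuration of every rational normal curve of degree $5$. The normal-crossings condition, being an explicit transversality statement on the $H_i$, is comparatively routine to establish by a dimension count; the subtler point is the non-emptiness of the complement of condition 3), for which one must either exhibit an explicit such arrangement or argue by comparing the dimension of the family of osculating configurations of degree-$5$ rational normal curves in $(\mathbf{P}^5)^{\vee}$ against the dimension of the full space of $\ell$-tuples of points.
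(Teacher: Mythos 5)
Your proposal is correct and follows exactly the route the paper intends: the paper's entire justification is the single sentence preceding the corollary (``isomorphic logarithmic bundles have the same set of unstable smooth conics''), combined implicitly with Theorem~\ref{t54} applied on the locus where its hypotheses hold. The only difference is that you make explicit the genericity of hypotheses 2) and 3) (via the dimension count for osculating configurations of rational normal quintics in $\mathbf{P}^5$, which works precisely because $\ell \geq 9$), a point the paper leaves entirely unstated.
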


\begin{remark}
The hypothesis $ 1), 2), 3) $ of theorem $ 5.4 $ are necessary in order to apply Vall\`es' result for the case of $ \mathbf{P}^5 $. However we don't know what happens for arrangements made of $ \ell \in \{3,\ldots, 8\} $ conics. In the next two sections we will describe the cases of $ \ell = 1 $ and $ \ell = 2 $.
\end{remark}

\vfill\eject

\section{One conic}
Arrangements made of one smooth conic are not of \emph{Torelli type}. In this sense we have the following:

\begin{theorem}\label{T:1conica}
Let $ C \subset \mathbf{P}^2 $ be a smooth conic and let $ \mathcal{D} = \{C\} $. Then 
$$ \Omega_{\mathbf{P}^2}^{1}(\log \mathcal{D}) \cong \mathbf{TP}^2(-2). $$ 
\end{theorem}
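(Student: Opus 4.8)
The plan is to specialize Ancona's resolution (Theorem~\ref{T:Ancona}) to the case $ n=2 $, $ \ell=1 $, $ d_{1}=2 $ and then to recognize the resulting map as the Euler map, up to an automorphism of the middle term. When $ \ell=1 $ the matrix $ N $ in (\ref{eq:matrixAncona}) reduces to the single row $ (\partial_{0}f, \partial_{1}f, \partial_{2}f) $ (there are no $ f_{i} $-columns and no $ \mathcal{O}_{\mathbf{P}^2}^{\ell-1} $ summand), so (\ref{eq:Anconas}) becomes
\[
0 \longrightarrow \Omega_{\mathbf{P}^2}^{1}(\log \mathcal{D})^{\vee} \longrightarrow \mathcal{O}_{\mathbf{P}^2}(1)^{3} \buildrel \rm N \over \longrightarrow \mathcal{O}_{\mathbf{P}^2}(2) \longrightarrow 0,
\]
where $ N(g_{0},g_{1},g_{2}) = \sum_{j=0}^{2} g_{j}\,\partial_{j}f $. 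Since $ (\mathbf{TP}^2(-2))^{\vee} = \Omega_{\mathbf{P}^2}^{1}(2) $, proving the claimed isomorphism is equivalent, after dualizing, to proving that $ \Omega_{\mathbf{P}^2}^{1}(\log \mathcal{D})^{\vee} \cong \Omega_{\mathbf{P}^2}^{1}(2) $.

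Next I would compare the sequence above with the Euler sequence (the one appearing in the proof of Proposition~\ref{P:3.14}) twisted by $ \mathcal{O}_{\mathbf{P}^2}(2) $,
\[
0 \longrightarrow \Omega_{\mathbf{P}^2}^{1}(2) \longrightarrow \mathcal{O}_{\mathbf{P}^2}(1)^{3} \buildrel \rm E \over \longrightarrow \mathcal{O}_{\mathbf{P}^2}(2) \longrightarrow 0,
\]
where $ E(h_{0},h_{1},h_{2}) = \sum_{j=0}^{2} x_{j}h_{j} $. Both sequences exhibit a rank-$ 2 $ bundle as the kernel of a surjection $ \mathcal{O}_{\mathbf{P}^2}(1)^{3} \to \mathcal{O}_{\mathbf{P}^2}(2) $, so it suffices to produce an automorphism of $ \mathcal{O}_{\mathbf{P}^2}(1)^{3} $ carrying $ N $ to $ E $, which then identifies the two kernels.

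The key step, where the smoothness of $ C $ enters, is the following. Writing $ f = \sum_{i,j} a_{ij} x_{i} x_{j} $ with symmetric matrix $ A = (a_{ij}) $, one has $ \partial_{j}f = 2\sum_{i} a_{ij} x_{i} $, and the smoothness of $ C $ is exactly the condition $ \det A \neq 0 $. Substituting gives $ N(g) = \sum_{j} g_{j}\,\partial_{j}f = \sum_{i} x_{i}\big(2\sum_{j} a_{ij} g_{j}\big) = E(\Phi(g)) $, where $ \Phi $ is the endomorphism of $ \mathcal{O}_{\mathbf{P}^2}(1)^{3} $ given by the constant invertible matrix $ 2A $. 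Hence $ N = E \circ \Phi $ with $ \Phi $ an isomorphism, so $ \ker N \cong \ker E $, i.e. $ \Omega_{\mathbf{P}^2}^{1}(\log \mathcal{D})^{\vee} \cong \Omega_{\mathbf{P}^2}^{1}(2) $; dualizing yields $ \Omega_{\mathbf{P}^2}^{1}(\log \mathcal{D}) \cong \mathbf{TP}^2(-2) $.

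I expect the only genuine content to be the invertibility observation $ \det A \neq 0 $ for a smooth conic; the remainder is bookkeeping with the two short exact sequences. As a consistency check, the conclusion agrees with Theorem~\ref{T:UY2} and the remark following it: a smooth conic is always of Sebastiani-Thom type, hence not of Torelli type, and indeed the bundle $ \mathbf{TP}^2(-2) $ obtained here does not depend on the chosen conic $ C $.
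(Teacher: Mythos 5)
Your argument is correct and is essentially the paper's own proof: both specialize Ancona's resolution to $\ell=1$, $d_1=2$ and identify the resulting sequence with a twist of the Euler sequence, the only content being that the gradient map of a smooth quadric is the Euler map composed with the invertible constant matrix $2A$. The paper phrases this as a ``without loss of generality'' normalization of the column of partial derivatives to $^{t}(x_0\ x_1\ x_2)$ and works with the presentation $0\to\mathcal{O}_{\mathbf{P}^2}(-2)\to\mathcal{O}_{\mathbf{P}^2}(-1)^3\to\Omega^1_{\mathbf{P}^2}(\log\mathcal{D})\to 0$ rather than its dual, but that is the same computation; if anything, your explicit factorization $N=E\circ\Phi$ makes the paper's WLOG precise.
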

\begin{proof}
Let consider the short exact sequence for $ \Omega_{\mathbf{P}^2}^{1}(\log \mathcal{D}) $:
\begin{equation}\label{eq:Anconaper1}
0 \longrightarrow \mathcal{O}_{\mathbf{P}^2}(-2) \buildrel \rm M \over \longrightarrow \mathcal{O}_{\mathbf{P}^2}(-1)^{3} \longrightarrow \Omega_{\mathbf{P}^2}^{1}(\log \mathcal{D})  \longrightarrow 0.
\end{equation} 
where $ M $ is the matrix associated to the injective map defined by the three partial derivatives of a quadratic polynomial defining $ C $. Without loss of generality we can assume that 
$$ M = \pmatrix{ x_{0} \cr x_{1} \cr x_{2} \cr} $$
and so, by tensor product with $ \mathcal{O}_{\mathbf{P}^2}(1) $ (\ref{eq:Anconaper1}) becomes the Euler sequence for $ \mathbf{TP}^2(-1) $, which concludes the proof.
\end{proof}

\begin{remark}
From (\ref{eq:Anconaper1}) we immediately get that our logarithmic bundle has Chern classes $ c_{1}(\Omega_{\mathbf{P}^2}^{1}(\log \mathcal{D})) = -1 $ and $ c_{2}(\Omega_{\mathbf{P}^2}^{1}(\log \mathcal{D})) = 1 $. Moreover, since the slope $ \mu(\Omega_{\mathbf{P}^2}^{1}(\log \mathcal{D})) = -\displaystyle{{1} \over {2}} > -1 $, Bohnhorst-Spindler criterion tells us that $ \Omega_{\mathbf{P}^2}^{1}(\log \mathcal{D}) $ is a stable bundle. So $ \Omega_{\mathbf{P}^2}^{1}(\log \mathcal{D}) $ belongs to the moduli space $ \mathbf{M}_{\mathbf{P}^2}(-1,1) $, which actually contains only the bundle $ \mathbf{TP}^2(-2) $, as we can see in~\cite{Wykno}. This is another way to prove theorem~\ref{T:1conica}.
\end{remark}

\begin{remark}
The previous theorem confirms the main result of~\cite{UY2} in the case of one smooth conic. Indeed, the defining equation of a conic is always of \emph{Sebastiani-Thom type} (see definition~\ref{d:sebthom}).
\end{remark}

\section{Pairs of conics}
Let's start with a classical result concerning a characterization of pairs of conics with normal crossings.
\begin{theorem}
Let $ C_{1} $ and $ C_{2} $ be smooth conics in $ \mathbf{P}^2 $. 
\\The following facts are equivalent:
\begin{itemize}
\item[$1)$] $ \mathcal{D} = \{C_{1}, C_{2}\} $ is an arrangement with normal crossings in $ \mathbf{P}^2 $;
\item[$2)$] the pencil of conics generated by $ C_{1} $ and $ C_{2} $ has four distinct base points $ \{P,Q,R,S\} $;
\item[$3)$] in the pencil of conics generated by $ C_{1} $ and $ C_{2} $ there are three distinct singular conics with singular points $ \{E,F,G\} $.
\end{itemize}
\end{theorem}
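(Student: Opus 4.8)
The plan is to prove the cycle $1)\Leftrightarrow 2)$ together with $2)\Rightarrow 3)\Rightarrow 2)$, using Bézout's theorem for the first equivalence and the classical geometry of a pencil of conics for the second. Throughout I would write $C_i=\{F_i=0\}$ with $F_i$ a quadratic form of symmetric matrix $A_i$, so that the pencil is $\{\lambda F_1+\mu F_2=0\}_{[\lambda:\mu]\in\mathbf{P}^1}$, its base locus is the common zero set $C_1\cap C_2$, and a member is singular exactly when the cubic form $\Delta(\lambda,\mu)=\det(\lambda A_1+\mu A_2)$ vanishes; since $C_1$ is smooth, $\det A_1\neq 0$, so $\Delta$ is a nonzero cubic and the pencil contains at most three singular members. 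For $1)\Leftrightarrow 2)$, since both conics are smooth and irreducible, normal crossings fails precisely when $C_1$ and $C_2$ are tangent at some common point, i.e. when some intersection multiplicity $I_P(C_1,C_2)$ exceeds $1$. As $\sum_P I_P(C_1,C_2)=4$ by Bézout, the arrangement has normal crossings if and only if there are four points each of multiplicity one, that is four distinct points of $C_1\cap C_2$; these are exactly the base points of the pencil, which is statement $2)$.

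For $2)\Rightarrow 3)$ I would exploit the complete quadrangle on the four base points $P,Q,R,S$. First, no three of them are collinear, since a line through three points of the smooth irreducible conic $C_1$ would meet it in at least three points, which is impossible. Hence the four points impose independent conditions on conics, so the conics through them form a $\mathbf{P}^1$; as $C_1\neq C_2$ both lie in it, this $\mathbf{P}^1$ is exactly our pencil. The three line-pairs $PQ\cup RS$, $PR\cup QS$, $PS\cup QR$ pass through all four base points, hence belong to the pencil and are singular. They are pairwise distinct (equality of two would force three collinear base points), and their singular points, the diagonal points $PQ\cap RS$, $PR\cap QS$, $PS\cap QR$, are distinct as well (two coinciding would again force three base points to be collinear). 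Since $\Delta$ has at most three roots these are all the singular members, giving exactly three distinct singular conics with three distinct singular points.

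Finally, $3)\Rightarrow 2)$ goes through the discriminant and simultaneous diagonalization. Three distinct singular conics mean that the cubic $\Delta$ has three distinct roots, equivalently that $A_1^{-1}A_2$ has three distinct eigenvalues. Over $\mathbf{C}$, a pair of quadratic forms, one of which is nondegenerate and whose discriminant has simple roots, can be brought by a single change of coordinates to the simultaneously diagonal form $F_1=\sum_i x_i^2$, $F_2=\sum_i a_i x_i^2$ with $a_0,a_1,a_2$ distinct. Solving the two equations then forces $x_0^2:x_1^2:x_2^2=(a_1-a_2):(a_2-a_0):(a_0-a_1)$, so every coordinate is nonzero and the sign choices produce four distinct base points, which is statement $2)$.

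I expect this last step to be the main obstacle: one must justify the simultaneous diagonalization of the pair $(A_1,A_2)$, that is congruence and not merely similarity, precisely under the hypothesis that $\Delta$ has simple roots. This is exactly the regime in which the Weierstrass--Kronecker normal form of the pencil is diagonal, the degenerate intersection types corresponding to nontrivial Jordan blocks and repeated roots of $\Delta$. Packaging this as a clean linear-algebra lemma, valid because $C_1$ is smooth so that $A_1$ is invertible, is what lets the converse close without an explicit case-by-case analysis of the tangential degenerations.
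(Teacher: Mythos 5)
Your proposal is correct and follows essentially the same route as the paper: Bézout for $1)\Leftrightarrow 2)$, the complete quadrangle of line-pairs for $2)\Rightarrow 3)$, and simultaneous diagonalization by congruence followed by an explicit computation of the four base points for $3)\Rightarrow 2)$. The only substantive difference is that you defer the key simultaneous-diagonalization lemma to Kronecker--Weierstrass normal form theory, whereas the paper proves it from scratch (Sylvester's theorem to reduce $B$ to the identity, then orthogonal diagonalization of the complex symmetric matrix $A'={}^{t}GAG$, including the non-obvious verification that the eigenvectors are anisotropic, i.e.\ $\langle v_i,v_i\rangle\neq 0$) --- you correctly identified this as the main obstacle, and the classical fact you invoke does close it.
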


\begin{figure}[h]
    \centering
\includegraphics[width=65mm]{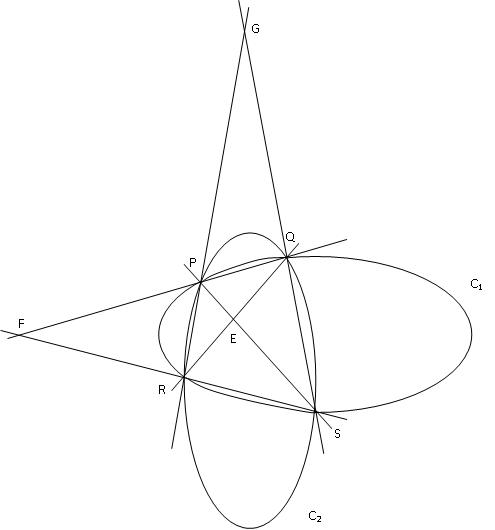}
    \caption{Two conics with normal crossings}
\label{flattening}
    \end{figure}

\begin{proof} 
The equivalence between $ 1) $ and $ 2) $ is a direct consequence of the B\'ezout's theorem, for example see ~\cite{Ki}. 
\\So, let assume that $ 2) $ holds. Then the three pairs of lines passing through disjoint pairs in $ \{P,Q,R,S\} $ are exactly the singular conics in the pencil generated by $ C_{1} $ and $ C_{2} $, which implies $ 3) $. 
\\The main part of the proof is to show that $ 3) $ implies $ 2) $. Let $ A, \, B \in GL(3,\mathbf{C}) $ be the symmetric matrices representing $ C_{1} $ and $ C_{2} $ with respect to the canonical basis $ \mathcal{C} $ of $ \mathbf{C}^3 $. By applying Sylvester's theorem to $ B $ we get that there exists $ G \in GL(3,\mathbf{C}) $ such that
\begin{equation}\label{eq:BSylvester}
^{t}GBG=I_{3}.
\end{equation}
Let $ A+tB $, with $ t \in \mathbf{C} $, be the symmetric matrix representing a generic element in the pencil generated by $ C_{1} $ and $ C_{2} $, we have that
$$ ^{t}G(A+tB)G=\,^{t}GAG+tI_{3} $$ 
and so 
$$ det(\,^{t}GAG+tI_{3})=det(G)^{2}det(A+tB). $$
Since $ G $ is not singular and $ 3) $ holds, the matrix $ A'=\,^{t}GAG $, which is clearly symmetric, has three \emph{distinct eigenvalues} (they are the opposite of the values giving singular conics in our pencil), that is $  A' $ is diagonalizable. This means that, if $ \mathcal{B}= \{v_{0},v_{1},v_{2}\}$ is a basis of $ \mathbf{C}^3 $ made of eigenvectors of $  A' $, $ \{\lambda_{0},\lambda_{1},\lambda_{2}\} $ are the corresponding eigenvalues, $ C= \mathcal{M}_{\mathcal{B}}^{\mathcal{C}}(id_{\mathbf{C}^3})$, $ \Lambda=diag(\lambda_{0},\lambda_{1},\lambda_{2}) $, then the representation of $  A' $ with respect to $ \mathcal{B} $ is 
$$ C^{-1} A'C=\Lambda. $$
We remark that we can always assume that $ C $ is an orthogonal matrix, i.e. $ ^{t}CC=C\,^{t}C=I_{3} $. Indeed, first of all we observe that, for $ i \not= j $, from 
$$ <A'v_{i},v_{j}>\,=\, <\lambda_{i}v_{i},v_{j}>\,=\,\lambda_{i}<v_{i},v_{j}> $$
and
$$ <A'v_{i},v_{j}>\,=\, ^{t}(A'v_{i})v_{j} \,=\, ^{t}v_{i}A'v_{j}\,=\, <v_{i},A'v_{j}> \,=\, \lambda_{j}<v_{i},v_{j}> $$
we get that
$$ (\lambda_{i}-\lambda_{j})<v_{i},v_{j}>\,=\,0 $$
which implies 
$$ <v_{i},v_{j}>\,=\,0. $$
This means that $ v_{0},v_{1},v_{2} $ are orthogonal with respect to the standard bilinear symmetric non degenerate form in $ \mathbf{C}^3 $ ($ <v,w> := \,^{t}vw $ for all $ v,w \in \mathbf{C}^3 $). Moreover they are orthogonal with respect to the scalar product defined by $ A' $: 
\begin{equation}\label{eq:A'ortogonali}
<v_{i},v_{j}>_{A'} \,=\, ^{t}v_{i}A'v_{j}=\lambda_{j}<v_{i},v_{j}>\,=\,0 
\end{equation}
for $ i \not= j $.
Finally these vectors satisfy $ <v_{i},v_{i}> \not= 0 $: if this is not the case, let assume for instance that $ v_{0} \not=0 $ satisfies $ <v_{0},v_{0}>\,=\,0 $. By doing the same computations as in (\ref{eq:A'ortogonali}), the previous equality implies that $ <v_{0},v_{0}>_{A'} \,=0 $. Thus $ v_{0},v_{1},v_{2} $ are three linearly independent elements of the orthogonal complement $ [v_{0}]^{\bot_{A'}} = \{v \in \mathbf{C}^3 \,| \, <v_{0},v>_{A'} \,=0\}$, that is $ dim_{\mathbf{C}}[v_{0}]^{\bot_{A'}} \geq 3 $. But we also know that 
$$ dim_{\mathbf{C}} [v_{0}]^{\bot_{A'}} = 3 - dim_{\mathbf{C}}<v_{0}> \,=\, 2  $$
which is a contradiction. \\
So, if we fix a choice of square root of $ <v_{i},v_{i}> $, we obtain that $ \mathcal{B}'= \{ {{v_{0}} \over {\sqrt{<v_{0},v_{0}>}}},{{v_{1}} \over {\sqrt{<v_{1},v_{1}>}}},{{v_{2}} \over {\sqrt{<v_{2},v_{2}>}}}\}$ is a basis of eigenvectors of $ A' $ orthonormal with respect to the standard bilinear non degenerate form in $ \mathbf{C}^3$ and $ O= \mathcal{M}_{\mathcal{B}'}^{\mathcal{C}}(id_{\mathbf{C}^3}) $ is an orthogonal matrix such that
$$ ^{t}OA'O=\Lambda $$
or equivalently
\begin{equation}\label{eq:Adiagonale}
^{t}(GO)A(GO)=\Lambda. 
\end{equation}
From (\ref{eq:BSylvester}) we get that
\begin{equation}\label{eq:Bdiagonale}
^{t}(GO)B(GO)=\,^{t}O(\,^{t}GBG)O\,=\,^{t}OO=I_{3}. 
\end{equation}
Thus, (\ref{eq:Adiagonale}) and (\ref{eq:Bdiagonale}) tell us that $ A $ and $ B $ are \emph{simultaneously diagonalizable by congruence} in the basis $ \mathcal{B}'' $ of $ \mathbf{C}^3$ made of the columns of $ GO $. In particular, in this frame the equations of $ C_{1} $ and $ C_{2} $ are, respectively,
\begin{equation}\label{eq:C1} 
\lambda_{0}x_{0}^2+\lambda_{1}x_{1}^2+\lambda_{2}x_{2}^2=0   
\end{equation}
\begin{equation}\label{eq:C2} 
x_{0}^2+x_{1}^2+x_{2}^2=0. 
\end{equation}
It's not hard to see that, if we fix a choice of square root of $ \lambda_{2}-\lambda_{1} $, then $ C_{1} $ and $ C_{2} $ intersect in four distinct points
$$ [\sqrt{\lambda_{2}-\lambda_{1}},\pm\sqrt{\lambda_{0}-\lambda_{2}},\pm\sqrt{\lambda_{1}-\lambda_{0}}] $$
which we denote by $ \{P,Q,R,S\} $. This concludes the proof of $ 2) $.
\end{proof}

\begin{remark}
As a consequence of the smoothness of $ C_{1} $ and $ C_{2} $ we immediately have that any three of $ \{P,Q,R,S\} $ are not collinear.
\end{remark}

\begin{remark}
In order to get from $ 3) $ the simultaneous diagonalization of the two conics we don't need that $ C_{1} $ is smooth. 
\end{remark}

\begin{remark}
We can check with direct computations that the elements of the basis $ \mathcal{B}'' $ with respect to which $ C_{1} $ and $ C_{2} $ have equations (\ref{eq:C1}) and (\ref{eq:C2}) are \emph{representative vectors} in $ \mathbf{C}^3 $ of the points $ \{E,F,G\} $. Indeed, let $ \{w_{0}, w_{1}, w_{2}\} $ be vectors corresponding to $ \{E,F,G\} $ and let $ \{t_{0}, t_{1}, t_{2}\} \in \mathbf{C} $ such that $ det(A+t_{i}B)=0 $. Since $ w_{i} $ represents the singular point of the conic given by $ A+t_{i}B $, we have that
$$ Aw_{i}=-t_{i}Bw_{i}; $$
but from (\ref{eq:Adiagonale}) and (\ref{eq:Bdiagonale}) we know that 
$$ A = (\,^{t}(GO))^{-1}\Lambda(GO)^{-1} $$
$$ B = (\,^{t}(GO))^{-1}(GO)^{-1}.  $$
Thus we get that
$$ (\,^{t}(GO))^{-1}\Lambda(GO)^{-1}w_{i}=-t_{i}(\,^{t}(GO))^{-1}(GO)^{-1}w_{i} $$
that is
$$ \Lambda(GO)^{-1}w_{i}=-t_{i}(GO)^{-1}w_{i}. $$
This means that $ (GO)^{-1}w_{i} $ is an eigenvector of $ \Lambda $ corresponding to the eigenvalue $ \lambda_{i}= -t_{i}$ and, being $ \Lambda $ a diagonal matrix, $ (GO)^{-1}w_{i} = e_{i}$, the $ i $-th vector of the canonical basis $ \mathcal{C} $ of $ \mathbf{C}^3 $. In this way we get that $ w_{i} = (GO)e_{i} $, the $ i $-th column of the matrix $ GO $, as desired.
\end{remark}

\begin{corollary}
Let $ \mathcal{D} = \{C_{1}, C_{2}\} $ be an arrangement of smooth conics with normal crossings and let $ \{E,F,G\} $ as in \emph{theorem $ 5.10 $}. Then the matrices associated to $ C_{1} $ and $ C_{2} $ in a frame given by  representative vectors of $ \{E,F,G\} $ are of the form $ diag(a_{1},b_{1},-1) $ and $ diag(a_{2},b_{2},-1) $, where $ a_{1},b_{1},a_{2},b_{2} \in \mathbf{C}-\{0\}.$
\end{corollary}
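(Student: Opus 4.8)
The plan is to read the statement off directly from the simultaneous diagonalization established in the theorem above (Theorem 5.10), using only two elementary observations: that a conic is smooth precisely when its defining symmetric matrix is invertible, and that this matrix is determined by the conic only up to a nonzero scalar factor.

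First I would recall the outcome of Theorem 5.10 together with the remark preceding this corollary. In the basis $\mathcal{B}''$ given by the columns of $GO$, which that remark identifies with representative vectors of the singular points $\{E,F,G\}$, the conics $C_1$ and $C_2$ are represented respectively by $\Lambda = \mathrm{diag}(\lambda_0,\lambda_1,\lambda_2)$ and by $I_3$, by equations (\ref{eq:Adiagonale}) and (\ref{eq:Bdiagonale}). Thus in any frame made of representative vectors of $\{E,F,G\}$ both $C_1$ and $C_2$ are already diagonal: a rescaling of the basis vectors $w_i \mapsto \mu_i w_i$ turns a diagonal congruence matrix $\mathrm{diag}(d_0,d_1,d_2)$ into $\mathrm{diag}(\mu_0^2 d_0,\mu_1^2 d_1,\mu_2^2 d_2)$, which is still diagonal with the same pattern of vanishing entries, so diagonality does not depend on the chosen scaling of the representative vectors.

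Next I would verify that every diagonal entry is nonzero. For $C_2$ this is clear, its matrix being $I_3$; for $C_1$, smoothness gives $\det A \neq 0$, whence $\det \Lambda = \det(GO)^2 \det A \neq 0$ and so $\lambda_0\lambda_1\lambda_2 \neq 0$. Finally, exploiting that the symmetric matrix of a conic is only defined up to a nonzero scalar, I would normalize the last diagonal entry of each conic to $-1$: multiplying the matrix of $C_2$ by $-1$ produces $\mathrm{diag}(-1,-1,-1)$, while multiplying the matrix of $C_1$ by $-1/\lambda_2$ produces $\mathrm{diag}(-\lambda_0/\lambda_2,\,-\lambda_1/\lambda_2,\,-1)$. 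Setting $a_2=b_2=-1$ and $a_1=-\lambda_0/\lambda_2$, $b_1=-\lambda_1/\lambda_2$ yields exactly the asserted forms, all four scalars being nonzero by the previous step.

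I expect essentially no obstacle here: the genuine work, namely proving that $C_1$ and $C_2$ are simultaneously diagonalizable by congruence and that the diagonalizing frame consists of representative vectors of $\{E,F,G\}$, has already been carried out in the proof of Theorem 5.10 and its following remark. The only point requiring care is bookkeeping of the two distinct scaling freedoms, that of the basis vectors and that of each conic's defining matrix; this corollary merely repackages the earlier result in a conveniently normalized frame.
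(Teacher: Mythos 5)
Your proposal is correct and follows exactly the route the paper intends: the corollary is stated without proof precisely because it is read off from the simultaneous diagonalization of Theorem $5.10$ (equations (\ref{eq:Adiagonale}) and (\ref{eq:Bdiagonale})) together with the preceding remark identifying the basis $\mathcal{B}''$ with representative vectors of $\{E,F,G\}$, followed by the two scalar normalizations you describe. Your added bookkeeping of the rescaling freedom of the representative vectors and of the defining matrices, and the observation that smoothness forces all diagonal entries to be nonzero, is exactly the right way to make this explicit.
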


Now let's come back to the \emph{Torelli problem}. \\

Let $ \mathcal{D} = \{C_{1}, C_{2}\} $ be an arrangement of smooth conics with normal crossings and let $\Omega_{\mathbf{P}^2}^{1}(\log \mathcal{D})$ be the logarithmic bundle attached to it.

\begin{remark}
Theorem \ref{T:Ancona} implies that $\Omega_{\mathbf{P}^2}^{1}(\log \mathcal{D})$ is a rank $ 2 $ vector bundle over $ \mathbf{P}^2 $ with an exact sequence of the form
\begin{equation}\label{eq:Anconasequence}
0 \longrightarrow \mathcal{O}_{\mathbf{P}^2}(-2)^{2} \longrightarrow \mathcal{O}_{\mathbf{P}^2}(-1)^{3} \oplus \mathcal{O}_{\mathbf{P}^2} \longrightarrow \Omega_{\mathbf{P}^2}^{1}(\log \mathcal{D})  \longrightarrow 0.
\end{equation}
Its Chern polynomial is obtained by truncating to degree $ 2 $ the expression
$$ {{(1-t)^{3}} \over {(1-2t)^{2}}} $$ 
that is
$$ p_{\Omega_{\mathbf{P}^2}^{1}(\log \mathcal{D})}(t) = 1+t+3t^{2}. $$
In particular its Chern classes are 
\begin{equation}\label{eq:chernclasses}
c_{1}(\Omega_{\mathbf{P}^2}^{1}(\log \mathcal{D})) = 1, \,  c_{2}(\Omega_{\mathbf{P}^2}^{1}(\log \mathcal{D})) = 3.
\end{equation}
Moreover its \emph{normalized bundle} is $\Omega_{\mathbf{P}^2}^{1}(\log \mathcal{D})_{norm} = \Omega_{\mathbf{P}^2}^{1}(\log \mathcal{D})(-1) $ with $ c_{1}(\Omega_{\mathbf{P}^2}^{1}(\log \mathcal{D})_{norm}) = -1 $ and $ c_{2}(\Omega_{\mathbf{P}^2}^{1}(\log \mathcal{D})_{norm}) = 3 $. If we do the tensor product of (\ref{eq:Anconasequence}) with $ \mathcal{O}_{\mathbf{P}^2}(-1) $ and then we consider the induced long exact cohomology sequence, we get that 
$$ H^{0}(\mathbf{P}^2,\Omega_{\mathbf{P}^2}^{1}(\log \mathcal{D})_{norm}) = \{0\}, $$
that is $ \Omega_{\mathbf{P}^2}^{1}(\log \mathcal{D}) $ is stable,~\cite{OSS}. The same is true for $\Omega_{\mathbf{P}^2}^{1}(\log \mathcal{D})_{norm} $. \\
 $ \Omega_{\mathbf{P}^2}^{1}(\log \mathcal{D})_{norm} \in \mathbf{M}_{\mathbf{P}^2}(-1,3) $, the moduli space of stable rank $ 2 $ bundles on $ \mathbf{P}^2 $ such that $ c_{1} = -1 $ and $ c_{2} = 3 $. In general, as we can see in~\cite{OSS}, given a vector bundle $ E $ in the moduli space $ \mathbf{M}_{\mathbf{P}^2}(c_{1},c_{2}) $, we have that $ h^{0}(End E) = 1 $ ($ E $ is \emph{simple}) and $ h^{2}(End E) = 0 $. The Riemann-Roch theorem implies that the Euler characteristic of $ End E $ is $ \chi(EndE) = c_{1}^{2}(E)-4c_{2}(E)+4 $ and so  
\begin{equation}\label{eq:dimmoduli}
dim \mathbf{M}_{\mathbf{P}^2}(c_{1},c_{2}) = h^{1}(EndE) = -c_{1}^{2}(E)+4c_{2}(E)-3. 
\end{equation}
In our case, the previous formula gives us
\begin{equation}\label{eq:M(-1,3)}
dim \mathbf{M}_{\mathbf{P}^2}(-1,3) = 8
\end{equation}
but the number of parameters associated to a pair of conics is $ 10 $. \\
Thus we conclude that such $ \mathcal{D} $ can't be an arrangement of \emph{Torelli type}.
\end{remark}

In order to describe the pairs of conics giving isomorphic logarithmic bundles we need the following:

\begin{proposition}
Let $ \mathcal{D} = \{C_{1}, C_{2}\} $ be an arrangement of smooth conics with normal crossings and let $ \{E,F,G\} $ as in \emph{theorem $ 5.10 $}. Then:
\begin{itemize}
\item[$1)$] $ \{E,F,G\} $ is the zero locus of the non-zero section of $\Omega_{\mathbf{P}^2}^{1}(\log \mathcal{D})$;
\item[$2)$] the three lines through any two of the points in $ \{E,F,G\} $ are exactly the \emph{jumping lines} of $\Omega_{\mathbf{P}^2}^{1}(\log \mathcal{D})_{norm}$ (see figure 5.2).
\end{itemize} 
\end{proposition}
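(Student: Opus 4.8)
The plan is to prove the two claims separately, using the simultaneous diagonalization of Theorem~5.10: after fixing the frame $\mathcal B''$ in which $E=[1:0:0]$, $F=[0:1:0]$, $G=[0:0:1]$, the two conics are $C_i=\{f_i=0\}$ with $f_1=a_1x_0^2+b_1x_1^2+c_1x_2^2$ and $f_2=a_2x_0^2+b_2x_1^2+c_2x_2^2$, all coefficients nonzero and the ratios $a_1/a_2,\,b_1/b_2,\,c_1/c_2$ pairwise distinct (the latter recording that the three singular conics of the pencil are distinct). In this frame $E,F,G$ are the coordinate points, hence non-collinear, and $f_i(E)=a_i\neq0$, etc., so none of them lies on $C_1\cup C_2$.

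For claim 1 I would first pin down the section. Applying $\Gamma(\mathbf P^2,-)$ to the residue exact sequence and using $H^0(\Omega_{\mathbf P^2}^1)=0$, $H^0(\mathcal O_{C_i})=\mathbf C$, $H^1(\Omega_{\mathbf P^2}^1)=\mathbf C$, the connecting map $H^0(\mathcal O_{C_1})\oplus H^0(\mathcal O_{C_2})\to H^1(\Omega_{\mathbf P^2}^1)$ is $(\lambda_1,\lambda_2)\mapsto 2\lambda_1+2\lambda_2$ (the degrees of the conics), so $h^0(\Omega_{\mathbf P^2}^1(\log\mathcal D))=1$ and the unique section up to scale is the logarithmic form $s=d\log(f_1/f_2)=\frac{df_1}{f_1}-\frac{df_2}{f_2}$ (well defined since $f_1/f_2$ is a genuine rational function), with residues $+1,-1$ along $C_1,C_2$. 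For its zero locus: since $\Omega_{\mathbf P^2}^1(\log\mathcal D)$ is stable of slope $1/2$, $s$ cannot vanish on a divisor (a sub-line bundle $\mathcal O_{\mathbf P^2}(D)$ with $\deg D\ge1$ would break stability), so $\{s=0\}$ is $0$-dimensional of length $c_2=3$. On $C_1\cup C_2$ the section $s$ has a nonzero residue, hence does not vanish there; off the conics $s=(f_2\,df_1-f_1\,df_2)/(f_1f_2)$ vanishes exactly where $\eta:=f_2\,df_1-f_1\,df_2$ does. A direct substitution shows $E,F,G$ annihilate all three coefficients of $\eta$, and they lie off the conics. Having exhibited three distinct zeros inside a scheme of length $3$, I conclude $\{s=0\}=\{E,F,G\}$, each simple. (As a sanity check one verifies that the remaining zeros of $\eta$ are precisely the four base points, absorbed by the poles of $1/(f_1f_2)$.)

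For claim 2 I would run the Serre construction on this section,
$$0\longrightarrow \mathcal O_{\mathbf P^2}\longrightarrow \Omega_{\mathbf P^2}^1(\log\mathcal D)\longrightarrow \mathcal I_Z(1)\longrightarrow0,\qquad Z=\{E,F,G\},$$
and twist by $\mathcal O_{\mathbf P^2}(-1)$ to get $0\to\mathcal O_{\mathbf P^2}(-1)\to\Omega_{\mathbf P^2}^1(\log\mathcal D)_{norm}\to\mathcal I_Z\to0$. Writing $F=\Omega_{\mathbf P^2}^1(\log\mathcal D)_{norm}$ and $\ell=\{L=0\}$, a line is a jumping line iff $F|_\ell\neq\mathcal O_\ell\oplus\mathcal O_\ell(-1)$, i.e. iff $h^0(F|_\ell)\ge2$. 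From the restriction sequence $0\to F(-1)\buildrel L\over\longrightarrow F\to F|_\ell\to0$ together with $H^0(F)=0$ one gets $h^0(F|_\ell)=\dim\ker\bigl[H^1(F(-1))\buildrel L\over\longrightarrow H^1(F)\bigr]$. Chasing the twisted Serre sequences identifies $H^1(F(-1))\cong H^0(\mathcal O_Z)=\mathbf C^3$ and $H^1(F)\cong H^0(\mathcal O_Z)/\langle(1,1,1)\rangle\cong\mathbf C^2$, with multiplication by $L$ acting as $(v_E,v_F,v_G)\mapsto(L(E)v_E,L(F)v_F,L(G)v_G)\bmod\langle(1,1,1)\rangle$. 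A short count shows this kernel is $1$-dimensional unless $L$ vanishes at two of the three points, where it becomes $2$-dimensional; hence $\ell$ jumps iff it passes through two of $E,F,G$, and since these are non-collinear the jumping lines are exactly the three sides $\overline{EF},\overline{EG},\overline{FG}$.

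The main obstacle is the bookkeeping in claim 2: correctly identifying the two first-cohomology groups and, above all, the multiplication-by-$L$ map between them as diagonal multiplication by the values $L(E),L(F),L(G)$ followed by reduction modulo constants; once this module structure is secured the kernel dimensions, and therefore the jumping lines, drop out immediately. In claim 1 the only real subtlety is recognizing that $s$ does not vanish on the conics because its residues there are nonzero, so that the ``extra'' common zeros of $\eta$ at the four base points are \emph{not} zeros of the logarithmic section.
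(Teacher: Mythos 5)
Your proof is correct, but it reaches both conclusions by a genuinely different route than the paper. For part $1)$ the paper never writes down the section explicitly: it reads the zero locus off the resolution of theorem~\ref{T:Ancona}, asking at which points $x$ the linear block of the $4\times 2$ presentation matrix drops to rank one; this is the generalized eigenvalue problem $Ax=\lambda Bx$, whose solutions are by construction the singular points of the three singular conics of the pencil, i.e.\ $E,F,G$. You instead exhibit the section as $d\log(f_{1}/f_{2})$ and locate its zeros through $\eta=f_{2}\,df_{1}-f_{1}\,df_{2}$; the two extra points your version needs --- that stability forbids a divisorial zero locus, and that the nonzero residues prevent vanishing along the conics (so the four base points, which are zeros of $\eta$, are not zeros of the section) --- are both handled correctly, and the count $c_{2}=3$ closes the argument exactly as in the paper. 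For part $2)$ the divergence is larger: the paper normalizes the four base points to the standard frame, restricts the dualized resolution to each of the three candidate lines, and verifies by direct computation that three explicit $6\times 7$ matrices $\mathcal{N}_{0},\mathcal{N}_{1},\mathcal{N}_{2}$ have rank $5$, then invokes the enumerative count of three jumping lines to conclude these are all of them. Your route through the extension $0\to\mathcal{O}_{\mathbf{P}^2}\to\Omega_{\mathbf{P}^2}^{1}(\log\mathcal{D})\to\mathcal{I}_{Z}(1)\to 0$ decides the jumping condition for \emph{every} line at once, by identifying $h^{0}(F_{|\ell})$ with the kernel of the multiplication-by-$L$ map $H^{0}(\mathcal{O}_{Z})\to H^{0}(\mathcal{O}_{Z})/\langle(1,1,1)\rangle$ (a computation that checks out: the kernel is $2$-dimensional precisely when $L$ vanishes at two of the three non-collinear points, and $1$-dimensional otherwise). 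This characterizes the jumping lines intrinsically, needs no coordinate normalization and no appeal to the degree of the jumping locus, and is arguably cleaner; what the paper's computational version buys in exchange is the explicit matrix apparatus that is reused immediately afterwards in the proof of theorem~\ref{T:coppieconiche}.
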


\begin{figure}[h]
    \centering
\includegraphics[width=60mm]{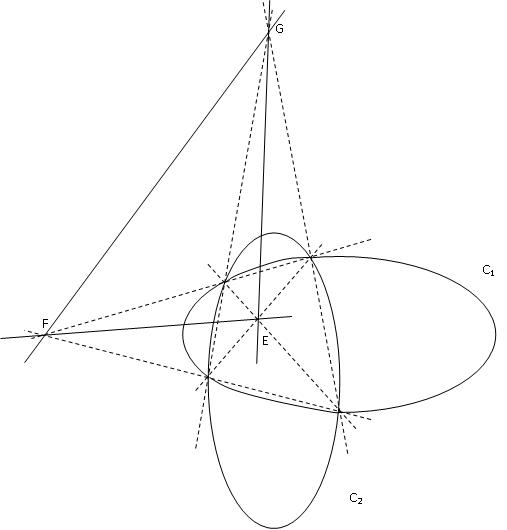}
    \caption{Jumping lines of $\Omega_{\mathbf{P}^2}^{1}(\log \mathcal{D})_{norm}$}
\label{flattening}
    \end{figure}

\begin{proof} From (\ref{eq:chernclasses}) we get that $ \Omega_{\mathbf{P}^2}^{1}(\log \mathcal{D}) $ has one non-zero section with three zeroes; we want to prove the zeroes are $ \{E,F,G\} $. So, let assume that $ A=(a_{ij}) $ and $ B=(b_{ij}) $ are the matrices representing $ C_{1} $ and $ C_{2} $ with respect to the canonical basis of $ \mathbf{C}^3 $. From theorem \ref{T:Ancona} we get that $ \Omega_{\mathbf{P}^2}^{1}(\log \mathcal{D}) $ admits the exact sequence
\begin{equation}\label{eq:anper2}
0 \longrightarrow \mathcal{O}_{\mathbf{P}^2}(-2)^{2} \buildrel \rm M \over \longrightarrow \mathcal{O}_{\mathbf{P}^2}(-1)^{3} \oplus \mathcal{O}_{\mathbf{P}^2} \longrightarrow \Omega_{\mathbf{P}^2}^{1}(\log \mathcal{D})  \longrightarrow 0
\end{equation}
where
$$ M = \pmatrix{ 
2 $$ \sum^2_{i=0}$$ a_{0i}x_{i} & 2 $$\sum^2_{i=0}$$  b_{0i}x_{i} \cr 
2 $$\sum^2_{i=0}$$ a_{1i}x_{i}  &  2 $$\sum^2_{i=0}$$  b_{1i}x_{i}  \cr 
2 $$\sum^2_{i=0}$$ a_{2i}x_{i}  &  2 $$\sum^2_{i=0}$$ b_{2i}x_{i}  \cr 
$$\sum^2_{i,j=0} $$ a_{ij}x_{i}x_{j} & 0 \cr
}. $$
We are searching for $ x = (x_{0},x_{1},x_{2}) \in \mathbf{C}^3 -\{0\} $ such that the linear part of $ M $ has rank $ 1 $, that is the solutions of 
$$ Ax = \lambda Bx $$
for certain $ \lambda \in \mathbf{C} $ ($ \lambda $ is an eigenvalue of $ AB^{-1} $ and $ x $ is the corresponding eigenvector). In other words, any such $ x $ has to be a representative vector for the singular point of the conic associated to $ A - \lambda B $ and this shows $ 1) $. \\
The \emph{jumping lines} of $\Omega_{\mathbf{P}^2}^{1}(\log \mathcal{D})_{norm} $ are the lines $ \ell $ in $ \mathbf{P}^2 $ over which this bundle doesn't split as in the Grauert-Mulich theorem (\cite{Wykno}), that is like 
$$ \mathcal{O}_{\ell} \oplus \mathcal{O}_{\ell}(-1). $$
Equivalently, a line $ \ell $ is a jumping line if it satisfies the following condition:
\begin{equation}\label{eq:jumpingline}
H^{0}(\ell,\Omega_{\mathbf{P}^2}^{1}(\log \mathcal{D})_{norm_{|\ell}}(-1)) \not= \{0\}.
\end{equation}
Since $ c_{1}(\Omega_{\mathbf{P}^2}^{1}(\log \mathcal{D})_{norm}) = -1 $, the set of jumping lines is a codimension $ 2 $ subvariety of $ Gr({\mathbf{P}^1},{\mathbf{P}^2}) $ with degree 
$$ { \displaystyle {c_{2}(\Omega_{\mathbf{P}^2}^{1}(\log \mathcal{D})_{norm})(c_{2}(\Omega_{\mathbf{P}^2}^{1}(\log \mathcal{D})_{norm}) - 1)} \over {2}} = 3 $$
that is $ \Omega_{\mathbf{P}^2}^{1}(\log \mathcal{D})_{norm} $ has three jumping lines, \cite{Wykno}. \\
We want to show that the lines through any two of the points in $ \{E,F,G\} $ verify (\ref{eq:jumpingline}). In order to do that, we can always assume that the base points of the pencil generated by $ C_{1} $ and $ C_{2} $ are
\begin{equation}\label{eq:basepoints}
P = [1,0,0], \,Q = [0,1,0], \,R = [0,0,1], \,S = [1,1,1].
\end{equation}
Indeed, there's a unique projective transformation $ \tau : \mathbf{P}^2 \rightarrow \mathbf{P}^2 $ sending four points in general position to the points in (\ref{eq:basepoints}). In this case the equations of $ C_{1} $ and $ C_{2} $ are, respectively,
\begin{equation}\label{eq:eqC1}
x_{0}x_{1}+ax_{0}x_{2}+bx_{1}x_{2}=0 
\end{equation}
\begin{equation}\label{eq:eqC2}
x_{0}x_{1}+cx_{0}x_{2}+dx_{1}x_{2}=0
\end{equation}
where $ a,b,c,d \in \mathbf{C} - \{0\} $ satisfy 
\begin{equation}\label{eq:conditions}
1+a+b = 1+c+d = 0, \, a \not= c, \,b \not= d
\end{equation}
and we have that
\begin{equation}\label{eq:EFG}
E = [1,1,0],F=[1,0,1],G=[0,1,1].
\end{equation}
The equations of the lines $ \ell_{0}, \ell_{1}, \ell_{2} $ through $ E $ and $ F $, $ E $ and $ G $, $ F $ and $ G $ are, respectively,
\begin{equation}\label{eq:eql1}
x_{0} = x_{1}+x_{2}
\end{equation}
\begin{equation}\label{eq:eql2}
x_{1} = x_{0}+x_{2}
\end{equation}
\begin{equation}\label{eq:eql3}
x_{2} = x_{0}+x_{1}.
\end{equation}
Applying $ \mathcal{H}om( \cdot,\mathcal{O}_{\mathbf{P}^{2}}) $ to (\ref{eq:anper2}) we get the following exact sequence:
\begin{equation}\label{eq:Anconadualseq}
0 \longrightarrow \Omega_{\mathbf{P}^2}^{1}(\log \mathcal{D})^{\vee} \longrightarrow \mathcal{O}_{\mathbf{P}^2}(1)^{3} \oplus \mathcal{O}_{\mathbf{P}^2} \buildrel \rm N \over \longrightarrow \mathcal{O}_{\mathbf{P}^2}(2)^{2} \longrightarrow 0
\end{equation}
where
$$ N = \,^{t}M = \pmatrix{ 
x_{1}+ax_{2} & x_{0}+bx_{2} & ax_{0}+bx_{1} & x_{0}x_{1}+ax_{0}x_{2}+bx_{1}x_{2} \cr 
x_{1}+cx_{2} & x_{0}+dx_{2} & cx_{0}+dx_{1} & 0 \cr
}. $$ 
Restricting (\ref{eq:Anconadualseq}) to $ \ell_{i} $ we obtain
\begin{equation}\label{eq:Anconadualseqi}
0 \longrightarrow \Omega_{\mathbf{P}^2}^{1}(\log \mathcal{D})^{\vee}_{|\ell_{i}} \longrightarrow \mathcal{O}_{\mathbf{P}^2}(1)^{3}_{|\ell_{i}} \oplus {\mathcal{O}_{\mathbf{P}^2}}_{|\ell_{i}} \buildrel \rm N_{i} \over \longrightarrow \mathcal{O}_{\mathbf{P}^2}(2)^{2}_{|\ell_{i}} \longrightarrow 0
\end{equation}
where
$$ N_{0} = N_{|\ell_{0}} = \pmatrix{ 
x_{1}+ax_{2} & x_{1}+(1+b)x_{2} & x_{1}(a+b)+ax_{2} & x_{1}^{2}+ax_{2}^{2} \cr 
x_{1}+cx_{2} & x_{1}+(1+d)x_{2} & x_{1}(c+d)+cx_{2} & 0 \cr
} $$ 
$$ N_{1} = N_{|\ell_{1}} = \pmatrix{ 
x_{0}+(1+a)x_{2} & x_{0}+bx_{2} & x_{0}(a+b)+bx_{2} & x_{0}^{2}+bx_{2}^{2} \cr 
x_{0}+(1+c)x_{2} & x_{0}+dx_{2} & x_{0}(c+d)+dx_{2} & 0 \cr
} $$
$$ N_{2} = N_{|\ell_{2}} = \pmatrix{ 
ax_{0}+x_{1}(1+a) & x_{0}(1+b)+bx_{1} & ax_{0}+bx_{1} & ax_{0}^{2}+bx_{1}^{2} \cr 
cx_{0}+x_{1}(1+c) & x_{0}(1+d)+dx_{1} & cx_{0}+dx_{1} & 0 \cr
}. $$
Now, the induced cohomology exact sequence of (\ref{eq:Anconadualseqi}) is
$$ 0 \rightarrow H^{0}(\ell_{i}, \Omega_{\mathbf{P}^2}^{1}(\log \mathcal{D})^{\vee}_{|\ell_{i}}) \rightarrow H^{0}(\ell_{i}, \mathcal{O}_{\mathbf{P}^2}(1)_{|\ell_{i}})^{3} \oplus H^{0}(\ell_{i}, {\mathcal{O}_{\mathbf{P}^2}}_{|\ell_{i}}) \buildrel \rm \mathcal{N}_{i} \over\rightarrow $$ 
$$ \rightarrow H^{0}(\ell_{i}, \mathcal{O}_{\mathbf{P}^2}(2)_{|\ell_{i}})^{2} \rightarrow H^{1}(\ell_{i}, \Omega_{\mathbf{P}^2}^{1}(\log \mathcal{D})^{\vee}_{|\ell_{i}}) \longrightarrow 0 \quad\quad\quad\quad\quad $$
where $ \mathcal{N}_{0},\mathcal{N}_{1},\mathcal{N}_{2} $ are the following $ 6 \times 7 $ matrices:
$$ \mathcal{N}_{0} = \pmatrix{ 1 & 0 & 1 & 0 & a+b & 0 & 1 \cr
a & 1 & 1+b & 1 & a & a+b & 0 \cr
0 & a & 0 & 1+b & 0 & a & a \cr
1 & 0 & 1 & 0 & c+d & 0 & 0 \cr
c & 1 & 1+d & 1 & c & c+d & 0 \cr 
0 & c & 0 & 1+d & 0 & c & 0 \cr
}  $$
$$ \mathcal{N}_{1} = \pmatrix{ 1 & 0 & 1 & 0 & a+b & 0 & 1 \cr
1+a & 1 & b & 1 & b & a+b & 0 \cr
0 & 1+a & 0 & b & 0 & b & b \cr
1 & 0 & 1 & 0 & c+d & 0 & 0 \cr
1+c & 1 & d & 1 & d & c+d & 0 \cr 
0 & 1+c & 0 & d & 0 & d & 0 \cr
}  $$
$$ \mathcal{N}_{2} = \pmatrix{ a & 0 & 1+b & 0 & a & 0 & a \cr
1+a & a & b & 1+b & b & a & 0 \cr
0 & 1+a & 0 & b & 0 & b & b \cr
c & 0 & 1+d & 0 & c & 0 & 0 \cr
1+c & c & d & 1+d & d & c & 0 \cr 
0 & 1+c & 0 & d & 0 & d & 0 \cr
}.  $$
By using Serre duality we have that 
$$ H^{1}(\ell_{i}, \Omega_{\mathbf{P}^2}^{1}(\log \mathcal{D})^{\vee}_{|\ell_{i}}) = H^{0}(\ell_{i}, \Omega_{\mathbf{P}^2}^{1}(\log \mathcal{D})_{|\ell_{i}}(-2))^{\vee} = $$
$$ = H^{0}(\ell_{i}, {\Omega_{\mathbf{P}^2}^{1}(\log \mathcal{D})_{norm}}_{|\ell_{i}}(-1))^{\vee}. $$
Thus it suffices to show that $ \mathcal{N}_{0},\mathcal{N}_{1},\mathcal{N}_{2} $ have not maximal rank.
Since (\ref{eq:conditions}) holds, it's not hard to see that $ \mathcal{N}_{0},\mathcal{N}_{1},\mathcal{N}_{2} $ have rank $ 5 $ and this concludes the proof.
\end{proof}

\begin{remark}
The previous proposition tells us that there are two ways to make a correspondence between the logarithmic bundle $ \Omega_{\mathbf{P}^2}^{1}(\log \mathcal{D}) $ and the points $ \{E,F,G\} $.
\end{remark}

The main theorem concerning pairs of conics is the following:
\begin{theorem}\label{T:coppieconiche}
Let $ \mathcal{D}_{1} = \{C_{1}, C_{2}\} $ and $ \mathcal{D}_{2} = \{C'_{1}, C'_{2}\} $ be arrangements of smooth conics with normal crossings. Then 
\begin{equation}\label{eq:isomorphism}
\Omega_{\mathbf{P}^2}^{1}(\log \mathcal{D}_{1}) \cong \Omega_{\mathbf{P}^2}^{1}(\log \mathcal{D}_{2}).
\end{equation}
if and only if $ \mathcal{D}_{1} $ and $ \mathcal{D}_{2} $ have the same four tangent lines.
\end{theorem}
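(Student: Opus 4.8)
The plan is to extract from the bundle a complete isomorphism invariant and to match it, on the nose, with the configuration of the four common tangents. By the preceding proposition the unique (up to scalar) global section of $\Omega_{\mathbf{P}^2}^{1}(\log \mathcal{D})$ vanishes exactly on the triangle $\{E,F,G\}$, and this triangle is intrinsic to the isomorphism class: an isomorphism of bundles covers the identity of $\mathbf{P}^2$, carries section to section, hence preserves the vanishing locus. So if (\ref{eq:isomorphism}) holds, then $\mathcal{D}_1$ and $\mathcal{D}_2$ share the same triangle $\{E,F,G\}$. First I would record the geometric meaning of this triangle: by the simultaneous diagonalisation above, in the frame where $\{E,F,G\}$ is the coordinate triangle both conics are diagonal, their dual conics are diagonal as well, and the four common tangents are the base points of the dual pencil; in particular $\{E,F,G\}$ is simultaneously the diagonal triangle of the four base points and of the four common tangent lines. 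Dually, the conics inscribed in a fixed quadrilateral form a pencil whose members are all diagonal in this same frame, so two pairs share their four tangents precisely when, in the common diagonal frame, their dual conics span the same pencil.

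Next I would encode the bundle as a Serre extension. Since $c_1=1$, $c_2=3$ and $h^0=1$, the section yields
\[
0 \longrightarrow \mathcal{O}_{\mathbf{P}^2} \longrightarrow \Omega_{\mathbf{P}^2}^{1}(\log \mathcal{D}) \longrightarrow \mathcal{I}_{\{E,F,G\}}(1) \longrightarrow 0,
\]
and by the Serre correspondence the isomorphism class of the bundle is equivalent to the datum of the triangle $\{E,F,G\}$ together with the extension class in $\mathbf{P}\,\mathrm{Ext}^1(\mathcal{I}_{\{E,F,G\}}(1),\mathcal{O}_{\mathbf{P}^2})$. For three non-collinear reduced points this $\mathrm{Ext}^1$ is canonically a $\mathbf{C}^3$ with one coordinate per point, and local freeness of the middle term forces all three components to be non-zero. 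Thus the bundle is classified by $\{E,F,G\}$ together with a point $(c_E:c_F:c_G)$ in the open torus of $\mathbf{P}^2$.

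The heart of the argument is then to compute this extension class and to identify it with the four tangents. Working in the diagonal frame with $C_1=\mathrm{diag}(p_0,p_1,p_2)$ and $C_2=\mathrm{diag}(q_0,q_1,q_2)$, I would read the extension off Ancona's resolution of Theorem~\ref{T:Ancona} (the matrix $N$), and show that its component at the vertex $e_i$ is the $i$-th coordinate of the cross product $(1/p_0,1/p_1,1/p_2)\times(1/q_0,1/q_1,1/q_2)$, i.e. the coefficient vector of the dual pencil. Since this vector determines the dual pencil, hence its four base points, it determines the four common tangents; so the extension class equals the tangent datum. Consequently isomorphic bundles have the same $\{E,F,G\}$ and the same dual pencil, that is the same four tangents, and conversely two pairs with the same four tangents are diagonal in a common frame with proportional cross products, hence define the same pair $(\{E,F,G\},(c_i))$ and thus isomorphic bundles.

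The main obstacle is exactly this last identification: the clean-looking claim ``extension class $=$ cross product'' hides a local computation at each vertex, comparing the local generators of $\Omega_{\mathbf{P}^2}^{1}(\log \mathcal{D})^{\vee}$ with the dual conics, and one must also check that the nondegeneracy condition (all $c_i\neq 0$) corresponds precisely to the smoothness/normal-crossing hypotheses. A fully explicit alternative, in the same diagonal frame, is to write the two Ancona matrices $N,N'$ and to produce by hand a pair $(G,\Phi)\in GL_2(\mathbf{C})\times \mathrm{Aut}(\mathcal{O}_{\mathbf{P}^2}(1)^3\oplus\mathcal{O}_{\mathbf{P}^2})$ realising $N'=G\,N\,\Phi$ whenever the cross products agree, and to rule out any such pair otherwise. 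The delicate point there is that moving within the tangential pencil is \emph{not} a row operation on $N$ (row operations only move within the base-point pencil), so the required isomorphism is genuinely hidden and must be constructed through the invariant $(c_i)$ rather than read directly from the resolution.
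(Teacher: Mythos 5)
Your route is genuinely different from the paper's. The paper, after fixing the common self-polar triangle $\{E,F,G\}$ as the zero locus of the section (your first step, identical), proceeds by brute force: it writes the two resolutions from Theorem~\ref{T:Ancona} in the diagonal frame and chases the commutative diagram $M'N_1=N_2M''$ coefficient by coefficient, extracting the two resolubility conditions (\ref{eq:1condition}) and (\ref{eq:2condition}), solving them for $d_1,d_2$, and recognising the resulting matrices (\ref{eq:solutionsD2}) as $(A^{-1}+t_iB^{-1})^{-1}$, i.e.\ as members of the dual pencil. Your reduction via the Serre correspondence is cleaner and is correct as stated: since $h^0=1$ the triangle $Z=\{E,F,G\}$ is intrinsic, $\mathrm{Ext}^1(\mathcal{I}_Z(1),\mathcal{O})\cong\mathrm{Ext}^2(\mathcal{O}_Z,\mathcal{O})\cong\mathbf{C}^3$ with one coordinate per point, both $\mathcal{O}$ and $\mathcal{I}_Z(1)$ have only scalar automorphisms, and local freeness is exactly the condition that all three coordinates be non-zero; so $(Z,[c_E:c_F:c_G])$ is a complete isomorphism invariant, and the parameter count $6+2=8$ matches $\dim\mathbf{M}_{\mathbf{P}^2}(-1,3)$. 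If completed, this would even improve on the paper, whose argument only closes the loop modulo the open genericity conditions (\ref{eq:M'invertibile}) and (\ref{eq:M''invertibile}).

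The gap is the one you yourself flag: the identification of the extension class with the dual-pencil datum (your cross-product formula) is asserted, not proven, and it is not a formality --- it is where the entire content of the theorem sits. Both directions of your argument pass through it, and you cannot obtain it by observing that both the extension class and the dual pencil are points of the same two-torus attached to $Z$: a priori they could be unrelated functions of the pair of conics, and deducing their equivalence from the statement of the theorem would be circular. So the local computation at each vertex $e_i$ --- comparing the connecting map of $0\to\mathcal{O}\to\Omega^1_{\mathbf{P}^2}(\log\mathcal{D})\to\mathcal{I}_Z(1)\to 0$ with the generator of $\mathcal{E}xt^2(\mathcal{O}_{e_i},\mathcal{O})$, most conveniently read off from the resolution of Theorem~\ref{T:Ancona} --- must actually be carried out before the proof is complete. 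Your closing remark is accurate and worth heeding: moving within the tangential pencil is not visible as a row operation on the Ancona matrix, so the isomorphism really must be produced through the invariant rather than read off the resolution; this is precisely why the paper's version of the argument degenerates into the long system of equations it records.
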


\begin{figure}[h]
    \centering
\includegraphics[width=50mm]{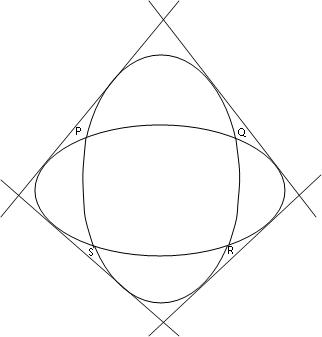}
    \caption{Four tangent lines of a pair of conics}
\label{flattening}
    \end{figure}

\begin{proof}
Let assume that (\ref{eq:isomorphism}) holds, by using proposition $ 5.16 $ we can associate to each bundle the same set of points $ \{E,F,G\} $. From corollary $ 5.14 $ we get that there's a frame of $ \mathbf{C}^3 $ made of representative vectors of $ \{E,F,G\} $ in which $ C_{1}, C_{2}, C'_{1}, C'_{2} $ have equations, respectively,
$$ a_{1}x_{0}^2+b_{1}x_{1}^2-x_{2}^2=0  $$
$$ a_{2}x_{0}^2+b_{2}x_{1}^2-x_{2}^2=0  $$
$$ c_{1}x_{0}^2+d_{1}x_{1}^2-x_{2}^2=0  $$
$$ c_{2}x_{0}^2+d_{2}x_{1}^2-x_{2}^2=0  $$
where $ a_{1}, b_{1}, a_{2}, b_{2}, c_{1}, d_{1}, c_{2}, d_{2} \in \mathbf{C}-\{0\} $ and $ a_{1} \not= a_{2} $, $ b_{1} \not= b_{2} $, $ c_{1} \not= c_{2} $, $ d_{1} \not= d_{2} $, $ \displaystyle{{b_{1}} \over {a_{1}}} \not= {{b_{2}} \over {a_{2}}} $, $ \displaystyle{{d_{1}} \over {c_{1}}} \not= {{d_{2}} \over {c_{2}}} $ (these properties assure, respectively, that we have two smooth conics with normal crossings). 
Our aim is to find relations between the coefficients of the previous equations in order to have (\ref{eq:isomorphism}). \\
We recall the two exact sequences
$$ 0 \longrightarrow \Omega_{\mathbf{P}^2}^{1}(\log \mathcal{D}_{1})^{\vee} \longrightarrow \mathcal{O}_{\mathbf{P}^2}(1)^{3} \oplus \mathcal{O}_{\mathbf{P}^2} \buildrel \rm N_{1} \over \longrightarrow \mathcal{O}_{\mathbf{P}^2}(2)^{2} \longrightarrow 0 $$
$$ 0 \longrightarrow \Omega_{\mathbf{P}^2}^{1}(\log \mathcal{D}_{2})^{\vee} \longrightarrow \mathcal{O}_{\mathbf{P}^2}(1)^{3} \oplus \mathcal{O}_{\mathbf{P}^2} \buildrel \rm N_{2} \over \longrightarrow \mathcal{O}_{\mathbf{P}^2}(2)^{2} \longrightarrow 0 $$
where
$$ N_{1} = \pmatrix{ 
2a_{1}x_{0} & 2b_{1}x_{1} & -2x_{2} & a_{1}x_{0}^2+b_{1}x_{1}^2-x_{2}^2 \cr 
2a_{2}x_{0} & 2b_{2}x_{1} & -2x_{2} & 0 \cr
} $$
$$ N_{2} = \pmatrix{ 
2c_{1}x_{0} & 2d_{1}x_{1} & -2x_{2} & c_{1}x_{0}^2+d_{1}x_{1}^2-x_{2}^2 \cr 
2c_{2}x_{0} & 2d_{2}x_{1} & -2x_{2} & 0 \cr
}. $$
(\ref{eq:isomorphism}) is equivalent to the fact that there exist two invertible matrices
\begin{equation}\label{eq:M'}
M' = \pmatrix{ 
\alpha & \beta \cr 
\gamma & \delta \cr
}
\end{equation}
\begin{equation}\label{eq:M''}
M'' = \pmatrix{ 
E & F & G & f_{1} \cr
H & I & L & f_{2} \cr
M & N & O & f_{3} \cr
0 & 0 & 0 & \theta \cr
}
\end{equation}
with $ \alpha, \dots, \delta,E, \dots, O, \theta \in \mathbf{C} $ and $ f_{j} = f^{0}_{j}x_{0}+f^{1}_{j}x_{1}+f^{2}_{j}x_{2} $, $ j \in \{1,2,3\} $, complex linear forms, such that the following diagram commutes:
\vfill\eject
$$ \mathcal{O}_{\mathbf{P}^2}(1)^{3} \oplus \mathcal{O}_{\mathbf{P}^2} \buildrel \rm N_{1} \over \longrightarrow \mathcal{O}_{\mathbf{P}^2}(2)^{2} $$ 
$$ \,\,\,\,\,\,\,\,\,\,M'' \downarrow  \,\,\,\,\,\,\,\,\,\,\,\,\,\,\,\,\,\,\,\,\,\,\,\,\,\,\,\,\,\ \downarrow M'  $$
$$ \mathcal{O}_{\mathbf{P}^2}(1)^{3} \oplus \mathcal{O}_{\mathbf{P}^2} \buildrel \rm N_{2} \over \longrightarrow \mathcal{O}_{\mathbf{P}^2}(2)^{2} $$ 
First, let's equate the coefficients of the matrices $ M'N_{1} $ and $ N_{2}M'' $ that don't belong to the fourth column. We get the following conditions:
$$ \alpha a_{1} + \beta a_{2} = c_{1} E $$
$$ H = M = 0 $$ 
$$ \alpha b_{1} + \beta b_{2} = d_{1} I $$
$$ F = N = 0 $$
$$ \alpha + \beta = O $$
$$ G = L = 0 $$
$$ \gamma a_{1} + \delta a_{2} = c_{2} E $$
$$ \gamma b_{1} + \delta b_{2} = d_{2} I $$
$$ \gamma + \delta = O. $$
These relations reduce to:
\begin{equation}\label{eq:E}
E = \displaystyle{{a_{1}} \over {c_{1}}} \alpha + {{a_{2}} \over {c_{1}}} \beta = \displaystyle{{a_{1}} \over {c_{2}}} \gamma + \displaystyle{{a_{2}} \over {c_{2}}} \delta
\end{equation}
\begin{equation}\label{eq:I}
I = \displaystyle{{b_{1}} \over {d_{1}}} \alpha + \displaystyle{{b_{2}} \over {d_{1}}} \beta = \displaystyle{{b_{1}} \over {d_{2}}} \gamma + \displaystyle{{b_{2}} \over {d_{2}}} \delta
\end{equation}
\begin{equation}\label{eq:O}
O = \alpha + \beta = \gamma + \delta.
\end{equation}
If we compute $ \gamma $ from (\ref{eq:O}) and then we substitute its expression in (\ref{eq:E}) we get
\begin{equation}\label{eq:D}
\delta = \displaystyle {{a_{1}(c_{2}-c_{1})} \over {c_{1}(a_{2}-a_{1})}} \alpha + \displaystyle {{(a_{2}c_{2}-a_{1}c_{1})} \over {c_{1}(a_{2}-a_{1})}} \beta. 
\end{equation}
Thus, (\ref{eq:O}) and (\ref{eq:D}) imply that 
\begin{equation}\label{eq:C}
\gamma = \displaystyle {{(a_{2}c_{1}-a_{1}c_{2})} \over {c_{1}(a_{2}-a_{1})}} \alpha + \displaystyle {{a_{2}(c_{1}-c_{2})} \over {c_{1}(a_{2}-a_{1})}} \beta.
\end{equation}
Now, by equating the corresponding coefficients of the last column of $ M'N_{1} $ and $ N_{2}M'' $ we obtain:
\begin{equation}\label{eq:X1400}
\alpha a_{1} = 2 c_{1} f^{0}_{1} + c_{1} \theta
\end{equation}
\begin{equation}\label{eq:X1411}
\alpha b_{1} = 2 d_{1} f^{1}_{2} + d_{1} \theta
\end{equation}
\begin{equation}\label{eq:X1422}
\alpha = 2 f^{2}_{3} + \theta
\end{equation}
\begin{equation}\label{eq:X1401}
c_{1} f^{1}_{1} + d_{1} f^{0}_{2} = 0
\end{equation}
\begin{equation}\label{eq:X1402}
c_{1} f^{2}_{1} - f^{0}_{3} = 0
\end{equation}
\begin{equation}\label{eq:X1412}
d_{1} f^{2}_{2} - f^{1}_{3} = 0
\end{equation}
\begin{equation}\label{eq:X2400}
\gamma a_{1} = 2 c_{2} f^{0}_{1}
\end{equation}
\begin{equation}\label{eq:X2411}
\gamma b_{1} = 2 d_{2} f^{1}_{2}
\end{equation}
\begin{equation}\label{eq:X2422}
\gamma = 2 f^{2}_{3} 
\end{equation}
\begin{equation}\label{eq:X2401}
c_{2} f^{1}_{1} + d_{2} f^{0}_{2} = 0
\end{equation}
\begin{equation}\label{eq:X2402}
c_{2} f^{2}_{1} - f^{0}_{3} = 0
\end{equation}
\begin{equation}\label{eq:X2412}
d_{2} f^{2}_{2} - f^{1}_{3} = 0.
\end{equation}
First of all, since $ \displaystyle{{d_{1}} \over {c_{1}}} \not= {{d_{2}} \over {c_{2}}} $, by using (\ref{eq:X1401}) and (\ref{eq:X2401}) we immediately get that 
$$ f^{0}_{2} = f^{1}_{1} = 0. $$
Similarly, equations (\ref{eq:X1402}), (\ref{eq:X2402}) and (\ref{eq:X1412}), (\ref{eq:X2412}) imply, respectively, that 
$$ f^{2}_{1} = f^{0}_{3} = 0 $$
$$ f^{2}_{2} = f^{1}_{3} = 0. $$
By computing $ \theta $ from (\ref{eq:X1422}), equations (\ref{eq:X1400}) and (\ref{eq:X1411}) become, respectively
$$ f^{2}_{3} = \displaystyle{{c_{1}-a_{1}} \over {2c_{1}}} \alpha + f^{0}_{1} $$ 
\begin{equation}\label{eq:f23a}
f^{2}_{3} = \displaystyle{{d_{1}-b_{1}} \over {2d_{1}}} \alpha + f^{1}_{2}.
\end{equation}
In particular we have
\begin{equation}\label{eq:f01a}
f^{0}_{1} = \displaystyle \left({{d_{1}-b_{1}} \over {2d_{1}}} + {{a_{1}-c_{1}} \over {2c_{1}}} \right) \alpha + f^{1}_{2}.
\end{equation}
Moreover, from (\ref{eq:X2400}), (\ref{eq:X2411}), (\ref{eq:X2422}) we get, respectively,
\begin{equation}\label{eq:f01b}
f^{0}_{1} = \displaystyle {{a_{1}} \over {2c_{2}}}\gamma
\end{equation}
\begin{equation}\label{eq:f12a}
f^{1}_{2} = \displaystyle {{b_{1}} \over {2d_{2}}}\gamma
\end{equation}
\begin{equation}\label{eq:f23b}
f^{2}_{3} = \displaystyle {{\gamma} \over {2}}.
\end{equation}
If we put together (\ref{eq:f23a}), (\ref{eq:f23b}), (\ref{eq:f12a}) and we use (\ref{eq:C}), we obtain the following well defined expression for $ \beta $:
\begin{equation}\label{eq:Bfinale}
\beta = \displaystyle {{a_{2}b_{1}c_{1}(d_{2}-d_{1})+a_{1}b_{1}(c_{2}d_{1}-c_{1}d_{2})+a_{1}d_{1}d_{2}(c_{1}-c_{2})} \over {a_{2}d_{1}(b_{1}-d_{2})(c_{1}-c_{2})}} \alpha.
\end{equation}
In this way (\ref{eq:C}) and (\ref{eq:D}) become, respectively,
\begin{equation}\label{eq:Cfinale}
\gamma = \displaystyle {{d_{2}(b_{1}-d_{1})} \over {d_{1}(b_{1}-d_{2})}} \alpha
\end{equation}
\begin{equation}\label{eq:Dfinale}
\delta = \displaystyle {{b_{1}d_{2}(a_{2}c_{2}-a_{1}c_{1})+b_{1}c_{2}d_{1}(a_{1}-a_{2})-a_{1}d_{1}d_{2}(c_{2}-c_{1})} \over {a_{2}d_{1}(b_{1}-d_{2})(c_{1}-c_{2})}} \alpha.
\end{equation}
Thus, if we choose $ \alpha \in \mathbf{C}-\{0\} $ and if the condition 
\begin{equation}\label{eq:M'invertibile}
(b_{1}-d_{2})c_{2}d_{1}+(d_{1}-b_{1})c_{1}d_{2} \not= 0
\end{equation}
is satisfied, then the matrix $ M' $ in (\ref{eq:M'}) is invertible. \\
By using (\ref{eq:f01a}), (\ref{eq:f01b}), (\ref{eq:f12a}) and (\ref{eq:Cfinale}) we get the first resolubility condition for our system of equations:
\begin{equation}\label{eq:1condition}
a_{1}b_{1}(c_{2}d_{1}-c_{1}d_{2})+b_{1}c_{1}c_{2}(d_{2}-d_{1})+a_{1}d_{1}d_{2}(c_{1}-c_{2}) = 0.
\end{equation}
With (\ref{eq:Cfinale}) we are able to compute final expressions for $ f^{0}_{1},f^{1}_{2},f^{2}_{3},\theta $; in particular we have
\begin{equation}\label{eq:tetafinale}
\theta = \displaystyle {{b_{1}(d_{1}-d_{2})} \over {d_{1}(b_{1}-d_{2})}} \alpha.
\end{equation}
Moreover, (\ref{eq:E}) and (\ref{eq:O}) become, respectively,
\begin{equation}\label{eq:Efinale}
E = \displaystyle {{b_{1}(d_{1}-d_{2})(a_{1}-a_{2})} \over {d_{1}(b_{1}-d_{2})(c_{1}-c_{2})}} \alpha
\end{equation}
\begin{equation}\label{eq:Ofinale}
O = \displaystyle{{(a_{1}-a_{2})[c_{2}d_{1}(b_{1}-d_{2})+c_{1}d_{2}(d_{1}-b_{1})]} \over {a_{2}d_{1}(b_{1}-d_{2})(c_{1}-c_{2})}} \alpha.
\end{equation}
We observe that $ \theta,E,O \in \mathbf{C}-\{0\} $ (for $ O $ see condition (\ref{eq:M'invertibile})). \\
By using (\ref{eq:I}) with (\ref{eq:Bfinale}), (\ref{eq:Cfinale}), (\ref{eq:Dfinale}) we get the second resolubility condition:
\begin{equation}\label{eq:2condition}
b_{1}b_{2}(a_{2}-a_{1})(c_{1}d_{2}-c_{2}d_{1})+d_{1}d_{2}(c_{1}-c_{2})(a_{1}b_{2}-a_{2}b_{1}) = 0.
\end{equation}
Finally, from (\ref{eq:I}) we get
\begin{equation}\label{eq:Ifinale}
I = \displaystyle{{d_{1}(a_{2}b_{1}-a_{1}b_{2})[d_{2}(c_{2}-c_{1})-b_{1}c_{2}]+b_{1}c_{1}[a_{2}d_{1}(b_{1}-b_{2})+b_{2}d_{2}(a_{2}-a_{1})]} \over {a_{2}d_{1}^{2}(b_{1}-d_{2})(c_{1}-c_{2})}} \alpha.
\end{equation}
If also $ I $ is different from $ 0 $, that is if
\begin{equation}\label{eq:M''invertibile}
d_{1}(a_{2}b_{1}-a_{1}b_{2})[d_{2}(c_{2}-c_{1})-b_{1}c_{2}]+b_{1}c_{1}[a_{2}d_{1}(b_{1}-b_{2})+b_{2}d_{2}(a_{2}-a_{1})] \not=0
\end{equation}
is satisfied, then the matrix $ M'' $ in (\ref{eq:M''}) is invertible. \\
Thus, $ \Omega_{\mathbf{P}^2}^{1}(\log \mathcal{D}_{1}) \cong \Omega_{\mathbf{P}^2}^{1}(\log \mathcal{D}_{2}) $ if and only if (\ref{eq:1condition}), (\ref{eq:2condition}), (\ref{eq:M'invertibile}), (\ref{eq:M''invertibile}) are verified.
Let's start by solving (\ref{eq:1condition}) and (\ref{eq:2condition}): if we fix $ a_{1},b_{1},a_{2},b_{2},c_{1},c_{2} $, for the remaining coefficients we get
\begin{equation}\label{eq:d_{1}}
d_{1} = \displaystyle{{b_{1}b_{2}c_{1}(a_{2}-a_{1})} \over {a_{1}b_{2}(a_{2}-c_{1})+a_{2}b_{1}(c_{1}-a_{1})}}
\end{equation}
\begin{equation}\label{eq:d_{2}}
d_{2} = \displaystyle{{b_{1}b_{2}c_{2}(a_{2}-a_{1})} \over {a_{1}b_{2}(a_{2}-c_{2})+a_{2}b_{1}(c_{2}-a_{1})}}.
\end{equation}
So, the the matrix associated to $ C'_{i} $, $ i \in \{1,2\} $, is of the form
\begin{equation}\label{eq:matricesD2}
\pmatrix{ 
c_{i} & 0 & 0 \cr
0 & \displaystyle{{b_{1}b_{2}c_{i}(a_{2}-a_{1})} \over {a_{1}b_{2}(a_{2}-c_{i})+a_{2}b_{1}(c_{i}-a_{1})}} & 0 \cr
0 & 0 & -1 \cr
}. 
\end{equation}
If we put $ t_{i} = \displaystyle {{a_{2}(a_{1}-c_{i})} \over {a_{1}(c_{i}-a_{2})}} $, then (\ref{eq:matricesD2}) becomes
\begin{equation}\label{eq:matricesintermedieD2}
\pmatrix{ 
\displaystyle{{a_{1}a_{2}(1+t_{i})} \over {a_{2}+t_{i}a_{1}}} & 0 & 0 \cr
0 & \displaystyle{{b_{1}b_{2}(1+t_{i})} \over {b_{2}+t_{i}b_{1}}} & 0 \cr
0 & 0 & -1 \cr
}. 
\end{equation}
(\ref{eq:matricesintermedieD2}) is equivalent (up to scalar multiplication) to the diagonal matrix
\begin{equation}\label{eq:solutionsD2}
C(t_{i}) = (A^{-1}+t_{i}B^{-1})^{-1}
\end{equation}
where $ A = diag(a_{1},b_{1},-1),\, B = diag(a_{2},b_{2},-1) $ are the matrices associated to $ C_{1} $ and $ C_{2} $. As we can see in \cite{GKZ}, if $ C \subset \mathbf{P}^2 $ is a smooth conic represented by a matrix $ M $, then the \emph{dual conic} $ C^{\vee} \subset {\mathbf{P}^2}^{\vee} $ is defined by the inverse matrix $ M^{-1} $. So, the four tangent lines to $ C_{1} $ and $ C_{2} $ become, in $ ({\mathbf{P}^2})^{\vee} $, the base points for the pencil of conics generated by $ A^{-1} $ and $ B^{-1} $. Coming back to $ {\mathbf{P}^2}$, these points correspond to the four tangent lines to $ C'_{1} $ and $ C'_{2} $, as desired. We remark that this implication is true when the elements of (\ref{eq:matricesintermedieD2}) satisfy the open condition (\ref{eq:M''invertibile}) ((\ref{eq:M'invertibile}) is always true). \\
Viceversa, let assume that $ \mathcal{D}_{1} $ and $ \mathcal{D}_{2} $ have the same tangent lines, we want to prove that the corresponding logarithmic bundles are isomorphic. Since $ C_{1} $ and $ C_{2} $ have normal crossings, we can suppose that they are represented, respectively, by $ A = diag(a_{1},b_{1},-1) $ and $ B = diag(a_{2},b_{2},-1) $, as above. If the two pairs of conics have the same tangent lines, $ {C'_{1}}^{\vee} $ and $ {C'_{2}}^{\vee} $ live in the pencil generated by $ {C_{1}}^{\vee} $ and $ {C_{2}}^{\vee} $, that is $ C'_{1} $ and $ C'_{2} $ are represented by matrices as in (\ref{eq:solutionsD2}) (or, equivalently, as in (\ref{eq:matricesintermedieD2})). Clearly these matrices satisfy (\ref{eq:1condition}), (\ref{eq:2condition}), (\ref{eq:M'invertibile}). If also (\ref{eq:M''invertibile}) holds, then $ \Omega_{\mathbf{P}^2}^{1}(\log \mathcal{D}_{1}) \cong \Omega_{\mathbf{P}^2}^{1}(\log \mathcal{D}_{2}) $, which concludes the proof.
\end{proof}

\begin{remark}
The previous theorem asserts that the isomorphism class of $ \Omega_{\mathbf{P}^2}^{1}(\log \mathcal{D}) $ is determined by the four tangent lines to $ \mathcal{D} $. It is confirmed also by dimensional computations: indeed, as we can see in (\ref{eq:M(-1,3)}), $ \Omega_{\mathbf{P}^2}^{1}(\log \mathcal{D})_{norm} $ lives in the $ 8 $-dimensional moduli space $ \mathbf{M}_{\mathbf{P}^2}(-1,3) $ and four lines in $ \mathbf{P}^2 $ are determined exactly by $ 8 $ parameters. In particular, all the vector bundles in $ \mathbf{M}_{\mathbf{P}^2}(-1,3) $ are logarithmic.
\end{remark}

\begin{remark}
In the proof of theorem $ 5.18 $ we use the fact that isomorphic logarithmic bundles correspond to the same set of points $ \{E,F,G\} $. This condition is necessary but not sufficient. Indeed, if $ \mathcal{D}_{1} $ and $ \mathcal{D}_{2} $ are made of conics in the same pencil, then the zero locus of the section of $ \Omega_{\mathbf{P}^2}^{1}(\log \mathcal{D}_{1}) $ coincides with the zero locus of the section of $ \Omega_{\mathbf{P}^2}^{1}(\log \mathcal{D}_{2}) $ but these bundles are not isomorphic, since $ \mathcal{D}_{1} $ and $ \mathcal{D}_{2} $ have not the same tangent lines.
\end{remark}

\chapter{Many higher degree hypersurfaces in the projective space}
\label{Many higher degree hypersurfaces in the projective space}

\section{A generalization of conic arrangements case}

The arguments used for arrangements of at least $ 9 $ conics can be extended in a natural way to families with a \emph{large} number of higher degree smooth codimension $ 1 $ objects with normal crossings on the complex projective space.\\
Let $ \mathcal{D} = \{D_{1}, \ldots, D_{\ell}\} $ be an arrangement of smooth hypersurfaces of the same degree $ d \geq 2 $ with normal crossings on $ \mathbf{P}^n $, with $ n \geq 2 $. If $ n = 2 $ each $ D_{i} $ reduces to a curve and we can assume that $ d \geq 3 $. We denote by $ \Omega_{\mathbf{P}^n}^{1}(\log \mathcal{D}) $ the corresponding logarithmic bundle.

\begin{remark}
According to~\cite{H}, the Veronese map of degree $ d $, that is  
$$ \nu_{d}: \mathbf{P}^n \longrightarrow \mathbf{P}^{N-1} $$
$$ \, [x_{0}, \ldots, x_{n}] \longmapsto [\ldots \, x^{I} \ldots] $$
where $ N = {{n+d} \choose {d}} $ and $ x^{I} $ ranges over all monomials of degree $ d $ in $ x_{0},\ldots, x_{n} $, allows us to associate to $ \mathcal{D} $ a hyperplane arrangement $ \mathcal{H} = \{H_{1}, \ldots, H_{\ell}\} $ on $ \mathbf{P}^{N-1} $. As in the case of conics, we want to recover the elements of $ D $ through this link with hyperplanes.
\end{remark}

\begin{remark}
Theorem~\ref{T:Ancona} implies that $ \Omega_{\mathbf{P}^n}^{1}(\log \mathcal{D}) $ admits this short exact sequence
$$ 0 \longrightarrow \mathcal{O}_{\mathbf{P}^n}(-d)^{\ell} \longrightarrow \mathcal{O}_{\mathbf{P}^n}(-1)^{n+1} \oplus \mathcal{O}_{\mathbf{P}^n}^{\ell -1} \longrightarrow \Omega_{\mathbf{P}^n}^{1}(\log \mathcal{D})  \longrightarrow 0; $$
we get that $ {\Omega_{\mathbf{P}^n}^{1}(\log \mathcal{D})}^{\vee} $ is stable if and only if $ c_{1}({\Omega_{\mathbf{P}^n}^{1}(\log \mathcal{D})}^{\vee}) < 0 $ which is equivalent to say that 
\begin{equation}\label{eq:fibstab}
\ell \geq {{n+1} \over {d}}.
\end{equation} 
Thus, as in remark~\ref{r53}, if $ \ell $ satisfies the previous inequality then $ {\Omega_{\mathbf{P}^n}^{1}(\log \mathcal{D})}^{\vee} $ has no global sections on $ \mathbf{P}^n $ different from the zero one. For this reason we are allowed to introduce the notion of \emph{unstable} hypersurface as in definition~\ref{d52}. 
\end{remark}

\begin{definition}
Let $ D \subset \mathbf{P}^n $ be a hypersurface of degree $ d $. \\
$ D $ is said to be \emph{unstable} for $ \Omega_{\mathbf{P}^n}^{1}(\log \mathcal{D}) $ if 
\begin{equation}\label{eq:ipersupinstabile}
H^{0}(D, {\Omega_{\mathbf{P}^n}^{1}(\log \mathcal{D})}^{\vee}_{|_{D}}) \not= \{0\}.
\end{equation}
\end{definition}

We have the following:

\begin{theorem}\label{t64}
Let $ \mathcal{D} = \{D_{1}, \ldots, D_{\ell}\} $ be an arrangement of smooth hypersurfaces of degree $ d \geq 2 $ with normal crossings on $ \mathbf{P}^n $, with $ n \geq 2 $. Let $ \mathcal{H} = \{H_{1}, \ldots, H_{\ell}\} $ be the corresponding hyperplane arrangement in $ \mathbf{P}^{N-1} $, with $ N = {{n+d} \choose {d}} $. Assume that:
\begin{itemize}
\item[$1)$] $ \ell \geq N+3 $;
\item[$2)$] $ \mathcal{H} $ is an arrangement of hyperplanes with normal crossings;
\item[$3)$] $ H_{1}, \ldots, H_{\ell} $ don't osculate a rational normal curve of degree $ N-1 $ in $ \mathbf{P}^{N-1} $.
\end{itemize}
Then $ \mathcal{D} $ is equal to the following set:
$$ \{ D \subset \mathbf{P}^n \, \emph{smooth irreducible hypersurface of degree d} \,|\, D\, \emph{satisfies (\ref{eq:ipersupinstabile})}\}.  $$
\end{theorem}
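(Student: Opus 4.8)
The plan is to imitate the proof of Theorem~\ref{t54} verbatim, transporting the whole argument through the degree-$d$ Veronese embedding $\nu_d \colon \mathbf{P}^n \to \mathbf{P}^{N-1}$ so that the higher-degree Torelli question is reduced to the already solved hyperplane case on $\mathbf{P}^{N-1}$. Write $V_d = \nu_d(\mathbf{P}^n) \cong \mathbf{P}^n$ for the Veronese variety, so that $\mathcal{D} = \mathcal{H} \cap V_d$ and the hyperplane sections of $V_d$ are exactly the degree-$d$ hypersurfaces of $\mathbf{P}^n$. Since the statement is a set equality, I would prove the two inclusions separately, the first being formal and the second containing all the geometry.

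For the inclusion of $\mathcal{D}$ into the set of unstable hypersurfaces I would argue exactly as in the first half of the proof of Theorem~\ref{t54}, with no change at all. Take $D = D_j \in \mathcal{D}$, restrict the residue sequence (\ref{eq:resgeneral}) to $D_j$, and read off from the resulting four-term sequence a surjection $\Omega_{\mathbf{P}^n}^{1}(\log \mathcal{D})_{|_{D_j}} \twoheadrightarrow \mathcal{O}_{D_j}$. Dualizing produces a nonzero element of $\mathrm{Hom}(\mathcal{O}_{D_j}, \Omega_{\mathbf{P}^n}^{1}(\log \mathcal{D})^{\vee}_{|_{D_j}}) = H^{0}(D_j, \Omega_{\mathbf{P}^n}^{1}(\log \mathcal{D})^{\vee}_{|_{D_j}})$, so $D_j$ satisfies (\ref{eq:ipersupinstabile}). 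Nothing here depends on the degree.

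The content is the reverse inclusion. Let $D = \{f = 0\}$ be a smooth irreducible hypersurface of degree $d$ satisfying (\ref{eq:ipersupinstabile}) and let $H \subset \mathbf{P}^{N-1}$ be the hyperplane cutting $D$ out of $V_d$. As in Theorem~\ref{t54}, Proposition $2.11$ of~\cite{Do} applied to the smooth subvariety $V_d \cong \mathbf{P}^n$ meeting $\mathcal{H}$ transversally gives the conormal sequence
$$ 0 \longrightarrow \mathcal{N}^{\vee}_{\mathbf{P}^n,\,\mathbf{P}^{N-1}} \longrightarrow \Omega_{\mathbf{P}^{N-1}}^{1}(\log \mathcal{H})_{|_{\mathbf{P}^n}} \longrightarrow \Omega_{\mathbf{P}^n}^{1}(\log \mathcal{D}) \longrightarrow 0. $$
Restricting to $D$, applying $\mathcal{H}om(\,\cdot\,,\mathcal{O}_D)$ and then $\Gamma(D,\,\cdot\,)$ yields an injection $H^{0}(D, \Omega_{\mathbf{P}^n}^{1}(\log \mathcal{D})^{\vee}_{|_D}) \hookrightarrow H^{0}(D, \Omega_{\mathbf{P}^{N-1}}^{1}(\log \mathcal{H})^{\vee}_{|_D})$, so instability of $D$ forces $H^{0}(D, \Omega_{\mathbf{P}^{N-1}}^{1}(\log \mathcal{H})^{\vee}_{|_D}) \neq \{0\}$. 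Feeding $V_d \cap H = D$ into the ideal-sheaf sequence inside $H$, tensored with $\Omega_{\mathbf{P}^{N-1}}^{1}(\log \mathcal{H})^{\vee}_{|_H}$, the long exact cohomology sequence shows that $H$ is unstable for $\Omega_{\mathbf{P}^{N-1}}^{1}(\log \mathcal{H})$ as soon as the key vanishing $H^{1}(H, \mathcal{I}_{V_d \cap H,\,H} \otimes \Omega_{\mathbf{P}^{N-1}}^{1}(\log \mathcal{H})^{\vee}_{|_H}) = \{0\}$ holds.

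This vanishing is the crux and the step I expect to be the main obstacle. Since $\ell \geq N+3 > N+1$ and $\mathcal{H}$ has normal crossings, Theorem~\ref{T:3.8} gives a Steiner resolution of $\Omega_{\mathbf{P}^{N-1}}^{1}(\log \mathcal{H})$; dualizing it and tensoring with $\mathcal{I}_{V_d \cap H,\,H}$ reduces the desired $H^{1}$, as in Theorem~\ref{t54}, to the vanishings $H^{0}(H, \mathcal{I}_{V_d \cap H,\,H}(t)) = \{0\}$ for $t = 0,1$ together with $H^{1}(H, \mathcal{I}_{V_d \cap H,\,H}) = \{0\}$. The first two are the genuinely degree-sensitive point: a degree-$t$ form on $H$ vanishing on $V_d \cap H$ corresponds, via $\nu_d^{\ast}$ and the fact that $V_d$ is embedded by the \emph{complete} linear system $|\mathcal{O}_{\mathbf{P}^n}(d)|$, to a degree-$td$ form on $\mathbf{P}^n$ vanishing on $D = \{f = 0\}$ and therefore divisible by $f$ (here the reducedness, hence the smoothness and irreducibility, of $D$ is essential); modulo the equation of $H$ this form is zero, so both $H^{0}$'s vanish. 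The last vanishing follows because $V_d \cap H \cong D$ is connected, being a smooth irreducible hypersurface of $\mathbf{P}^n$ with $n \geq 2$, so that $H^{1}(H, \mathcal{I}_{V_d \cap H,\,H}) = \mathbf{C}^{\,k-1}$ with $k = 1$. Granting the vanishing, $H$ is an unstable hyperplane of $\Omega_{\mathbf{P}^{N-1}}^{1}(\log \mathcal{H})$; hypotheses $1),2),3)$ are precisely the conditions under which Vall\`es' Theorem~\ref{T:Do-Ka} on $\mathbf{P}^{N-1}$ applies and rules out a Schwarzenberger bundle, so the unstable hyperplanes of $\Omega_{\mathbf{P}^{N-1}}^{1}(\log \mathcal{H})$ are exactly $H_{1}, \ldots, H_{\ell}$. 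Hence $H = H_{i}$ for some $i$, whence $D = D_i \in \mathcal{D}$, which completes the reverse inclusion and the proof.
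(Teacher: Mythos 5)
Your proposal is correct and follows essentially the same route as the paper, which simply states that one applies the double-inclusion argument of Theorem~\ref{t54} verbatim, using the residue sequence (\ref{eq:resgeneral}) for one inclusion and the conormal sequence from Proposition $2.11$ of~\cite{Do} for the other. In fact you supply more detail than the paper does at the one genuinely degree-sensitive point, namely the vanishing of $H^{0}(H,\mathcal{I}_{V_{d}\cap H,\,H}(t))$ for $t=0,1$ via completeness of the Veronese linear system and divisibility by $f$, and that argument is sound.
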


\begin{proof} We can apply the same double-inclusion argument of theorem~\ref{t54}. In particular, the first part follows from the \emph{residue exact sequence} (\ref{eq:resgeneral}) for $ \Omega_{\mathbf{P}^n}^{1}(\log \mathcal{D}) $ and the second part is a consequence of the short exact sequence given in proposition $ 2.11 $ of \cite{Do}
$$ 0 \longrightarrow  \mathcal{N}^{\vee}_{\nu_{d}(\mathbf{P}^n), \, \mathbf{P}^{N-1}} \longrightarrow \Omega_{\mathbf{P}^n}^{1}(\log \mathcal{H})_{|_{\nu_{d}(\mathbf{P}^n)}} \longrightarrow \Omega_{\nu_{d}(\mathbf{P}^n)}^{1}(\log \mathcal{H} \cap \nu_{d}(\mathbf{P}^n)) \longrightarrow 0. $$
\end{proof}

\begin{remark}
It's not hard to see that the first hypothesis of theorem~\ref{t64} implies condition (\ref{eq:fibstab}).
\end{remark}

\begin{corollary}
If $ \ell \geq {{n+d} \choose {d}}+3 $ then the map
$$ \mathcal{D} \longmapsto \Omega_{\mathbf{P}^n}^{1}(\log \mathcal{D}) $$
is generically injective.
\end{corollary}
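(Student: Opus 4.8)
The plan is to deduce the corollary directly from Theorem~\ref{t64}, by observing that the set of unstable hypersurfaces is an isomorphism invariant of the logarithmic bundle and then checking that hypotheses $2)$ and $3)$ of that theorem hold on a dense open subset of the space of arrangements. First I would fix the parameter space. A smooth hypersurface of degree $d$ in $\mathbf{P}^n$ is the zero locus of a form $f \in H^0(\mathbf{P}^n, \mathcal{O}_{\mathbf{P}^n}(d))$, and $f$ determines a hyperplane $H_f \subset \mathbf{P}^{N-1}$ whose preimage under $\nu_d$ is that hypersurface, where $N = {n+d \choose d}$. Hence arrangements $\mathcal{D} = \{D_1, \ldots, D_\ell\}$ of $\ell$ distinct smooth degree-$d$ hypersurfaces are parametrized by a Zariski-open subset $\mathcal{U}$ of the irreducible variety $((\mathbf{P}^{N-1})^\vee)^\ell$ (one removes the discriminant locus and the diagonals), and under this identification $\mathcal{D}$ corresponds precisely to the hyperplane arrangement $\mathcal{H} = \{H_1, \ldots, H_\ell\}$ on $\mathbf{P}^{N-1}$. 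Thus the parameter spaces for $\mathcal{D}$ and for $\mathcal{H}$ coincide.

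Next I would note that condition~(\ref{eq:ipersupinstabile}) defining an unstable hypersurface involves only the bundle ${\Omega_{\mathbf{P}^n}^{1}(\log \mathcal{D})}$ and not $\mathcal{D}$ itself, so the set of unstable smooth irreducible degree-$d$ hypersurfaces is determined by the isomorphism class of ${\Omega_{\mathbf{P}^n}^{1}(\log \mathcal{D})}$. Consequently, if $\mathcal{D}_1, \mathcal{D}_2 \in \mathcal{U}$ both satisfy hypotheses $1),2),3)$ and $\Omega_{\mathbf{P}^n}^{1}(\log \mathcal{D}_1) \cong \Omega_{\mathbf{P}^n}^{1}(\log \mathcal{D}_2)$, then Theorem~\ref{t64} identifies both $\mathcal{D}_1$ and $\mathcal{D}_2$ with the \emph{same} invariant set, forcing $\mathcal{D}_1 = \mathcal{D}_2$.

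It then remains to show that hypotheses $2)$ and $3)$ hold on a dense open subset of $\mathcal{U}$; hypothesis $1)$, $\ell \geq N+3$, is the standing assumption of the corollary. For $2)$, I would argue that the locus in $((\mathbf{P}^{N-1})^\vee)^\ell$ where the hyperplanes fail normal crossings is closed, being cut out by the vanishing of the minors that express that some $k \leq N$ of the hyperplanes meet in excess dimension, and proper, since generic hyperplanes meet in the expected codimension; its complement is therefore dense and open. For $3)$, I would use that through $N+2$ general points of $(\mathbf{P}^{N-1})^\vee$ passes a unique rational normal curve of degree $N-1$ (the same count as in the remark following Theorem~\ref{T:Do-Ka}); since $\ell \geq N+3$, requiring all $\ell$ points $H_1, \ldots, H_\ell$ to lie on a common such curve imposes at least one nontrivial condition beyond the first $N+2$, so this locus too is a proper closed subset. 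Intersecting the two resulting dense open subsets with $\mathcal{U}$ yields a dense open $U \subseteq \mathcal{U}$ on which all three hypotheses hold and on which, by the previous paragraph, the Torelli map is injective; hence the map is generically injective.

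The step I expect to be the main obstacle is verifying that hypothesis $2)$ is genuinely an open \emph{and nonempty} condition: normal crossings of $\mathcal{H}$ in the large space $\mathbf{P}^{N-1}$ is much stronger than normal crossings of $\mathcal{D}$ in $\mathbf{P}^n$, and one must check that the smoothness constraint (each $H_i$ must avoid the dual variety $(\nu_d(\mathbf{P}^n))^\vee$, so that $D_i$ be smooth) does not conflict with the general-position requirement. Since smoothness only excludes the proper closed subset $(\nu_d(\mathbf{P}^n))^\vee$ of $(\mathbf{P}^{N-1})^\vee$, an open dense set of admissible hyperplanes remains, and a dimension count confirms that normal crossings can be attained there; making this nonemptiness precise, together with the bookkeeping for condition $3)$, is the technical heart of the argument.
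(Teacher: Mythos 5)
Your argument is correct and follows essentially the same route as the paper, which obtains the corollary immediately from Theorem~\ref{t64} together with the observation that the set of unstable hypersurfaces depends only on the isomorphism class of the logarithmic bundle; your verification that hypotheses $2)$ and $3)$ hold on a dense open subset of the parameter space is exactly the genericity the paper leaves implicit. One small caveat: normal crossings of $\mathcal{H}$ in $\mathbf{P}^{N-1}$ does \emph{not} imply normal crossings of $\mathcal{D}$ in $\mathbf{P}^n$ (two distinct hyperplanes in $\mathbf{P}^5$ always meet transversally, yet the corresponding conics may be tangent), so the normal-crossings hypothesis on $\mathcal{D}$ required by Theorem~\ref{t64} must be imposed as a separate --- but still generic and open --- condition on your set $\mathcal{U}$.
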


\chapter{Arrangements of quadrics in the projective space}
\label{ch:arrangements of quadrics in the projective space}

\section{One quadric}

Theorem $ 5.7 $ that holds for one smooth conic in $ \mathbf{P}^2 $ can be generalized to $ n \geq 3 $. In this sense we have the following result:
\begin{theorem}\label{T:1quadric}
Let $ Q \subset \mathbf{P}^n $ be a smooth quadric and let $ \mathcal{D} = \{Q\} $. Then 
\begin{equation}\label{eq:isom1quad}
\Omega_{\mathbf{P}^n}^{1}(\log \mathcal{D}) \cong \mathbf{TP}^n(-2).
\end{equation} 
\end{theorem}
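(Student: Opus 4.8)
The plan is to mimic the proof of the conic case (Theorem~\ref{T:1conica}) verbatim, since nothing in that argument used $ n=2 $ in an essential way. By Theorem~\ref{T:Ancona} applied to the single smooth quadric $ Q = \{f = 0\} $ with $ \deg f = 2 $ and $ \ell = 1 $, the resolution~(\ref{eq:Anconas}) has no $ \mathcal{O}_{\mathbf{P}^n}^{\ell-1} = \mathcal{O}_{\mathbf{P}^n}^{0} $ summand, so dualizing it gives the short exact sequence
$$ 0 \longrightarrow \mathcal{O}_{\mathbf{P}^n}(-2) \buildrel \rm M \over \longrightarrow \mathcal{O}_{\mathbf{P}^n}(-1)^{n+1} \longrightarrow \Omega_{\mathbf{P}^n}^{1}(\log \mathcal{D}) \longrightarrow 0, $$
where the single-column matrix $ M $ is built from the partial derivatives $ \partial_{0}f, \ldots, \partial_{n}f $ of the quadratic form $ f $.

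The key observation is that $ M $ can be normalized. First I would choose coordinates in which $ Q $ is the standard smooth quadric; then the gradient $ (\partial_{0}f, \ldots, \partial_{n}f) $ is, up to an invertible constant linear change of the ambient coordinates, exactly $ (x_{0}, \ldots, x_{n}) $. Concretely, since $ f $ is a nondegenerate quadratic form, its associated symmetric matrix is invertible, and the linear forms $ \partial_{i}f $ are linearly independent; applying the corresponding element of $ GL(n+1,\mathbf{C}) $ lets me assume without loss of generality that
$$ M = {}^{t}\!\pmatrix{ x_{0} & x_{1} & \cdots & x_{n} }. $$

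With $ M $ in this form, I tensor the displayed sequence by $ \mathcal{O}_{\mathbf{P}^n}(1) $, obtaining
$$ 0 \longrightarrow \mathcal{O}_{\mathbf{P}^n}(-1) \longrightarrow \mathcal{O}_{\mathbf{P}^n}^{n+1} \longrightarrow \Omega_{\mathbf{P}^n}^{1}(\log \mathcal{D})(1) \longrightarrow 0, $$
whose left map $ (x_{0}, \ldots, x_{n}) $ is precisely the Euler map. Hence $ \Omega_{\mathbf{P}^n}^{1}(\log \mathcal{D})(1) \cong \mathbf{TP}^n(-1) $, which after untwisting gives the claimed isomorphism~(\ref{eq:isom1quad}).

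The only real content lies in the normalization step: verifying that the nondegeneracy of the quadratic form $ f $ makes the gradient vector a regular linear coordinate change, so that $ M $ genuinely reduces to the Euler vector. This is a routine consequence of $ f $ being smooth (its Hessian is invertible), so I do not expect a genuine obstacle — the higher-dimensional case is formally identical to the planar one, with $ n+1 $ partial derivatives in place of three. One could alternatively bypass the explicit normalization entirely and argue, as in the remark following Theorem~\ref{T:1conica}, via stability and the moduli space $ \mathbf{M}_{\mathbf{P}^n}(-1,\ldots) $; but the direct Euler-sequence identification is cleaner and self-contained here.
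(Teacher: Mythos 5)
Your proposal is correct and follows essentially the same route as the paper: the paper's proof simply cites the exact sequence $0 \to \mathcal{O}_{\mathbf{P}^n}(-2) \to \mathcal{O}_{\mathbf{P}^n}(-1)^{n+1} \to \Omega_{\mathbf{P}^n}^{1}(\log \mathcal{D}) \to 0$ coming from Theorem~\ref{T:Ancona}, exactly as you do, and the normalization of the gradient to the Euler vector via the invertible Hessian is the same step carried out in the planar case (Theorem~\ref{T:1conica}). Your write-up just makes explicit the details the paper leaves implicit.
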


\begin{proof}
The isomorphism (\ref{eq:isom1quad}) is a direct consequence of the exact sequence
$$ 0 \longrightarrow \mathcal{O}_{\mathbf{P}^n}(-2) \longrightarrow \mathcal{O}_{\mathbf{P}^2}(-1)^{n+1} \longrightarrow \Omega_{\mathbf{P}^n}^{1}(\log \mathcal{D})  \longrightarrow 0. $$
\end{proof}

\begin{remark}
Theorem \ref{T:1quadric} points out that $ \mathcal{D} $ and an arrangement $ \mathcal{H} $ made of $ n+2 $ hyperplanes with normal crossings on $ \mathbf{P}^n $ behave in a similar way. Indeed, as we can see in proposition~\ref{P:3.14}, $ \Omega_{\mathbf{P}^n}^{1}(\log \mathcal{D}) \cong \Omega_{\mathbf{P}^n}^{1}(\log \mathcal{H})(-1) $.
\end{remark}

\section{Pairs of quadrics}
The case of pair of conics in $ \mathbf{P}^2 $ can be extended to the case of pairs of quadrics in $ \mathbf{P}^n $, with $ n \geq 3 $. 
\begin{theorem}
Let $ Q_{1} $ and $ Q_{2} $ be smooth quadrics in $ \mathbf{P}^n $. \\
The following facts are equivalent:
\begin{itemize}
\item[$1)$] $ \mathcal{D} = \{ Q_{1},Q_{2}\} $ is an arrangement with normal crossings in $ \mathbf{P}^n $, that is $ Q_{1} \cap Q_{2} $ is a smooth codimension two subvariety;
\item[$2)$] in the pencil of quadrics generated by $ Q_{1} $ and $ Q_{2} $ there are $ n+1 $ distinct singular quadrics with singular points $ \{v_{0}, \ldots , v_{n}\} $.
\end{itemize}
\end{theorem}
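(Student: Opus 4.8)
The plan is to translate everything into the linear algebra of the pencil of symmetric matrices. Write $Q_1=\{{}^{t}x A x=0\}$ and $Q_2=\{{}^{t}x B x=0\}$ with $A,B$ symmetric; since $Q_1$ and $Q_2$ are smooth, both $A$ and $B$ are invertible. Consider $p(t)=\det(A+tB)$: because $\det B\neq 0$ this is a polynomial of degree exactly $n+1$ in $t$, and since the member at $t=\infty$ is $Q_2$ (smooth), every singular member of the pencil occurs at a finite $t$ and corresponds to a root of $p$. Hence statement $2)$, ``$n+1$ distinct singular quadrics'', is equivalent to ``$p$ has $n+1$ distinct (simple) roots''. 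The heart of the proof is to show that this last condition is equivalent to the smoothness of the base locus $Q_1\cap Q_2$.

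First I would record the infinitesimal criterion. Since $Q_1$ and $Q_2$ are distinct irreducible hypersurfaces, $Q_1\cap Q_2$ is automatically a proper, codimension-two subscheme, so statement $1)$ reduces to its smoothness; and $Q_1\cap Q_2$ is singular at $[x]$ exactly when $[x]$ lies on both quadrics and the gradients $Ax,Bx$ are linearly dependent. As $A,B$ are invertible, both gradients are nonzero, so dependence means $Ax=cBx$ with $c\neq 0$, i.e. $(A+t_0B)x=0$ with $t_0=-c$. Thus $Q_1\cap Q_2$ is singular precisely when some singular member $A+t_0B$ has a kernel vector $x_0$ lying on the base locus, equivalently (using $(A+t_0B)x_0=0$, which gives ${}^{t}x_0 A x_0=-t_0\,{}^{t}x_0 B x_0$) satisfying ${}^{t}x_0 B x_0=0$.

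Then I would relate ``the kernel vector lies on the base locus'' to the multiplicity of the root $t_0$. For a symmetric $M=A+t_0B$ of corank one one has $\mathrm{adj}(M)=d\,x_0\,{}^{t}x_0$ with $d\neq 0$ and $x_0$ spanning $\ker M$; combined with Jacobi's formula $p'(t_0)=\mathrm{tr}\bigl(\mathrm{adj}(A+t_0B)\,B\bigr)=d\,{}^{t}x_0 B x_0$, this shows that a simple root forces ${}^{t}x_0 B x_0\neq 0$ (vertex off the base locus), while a double root of corank one forces ${}^{t}x_0 B x_0=0$ (vertex on the base locus). If instead the member has corank $\geq 2$, then $t_0$ is automatically a multiple root, and the quadratic form $x\mapsto {}^{t}x B x$ restricted to the (at least) two-dimensional kernel has a nonzero isotropic vector $x_0$, which again lands on the base locus. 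In every case one gets: $p$ has only simple roots $\Longleftrightarrow$ no singular member has a kernel vector on $Q_1\cap Q_2$ $\Longleftrightarrow$ $Q_1\cap Q_2$ is smooth, which is exactly $2)\Longleftrightarrow 1)$. For the direction $2)\Rightarrow 1)$ I could alternatively argue as in Theorem $5.10$: distinct roots make $B^{-1}A$ diagonalizable with distinct eigenvalues, so Sylvester's theorem plus orthogonal diagonalization simultaneously diagonalize the pencil to $Q_1:\sum\lambda_i x_i^2=0$, $Q_2:\sum x_i^2=0$, and then the $n+1$ vertices $v_i=[e_i]$ (independent eigenvectors, hence distinct points) satisfy $\sum_j(e_i)_j^2=1\neq 0$, so none lies on $Q_2$ and the intersection is smooth.

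The main obstacle is the non-diagonalizable case, namely a repeated root of corank one, where the pencil \emph{cannot} be simultaneously diagonalized and the clean ``self-polar simplex'' picture breaks down; this is precisely where the adjugate/Jacobi-formula computation (rather than diagonalization) becomes essential, and I would take care to handle the corank-one versus corank-$\geq 2$ dichotomy uniformly. Alongside this I would dispatch the routine but necessary remarks that $Q_1\cap Q_2$ is automatically of codimension two and that the $n+1$ vertices arising from distinct roots are genuinely distinct points.
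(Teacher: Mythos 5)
Your proof is correct, and for the hard direction $1)\Rightarrow 2)$ it takes a genuinely different route from the paper. The paper, like you, proves $2)\Rightarrow 1)$ by simultaneous diagonalization (the argument of Theorem $5.10$, followed by checking that the $2\times(n+1)$ matrix of gradients has rank $2$ at every base point), but for $1)\Rightarrow 2)$ it argues by contradiction via the Schl\"afli method: the discriminant of the binary form $F_{\mathcal{A}}(\tau_0,\tau_1)=\det(\tau_0A+\tau_1B)$ is a nonzero scalar multiple of the hyperdeterminant of the $(2,n+1,n+1)$ three-dimensional matrix $\mathcal{A}$ with slices $A$ and $B$, so a repeated root forces $\mathcal{A}$ to be degenerate, and the resulting triple $(\overline{t},\overline{x},\overline{y})$, after normalization, exhibits a base point at which the gradients are dependent. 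Your adjugate--Jacobi computation $p'(t_0)=\mathrm{tr}\bigl(\mathrm{adj}(A+t_0B)\,B\bigr)=d\,{}^{t}x_0Bx_0$ replaces this with elementary linear algebra and, together with the corank-$\geq 2$ case (automatic multiple root, isotropic vector in the kernel), delivers the full equivalence ``simple roots $\Leftrightarrow$ smooth base locus'' in one stroke, correctly covering the non-diagonalizable pencils where the self-polar-simplex picture fails. What the paper's route buys is the conceptual link to hyperdeterminants of multidimensional matrices (the identity $\Delta(F_{\mathcal{A}})=k\,\mathrm{Det}(\mathcal{A})$ from Gelfand--Kapranov--Zelevinsky, which the author clearly wishes to showcase); what yours buys is self-containedness, a direct biconditional rather than a proof by contradiction, and the side facts the paper defers to Remark $7.4$ (each singular member has corank one with a single vertex, and the $n+1$ vertices are independent eigenvectors of $-AB^{-1}$, hence distinct) falling out of the same computation.
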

\begin{proof}
Let assume that $ 2) $ holds. Then, by using the same arguments of the proof of theorem $ 5.10 $ we get that there's a basis $ \mathcal{B}'' $ of $ \mathbf{C}^{n+1} $ made of representative vectors of the points $ \{v_{0}, \ldots , v_{n}\} $ with respect to which $ Q_{1} $ and $ Q_{2} $ have equations
\begin{equation}\label{eq:Q1}
\lambda_{0}x_{0}^{2} + \ldots + \lambda_{n}x_{n}^{2} = 0
\end{equation}
\begin{equation}\label{eq:Q2}
x_{0}^{2} + \ldots + x_{n}^{2} = 0.
\end{equation}
where $ \lambda_{i} \in \mathbf{C} - \{0\} $, $ \lambda_{i} \not= \lambda_{j} $, are the opposite values of the parameters giving the singular quadrics in the pencil of $ Q_{1} $ and $ Q_{2} $.
Now, let $ P = (\overline{x}_{0}, \ldots, \overline{x}_{n}) \in Q_{1} \cap Q_{2} $, let say $ \overline{x}_{0} \not = 0 $; we want to prove that $ Q_{1} $ and $ Q_{2} $ have normal crossings at P. It's not hard to see that the tangent spaces $ T_{P}Q_{1} $ and $ T_{P}Q_{2} $ are given by, respectively, 
\begin{equation}\label{eq:TQ1}
\lambda_{0}x_{0}\overline{x}_{0} + \ldots + \lambda_{n}x_{n}\overline{x}_{n} = 0
\end{equation}
\begin{equation}\label{eq:TQ2}
x_{0}\overline{x}_{0} + \ldots + x_{n}\overline{x}_{n} = 0.
\end{equation}
The matrix associated to the system of equations (\ref{eq:TQ1}) and (\ref{eq:TQ2}) is 
$$ \pmatrix{ 
\lambda_{0}\overline{x}_{0} & \ldots & \lambda_{n}\overline{x}_{n} \cr 
\overline{x}_{0} & \ldots & \overline{x}_{n} \cr 
} $$
and it is clearly of rank $ 2 $. Indeed, we can always find $ i \in \{1, \ldots, n\} $ such that 
$$ \left | \, \matrix{
\lambda_{0}\overline{x}_{0} & \lambda_{i}\overline{x}_{i} \cr
\overline{x}_{0} & \overline{x}_{i} \cr } \right | = \overline{x}_{0}\overline{x}_{i}(\lambda_{0}-\lambda_{i}) \not= 0. $$
If this is not the case, since by hypothesis $ \lambda_{0} \not= \lambda_{i} $ and $ \overline{x}_{0} \not= 0 $, we get $ \overline{x}_{i} = 0 $ for all $ i \in \{1, \ldots, n\} $, that is $ P = (\overline{x}_{0}, 0, \ldots, 0) $. This leads to a contradiction because the coordinates of $ P $ have to satisfy (\ref{eq:Q1}) and (\ref{eq:Q2}). 
So $ dim_{\mathbf{C}} (T_{P}Q_{1} \cap T_{P}Q_{2}) = n+1-2 = n-1 $, that is $ Q_{1} $ and $ Q_{2} $ have normal crossings in P. \\
Now we want to prove $ 2) $ from $ 1) $. Let $ A = \{a_{ij}\} $ and $ B = \{b_{ij}\} $ be symmetric elements of $ GL(n+1,\mathbf{C}) $ representing $ Q_{1} $ and $ Q_{2} $ and let $ A+tB $ be the matrix of a generic quadric in the pencil generated by $ Q_{1} $ and $ Q_{2} $. Suppose that $ 2) $ is not true, that is the equation
\begin{equation}\label{eq:quadrichesingolari}
det(A+tB) = 0 
\end{equation}
has a root with multiplicity at least $ 2 $. Let consider the multilinear map
$$ \phi : \mathbf{C}^{2} \times \mathbf{C}^{n+1} \times \mathbf{C}^{n+1} \longrightarrow \mathbf{C} $$
defined by
$$ \phi ((t_{0}, t_{1}),(x_{0}, \ldots, x_{n}), (y_{0}, \ldots, y_{n})) = \displaystyle \sum_{i,j = 0}^n y_{i}(t_{0}a_{ij} + t_{1}b_{ij})x_{j}. $$
$ \phi $ corresponds to the 3-dimensional matrix $ \mathcal{A} = \{\alpha_{i_{0} i_{1} i_{2}}\}$ of format $ (2,n+1,n+1) $ with $ A $ in the first vertical slice and $ B $ in the second one. \\

\begin{figure}[h]
    \centering
\includegraphics[width=50mm]{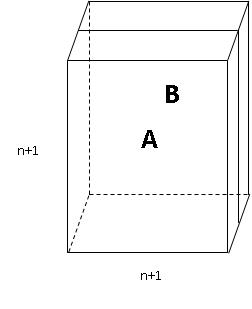}
    \caption{The $3$-dimensional matrix $\mathcal{A}$}
\end{figure}

By using \emph{Schl$\ddot a $fli's method} (for more details see \cite{GKZ}, \cite{O1} and \cite{O2}), we can associate to $ \mathcal{A} $ a family of $ (n+1) \times (n+1) $ ordinary matrices $ \overline{\mathcal{A}}(\tau_{0},\tau_{1}) $ with entries
$$ \overline{\mathcal{A}}(\tau_{0},\tau_{1})_{i_{1} i_{2}} =  \alpha_{0 i_{1} i_{2}} \tau_{0} + \alpha_{1 i_{1} i_{2}} \tau_{1} = a_{i_{1} i_{2}} \tau_{0} + b_{i_{1} i_{2}} \tau_{1} $$
that is we have a linear operator 
$$ \overline{\mathcal{A}} : \mathbf{C}^{2} \longrightarrow \mathbf{C}^{n+1} \times \mathbf{C}^{n+1}. $$
Since the 3-dimensional \emph{hyperdeterminant} of format $ (2,n+1,n+1) $ is non trivial, we can associate to $ \mathcal{A} $ a polynomial function defined by
$$ F_{\mathcal{A}}(\tau_{0},\tau_{1}) = det\,\overline{\mathcal{A}}(\tau_{0},\tau_{1}) $$ 
which is a homogenous form in $ \tau_{0},\tau_{1} $ of degree $ n+1 $. Denote by $ \Delta(F_{\mathcal{A}}) $ the \emph{discriminant} of $ F_{\mathcal{A}} $: it is a polynomial in $ \alpha_{i_{0} i_{1} i_{2}} $ of degree
$$ deg(\Delta(F_{\mathcal{A}})) = 2n(n+1) $$ 
and it is divisible by the hyperdeterminant $ Det(\mathcal{A}), $ which has the same degree. So there exists $ k \in \mathbf{C} - \{0\} $ such that
\begin{equation}\label{eq:discr-hyper}
\Delta(F_{\mathcal{A}}) = k Det(\mathcal{A}).
\end{equation}
We remark that 
$$ \Delta(F_{\mathcal{A}}) = (-1)^{{(n+1)n} \over {2}} \gamma_{n+1}^{2n} \displaystyle \prod _{i < j} (t_{i}-t_{j})^{2} $$
where $ \gamma_{n+1} $ is the leading coefficient of (\ref{eq:quadrichesingolari}) and $ t_{0}, \ldots, t_{n} $ are its roots. By assumption, $ \Delta(F_{\mathcal{A}}) $ reduces to $ 0 $ and, since (\ref{eq:discr-hyper}) holds, the same is true for $ Det(\mathcal{A}) $. This implies that the 3-dimensional matrix $ \mathcal{A} $ is degenerate, i.e. there exists a non zero $ (\overline{t}_{0}, \overline{t}_{1}) \otimes (\overline{x}_{0}, \ldots, \overline{x}_{n}) \otimes (\overline{y}_{0}, \ldots, \overline{y}_{n}) \in \mathbf{C}^{2} \otimes \mathbf{C}^{n+1} \otimes \mathbf{C}^{n+1} $ such that 
$$ \phi(\mathbf{C}^{2},(\overline{x}_{0}, \ldots, \overline{x}_{n}),(\overline{y}_{0}, \ldots, \overline{y}_{n})) = 0 $$
$$ \phi((\overline{t}_{0}, \overline{t}_{1}), \mathbf{C}^{n+1},(\overline{y}_{0}, \ldots, \overline{y}_{n})) = 0 $$
$$ \phi((\overline{t}_{0}, \overline{t}_{1}), (\overline{x}_{0}, \ldots, \overline{x}_{n}),\mathbf{C}^{n+1}) = 0. $$
After a linear change of coordinates we may assume that $ (\overline{t}_{0}, \overline{t}_{1}) = (1,0)$, $ (\overline{x}_{0}, \ldots, \overline{x}_{n}) = (1,0, \ldots, 0) $ and $ (\overline{y}_{0}, \ldots, \overline{y}_{n}) = (1,0, \ldots, 0) $.
We immediately get that $ a_{0 j} = 0 $ for all $ j \in \{0, \ldots, n\} $ and $ b_{00} = 0 $, i.e. $ \mathcal{A} $ becomes as in figure $ 7.2 $. Thus $ Q_{1} $ is not smooth and $ \mathcal{D} = \{Q_{1},Q_{2}\} $ has not normal crossings at the point $ P = (1,0, \ldots, 0) $, which is a contradiction.
\begin{figure}[h]
    \centering
    \includegraphics[width=50mm]{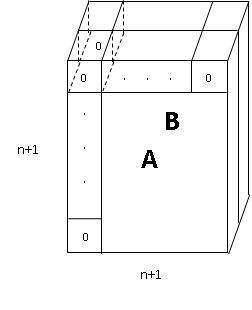}
    \caption{$\mathcal{A}$ after the linear change of coordinates}
\end{figure}

\end{proof}

\begin{remark}
If in the pencil of quadrics generated by $ Q_{1} $ and $ Q_{2} $ there are $ n+1 $ distinct singular quadrics $ \overline{Q}_{0}, \ldots, \overline{Q}_{n} $, then $ rank (\overline{Q}_{i}) = n $, that is $ \overline{Q}_{i} $ is a cone, say with vertex $ v_{i} $, for all $ i \in \{0, \ldots,n\} $. 
Indeed, let $ A $ and $ B $ be symmetric matrices in $ GL(n+1,\mathbf{C}) $ representing $ Q_{1} $ and $ Q_{2} $. By hypothesis, the equation
$$ det(AB^{-1}+tI_{n+1}) = 0 $$
has $ n+1 $ non zero distinct solutions, which implies that the matrix $ - AB^{-1} $ has $ n+1 $ distinct eigenvalues $ t_{0}, \ldots, t_{n} $. So
$$ rank(\overline{Q}_{i}) = rank(A+t_{i}B) = rank(- AB^{-1} - t_{i}I_{n+1}) = n. $$
In particular, the singular point $ v_{i} $ of $ \overline{Q}_{i} $ is an eigenvector of $ - AB^{-1} $ corresponding to the eigenvalue $ t_{i}. $
\end{remark}

\begin{remark}
Let $ Q_{1} $ and $ Q_{2} $ be smooth quadrics with normal crossings and let $ \{v_{0}, \ldots , v_{n}\} $ as in theorem $ 7.3 $. Then the matrices associated to $ Q_{1} $ and $ Q_{2} $ with respect to a basis of $ \mathbf{C}^{n+1} $ made of representative vectors of the points $ \{v_{0}, \ldots , v_{n}\} $ are of the form $ diag(a_{0}, a_{1}, \ldots, a_{n-1},-1) $ and $ diag(b_{0},b_{1}, \ldots, b_{n-1}, -1) $, where $ a_{i},b_{i} \in \mathbf{C} - \{0\} $, $ a_{i} \not= b_{i} $ and $ \displaystyle{{a_{i}} \over {a_{j}}} \not= {{b_{i}} \over {b_{j}}} $, for all $ i,j \in \{0, \ldots, n-1 \} $ (we remark that our quadrics are smooth and in the pencil generated by them there are $ n+1 $ singular quadrics). 
\end{remark}

\begin{remark}
Let $ \Omega_{\mathbf{P}^n}^{1}(\log \mathcal{D}) $ the logarithmic bundle attached to an arrangement of smooth quadrics with normal crossings $ \mathcal{D} = \{Q_{1},Q_{2}\} $. Theorem \ref{T:Ancona} asserts that it is a rank $ n $ vector bundle over $ \mathbf{P}^n $ such that
\begin{equation}\label{eq:Ancona2quadriche}
0 \longrightarrow \mathcal{O}_{\mathbf{P}^n}(-2)^{2} \longrightarrow \mathcal{O}_{\mathbf{P}^n}(-1)^{n+1} \oplus \mathcal{O}_{\mathbf{P}^n} \longrightarrow \Omega_{\mathbf{P}^n}^{1}(\log \mathcal{D})  \longrightarrow 0
\end{equation}
is exact. So the Chern polynomial $ p(t) $ of $ \Omega_{\mathbf{P}^n}^{1}(\log \mathcal{D}) $ is obtained by truncating to degree $ n $ the expression
$$ {{(1-t)^{n+1}} \over {(1-2t)^{2}}} = \left \lbrack \displaystyle \sum_{i = 0}^{n+1} {{n+1} \choose {i }} (-1)^{i} t^{i} \right \rbrack \left \lbrack \displaystyle \sum_{k \geq 0} 2^{k} (k+1) t^{k} \right \rbrack  $$ that is 
$$ p_{\Omega_{\mathbf{P}^n}^{1}(\log \mathcal{D})}(t) = \displaystyle \sum_{m = 0}^{n} \left \lbrack \displaystyle \sum_{h = 0}^{m} {{n+1} \choose {h}}(-1)^{h}2^{m-h}(m-h+1) \right \rbrack t^{m}. $$
In particular the n-th Chern class of $ \Omega_{\mathbf{P}^n}^{1}(\log \mathcal{D}) $ is
$$ c_{n}(\Omega_{\mathbf{P}^n}^{1}(\log \mathcal{D})) = \displaystyle \sum_{h = 0}^{n} {{n+1} \choose {h}}(-1)^{h}2^{n-h}(n-h+1) = $$
$$ \,\quad\quad\quad\quad\quad\quad = (n+1) \displaystyle \sum_{h = 0}^{n} {n \choose {h}} (-1)^{h}2^{n-h} = n+1. $$
Moreover, (\ref{eq:Ancona2quadriche}) tells us that 
$$ H^{0}(\mathbf{P}^n,\Omega_{\mathbf{P}^n}^{1}(\log \mathcal{D})) = \mathbf{C}. $$
Thus $ \Omega_{\mathbf{P}^n}^{1}(\log \mathcal{D}) $ has one non-zero section with $ n+1 $ zeroes. 
\end{remark}

The arguments used for the proof of proposition $ 5.16 $ part 1) naturally extend to the case of $ \mathbf{P}^{n} $, $ n \geq 3 $. The key idea is that the singular points $ \{v_{0}, \ldots, v_{n}\} $ of the cones $ \overline{Q}_{0}, \ldots, \overline{Q}_{n} $ are the eigenvectors of $ AB^{-1} $. In this sense we have the following:

\begin{proposition}
Let $ \mathcal{D} = \{Q_{1},Q_{2}\} $ be an arrangement of smooth quadrics in $ \mathbf{P}^{n} $ with normal crossings. Then $ \{v_{0}, \ldots, v_{n}\} $ is the zero locus of the non-zero section of $ \Omega_{\mathbf{P}^n}^{1}(\log \mathcal{D}) $.
\end{proposition}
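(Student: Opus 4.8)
The plan is to identify explicitly the non-zero section furnished by the sequence (\ref{eq:Ancona2quadriche}) and to compute directly its vanishing locus. Write $A=(a_{ij})$ and $B=(b_{ij})$ for symmetric matrices representing $Q_{1}$ and $Q_{2}$. Following the shape of the matrix $M$ obtained for pairs of conics in the proof of proposition $5.16$, the injection $\mathcal{O}_{\mathbf{P}^n}(-2)^{2}\to\mathcal{O}_{\mathbf{P}^n}(-1)^{n+1}\oplus\mathcal{O}_{\mathbf{P}^n}$ in (\ref{eq:Ancona2quadriche}) is given, up to scalar, by the two columns
$$ c_{1}(x)=\left(2(Ax)_{0},\ldots,2(Ax)_{n},\,x^{t}Ax\right),\qquad c_{2}(x)=\left(2(Bx)_{0},\ldots,2(Bx)_{n},\,0\right), $$
where $(Ax)_{i}=\sum_{j}a_{ij}x_{j}$. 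The unique non-zero global section of $\Omega_{\mathbf{P}^n}^{1}(\log\mathcal{D})$ (recall that $H^{0}(\mathbf{P}^n,\Omega_{\mathbf{P}^n}^{1}(\log\mathcal{D}))=\mathbf{C}$) is the image of the generator $e=(0,\ldots,0,1)$ of the summand $\mathcal{O}_{\mathbf{P}^n}$ under the surjection onto $\Omega_{\mathbf{P}^n}^{1}(\log\mathcal{D})$.

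First I would translate the vanishing of this section at a point $[x]$ into a linear-algebra condition: the section is zero at $[x]$ precisely when $e$ lies in the image of the fiber map, i.e. when there exist $\lambda,\mu\in\mathbf{C}$ with $\lambda c_{1}(x)+\mu c_{2}(x)=e$. Reading off the first $n+1$ coordinates gives $\lambda Ax+\mu Bx=0$, while the last coordinate gives $\lambda\,x^{t}Ax=1$. The latter forces $\lambda\neq0$ and $x^{t}Ax\neq0$, and then the former says $Ax=-(\mu/\lambda)Bx$, i.e. $Ax$ and $Bx$ are proportional. Since $A$ and $B$ are invertible, for $x\neq0$ this is equivalent to $B^{-1}Ax=t\,x$ for some $t\in\mathbf{C}$, that is, $[x]$ is an eigenvector of $B^{-1}A$. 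Hence the zero locus of the section is contained in the set of such eigenvectors on which moreover $x^{t}Ax\neq0$.

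Then I would invoke the normal-crossings hypothesis. By theorem $7.3$ the pencil contains $n+1$ distinct singular quadrics, so $B^{-1}A$ has $n+1$ distinct eigenvalues and thus exactly $n+1$ eigendirections, which are precisely the vertices $\{v_{0},\ldots,v_{n}\}$ (the eigenvector description is recorded in the remark following theorem $7.3$). It remains to check that the open condition $x^{t}Ax\neq0$ is met at each $v_{i}$: in the basis of representative vectors of the $v_{i}$ one has, by the diagonalisation remark, $A=\mathrm{diag}(a_{0},\ldots,a_{n-1},-1)$ with all diagonal entries non-zero, so that $v_{i}^{t}Av_{i}$ equals the $i$-th diagonal entry, which is non-zero. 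Therefore all $n+1$ vertices genuinely lie in the zero locus.

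Finally I would match cardinalities. From (\ref{eq:Ancona2quadriche}) the top Chern class is $c_{n}(\Omega_{\mathbf{P}^n}^{1}(\log\mathcal{D}))=n+1$, so the zero-dimensional zero scheme of the section has length $n+1$. Having exhibited exactly $n+1$ distinct points in it, I conclude that the zero locus is the reduced set $\{v_{0},\ldots,v_{n}\}$. The only delicate point is the bookkeeping in the last two steps: one must be sure that every eigendirection actually occurs as a genuine zero, which is guaranteed by the condition $x^{t}Ax\neq0$ secured by the diagonal form, so that no vertex is lost and the count $n+1=c_{n}$ is attained exactly; the reduction to $B^{-1}Ax=t\,x$ is itself routine once the invertibility of $A$ and $B$ is used.
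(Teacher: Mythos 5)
Your proof is correct and follows essentially the same route as the paper, which reduces the vanishing of the section coming from the $\mathcal{O}_{\mathbf{P}^n}$ summand of (\ref{eq:Ancona2quadriche}) to the rank-drop condition $Ax=\lambda Bx$ on the linear part of the presentation matrix and then identifies the solutions with the eigendirections of $B^{-1}A$, i.e.\ the vertices $v_{0},\ldots,v_{n}$. You are somewhat more careful than the paper in checking the open condition $x^{t}Ax\neq 0$ (which in fact also follows from the fiberwise injectivity of the presentation map) and in matching the count against $c_{n}=n+1$, but the underlying argument is the same.
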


In order to state and prove the main result concerning pairs of quadrics in the complex projective space, we recall that, given a smooth quadric $ Q \subset \mathbf{P}^{n} $, the \emph{dual quadric} of $ Q $ is $ Q^{\vee}\subset (\mathbf{P}^{n})^{\vee} $ given by the tangent hyperplanes to $ Q $. In particular, if $ Q $ is represented by a symmetric $ n \times n $ matrix $ G $, then $ Q^{\vee} $ is associated to $ G^{-1} $. The set of tangent hyperplanes to two smooth quadrics with normal crossings in $ \mathbf{P}^{n} $, $ Q_{1} $ and $ Q_{2} $, is the base locus of the pencil of quadrics in $ (\mathbf{P}^{n})^{\vee} $ generated by $ Q_{1}^{\vee} $ and $ Q_{2}^{\vee} $, that is $ Q_{1}^{\vee} \cap Q_{2}^{\vee} $. \\
We have the following:

\begin{theorem}\label{T:pairquad}
Let $ \mathcal{D}_{1} = \{Q_{1},Q_{2}\} $ and $ \mathcal{D}_{2} = \{Q'_{1},Q'_{2}\} $ be arrangements of smooth quadrics with normal crossings in $ \mathbf{P}^{n} $, with $ n \geq 3 $. Then 
\begin{equation}\label{eq:fibriso}
\Omega_{\mathbf{P}^n}^{1}(\log \mathcal{D}_{1}) \cong \Omega_{\mathbf{P}^n}^{1}(\log \mathcal{D}_{2})
\end{equation}
if and only if $ \mathcal{D}_{1} $ and $ \mathcal{D}_{2} $ have the same $ n+1 $ tangent hyperplanes, that is
$$ Q_{1}^{\vee} \cap Q_{2}^{\vee} =  Q_{1}^{'\vee} \cap Q_{2}^{'\vee}. $$
\end{theorem}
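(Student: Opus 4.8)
The plan is to transpose the proof of the analogous statement for pairs of conics (Theorem~\ref{T:coppieconiche}) to arbitrary dimension, replacing the ad hoc $2\times2$ diagonalization by the simultaneous diagonalization of Remark~$7.5$ and translating the bundle isomorphism (\ref{eq:fibriso}) into an equivalence of the Ancona presentations (\ref{eq:Ancona2quadriche}). First I would fix a common frame. By Remark~$7.6$ each logarithmic bundle has a one–dimensional space of global sections, so an isomorphism carries section to section and identifies the two zero loci; by Proposition~$7.7$ these loci are the vertices $\{v_0,\dots,v_n\}$ of the $n+1$ cones in the respective pencils. Hence all four quadrics are simultaneously diagonalized in a single basis, and I may write $A=\mathrm{diag}(a_0,\dots,a_{n-1},-1)$, $B=\mathrm{diag}(b_0,\dots,b_{n-1},-1)$ for $\mathcal{D}_1$ and $C,D$ of the same shape for $\mathcal{D}_2$. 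Dualizing (\ref{eq:Ancona2quadriche}) presents each bundle by a $2\times(n+2)$ matrix $N$ whose first $n+1$ columns are the two gradients and whose last column carries one quadratic form, and the isomorphism becomes a commuting square $M'N_1=N_2M''$ with $M'$ a constant $2\times2$ matrix and $M''$ block upper–triangular (a scalar $(n+1)\times(n+1)$ block $S$, a column of linear forms $f_k$, and a scalar $\theta$), exactly as in (\ref{eq:M'}) and (\ref{eq:M''}).

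The heart of the argument is then a coefficient comparison, which simplifies because every matrix involved is diagonal. Matching the $n+1$ gradient columns forces $S$ to be diagonal — here one uses the normal–crossing genericity $c_i/c_j\neq d_i/d_j$ from Remark~$7.5$ — and produces $c_jS_j=\alpha a_j+\beta b_j$, $d_jS_j=\gamma a_j+\delta b_j$ together with $\alpha+\beta=\gamma+\delta$, generalizing (\ref{eq:E})--(\ref{eq:O}). Matching the quadratic column kills the off–diagonal parts of the $f_k$ and, after eliminating $\theta$ and the diagonal terms $f_k^k$, yields the resolubility conditions playing the role of (\ref{eq:1condition})--(\ref{eq:2condition}). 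The key point I would establish is that, thanks to the diagonal structure, these identities decouple coordinatewise into the single statement that, with $\rho=\alpha/\gamma$ and a constant $\lambda$,
$$ \frac{\rho}{c_k}-\frac{1}{d_k}=\frac{\rho-1}{a_k}, \qquad \frac{1}{c_k}=\lambda\left(\gamma\,\frac{1}{b_k}+\delta\,\frac{1}{a_k}\right),\qquad k=0,\dots,n-1, $$
equivalently that $C^{-1}$ and $D^{-1}$ both lie in the pencil $\langle A^{-1},B^{-1}\rangle$. Since $G^{-1}$ represents the dual quadric $Q^\vee$, this says exactly that $Q_1'^\vee,Q_2'^\vee$ belong to the pencil spanned by $Q_1^\vee,Q_2^\vee$, hence that $Q_1^\vee\cap Q_2^\vee=Q_1'^\vee\cap Q_2'^\vee$.

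For the converse I would start from the equality of dual base loci. Since $n\ge3$ and $Q_1^\vee\cap Q_2^\vee$ is a smooth complete intersection of two quadrics, the quadrics containing it form exactly the $2$-dimensional pencil $\langle Q_1^\vee,Q_2^\vee\rangle$; thus the two dual pencils coincide and $C^{-1},D^{-1}\in\langle A^{-1},B^{-1}\rangle$. In particular $C,D$ are diagonal in the very frame that diagonalizes $A,B$, so the system above is solvable: taking $\rho=t_2/t_1$ and reading off $\beta,\delta$ from the $a_k^2,b_k^2,a_kb_k$ matching gives matrices $M',M''$ which are invertible under open (generic) conditions — the analogues of (\ref{eq:M'invertibile}) and (\ref{eq:M''invertibile}) — and this realizes the isomorphism (\ref{eq:fibriso}).

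I expect the main obstacle to be the bookkeeping of the coefficient comparison in $n$ variables and, above all, the verification that the several resolubility identities collapse cleanly to the pencil–membership condition $C^{-1},D^{-1}\in\langle A^{-1},B^{-1}\rangle$ with no surviving extra constraints. Once the diagonal structure is exploited each condition becomes a coordinatewise linear relation among the entries of $A^{-1},B^{-1},C^{-1},D^{-1}$, so the computation is tractable; the delicate points are showing that the off–diagonal data are forced to vanish by the smoothness/normal–crossing genericity, and that the invertibility conditions on $M'$ and $M''$ can always be arranged, exactly as in the conic case.
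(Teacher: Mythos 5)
Your proposal follows essentially the same route as the paper: simultaneous diagonalization of all four quadrics in the frame of the cone vertices (via the zero locus of the unique section), translation of the bundle isomorphism into a commuting square $M'N_1=N_2M''$ for the dualized Ancona presentations, coefficient comparison yielding resolubility conditions that amount to $C^{-1},D^{-1}\in\langle A^{-1},B^{-1}\rangle$, and the duality argument identifying this with equality of the tangent-hyperplane loci. The only cosmetic difference is that you package the $2n-2$ resolubility identities directly as coordinatewise pencil-membership relations, whereas the paper solves them explicitly for $c_j,d_j$ and recognizes the resulting diagonal matrices as $(A^{-1}+tB^{-1})^{-1}$ and $(A^{-1}+sB^{-1})^{-1}$ afterwards; the content is the same.
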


\begin{proof}
Suppose that (\ref{eq:fibriso}) holds. Then, by using proposition $ 7.7 $ and remark $ 7.5 $, we can assume that $ Q_{1} $, $ Q_{2} $, $ Q'_{1} $, $ Q'_{2} $ have equations, respectively,
$$ a_{0}x_{0}^2+a_{1}x_{1}^2+ \ldots + a_{n-1}x_{n-1}^2-x_{n}^2 = 0 $$
$$ b_{0}x_{0}^2+b_{1}x_{1}^2+ \ldots + b_{n-1}x_{n-1}^2-x_{n}^2 = 0 $$
$$ c_{0}x_{0}^2+c_{1}x_{1}^2+ \ldots + c_{n-1}x_{n-1}^2-x_{n}^2 = 0 $$
$$ d_{0}x_{0}^2+d_{1}x_{1}^2+ \ldots + d_{n-1}x_{n-1}^2-x_{n}^2 = 0 $$
where the coefficients satisfy the properties stated in remark $ 7.5 $.
Saying that the two logarithmic bundles are isomorphic is equivalent to the fact that we can find two invertible matrices 
\begin{equation}\label{eq:M'quadrics}
M' = \pmatrix{ 
\alpha & \beta \cr 
\gamma & \delta \cr
}
\end{equation}
\begin{equation}\label{eq:M''quadrics}
M'' = \pmatrix{ 
E_{1,1} & \ldots & E_{1,n+1} & f_{1} \cr
E_{2,1} & \ldots & E_{2,n+1} & f_{2} \cr
\vdots & {} & \vdots & \vdots \cr
E_{n+1,1} & \ldots & E_{n+1,n+1} & f_{n+1} \cr
0 & \ldots & 0 & \theta \cr
}
\end{equation}
with $ \alpha, \beta, \gamma, \delta, E_{i,j}, \theta \in \mathbf{C} $ and $ f_{j} = \displaystyle \sum_{j = 0}^{n} f^{i}_{j}x_{i} $ complex linear forms, such that the following diagram commutes:
$$ \mathcal{O}_{\mathbf{P}^n}(1)^{n+1} \oplus \mathcal{O}_{\mathbf{P}^n} \buildrel \rm N_{1} \over \longrightarrow \mathcal{O}_{\mathbf{P}^n}(2)^{2} $$ 
$$ \,\,\,\,\,\,\,\,\,\,M'' \downarrow  \,\,\,\,\,\,\,\,\,\,\,\,\,\,\,\,\,\,\,\,\,\,\,\,\,\,\,\,\,\ \downarrow M'  $$
$$ \mathcal{O}_{\mathbf{P}^n}(1)^{n+1} \oplus \mathcal{O}_{\mathbf{P}^n} \buildrel \rm N_{2} \over \longrightarrow \mathcal{O}_{\mathbf{P}^n}(2)^{2} $$ 
where
$$ N_{1} = \pmatrix{ 
2a_{0}x_{0} & \dots  & 2a_{n-1}x_{n-1} & -2x_{n} &  a_{0}x_{0}^2+ \ldots + a_{n-1}x_{n-1}^2-x_{n}^2 \cr 
2b_{0}x_{0} & \ldots & 2b_{n-1}x_{n-1} & -2x_{n} & 0 \cr
} $$
$$ N_{2} = \pmatrix{ 
2c_{0}x_{0} & \dots  & 2c_{n-1}x_{n-1} & -2x_{n} &  c_{0}x_{0}^2+ \ldots + c_{n-1}x_{n-1}^2-x_{n}^2 \cr 
2d_{0}x_{0} & \ldots & 2d_{n-1}x_{n-1} & -2x_{n} & 0 \cr
}. $$
Let's equate the entries of the $ 2 \times (n+2) $ matrices $ M'N_{1} $ and $ N_{2}M'' $; we immediately get that if $ i \not= j $ then $ E_{i,j} = 0 $. The first non trivial conditions are:
\begin{equation}\label{eq:X11X21}
E_{1,1} = \displaystyle{{a_{0}} \over {c_{0}}} \alpha + {{b_{0}} \over {c_{0}}} \beta = \displaystyle{{a_{0}} \over {d_{0}}} \gamma + \displaystyle{{b_{0}} \over {d_{0}}} \delta
\end{equation}
\begin{equation}\label{eq:X12X22}
E_{2,2} = \displaystyle{{a_{1}} \over {c_{1}}} \alpha + {{b_{1}} \over {c_{1}}} \beta = \displaystyle{{a_{1}} \over {d_{1}}} \gamma + \displaystyle{{b_{1}} \over {d_{1}}} \delta
\end{equation}
\begin{equation}\label{eq:X13X23}
E_{3,3} = \displaystyle{{a_{2}} \over {c_{2}}} \alpha + {{b_{2}} \over {c_{2}}} \beta = \displaystyle{{a_{2}} \over {d_{2}}} \gamma + \displaystyle{{b_{2}} \over {d_{2}}} \delta
\end{equation}
$$ \vdots $$
\begin{equation}\label{eq:X1nX2n}
E_{n,n} = \displaystyle{{a_{n-1}} \over {c_{n-1}}} \alpha + {{b_{n-1}} \over {c_{n-1}}} \beta = \displaystyle{{a_{n-1}} \over {d_{n-1}}} \gamma + \displaystyle{{b_{n-1}} \over {d_{n-1}}} \delta
\end{equation}
\begin{equation}\label{eq:X1n+1X2n+1}
E_{n+1,n+1} = \alpha + \beta = \gamma + \delta
\end{equation}
For the moment we don't care about equations from (\ref{eq:X12X22}) to (\ref{eq:X1nX2n}), we will use them afterwards in order to get $ n-1 $ resolubility conditions for our system.
From equations (\ref{eq:X11X21}) and (\ref{eq:X1n+1X2n+1}) we find
\begin{equation}\label{eq:deltaintermedio}
\delta = \displaystyle {{a_{0}(d_{0}-c_{0})} \over {c_{0}(b_{0}-a_{0})}} \alpha + \displaystyle {{(b_{0}d_{0}-a_{0}c_{0})} \over {c_{0}(b_{0}-a_{0})}} \beta
\end{equation}
and
\begin{equation}\label{eq:gammaintermedio}
\gamma = \displaystyle {{(b_{0}c_{0}-a_{0}d_{0})} \over {c_{0}(b_{0}-a_{0})}} \alpha + \displaystyle {{b_{0}(c_{0}-d_{0})} \over {c_{0}(b_{0}-a_{0})}} \beta.
\end{equation}
If we consider the last column of $ M'N_{1} $ and $ N_{2}M'' $ we obtain, for the first entry
\begin{equation}\label{eq:X1n+2,0}
\alpha a_{0} = 2 c_{0} f^{0}_{1} + \theta c_{0}
\end{equation}
\begin{equation}\label{eq:X1n+2,1}
\alpha a_{1} = 2 c_{1} f^{1}_{2} + \theta c_{1}
\end{equation}
\begin{equation}\label{eq:X1n+2,2}
\alpha a_{2} = 2 c_{2} f^{2}_{3} + \theta c_{2}
\end{equation}
$$ \vdots $$
\begin{equation}\label{eq:X1n+2,n-1}
\alpha a_{n-1} = 2 c_{n-1} f^{n-1}_{n} + \theta c_{n-1}
\end{equation}
\begin{equation}\label{eq:X1n+2,n}
\alpha = 2f^{n}_{n+1} + \theta
\end{equation}
\begin{equation}\label{eq:X1n+2,01}
c_{0}f^{1}_{1}+c_{1}f^{0}_{2} = 0
\end{equation}
$$ \vdots $$
\begin{equation}\label{eq:X1n+2,0n-1}
c_{0}f^{n-1}_{1}+c_{n-1}f^{0}_{n} = 0
\end{equation}
\begin{equation}\label{eq:X1n+2,0n}
c_{0}f^{n}_{1}-f^{0}_{n+1} = 0
\end{equation}
\begin{equation}\label{eq:X1n+2,12}
c_{1}f^{2}_{2}+c_{2}f^{1}_{3} = 0
\end{equation}
$$ \vdots $$
\begin{equation}\label{eq:X1n+2,1n-1}
c_{1}f^{n-1}_{2}+c_{n-1}f^{1}_{n} = 0
\end{equation}
\begin{equation}\label{eq:X1n+2,1n}
c_{1}f^{n}_{2}-f^{1}_{n+1} = 0
\end{equation}
$$ \vdots $$
\begin{equation}\label{eq:X1n+2,n-1n}
c_{n-1}f^{n}_{n}-f^{n-1}_{n+1} = 0
\end{equation}
and for the second entry
\begin{equation}\label{eq:X2n+2,0}
\gamma a_{0} = 2 d_{0} f^{0}_{1}
\end{equation}
\begin{equation}\label{eq:X2n+2,1}
\gamma a_{1} = 2 d_{1} f^{1}_{2}
\end{equation}
\begin{equation}\label{eq:X2n+2,2}
\gamma a_{2} = 2 d_{2} f^{2}_{3}
\end{equation}
$$ \vdots $$
\begin{equation}\label{eq:X2n+2,n-1}
\gamma a_{n-1} = 2 d_{n-1} f^{n-1}_{n}
\end{equation}
\begin{equation}\label{eq:X2n+2,n}
\gamma = 2f^{n}_{n+1}
\end{equation}
\begin{equation}\label{eq:X2n+2,01}
d_{0}f^{1}_{1}+d_{1}f^{0}_{2} = 0
\end{equation}
$$ \vdots $$
\begin{equation}\label{eq:X2n+2,0n-1}
d_{0}f^{n-1}_{1}+d_{n-1}f^{0}_{n} = 0
\end{equation}
\begin{equation}\label{eq:X2n+2,0n}
d_{0}f^{n}_{1}-f^{0}_{n+1} = 0
\end{equation}
\begin{equation}\label{eq:X2n+2,12}
d_{1}f^{2}_{2}+d_{2}f^{1}_{3} = 0
\end{equation}
$$ \vdots $$
\begin{equation}\label{eq:X2n+2,1n-1}
d_{1}f^{n-1}_{2}+d_{n-1}f^{1}_{n} = 0
\end{equation}
\begin{equation}\label{eq:X2n+2,1n}
d_{1}f^{n}_{2}-f^{1}_{n+1} = 0
\end{equation}
$$ \vdots $$
\begin{equation}\label{eq:X2n+2,n-1n}
d_{n-1}f^{n}_{n}-f^{n-1}_{n+1} = 0.
\end{equation}
By using equations (\ref{eq:X1n+2,01}),$ \ldots $, (\ref{eq:X1n+2,n-1n}) and (\ref{eq:X2n+2,01}), $ \ldots $, (\ref{eq:X2n+2,n-1n}) (that is conditions coming from coefficients of $ x_{i}x_{j} $ with $ i \not= j $) and remembering the properties of $ c_{0},\ldots,c_{n-1},d_{0},\ldots,d_{n-1} $, we get that if $ j-i \not= 1 $ then $ f^{i}_{j} = 0 $. So each linear form reduces to $ f_{j} = f^{j-1}_{j}x_{j -1} $. In order to determine these coefficients we consider equations from (\ref{eq:X1n+2,0}) to (\ref{eq:X1n+2,n}) (actually these are $ n+1 $ relations) and we get
\begin{equation}\label{eq:f01intermedia1}
f^{0}_{1} =  \displaystyle {{\alpha} \over {2}} \left ({{a_{0}} \over {c_{0}}} - {{a_{1}} \over {c_{1}}} \right ) + f^{1}_{2} 
\end{equation}
\begin{equation}\label{eq:f23intermedia1}
f^{2}_{3} =  \displaystyle {{\alpha} \over {2}} \left ({{a_{2}} \over {c_{2}}} - {{a_{1}} \over {c_{1}}} \right ) + f^{1}_{2} 
\end{equation}
$$ \vdots $$
\begin{equation}\label{eq:fn-1nintermedia1}
f^{n-1}_{n} =  \displaystyle {{\alpha} \over {2}} \left ({{a_{n-1}} \over {c_{n-1}}} - {{a_{1}} \over {c_{1}}} \right ) + f^{1}_{2} 
\end{equation}
\begin{equation}\label{eq:fnn+1intermedia1}
f^{n}_{n+1} =  \displaystyle {{\alpha} \over {2}} \left (1 - {{a_{1}} \over {c_{1}}} \right ) + f^{1}_{2} 
\end{equation}
\begin{equation}\label{eq:thetaintermedia}
\theta =  \displaystyle {{a_{1}} \over {c_{1}}} \alpha - 2 f^{1}_{2}.
\end{equation}
Moreover equations from (\ref{eq:X2n+2,0}) to (\ref{eq:X2n+2,n}) tell us that
\begin{equation}\label{eq:f01intermedia2}
f^{0}_{1} =  \displaystyle {{a_{0}} \over {2d_{0}}} \gamma 
\end{equation}
\begin{equation}\label{eq:f12intermedia}
f^{1}_{2} =  \displaystyle {{a_{1}} \over {2d_{1}}} \gamma
\end{equation}
\begin{equation}\label{eq:f23intermedia2}
f^{2}_{3} =  \displaystyle {{a_{2}} \over {2d_{2}}} \gamma
\end{equation}
$$ \vdots $$
\begin{equation}\label{eq:fn-1nintermedia2}
f^{n-1}_{n} =  \displaystyle {{a_{n-1}} \over {2d_{n-1}}} \gamma
\end{equation}
\begin{equation}\label{eq:fnn+1intermedia2}
f^{n}_{n+1} =  \displaystyle {{\gamma} \over {2}} 
\end{equation}
By using (\ref{eq:fnn+1intermedia1}), (\ref{eq:fnn+1intermedia2}), (\ref{eq:f12intermedia}) and the expression for $ \gamma $ given by (\ref{eq:gammaintermedio}) we get
\begin{equation}\label{eq:beta}
\beta = \displaystyle {{a_{1}b_{0}c_{0}(c_{1}-d_{1})+a_{0}c_{1}d_{1}(d_{0}-c_{0})+a_{0}a_{1}(c_{0}d_{1}-c_{1}d_{0})} \over {b_{0}c_{1}(c_{0}-d_{0})(d_{1}-a_{1})}} \alpha.
\end{equation}
This implies that (\ref{eq:gammaintermedio}) and (\ref{eq:deltaintermedio}) become, respectively,
\begin{equation}\label{eq:gamma}
\gamma = \displaystyle {{d_{1}(a_{1}-c_{1})} \over {c_{1}(a_{1}-d_{1})}} \alpha
\end{equation}
and
\begin{equation}\label{eq:delta}
\delta = \displaystyle {{a_{1}d_{1}(b_{0}d_{0}-a_{0}c_{0})+a_{1}c_{1}d_{0}(a_{0}-b_{0})-a_{0}c_{1}d_{1}(d_{0}-c_{0})} \over {b_{0}c_{1}(c_{0}-d_{0})(a_{1}-d_{1})}} \alpha.
\end{equation}
We remark that (\ref{eq:beta}), (\ref{eq:gamma}) and (\ref{eq:delta}) are the same formulas that we found in the case of two conics in $ \mathbf{P}^2 $ (indeed only the coefficients of $ x_{0}^2 $ and $ x_{1}^2 $ of the four quadrics are involved).
As in that case, if we choose $ \alpha \in \mathbf{C} - \{0\} $ and the following property holds
\begin{equation}\label{eq:M'quadricsinvertibile}
a_{1}(c_{1}d_{0}-c_{0}d_{1})+c_{1}d_{1}(c_{0}-d_{0}) \not= 0
\end{equation}
then the matrix $ M' $ introduced in (\ref{eq:M'quadrics}) is invertible.
Moreover, if we consider (\ref{eq:gamma}) together with equations from (\ref{eq:thetaintermedia}) to (\ref{eq:fnn+1intermedia2}) we get 
\begin{equation}\label{eq:fj-1j}
f^{j-1}_{j} = \displaystyle {{a_{j-1}d_{1}(a_{1}-c_{1})} \over {2d_{j-1}c_{1}(a_{1}-d_{1})}} \alpha \,\,\, \forall \, j \in \{1, \ldots, n\}
\end{equation}
\begin{equation}\label{eq:fnn+1}
f^{n}_{n+1} = \displaystyle {{d_{1}(a_{1}-c_{1})} \over {2c_{1}(a_{1}-d_{1})}} \alpha
\end{equation}
\begin{equation}\label{eq:theta}
\theta = \displaystyle {{a_{1}(c_{1}-d_{1})} \over {c_{1}(a_{1}-d_{1})}} \alpha.
\end{equation}
Since we have two ways to compute $ f^{0}_{1},f^{2}_{3},\ldots,f^{n-1}_{n}$, we get $ n-1 $ resolubility conditions for our system involving the coefficients of the quadrics. In this sense, let consider equations (\ref{eq:f01intermedia1}) and (\ref{eq:fj-1j}) for $ j=1$, $ j=2 $, we have
\begin{equation}\label{eq:condition1}
a_{0}a_{1}(c_{1}d_{0}-c_{0}d_{1})+a_{1}c_{0}d_{0}(d_{1}-c_{1})+a_{0}c_{1}d_{1}(c_{0}-d_{0}) = 0.
\end{equation}
Similarly, if we consider equations (\ref{eq:f23intermedia1}) with (\ref{eq:fj-1j}) for $ j=2$, $ j=3 $ we get 
\begin{equation}\label{eq:condition2}
a_{1}a_{2}(c_{1}d_{2}-c_{2}d_{1})+a_{1}c_{2}d_{2}(d_{1}-c_{1})+a_{2}c_{1}d_{1}(c_{2}-d_{2}) = 0
\end{equation}
and so on.
Finally, equations (\ref{eq:fn-1nintermedia1}) and (\ref{eq:fj-1j}) for $ j=2$, $ j=n $ give us
\begin{equation}\label{eq:conditionn-1}
a_{1}a_{n-1}(c_{1}d_{n-1}-c_{n-1}d_{1})+a_{1}c_{n-1}d_{n-1}(d_{1}-c_{1})+a_{n-1}c_{1}d_{1}(c_{n-1}-d_{n-1}) = 0
\end{equation}
(in order to get all these relations it suffices to find (\ref{eq:condition1}) and then to change the index $ 0 $ with $ j \in \{2,3,\ldots,n-1\} $).
Now, let come back to equations from (\ref{eq:X12X22}) to (\ref{eq:X1nX2n}). If we substitute in these equations final expressions for $ \beta,\gamma,\delta $ we get, respectively,
\begin{equation}\label{eq:conditionn}
a_{1}b_{1}(b_{0}-a_{0})(c_{0}d_{1}-c_{1}d_{0})+c_{1}d_{1}(c_{0}-d_{0})(a_{0}b_{1}-a_{1}b_{0}) = 0
\end{equation}
\begin{equation}\label{eq:conditionn+1}
a_{1}a_{2}b_{0}(c_{0}-d_{0})(c_{2}d_{1}-c_{1}d_{2})+a_{1}b_{0}b_{2}(d_{1}-c_{1})(c_{2}d_{0}-c_{0}d_{2})+
\end{equation}
$$ \quad\quad +a_{0}a_{1}b_{2}(c_{2}-d_{2})(c_{1}d_{0}-c_{0}d_{1})+ c_{1}d_{1}(c_{2}-d_{2})(c_{0}-d_{0})(a_{0}b_{2}-a_{2}b_{0}) = 0 $$
\begin{equation}\label{eq:conditionn+2}
a_{1}a_{3}b_{0}(c_{0}-d_{0})(c_{3}d_{1}-c_{1}d_{3})+a_{1}b_{0}b_{3}(d_{1}-c_{1})(c_{3}d_{0}-c_{0}d_{3})+
\end{equation}
$$ \quad\quad +a_{0}a_{1}b_{3}(c_{3}-d_{3})(c_{1}d_{0}-c_{0}d_{1})+ c_{1}d_{1}(c_{3}-d_{3})(c_{0}-d_{0})(a_{0}b_{3}-a_{3}b_{0}) = 0 $$
$$ \vdots $$
\vfill\eject
\begin{equation}\label{eq:condition2n-2}
a_{1}a_{n-1}b_{0}(c_{0}-d_{0})(c_{n-1}d_{1}-c_{1}d_{n-1})+
\end{equation}
$$ + a_{1}b_{0}b_{n-1}(d_{1}-c_{1})(c_{n-1}d_{0}-c_{0}d_{n-1})+ $$
$$ +a_{0}a_{1}b_{n-1}(c_{n-1}-d_{n-1})(c_{1}d_{0}-c_{0}d_{1})+ $$
$$ \quad\quad\quad\quad\quad +c_{1}d_{1}(c_{n-1}-d_{n-1})(c_{0}-d_{0})(a_{0}b_{n-1}-a_{n-1}b_{0}) = 0. $$
We remark that (\ref{eq:conditionn}) is exactly condition (\ref{eq:2condition}) that we got in the case of conics. Moreover, in order to get the other relations it suffices to find (\ref{eq:conditionn+1}) and then to write $ 3,4,\ldots,n-1 $ instead of the index $ 2 $ (namely $ \beta,\gamma $ and $ \delta $ depend only on the coefficients of the quadrics indexed by $ 0 $ and $ 1 $). 
By using equations from (\ref{eq:X12X22}) to (\ref{eq:X1nX2n}) we can find expressions for $ E_{i,i} $:
\begin{equation}\label{eq:E11}
E_{1,1} = \displaystyle {{a_{1}(c_{1}-d_{1})(a_{0}-b_{0})} \over {c_{1}(a_{1}-d_{1})(c_{0}-d_{0})}} \alpha
\end{equation}
\leftline{$ E_{i,i} = $} 
\begin{equation}\label{eq:Eii}
= \displaystyle {{c_{1}(a_{i}b_{0}-a_{0}b_{i-1})[d_{1}(d_{0}-c_{0})-a_{1}d_{0}]+a_{1}c_{0}[b_{0}c_{1}(a_{i-1}-b_{i-1})+b_{i-1}d_{1}(b_{0}-a_{0})]} \over {b_{0}c_{1}c_{i -1}(c_{0}-d_{0})(a_{1}-d_{1})}} \alpha
\end{equation}
for all $ i \in \{2,3,\ldots,n \} $ and
\begin{equation}\label{eq:En+1n+1}
E_{n+1,n+1}= \displaystyle {{(a_{0}-b_{0})[a_{1}(c_{1}d_{0}-c_{0}d_{1})+c_{1}d_{1}(c_{0}-d_{0})]} \over {b_{0}c_{1}(c_{0}-d_{0})(a_{1}-d_{1})}} \alpha.
\end{equation}
If $ n=2 $ these coefficients reduce to $ E,I,O $ of the proof of theorem $ 5.18 $. 
Thus the matrix $ M'' $ introduced in (\ref{eq:M''quadrics}) is invertible if and only if, fixed $ \alpha \in \mathbf{C} - \{0\} $, besides (\ref{eq:M'quadricsinvertibile}), for all $ i \in \{2,3,\ldots,n\} $ hold
\begin{equation}\label{eq:M''quadricsinvertibile}
c_{1}(a_{i-1}b_{0}-a_{0}b_{i-1})[d_{1}(d_{0}-c_{0})-a_{1}d_{0}]+a_{1}c_{0}[b_{0}c_{1}(a_{i-1}-b_{i-1})+b_{i-1}d_{1}(b_{0}-a_{0})] \not= 0.
\end{equation}
In this way we have that $ \Omega_{\mathbf{P}^n}^{1}(\log \mathcal{D}_{1}) \cong \Omega_{\mathbf{P}^n}^{1}(\log \mathcal{D}_{2}) $ if and only if relations from (\ref{eq:condition1}) to (\ref{eq:condition2n-2}) hold (they are $ 2n-2 $), with the open conditions (\ref{eq:M'quadricsinvertibile}) and (\ref{eq:M''quadricsinvertibile}) (they are n). So, let fix $ a_{0}, \ldots, a_{n-1}, b_{0}, \ldots, b_{n-1}, c_{0}, d_{0} $ and let consider equations (\ref{eq:condition1}) and (\ref{eq:conditionn}): with the same computations of the case of conics we immediately get that
\begin{equation}\label{eq:c1}
c_{1} = \displaystyle{{a_{1}b_{1}c_{0}(b_{0}-a_{0})} \over {a_{0}b_{1}(b_{0}-c_{0})+a_{1}b_{0}(c_{0}-a_{0})}}
\end{equation}
\begin{equation}\label{eq:d1}
d_{1} = \displaystyle{{a_{1}b_{1}d_{0}(b_{0}-a_{0})} \over {a_{0}b_{1}(b_{0}-d_{0})+a_{1}b_{0}(d_{0}-a_{0})}}.
\end{equation}
Now, let substitute these expressions of $ c_{1} $ and $ d_{1} $ in (\ref{eq:condition2}) and (\ref{eq:conditionn+1}): these equations become, respectively, quadratic and linear with respect to $ c_{2} $ and $ d_{2} $. With some computations we get
\begin{equation}\label{eq:c2}
c_{2} = \displaystyle{{a_{2}b_{2}c_{0}(b_{0}-a_{0})} \over {a_{0}b_{2}(b_{0}-c_{0})+a_{2}b_{0}(c_{0}-a_{0})}}
\end{equation}
\begin{equation}\label{eq:d2}
d_{2} = \displaystyle{{a_{2}b_{2}d_{0}(b_{0}-a_{0})} \over {a_{0}b_{2}(b_{0}-d_{0})+a_{2}b_{0}(d_{0}-a_{0})}}
\end{equation}
and so on. Finally the pair of equations (\ref{eq:conditionn-1}) and (\ref{eq:condition2n-2}) gives us
\begin{equation}\label{eq:cn-1}
c_{n-1} = \displaystyle{{a_{n-1}b_{n-1}c_{0}(b_{0}-a_{0})} \over {a_{0}b_{n-1}(b_{0}-c_{0})+a_{n-1}b_{0}(c_{0}-a_{0})}}
\end{equation}
\begin{equation}\label{eq:dn-1}
d_{n-1} = \displaystyle{{a_{n-1}b_{n-1}d_{0}(b_{0}-a_{0})} \over {a_{0}b_{n-1}(b_{0}-d_{0})+a_{n-1}b_{0}(d_{0}-a_{0})}}.
\end{equation}
Thus the matrices representing $ Q'_{1} $ and $ Q'_{2} $ are, respectively,
$$ \pmatrix{ 
c_{0} & 0 & {} & \ldots & 0 \cr
0 & \displaystyle{{a_{1}b_{1}c_{0}(b_{0}-a_{0})} \over {a_{0}b_{1}(b_{0}-c_{0})+a_{1}b_{0}(c_{0}-a_{0})}} & 0 & \ldots & 0 \cr
\vdots & {} & \ddots & {} & \vdots \cr
0 & \ldots & 0 & \displaystyle{{a_{n-1}b_{n-1}c_{0}(b_{0}-a_{0})} \over {a_{0}b_{n-1}(b_{0}-c_{0})+a_{n-1}b_{0}(c_{0}-a_{0})}} & 0 \cr 
0 & {} & \ldots & 0 & -1 \cr
} $$
$$ \pmatrix{ 
d_{0} & 0 & {} & \ldots & 0 \cr
0 & \displaystyle{{a_{1}b_{1}d_{0}(b_{0}-a_{0})} \over {a_{0}b_{1}(b_{0}-d_{0})+a_{1}b_{0}(d_{0}-a_{0})}} & 0 & \ldots & 0 \cr
\vdots & {} & \ddots & {} & \vdots \cr
0 & \ldots & 0 & \displaystyle{{a_{n-1}b_{n-1}d_{0}(b_{0}-a_{0})} \over {a_{0}b_{n-1}(b_{0}-d_{0})+a_{n-1}b_{0}(d_{0}-a_{0})}} & 0 \cr 
0 & {} & \ldots & 0 & -1 \cr
}. $$
If we introduce $ t = \displaystyle {{b_{0}(a_{0}-c_{0})} \over {a_{0}(c_{0}-b_{0})}} $ and $ s = \displaystyle {{b_{0}(a_{0}-d_{0})} \over {a_{0}(d_{0}-b_{0})}} $ the previous matrices become, respectively, 
\begin{equation}\label{eq:matricefinale1}
\pmatrix{ 
\displaystyle{{a_{0}b_{0}(1+t)} \over {b_{0}+ta_{0}}} & 0 & {} & \ldots & 0 \cr
0 & \displaystyle{{a_{1}b_{1}(1+t)} \over {b_{1}+ta_{1}}} & 0 & \ldots & 0 \cr
\vdots & {} & \ddots & {} & \vdots \cr
0 & \ldots & 0 & \displaystyle{{a_{n-1}b_{n-1}(1+t)} \over {b_{n-1}+ta_{n-1}}} & 0 \cr 
0 & {} & \ldots & 0 & -1 \cr
} 
\end{equation}
\begin{equation}\label{eq:matricefinale2}
\pmatrix{ 
\displaystyle{{a_{0}b_{0}(1+s)} \over {b_{0}+sa_{0}}} & 0 & {} & \ldots & 0 \cr
0 & \displaystyle{{a_{1}b_{1}(1+s)} \over {b_{1}+sa_{1}}} & 0 & \ldots & 0 \cr
\vdots & {} & \ddots & {} & \vdots \cr
0 & \ldots & 0 & \displaystyle{{a_{n-1}b_{n-1}(1+s)} \over {b_{n-1}+sa_{n-1}}} & 0 \cr
0 & {} & \ldots & 0 & -1 \cr
}
\end{equation}
that is they are equivalent (up to scalar multiplication) to $ (A^{-1}+tB^{-1})^{-1} $ and $ (A^{-1}+sB^{-1})^{-1} $, where $ A $ and $ B $ are the diagonal matrices associated to $ Q_{1} $ and $ Q_{2} $. By applying the same \emph{duality} argument of the case of pairs of conics we get that $ \mathcal{D}_{1} $ and $ \mathcal{D}_{2} $ have the same tangent hyperplanes. We remark that the entries of (\ref{eq:matricefinale1}) and (\ref{eq:matricefinale2}) have to verify the open conditions given in (\ref{eq:M''quadricsinvertibile}) ((\ref{eq:M'quadricsinvertibile}) is always satisfied). 
For the other direction, we can work as in the last part of the proof of theorem $ 5.18 $.
\end{proof}

\chapter{Arrangements of lines and conics in the projective plane}
\label{ch:arrangements of lines and conics in the projective plane}

\section{The case of a conic and a line}
Let $ \mathcal{D} = \{r, C\} $ be an arrangement with normal crossings in $ \mathbf{P}^2 $ made of a line $ r $ and a smooth conic $ C $. Assume that $ r = \{\alpha x_{0}+\beta x_1 + \gamma x_{2} = 0\}$ and $ C = \{f = 0 \} $ where $ f = \displaystyle{ \sum_{i, j = 0}^{2}a_{i j} x_{i} x_{j}} $. As usual, we denote by $ \Omega_{\mathbf{P}^2}^{1}(\log \mathcal{D}) $ the associated logarithmic bundle. 

\begin{figure}[h]
    \centering
    \includegraphics[width=50mm]{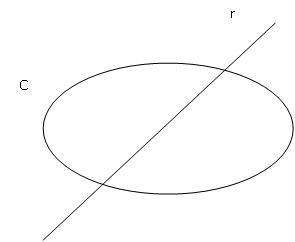}
    \caption{A line and a smooth conic with normal crossings}
\end{figure}

Theorem~\ref{T:Ancona} assures us that $ \Omega_{\mathbf{P}^2}^{1}(\log \mathcal{D}) $ admits the short exact sequence
\begin{equation}\label{eq:resline-conic}
0 \longrightarrow \mathcal{O}_{\mathbf{P}^2}(-1) \oplus \mathcal{O}_{\mathbf{P}^2}(-2)  \buildrel \rm M \over \longrightarrow \mathcal{O}_{\mathbf{P}^2}(-1)^{3} \oplus \mathcal{O}_{\mathbf{P}^2} \longrightarrow \Omega_{\mathbf{P}^2}^{1}(\log \mathcal{D}) \longrightarrow 0
\end{equation}
where $ M $ is the $ 4 \times 2 $ matrix given by
$$ M = \pmatrix{ 
\alpha & 2 \displaystyle{ \sum_{j = 0}^{2}a_{0 j} x_{j} }  \cr 
\beta & 2 \displaystyle{ \sum_{j = 0}^{2}a_{1 j} x_{j} }  \cr 
\gamma & 2 \displaystyle{ \sum_{j = 0}^{2}a_{2 j} x_{j} }  \cr 
\alpha x_{0}+\beta x_1 + \gamma x_{2} & 0 \cr
}. $$
We remark that (\ref{eq:resline-conic}) is not a minimal resolution since $ \alpha, \beta, \gamma $ are three elements of degree $ 0 $ such that $ (\alpha, \beta, \gamma) \in \mathbf{C}^3-(0,0,0) $. Without loss of generality we can assume that $ r = \{x_{0} = 0\}$, in particular we have 
$$  M = \pmatrix{ 
1 & 2 \displaystyle{ \sum_{j = 0}^{2}a_{0 j} x_{j} }  \cr 
0 & 2 \displaystyle{ \sum_{j = 0}^{2}a_{1 j} x_{j} }  \cr 
0 & 2 \displaystyle{ \sum_{j = 0}^{2}a_{2 j} x_{j} }  \cr 
x_{0} & 0 \cr
}.  $$
By applying Gaussian elimination to $ M $ we obtain the minimal resolution for $ \Omega_{\mathbf{P}^2}^{1}(\log \mathcal{D}) $:
\begin{equation}\label{eq:minresline-conic}
0 \longrightarrow \mathcal{O}_{\mathbf{P}^2}(-2)  \buildrel \rm \overline{M} \over \longrightarrow \mathcal{O}_{\mathbf{P}^2}(-1)^{2} \oplus \mathcal{O}_{\mathbf{P}^2} \longrightarrow \Omega_{\mathbf{P}^2}^{1}(\log \mathcal{D}) \longrightarrow 0
\end{equation}
with 
$$ \overline{M} = \pmatrix{ 
2 \displaystyle{ \sum_{j = 0}^{2}a_{1 j} x_{j} }  \cr 
2 \displaystyle{ \sum_{j = 0}^{2}a_{2 j} x_{j} }  \cr 
-2 \displaystyle{ \sum_{j = 0}^{2}a_{0 j} x_{j} x_{0} } \cr
}. $$

We observe that the entries of $ \overline{M} $ are, respectively, two linear forms ($ \partial_{x_{1}} f $ and $ \partial_{x_{2}} f $) and one quadratic form (the product of $ - x_{0} $ with $ \partial_{x_{0}} f $).
From (\ref{eq:minresline-conic}) we get that $ c_{1}(\Omega_{\mathbf{P}^2}^{1}(\log \mathcal{D})) = 0 $ and $ c_{2}(\Omega_{\mathbf{P}^2}^{1}(\log \mathcal{D})) = 1 $. Moreover theorem~\ref{T:3.5} implies that $ \Omega_{\mathbf{P}^2}^{1}(\log \mathcal{D}) $ is a semistable vector bundle over $ \mathbf{P}^2 $. As we can see also in~\cite{Wykno}, the following holds:

\begin{theorem}\label{T:parM201}
Let $ \mathbf{M}_{\mathbf{P}^2}^{ss}(0,1) $ be the family of semistable rank-$ 2 $ vector bundles $ E $  over $ \mathbf{P}^2 $ with minimal resolution 
$$ 0 \longrightarrow \mathcal{O}_{\mathbf{P}^2}(-2)  \buildrel \rm ^{t}\pmatrix{ f_{1} & f_{2} & f_{3} \cr} \over \longrightarrow \mathcal{O}_{\mathbf{P}^2}(-1)^{2} \oplus \mathcal{O}_{\mathbf{P}^2} \longrightarrow E \longrightarrow 0 $$
where $ f_{1}, f_{2} $ are linear forms and $ f_{3} $ is a quadratic form. Then the map
$$ \mathbf{M}_{\mathbf{P}^2}^{ss}(0,1) \buildrel \rm \pi_{2} \over \longrightarrow \mathbf{P}^2 $$
$$ E \longmapsto \{f_{1} = 0\} \cap \{f_{2} = 0\} $$
is an isomorphism.
\end{theorem}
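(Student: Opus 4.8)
The plan is to recover the point $\pi_{2}(E)$ \emph{intrinsically} from $E$, so that $\pi_{2}$ is manifestly well defined, and then to establish bijectivity by analysing the freedom in the minimal resolution. First I would read off the numerics: the Chern polynomial of $E$ is the degree-$2$ truncation of $(1-t)^{2}/(1-2t)$, so $c_{1}(E)=0$ and $c_{2}(E)=1$, while the long exact cohomology sequence of the resolution gives $h^{0}(E)=h^{0}(\mathcal{O}_{\mathbf{P}^{2}})=1$. Hence $E$ has a unique section $s$ up to scalars, induced by the summand $\mathcal{O}_{\mathbf{P}^{2}}$ of the middle term. Since $E$ is semistable with $\mu(E)=0$, this $s$ cannot vanish along a divisor $D$ (that would give $\mathcal{O}_{\mathbf{P}^{2}}(D)\hookrightarrow E$ with $\mu(\mathcal{O}_{\mathbf{P}^{2}}(D))>0$, contradicting semistability), so its zero scheme $Z(s)$ has length $c_{2}(E)=1$, i.e.\ it is a single reduced point. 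To identify it, I would compose the inclusion of $\mathcal{O}_{\mathbf{P}^{2}}(-1)^{2}$ into the middle term with the projection $E\to E/s(\mathcal{O}_{\mathbf{P}^{2}})=\mathcal{I}_{Z(s)}$: this presents $\mathcal{I}_{Z(s)}$ as the cokernel of the map $\mathcal{O}_{\mathbf{P}^{2}}(-2)\to\mathcal{O}_{\mathbf{P}^{2}}(-1)^{2}$ with components $(f_{1},f_{2})$, which is exactly the Koszul resolution of the complete intersection $\{f_{1}=0\}\cap\{f_{2}=0\}$. Therefore $Z(s)=\{f_{1}=0\}\cap\{f_{2}=0\}=\pi_{2}(E)$; in particular $f_{1},f_{2}$ must be linearly independent, for otherwise $f_{1},f_{2},f_{3}$ would share a common zero and $E$ would fail to be locally free.

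For bijectivity I would work on the resolution data directly. The isomorphism class of $E$ is the orbit of $\varphi={}^{t}(f_{1},f_{2},f_{3})$ under $G=\mathrm{Aut}(\mathcal{O}_{\mathbf{P}^{2}}(-2))\times\mathrm{Aut}(\mathcal{O}_{\mathbf{P}^{2}}(-1)^{2}\oplus\mathcal{O}_{\mathbf{P}^{2}})$, since a minimal resolution is unique up to isomorphism of complexes. Because $\mathrm{Hom}(\mathcal{O}_{\mathbf{P}^{2}},\mathcal{O}_{\mathbf{P}^{2}}(-1))=0$, an automorphism of the target acts by $(f_{1},f_{2})\mapsto A\,(f_{1},f_{2})$ with $A\in GL(2,\mathbf{C})$ and $f_{3}\mapsto c\,f_{3}+v_{1}f_{1}+v_{2}f_{2}$ with $c\in\mathbf{C}^{\ast}$ and $v_{1},v_{2}$ linear. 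The $GL(2,\mathbf{C})$-part fixes the common zero $p=\{f_{1}=0\}\cap\{f_{2}=0\}$ and acts transitively on bases of linear forms vanishing at $p$, while $f_{3}$ may be altered by any element of the degree-$2$ part of the ideal $(f_{1},f_{2})$ and rescaled; as $f_{1},f_{2}$ form a regular sequence, this degree-$2$ part has codimension $1$ in the space of quadrics, the quotient being detected by evaluation at $p$, so the condition $f_{3}(p)\neq 0$ forces all admissible $f_{3}$ into one orbit. Hence the orbit of $\varphi$ depends only on $p$, giving injectivity. For surjectivity, given $p$ I would pick a basis $f_{1},f_{2}$ of linear forms through $p$ and any quadric $f_{3}$ with $f_{3}(p)\neq 0$: then $f_{1},f_{2},f_{3}$ have no common zero, the cokernel $E$ is a rank-$2$ bundle with $c_{1}=0$ and $c_{2}=1$, and Theorem~\ref{T:3.5} certifies semistability, since here $(b_{1},b_{2},b_{3})=(0,-1,-1)$, $a_{1}=-2$ and $b_{1}=0=\mu(E)$.

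Finally I would upgrade the set-theoretic bijection to an isomorphism of varieties, and this is where the real work lies. Realising $\mathbf{M}^{ss}_{\mathbf{P}^{2}}(0,1)$ as the quotient $V^{0}/G$, where $V^{0}$ is the open subset of $\varphi$'s producing a locally free semistable cokernel inside the affine space of all $(f_{1},f_{2},f_{3})$, the assignment $\varphi\mapsto\{f_{1}=0\}\cap\{f_{2}=0\}$ is $G$-invariant and patently algebraic (the point is given by the $2\times2$ minors of the coefficient matrix of $(f_{1},f_{2})$), so it descends to a morphism $\pi_{2}$. The inverse I would produce by a \emph{universal family}: over the parameter $\mathbf{P}^{2}$ the linear forms vanishing at the moving point $p$ are the fibres of $\Omega^{1}_{\mathbf{P}^{2}}(1)$ (the kernel in the Euler sequence), so a relative version of the resolution on $\mathbf{P}^{2}\times\mathbf{P}^{2}$ yields a family of bundles and hence a morphism $\mathbf{P}^{2}\to\mathbf{M}^{ss}_{\mathbf{P}^{2}}(0,1)$ inverting $\pi_{2}$. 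The main obstacle is precisely this step: pinning down the quotient/moduli structure on $\mathbf{M}^{ss}_{\mathbf{P}^{2}}(0,1)$ and verifying that the universal construction is a genuine morphism into it. A cleaner, if less explicit, finish bypasses the universal family: $\mathbf{M}^{ss}_{\mathbf{P}^{2}}(0,1)=V^{0}/G$ is irreducible and $\pi_{2}$ is a bijective morphism onto the smooth, hence normal, variety $\mathbf{P}^{2}$; in characteristic zero such a map has degree one, so it is birational, and Zariski's Main Theorem then forces it to be an isomorphism.
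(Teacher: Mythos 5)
Your proof is correct, and it takes a genuinely different and more complete route than the one in the paper. The paper's argument is a direct computation in adapted coordinates: it normalises $f_{1}=f_{1}'=x_{0}$, $f_{2}=f_{2}'=x_{1}$, observes that local freeness at $[0,0,1]$ forces $f_{3}(p)\neq 0$, and then explicitly solves $g_{1}x_{0}+g_{2}x_{1}+f_{3}=f_{3}'$ to exhibit the commuting diagram --- essentially only the implication ``same point $\Rightarrow$ isomorphic bundles,'' with well-definedness of $\pi_{2}$, surjectivity, and the variety-level statement left implicit. You add three things the paper glosses over: (i) an intrinsic characterisation of $\pi_{2}(E)$ as the zero scheme of the unique section of $E$ (via $h^{0}(E)=1$, semistability excluding a divisorial component, and the Koszul identification of $E/s(\mathcal{O}_{\mathbf{P}^{2}})$ with $\mathcal{I}_{p}$), which is what actually makes $\pi_{2}$ well defined on isomorphism classes; (ii) a structural orbit analysis of the $G$-action on minimal resolutions, using $\mathrm{Hom}(\mathcal{O}_{\mathbf{P}^{2}},\mathcal{O}_{\mathbf{P}^{2}}(-1))=0$ and the codimension-one count for $(f_{1},f_{2})_{2}$ inside the quadrics, which subsumes the paper's explicit solution for $g_{1},g_{2}$; and (iii) an honest treatment of why the set-theoretic bijection is an isomorphism of varieties (universal family or bijective-morphism-onto-normal-target plus Zariski's Main Theorem), an issue the paper does not address at all. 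The paper's computation buys concreteness at the cost of completeness; your version buys a proof that $\pi_{2}$ is intrinsic and scheme-theoretically an isomorphism, at the cost of invoking the quotient description $V^{0}/G$ of $\mathbf{M}^{ss}_{\mathbf{P}^{2}}(0,1)$ --- a point worth flagging, since every such $E$ is strictly semistable and one should say explicitly that isomorphism classes, rather than S-equivalence classes, are being parametrised (here the two happen to coincide, both being determined by $p$).
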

\begin{proof}
Let $ E $ and $ E' $ be two elements of  $ \mathbf{M}_{\mathbf{P}^2}^{ss}(0,1) $. We want to prove that the intersection point of $ f_{1} $ and $ f_{2} $ coincides with the one of $ f'_{1} $ and $ f'_{2} $ if and only if $ E \cong E' $. If the intersection point is the same, without loss of generality we can assume that $ f_{1} = f'_{1} = x_{0} $ and $ f_{2} = f'_{2} = x_{1} $. Moreover, $ E_{x} $ and $ E'_{x} $ are the cokernels of two rank-$ 1 $ maps for all $ x \in\mathbf{P}^2 $, in particular if $ x = [0,0,1] $ we get that $ f_{3} $ and $  f'_{3} $ have to contain the term $ x_{2}^{2} $. In order to prove that $ E \cong E' $ it suffices to find $ g_{1}, g_{2} $ linear forms such that the following diagram commutes:
$$ \mathcal{O}_{\mathbf{P}^2}(-2) \buildrel \rm ^{t}\pmatrix{ x_{0} & x_{1} & f_{3} \cr} \over \longrightarrow \mathcal{O}_{\mathbf{P}^2}(-1)^{2} \oplus \mathcal{O}_{\mathbf{P}^2}  $$ 
$$ 1 \downarrow  \,\,\,\,\,\,\,\,\,\,\,\,\,\,\,\,\,\,\,\,\,\,\,\,\,\,\,\,\,\,\,\,\,\,\,\,\,\,\,\ \,\,\,\,\, \,\,\,\,\, \,\,\,\,\, \,\,\,\,\, \,\,\,\,\, \downarrow L $$
$$ \mathcal{O}_{\mathbf{P}^2}(-2) \buildrel \rm ^{t}\pmatrix{ x_{0} & x_{1} & f'_{3} \cr} \over \longrightarrow \mathcal{O}_{\mathbf{P}^2}(-1)^{2} \oplus \mathcal{O}_{\mathbf{P}^2}  $$ 
where
$$  L = \pmatrix{ 1 & 0 & 0  \cr 0 & 1 & 0  \cr g_{1} & g_{2} & 1 \cr }. $$
In particular we have to solve 
\begin{equation}\label{eq:equation}
g_{1}x_{0}+g_{2}x_{1}+f_{3} = f'_{3}. 
\end{equation}
Assume that
$$ f_{3} = b_{00}x_{0}^{2}+b_{01}x_{0}x_{1}+b_{02}x_{0}x_{2}+b_{11}x_{1}^{2}+b_{12}x_{1}x_{2}+x_{2}^{2}, $$ 
$$ f'_{3} = c_{00}x_{0}^{2}+c_{01}x_{0}x_{1}+c_{02}x_{0}x_{2}+c_{11}x_{1}^{2}+c_{12}x_{1}x_{2}+x_{2}^{2}; $$
we immediately get that 
$$ g_{1} = (c_{00}-b_{00})x_{0}+(c_{01}-b_{01}-1)x_{1}+(c_{02}-b_{02})x_{2} $$
$$ g_{2} = x_{0}+(c_{11}-b_{11})x_{1}+(c_{12}-b_{12})x_{2} $$
solve (\ref{eq:equation}), which concludes the proof.
\end{proof}

\begin{remark}
The previous result asserts that the vector bundle $ \Omega_{\mathbf{P}^2}^{1}(\log \mathcal{D}) $ lives in a space of dimension $ 2 $, while the number of parameters associated to a line and a conic with normal crossings is $ 7 $. So in this case the map in (\ref{eq:Torellimap}) can't be injective.
\end{remark}

With the same notations of the beginning of this section we have the following:

\begin{proposition}
Let $ \mathcal{D} = \{r,C\} $ be an arrangement with normal crossings. Then the pole of the line $ r $ with respect to the conic $ C $ describes the isomorphism class of $ \Omega_{\mathbf{P}^2}^{1}(\log \mathcal{D}) $, that is $ \pi_{2}(\Omega_{\mathbf{P}^2}^{1}(\log \mathcal{D})) $ is the pole of $ r $ with respect to $ C $.
\end{proposition}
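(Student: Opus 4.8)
The plan is to read off the two linear forms that govern the isomorphism class directly from the minimal resolution, and then identify their common zero with the pole by a short linear-algebra computation. Write $A=(a_{ij})$ for the symmetric matrix of $C$, so that $f=\sum_{i,j}a_{ij}x_ix_j$ and $\nabla f = 2Ax$. The matrix $\overline{M}$ exhibited above has as its first two entries precisely $\partial_{x_1}f$ and $\partial_{x_2}f$, while its third entry is the quadratic form $-x_0\,\partial_{x_0}f$. Hence the minimal resolution of $\Omega^1_{\mathbf{P}^2}(\log\mathcal{D})$ is already in the normal form of Theorem~\ref{T:parM201}, with $f_1=\partial_{x_1}f$ and $f_2=\partial_{x_2}f$. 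By that theorem the isomorphism class is determined by
$$\pi_2(\Omega^1_{\mathbf{P}^2}(\log\mathcal{D})) = \{\partial_{x_1}f=0\}\cap\{\partial_{x_2}f=0\}.$$

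Next I would translate this intersection point into matrix language. A point $P=[p_0,p_1,p_2]$ lies on both $\{\partial_{x_1}f=0\}$ and $\{\partial_{x_2}f=0\}$ exactly when the second and third coordinates of $AP$ vanish, that is, when $AP$ is proportional to $e_0=(1,0,0)$. Since $C$ is smooth, $A\in GL(3,\mathbf{C})$, so this condition is equivalent to $P=A^{-1}e_0$ up to a scalar; in particular the intersection is a single, well-defined point of $\mathbf{P}^2$.

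Finally I would match this with the definition of the pole. Recall that the polar line of a point $P$ with respect to $C$ is $\{(AP)^{t}x=0\}$, so its coefficient vector is $AP$, and the pole of a line is by definition the point whose polar line is that line. For $r=\{x_0=0\}$, whose coefficient vector is $e_0$, the pole is therefore the point $P$ with $AP$ proportional to $e_0$, namely $P=A^{-1}e_0$. This is exactly the point found in the previous step, so $\pi_2(\Omega^1_{\mathbf{P}^2}(\log\mathcal{D}))$ is the pole of $r$ with respect to $C$, as claimed. The only point requiring care is the bookkeeping between the partial-derivative rows that survive the Gaussian elimination and the standard pole--polar formulas; the invertibility of $A$ (guaranteed by the smoothness of $C$) ensures that $A^{-1}e_0$ is a well-defined point, so no serious obstacle remains once the identifications $f_1=\partial_{x_1}f$ and $f_2=\partial_{x_2}f$ are made explicit.
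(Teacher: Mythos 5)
Your proposal is correct and follows essentially the same route as the paper: both read off the two linear entries $\partial_{x_1}f$, $\partial_{x_2}f$ of the minimal resolution, invoke the parametrization $\pi_2$ to reduce the claim to identifying their common zero, and then check that this point is the pole of $r=\{x_0=0\}$ with respect to $C$. The only difference is cosmetic — the paper computes the intersection point explicitly by Cramer's rule and verifies its polar line is $x_0=0$, while you phrase the same computation intrinsically as $P=A^{-1}e_0$.
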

\begin{proof}
Applying Cramer's rule we get that the point in $ \mathbf{P}^2 $ satisfying
$$ \displaystyle{ \sum_{j = 0}^{2}a_{1 j} x_{j} } = \displaystyle{ \sum_{j = 0}^{2}a_{2 j} x_{j} } = 0 $$
is $ P = [ a_{12}^{2}-a_{11}a_{22}, a_{22}a_{01}-a_{02}a_{12}, a_{02}a_{11}-a_{12}a_{01} ] $. The polar line of $ P $ with respect to $ C $ is given by
$$ ( a_{12}^{2}-a_{11}a_{22}, a_{22}a_{01}-a_{02}a_{12}, a_{02}a_{11}-a_{12}a_{01} ) \pmatrix{ a_{00} & a_{01} & a_{02} \cr a_{01} & a_{11} & a_{12} \cr a_{02} & a_{12} & a_{22} \cr} \pmatrix{ x_{0} \cr x_{1} \cr x_{2} \cr } = 0  $$
which reduces to $ x_{0} = 0 $, that is to $ r $, as desired.
\end{proof}

\begin{figure}[h]
    \centering
    \includegraphics[width=55mm]{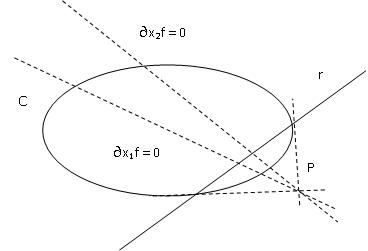}
    \caption{$ r $ is the polar line of $ P $ with respect to $ C $}
\end{figure}

We immediately get the following:

\begin{corollary}\label{C:line-conic}
Let $ \mathcal{D} = \{r, C\} $ and $ \mathcal{D'} = \{r', C'\} $ be two arrangements with normal crossings in $ \mathbf{P}^2 $ given by a line and a smooth conic. Then 
$$ \Omega_{\mathbf{P}^2}^{1}(\log \mathcal{D}) \cong \Omega_{\mathbf{P}^2}^{1}(\log \mathcal{D'}) $$
if and only if the pole of $ r $ with respect to $ C $ coincides with the pole of $ r' $ with respect to $ C' $ (see figure 8.3).
\end{corollary}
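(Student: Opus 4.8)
The plan is to deduce the statement directly by combining Theorem~\ref{T:parM201} with the Proposition proved just above, so that the bulk of the argument is bookkeeping rather than fresh computation. The key observation is that both $\Omega_{\mathbf{P}^2}^{1}(\log \mathcal{D})$ and $\Omega_{\mathbf{P}^2}^{1}(\log \mathcal{D'})$ are objects of the \emph{same} moduli family $\mathbf{M}_{\mathbf{P}^2}^{ss}(0,1)$, on which $\pi_{2}$ is an isomorphism; hence the image under $\pi_{2}$ is a complete invariant of the isomorphism class.

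First I would record that the minimal resolution (\ref{eq:minresline-conic}) exhibits $\Omega_{\mathbf{P}^2}^{1}(\log \mathcal{D})$ with exactly the shape required by Theorem~\ref{T:parM201}: the matrix $\overline{M}$ has two linear entries $f_{1} = \partial_{x_{1}} f$ and $f_{2} = \partial_{x_{2}} f$ together with one quadratic entry $f_{3} = -x_{0}\,\partial_{x_{0}} f$. Combined with the Chern-class computation $c_{1} = 0$, $c_{2} = 1$ and the semistability furnished by the Bohnhorst--Spindler criterion (Theorem~\ref{T:3.5}), this certifies that $\Omega_{\mathbf{P}^2}^{1}(\log \mathcal{D}) \in \mathbf{M}_{\mathbf{P}^2}^{ss}(0,1)$; the identical reasoning applies to $\mathcal{D'}$.

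Next I would invoke Theorem~\ref{T:parM201}: since $\pi_{2} : \mathbf{M}_{\mathbf{P}^2}^{ss}(0,1) \to \mathbf{P}^2$ is an isomorphism, two bundles in this family are isomorphic if and only if they share the same image under $\pi_{2}$. The preceding Proposition identifies $\pi_{2}(\Omega_{\mathbf{P}^2}^{1}(\log \mathcal{D}))$ with the pole of $r$ with respect to $C$, and likewise $\pi_{2}(\Omega_{\mathbf{P}^2}^{1}(\log \mathcal{D'}))$ with the pole of $r'$ with respect to $C'$. Chaining these two equivalences yields that $\Omega_{\mathbf{P}^2}^{1}(\log \mathcal{D}) \cong \Omega_{\mathbf{P}^2}^{1}(\log \mathcal{D'})$ precisely when the two poles coincide.

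The only point demanding any care, and the closest thing to an obstacle, is confirming the membership of both logarithmic bundles in $\mathbf{M}_{\mathbf{P}^2}^{ss}(0,1)$, so that the isomorphism $\pi_{2}$ can legitimately be used as a classifying map in both directions. Once both memberships are in place no further computation is required, and the corollary follows \emph{immediately} from the two cited results, as its statement anticipates.
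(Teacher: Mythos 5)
Your proposal is correct and follows exactly the route the paper intends: the corollary is stated as an immediate consequence of Theorem~\ref{T:parM201} and the preceding proposition, with membership of both logarithmic bundles in $\mathbf{M}_{\mathbf{P}^2}^{ss}(0,1)$ already secured by the minimal resolution, the Chern class computation and the Bohnhorst--Spindler criterion. Nothing is missing.
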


\begin{figure}[h]
    \centering
    \includegraphics[width=85mm]{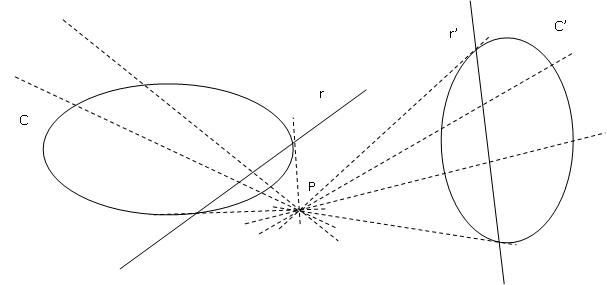}
    \caption{Line-conic arrangements with isomorphic logarithmic bundles}
\end{figure}

The previous results can be extended to $ \mathbf{P}^n $, for all $ n \geq 3 $.\\ 

Let $ \mathcal{D} = \{H, Q\} $ be an arrangement with normal crossings in $ \mathbf{P}^n $ made of a hyperlane $ H = \{\alpha_0x_{0}+ \cdots + \alpha_nx_{n} = 0 \} $ and a smooth quadric $ Q = \{f = 0\} $ with $ f = \displaystyle{ \sum_{i, j = 0}^{n}a_{i j} x_{i} x_{j}} $. \\
From theorem~\ref{T:Ancona} we get that the corresponding logarithmic bundle $ \Omega_{\mathbf{P}^n}^{1}(\log \mathcal{D}) $ admits the short exact sequence
$$ 0 \longrightarrow \mathcal{O}_{\mathbf{P}^n}(-1) \oplus \mathcal{O}_{\mathbf{P}^n}(-2)  \buildrel \rm M \over \longrightarrow \mathcal{O}_{\mathbf{P}^n}(-1)^{n+1} \oplus \mathcal{O}_{\mathbf{P}^n} \longrightarrow \Omega_{\mathbf{P}^n}^{1}(\log \mathcal{D}) \longrightarrow 0 $$
where $ M $ is the $ (n+2) \times 2 $ matrix given by
$$ M = \pmatrix{ 
\alpha_{0} & 2 \displaystyle{ \sum_{j = 0}^{n}a_{0 j} x_{j} }  \cr 
\vdots & \vdots \cr 
\alpha_{n} & 2 \displaystyle{ \sum_{j = 0}^{n}a_{n j} x_{j} }  \cr 
\alpha_0x_{0}+ \cdots + \alpha_nx_{n} & 0 \cr
}. $$
As in the case of $ n=2 $, we can assume that  $ H = \{x_{0} = 0 \} $ so that the minimal resolution of $ \Omega_{\mathbf{P}^n}^{1}(\log \mathcal{D}) $ takes the form 
\begin{equation}\label{eq:minreshyp-quad}
0 \longrightarrow \mathcal{O}_{\mathbf{P}^n}(-2)  \buildrel \rm \overline{M} \over \longrightarrow \mathcal{O}_{\mathbf{P}^n}(-1)^{n} \oplus \mathcal{O}_{\mathbf{P}^n} \longrightarrow \Omega_{\mathbf{P}^n}^{1}(\log \mathcal{D}) \longrightarrow 0
\end{equation}
with
$$ \overline{M} = \pmatrix{ 
2 \displaystyle{ \sum_{j = 0}^{n}a_{1 j} x_{j} }  \cr 
\vdots \cr 
2 \displaystyle{ \sum_{j = 0}^{n}a_{n j} x_{j} }  \cr 
-2 \displaystyle{ \sum_{j = 0}^{n}a_{0 j} x_{j} x_{0} } \cr
}. $$
Bohnhorst-Spindler criterion implies that $ \Omega_{\mathbf{P}^n}^{1}(\log \mathcal{D}) $ is not a semistable bundle over $ \mathbf{P}^n $ unless $ n=2 $. Nevertheless we can extend in a natural way theorem~\ref{T:parM201} also to $ n \geq 3 $:

\begin{theorem}
All the vector bundles $ E $ over $ \mathbf{P}^n $ having a minimal resolution
$$ 0 \longrightarrow \mathcal{O}_{\mathbf{P}^n}(-2)  \buildrel \rm ^{t}\pmatrix{ f_{1} & \cdots & f_{n+1} \cr} \over \longrightarrow \mathcal{O}_{\mathbf{P}^n}(-1)^{n} \oplus \mathcal{O}_{\mathbf{P}^n} \longrightarrow E \longrightarrow 0  $$
where $ f_{1}, \ldots, f_{n} \in H^{0}(\mathbf{P}^n, \mathcal{O}_{\mathbf{P}^n}(1)) $ and $ f_{n+1} \in H^{0}(\mathbf{P}^n, \mathcal{O}_{\mathbf{P}^n}(2)) $ are parametrized by $ \mathbf{P}^n $. In particular the correspondence is defined by
$$ E \buildrel \rm \pi_{n} \over \longmapsto \displaystyle{\bigcap_{i=1}^{n}\{f_{i}=0\}}. $$
\end{theorem}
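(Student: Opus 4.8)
The plan is to adapt the proof of Theorem~\ref{T:parM201} to arbitrary $n \geq 3$, the only genuinely new points being the verification that $\pi_n$ is well defined and surjective onto $\mathbf{P}^n$. The failure of semistability for $n \geq 3$ noted in the preceding remark plays no role, since the entire argument is carried out directly on the minimal resolution rather than on any stability property.

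First I would check that $\pi_n$ makes sense, i.e. that $\bigcap_{i=1}^{n} \{f_i = 0\}$ is a single point. The cokernel $E$ of $\phi = {}^{t}(f_1, \dots, f_{n+1})$ is locally free of rank $n$ precisely when $\phi$ is a subbundle inclusion, which holds if and only if the fibre map is injective at every $x \in \mathbf{P}^n$, equivalently $V(f_1, \dots, f_{n+1}) = \emptyset$. If the linear forms $f_1, \dots, f_n$ were linearly dependent, their common zero locus would be a linear subspace of dimension $\geq 1$, and the restriction to it of the quadric $f_{n+1}$ would necessarily vanish somewhere over $\mathbf{C}$, contradicting $V(f_1, \dots, f_{n+1}) = \emptyset$. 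Hence $f_1, \dots, f_n$ are independent, $\bigcap_{i=1}^{n} \{f_i = 0\} = \{p\}$ is a single point, and $f_{n+1}(p) \neq 0$. This defines $\pi_n(E) = p \in \mathbf{P}^n$, and surjectivity is immediate: given any $p$, choose $n$ independent linear forms vanishing at $p$ together with any quadric not vanishing at $p$.

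For the injectivity of $\pi_n$ I would follow verbatim the mechanism of Theorem~\ref{T:parM201}. Suppose $\pi_n(E) = \pi_n(E') = p$. After a projective change of coordinates I may assume $p = [0 : \cdots : 0 : 1]$ and, using the freedom to change basis in the summand $\mathcal{O}_{\mathbf{P}^n}(-1)^{n}$, that $f_i = f_i' = x_{i-1}$ for $i = 1, \dots, n$. Since $f_{n+1}(p), f_{n+1}'(p) \neq 0$, both quadrics contain $x_n^2$ with nonzero coefficient, which I normalize to $1$. To produce an isomorphism $E \cong E'$ it then suffices to find linear forms $g_1, \dots, g_n$ making the square with vertical maps $\mathrm{id}$ on $\mathcal{O}_{\mathbf{P}^n}(-2)$ and
\[ L = \pmatrix{ I_{n} & 0 \cr g & 1 \cr}, \]
with $g = (g_1, \dots, g_n)$ a row of linear forms, on $\mathcal{O}_{\mathbf{P}^n}(-1)^{n} \oplus \mathcal{O}_{\mathbf{P}^n}$ commute; this amounts to solving $\sum_{i=1}^{n} g_i\, x_{i-1} = f_{n+1}' - f_{n+1}$. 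The right-hand side is a quadric whose $x_n^2$ coefficients cancel, hence it lies in the ideal $(x_0, \dots, x_{n-1})$ and can be so written with the $g_i$ linear; as $L$ is invertible, $E \cong E'$.

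Finally, for the converse I would invoke the uniqueness up to isomorphism of the minimal free resolution: an isomorphism $E \cong E'$ lifts to an isomorphism of their minimal resolutions, whose restriction to the degree-zero part of $\mathcal{O}_{\mathbf{P}^n}(-1)^{n}$ is a constant invertible $n \times n$ matrix carrying $(f_1, \dots, f_n)$ to a scalar multiple of $(f_1', \dots, f_n')$. Thus the two tuples of linear forms span the same subspace and share the same common zero, so $\pi_n(E) = \pi_n(E')$; combining the two implications yields the asserted bijection with $\mathbf{P}^n$. The only delicate step, and the main obstacle, is the well-definedness argument: translating local freeness of $E$ into the emptiness of $V(f_1, \dots, f_{n+1})$ and extracting from it both the independence of the linear forms and the nonvanishing of $f_{n+1}$ at $p$. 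Everything else is a direct transcription of the planar case.
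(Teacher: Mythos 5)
Your proof is correct and follows essentially the route the paper intends: the theorem is stated without proof as a ``natural extension'' of Theorem~\ref{T:parM201}, and your argument is a faithful adaptation of that planar proof, with the genuinely new points (linear independence of $f_1,\dots,f_n$ forced by the emptiness of $V(f_1,\dots,f_{n+1})$, surjectivity of $\pi_n$, and the converse via uniqueness of the minimal resolution) correctly supplied. These additions are sound and in fact make the argument more complete than the $n=2$ proof given in the paper.
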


\begin{remark}
Since $ \Omega_{\mathbf{P}^n}^{1}(\log \mathcal{D}) $ belongs to a $ n $-dimensional family and $ \mathcal{D} = \{H, Q\} $ is described by $ {n+2} \choose {2}$$ + n $ parameters, then $ \mathcal{D} $ can't be a Torelli type arrangement.
\end{remark}

\begin{proposition}
Let $ \mathcal{D}= \{H,Q\} $ be an arrangement with normal crossings. Then the pole of the hyperplane $ H $ with respect to the quadric $ Q $ describes the isomorphism class of $ \Omega_{\mathbf{P}^n}^{1}(\log \mathcal{D}) $, that is $ \pi_{n}(\Omega_{\mathbf{P}^n}^{1}(\log \mathcal{D})) $ is the pole of $ H $ with respect to $ Q $.
\end{proposition}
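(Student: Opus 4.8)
The plan is to follow exactly the template of the analogous $n=2$ proposition, replacing the explicit Cramer's-rule computation by a coordinate-free linear algebra argument that scales to arbitrary $n$. First I would normalize coordinates so that $H = \{x_0 = 0\}$, which is legitimate by a projective change of variables and is precisely the normalization used to obtain the minimal resolution (\ref{eq:minreshyp-quad}). With this choice the map $\overline{M}$ has as its first $n$ entries the linear forms $2\sum_{j=0}^{n} a_{ij}x_j$ for $i = 1, \ldots, n$, that is, (twice) the rows $1, \ldots, n$ of the symmetric matrix $A = (a_{ij})_{0 \le i,j \le n}$ representing $Q$.

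By the definition of the map $\pi_n$ in the preceding theorem, $\pi_n(\Omega_{\mathbf{P}^n}^{1}(\log \mathcal{D}))$ is the point $P = \bigcap_{i=1}^{n}\{f_i = 0\}$, which is exactly the common zero of these $n$ linear forms; equivalently $P$ is characterized by $(AP)_i = 0$ for every $i \in \{1, \ldots, n\}$. The first thing to check is that this prescription really defines a single point of $\mathbf{P}^n$: since $Q$ is smooth, $A$ is invertible, so any $n$ of its rows are linearly independent, the corresponding $n \times (n+1)$ submatrix has rank $n$, and its kernel is one-dimensional. Thus $P$ is well defined, and $P = A^{-1}e_0$ up to scalar (writing $e_0 = {}^t(1,0,\ldots,0)$), because $A^{-1}e_0$ is the unique direction annihilated by rows $1,\dots,n$ of $A$.

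Next I would verify the pole condition directly. The polar hyperplane of $P$ with respect to $Q$ is $\{{}^t(AP)\,x = 0\}$ by symmetry of $A$. Since $(AP)_i = 0$ for $i \ge 1$, this equation collapses to $(AP)_0\, x_0 = 0$; and $(AP)_0 \ne 0$, for otherwise $AP = 0$ would force $P = 0$ against invertibility of $A$. Hence the polar hyperplane of $P$ is $\{x_0 = 0\} = H$, which is precisely the statement that $P$ is the pole of $H$ with respect to $Q$.

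This is essentially a short exercise in the polarity of a nondegenerate quadric, so there is no serious obstacle; the only delicate point — and the place where care is needed — is the bookkeeping that identifies the first $n$ entries of $\overline{M}$ with rows $1,\dots,n$ of $A$, together with the verification $(AP)_0 \ne 0$, which jointly guarantee that $\pi_n$ is genuinely valued in $\mathbf{P}^n$ and that the resulting polar locus is $H$ rather than a degenerate object. For $n = 2$ this recovers the explicit point $P$ obtained by Cramer's rule in the previous proposition, so the present proof can also be read as the intrinsic reformulation that makes the generalization transparent.
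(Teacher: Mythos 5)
Your proof is correct and follows essentially the same route as the paper's: normalize $H=\{x_0=0\}$, identify $\pi_n(\Omega_{\mathbf{P}^n}^{1}(\log \mathcal{D}))$ as the common zero $P$ of the $n$ linear entries of $\overline{M}$ (the rows $1,\dots,n$ of $A$), and check that the polar hyperplane of $P$ with respect to $Q$ is $x_0=0$. The only difference is presentational: where the paper writes $P$ explicitly via Cramer's rule as a vector of signed cofactors and then invokes the Laplace-expansion identities, you observe directly that $P=A^{-1}e_0$ up to scalar so that $AP$ is proportional to $e_0$, which is a cleaner way of reaching the same conclusion.
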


\begin{proof}
The solution of the linear system 
$$ a_{01}x_{0}+ \cdots + a_{1n}x_{n} = 0 $$
$$ \vdots $$
$$ a_{0n}x_{0}+ \cdots + a_{nn}x_{n} = 0 $$
is 
$$ P = [det A_{00}, - det A_{01}, det A_{02}, \ldots, (-1)^{n} det A_{0n}] $$
where $ A_{0j} $ denotes the submatrix of $ A = \{a_{ij}\} $ that we get by canceling the $0$-th row and the $j$-th column, for all $ j \in \{0, \ldots, n \} $. So the polar hyperplane of $ P $ with respect to $ Q $ is 
$$ (det A_{00}, - det A_{01}, det A_{02}, \ldots, (-1)^{n} det A_{0n}) \pmatrix{ a_{00} & \cdots & a_{0n} \cr \vdots & {} & \vdots \cr a_{0n} & \cdots & a_{nn} \cr} \pmatrix{ x_{0} \cr \vdots \cr x_{n} \cr } = 0   $$
that is $ x_{0} = 0 $.
\end{proof}

\begin{corollary}
Two arrangements with normal crossings $ \mathcal{D} = \{H, Q\} $ and $ \mathcal{D}' = \{H', Q'\} $ made of a hyperplane and a smooth quadric in $ \mathbf{P}^n $ have isomorphic logarithmic bundles if and only if the pole of $ H $ with respect to $ Q $ coincides the pole of $ H' $ with respect to $ Q' $.
\end{corollary}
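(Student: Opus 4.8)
The plan is to obtain the statement as a formal consequence of the two results immediately preceding it: the classification of the bundles admitting the minimal resolution~(\ref{eq:minreshyp-quad}), and the proposition identifying the classifying map $ \pi_n $ with the polarity associated to $ Q $.

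First I would check that both $ \Omega_{\mathbf{P}^n}^{1}(\log \mathcal{D}) $ and $ \Omega_{\mathbf{P}^n}^{1}(\log \mathcal{D}') $ belong to the family to which $ \pi_n $ applies. This is immediate from~(\ref{eq:minreshyp-quad}): after normalizing the hyperplane to $ \{x_0 = 0\} $, each logarithmic bundle admits a resolution $ 0 \to \mathcal{O}_{\mathbf{P}^n}(-2) \to \mathcal{O}_{\mathbf{P}^n}(-1)^{n} \oplus \mathcal{O}_{\mathbf{P}^n} \to E \to 0 $ whose first $ n $ entries $ f_1, \dots, f_n $ are the linear forms $ \partial_{x_i} f $ (for $ i \geq 1 $) and whose last entry $ f_{n+1} $ is quadratic. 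Hence $ \pi_n $ is defined on both bundles.

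Next I would use the fact that $ \pi_n $ separates isomorphism classes, i.e. two bundles in this family are isomorphic \emph{if and only if} the linear subspaces $ \bigcap_{i=1}^{n}\{f_i = 0\} $ that they determine coincide. The ``only if'' direction is the easy one, since an isomorphism of bundles identifies the two resolutions up to change of basis and therefore matches their degeneracy loci. For the ``if'' direction one normalizes $ f_1 = f_1' = x_0, \dots, f_n = f_n' = x_{n-1} $ and constructs a lower-triangular automorphism $ L $ of $ \mathcal{O}_{\mathbf{P}^n}(-1)^{n} \oplus \mathcal{O}_{\mathbf{P}^n} $ making the two resolutions commute, exactly as in the proof of Theorem~\ref{T:parM201} for $ n = 2 $; this amounts to solving $ \sum_{i=1}^{n} g_i x_{i-1} + f_{n+1} = f_{n+1}' $ for suitable linear forms $ g_i $, which is always possible because the difference $ f_{n+1}' - f_{n+1} $ lies in the ideal $ (x_0, \dots, x_{n-1}) $ once the coefficient of $ x_n^2 $ has been normalized to $ 1 $ in both quadrics.

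Finally I would invoke the preceding proposition, which states that $ \pi_n(\Omega_{\mathbf{P}^n}^{1}(\log \mathcal{D})) $ is precisely the pole of $ H $ with respect to $ Q $, and likewise for $ \mathcal{D}' $. Chaining the three observations yields
$$ \Omega_{\mathbf{P}^n}^{1}(\log \mathcal{D}) \cong \Omega_{\mathbf{P}^n}^{1}(\log \mathcal{D}') \iff \pi_n(\Omega_{\mathbf{P}^n}^{1}(\log \mathcal{D})) = \pi_n(\Omega_{\mathbf{P}^n}^{1}(\log \mathcal{D}')) \iff P = P', $$
where $ P, P' $ are the two poles, which is the assertion. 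The only genuine obstacle is the ``if'' direction of the separation property for general $ n $, namely the explicit construction of $ L $; but since the classifying theorem is already established in the excerpt, the corollary follows with no further work.
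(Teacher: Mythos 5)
Your proposal is correct and follows essentially the same route as the paper, which presents this corollary as an immediate consequence of the classification of bundles with the minimal resolution~(\ref{eq:minreshyp-quad}) via the map $\pi_n$ together with the proposition identifying $\pi_n(\Omega_{\mathbf{P}^n}^{1}(\log \mathcal{D}))$ with the pole of $H$ with respect to $Q$. Your remark that the only substantive point is the injectivity of $\pi_n$ for general $n$ — settled by solving $\sum_i g_i x_{i-1} = f_{n+1}' - f_{n+1}$ after normalizing the $x_n^2$ coefficient, exactly as in the proof of Theorem~\ref{T:parM201} — matches the paper's intent.
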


\vfill\eject

\section{The case of a conic and two lines}

Let $ \mathcal{D} = \{r_{1}, r_{2}, C\} $ be an arrangement with normal crossings in $ \mathbf{P}^2 $, where $ r_{1} $ and $ r_{2} $ are lines and $ C $ is a smooth conic. We suppose that $ r_{1} = \{\alpha_{0}x_{0}+\alpha_{1}x_{1}+\alpha_{2}x_{2} = 0\} $, $ r_{2} = \{\beta_{0}x_{0}+\beta_{1}x_{1}+\beta_{2}x_{2} = 0\} $ and $ C = \{f = 0\} $ where $ f =  \displaystyle{ \sum_{i, j = 0}^{2}a_{i j} x_{i} x_{j}} $. \\

\begin{figure}[h]
    \centering
    \includegraphics[width=50mm]{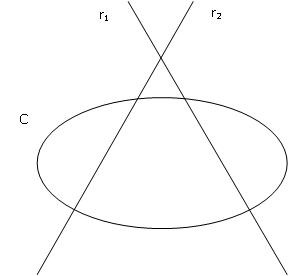}
    \caption{A smooth conic and two lines with normal crossings}
\end{figure}

In this case theorem~\ref{T:Ancona} says that $ \Omega_{\mathbf{P}^2}^{1}(\log \mathcal{D}) $ verifies
$$ 0 \longrightarrow \mathcal{O}_{\mathbf{P}^2}(-1)^{2} \oplus \mathcal{O}_{\mathbf{P}^2}(-2)  \buildrel \rm M \over \longrightarrow \mathcal{O}_{\mathbf{P}^2}(-1)^{3} \oplus \mathcal{O}_{\mathbf{P}^2}^{2} \longrightarrow \Omega_{\mathbf{P}^2}^{1}(\log \mathcal{D}) \longrightarrow 0 $$
and 
$$ M = \pmatrix{ 
\alpha_{0} & \beta_{0} & 2 \displaystyle{ \sum_{j = 0}^{2}a_{0 j} x_{j} }  \cr 
\alpha_{1} & \beta_{1} & 2 \displaystyle{ \sum_{j = 0}^{2}a_{1 j} x_{j} } \cr 
\alpha_{2} & \beta_{2} & 2 \displaystyle{ \sum_{j = 0}^{2}a_{2 j} x_{j} }  \cr 
\alpha_{0}x_{0}+\alpha_{1}x_{1}+\alpha_{2}x_{2} & 0 & 0 \cr
0 & \beta_{0}x_{0}+\beta_{1}x_{1}+\beta_{2}x_{2} & 0 \cr
}. $$
In order to simplify our computations, we can assume that $ r_{1} = \{ x_{0} = 0 \} $ and $ r_{2} = \{ x_{1} = 0 \} $; we denote with $ a_{ij} $ the coefficients of a quadratic form describing $ C $ also in this new frame. Thus the logarithmic bundle admits as minimal resolution
\begin{equation}\label{eq:rescon2lines}
0 \longrightarrow \mathcal{O}_{\mathbf{P}^2}(-2) \buildrel \rm \overline{M} \over \longrightarrow \mathcal{O}_{\mathbf{P}^2}(-1) \oplus \mathcal{O}_{\mathbf{P}^2}^{2} \longrightarrow \Omega_{\mathbf{P}^2}^{1}(\log \mathcal{D}) \longrightarrow 0
\end{equation}
where
$$  \overline{M} = \pmatrix{ 
2 \displaystyle{ \sum_{j = 0}^{2}a_{2 j} x_{j} }  \cr 
-2 \displaystyle{ \sum_{j = 0}^{2}a_{0 j} x_{j} x_{0} } \cr 
-2 \displaystyle{ \sum_{j = 0}^{2}a_{1 j} x_{j} x_{1} }  \cr 
}. $$
In particular $ c_{1}(\Omega_{\mathbf{P}^2}^{1}(\log \mathcal{D})) = 1 $ and $ c_{2}(\Omega_{\mathbf{P}^2}^{1}(\log \mathcal{D})) = 2 $. So its normalized bundle, that is $ \Omega_{\mathbf{P}^2}^{1}(\log \mathcal{D})(-1) $, belongs to $ \mathbf{M}_{\mathbf{P}^2}(-1,2) $, the moduli space of rank-$ 2 $ stable vector bundles over $ \mathbf{P}^2 $ with $ c_{1} = -1 $ and $ c_{2} = 2 $. An interesting description of this moduli space is given in the following result, of which we will see a sketch of the proof (see also~\cite{Wykno}). In order to state it we recall that $ \sigma_{2}(\nu_{2}(\mathbf{P}^2)) $ is the notation for the $2$-secant variety of the image of the quadratic Veronese map $ \nu_{2}(\mathbf{P}^2) $,~\cite{H}.

\begin{theorem}
$ \mathbf{M}_{\mathbf{P}^2}(-1,2) $ is isomorphic to $ \sigma_{2}(\nu_{2}(\mathbf{P}^2)) - \nu_{2}(\mathbf{P}^2) $, the projective space of symmetric $ 3 \times 3 $ rank-$ 2 $ matrices.
\end{theorem}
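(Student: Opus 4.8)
The plan is to exhibit two mutually inverse morphisms between $\mathbf{M}_{\mathbf{P}^2}(-1,2)$ and the variety of rank-$2$ symmetric $3\times 3$ matrices, and to check that each is algebraic. First I would collect the numerical input. For a stable $E$ with $c_1=-1$, $c_2=2$ one has $E^\vee \cong E(1)$ (since $\wedge^2 E=\mathcal{O}_{\mathbf{P}^2}(-1)$), and Riemann--Roch together with the stability vanishing $H^0(E(k))=0$ for $k\le 0$ yield $h^1(E)=1$, $h^1(E(-1))=2$, $h^0(E(1))=2$ and $h^1(E(1))=0$. These dimensions feed a Beilinson-type monad for $E$, which after reduction becomes a short self-dual presentation; this is the structural source of everything that follows.

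The central construction is a classifying map $E \mapsto Q(E)$, where $Q(E)$ is the conic of jumping lines of $E$ in $(\mathbf{P}^2)^\vee$, equivalently the symmetric $3\times 3$ matrix extracted from the self-duality $E\cong E^\vee(-1)$: the alternating pairing $E\otimes E\to \mathcal{O}_{\mathbf{P}^2}(-1)$ combined with Serre duality equips the cohomological data attached to $E$ with a symmetric bilinear form, well defined up to a scalar, hence a point of $\mathbf{P}(\mathrm{Sym}^2(\mathbf{C}^3)^\vee)=\mathbf{P}^5$. I would show this assignment is algebraic in families, so that it descends to a morphism on the (coarse) moduli space, by repeating the construction relative to a flat family of bundles. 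A dimension count already constrains the image: by~(\ref{eq:dimmoduli}) we have $\dim\mathbf{M}_{\mathbf{P}^2}(-1,2)=-c_1^2+4c_2-3=4$, whereas the smooth conics fill a $5$-dimensional open stratum of $\mathbf{P}^5$, so $Q(E)$ must be degenerate. Stability excludes the rank-$1$ case (a rank-$1$ form would force a destabilizing sub-line-bundle), and $c_2=2$ excludes the rank-$0$ case, so $Q(E)$ has rank exactly $2$; thus $Q$ lands in $\sigma_2(\nu_2(\mathbf{P}^2))-\nu_2(\mathbf{P}^2)$.

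To finish I would build the inverse. A rank-$2$ symmetric matrix is an unordered pair of distinct lines $\{L_1,L_2\}$, and from it I reconstruct $E$ as the cohomology of the associated self-dual monad; concretely one may realize a representative as the normalized logarithmic bundle $\Omega_{\mathbf{P}^2}^1(\log\{L_1,L_2,C\})(-1)$ for any smooth conic $C$ meeting $L_1\cup L_2$ transversally, which lies in $\mathbf{M}_{\mathbf{P}^2}(-1,2)$ by Theorem~\ref{T:Ancona} and the Chern-class computation preceding the statement, and one checks that applying $Q$ returns $\{L_1,L_2\}$. Injectivity of $Q$ is the claim that $E$ is recovered from its degeneracy conic: I would reconstruct the monad map, and hence $E$, from the symmetric data, using that the reduced monad is rigid once its symmetric cokernel is fixed. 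Being algebraic and mutually inverse, the two constructions define an isomorphism of varieties.

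I expect the main obstacle to lie in two coupled points. The first is proving cleanly that $Q(E)$ has rank \emph{exactly} $2$ for \emph{every} stable $E$, not merely generically, and that the symmetric form is genuinely canonical and varies algebraically in families. The second, equivalent in difficulty, is the reconstruction of $E$ from $\{L_1,L_2\}$ together with the verification that the resulting bijection is a morphism in both directions, i.e. an isomorphism of schemes rather than a set-theoretic bijection.
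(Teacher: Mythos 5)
Your route---extracting a symmetric $3\times 3$ form from the self-duality $E\cong E^{\vee}(-1)$ (essentially the conic of jumping lines of the second kind) and inverting it via logarithmic bundles of two lines and a conic---is genuinely different from the paper's, which works directly with the minimal resolution
$$ 0 \longrightarrow \mathcal{O}_{\mathbf{P}^2}(-3) \longrightarrow \mathcal{O}_{\mathbf{P}^2}(-2)\oplus\mathcal{O}_{\mathbf{P}^2}(-1)^{2} \longrightarrow E \longrightarrow 0, $$
observes that $\{f_{1}=0\}$ is the unique jumping line, and sends $E$ to the two double members $\{P_{1},P_{2}\}$ of the pencil of binary quadrics cut out by $f_{2},f_{3}$ on that line; this is an unordered pair of distinct points of $\mathbf{P}^2$, hence tautologically a rank-$2$ point of $\sigma_{2}(\nu_{2}(\mathbf{P}^2))-\nu_{2}(\mathbf{P}^2)$. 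Two steps of your version do not hold as written. The degeneracy of $Q(E)$ does not follow from the dimension count: a $4$-dimensional image in $\mathbf{P}^5$ need not lie inside the cubic discriminant hypersurface, so ``the smooth conics fill a $5$-dimensional stratum, hence $Q(E)$ is degenerate'' is a non sequitur. Degeneracy must come from the geometry (in the paper it is automatic because the invariant is a pair of points on a single line; in your language, the unique jumping line of the first kind is a singular point of the jumping conic). You also never pin down which cohomology space carries your symmetric form, nor why it is $3\times3$ rather than, say, a form on the $2$-dimensional space $H^{1}(E(-1))$.

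The more serious problem is the inverse. You assert that $Q$ applied to $\Omega_{\mathbf{P}^2}^{1}(\log\{L_{1},L_{2},C\})(-1)$ returns $\{L_{1},L_{2}\}$ for an arbitrary auxiliary smooth conic $C$. Section $8.2$ of the paper computes exactly this invariant: it is the pair of points where $C$ meets the polar line, with respect to $C$, of $L_{1}\cap L_{2}$; it depends on $C$ and is not the line pair (which in any case lives on the wrong side of the duality, since the target parametrizes unordered pairs of distinct \emph{points} of $\mathbf{P}^2$). Indeed, by Corollary~\ref{C:2lines-conic}, fixing $C$ and replacing $\{L_{1},L_{2}\}$ by a different pair of lines through the same point $L_{1}\cap L_{2}$ yields an isomorphic bundle; since $Q$ is an isomorphism invariant it cannot return the line pair. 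As stated, your two maps are therefore not mutually inverse. The construction can be repaired: given a rank-$2$ matrix, i.e.\ a pair of distinct points $\{P_{1},P_{2}\}$, choose a smooth conic $C$ through $P_{1}$ and $P_{2}$ and two distinct lines through the pole of the line $\overline{P_{1}P_{2}}$ with respect to $C$; but that correction is essential, not cosmetic.
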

\begin{proof}
A vector bundle $ E $ lives in $ \mathbf{M}_{\mathbf{P}^2}(-1,2) $ if and only if it is endowed with a short exact sequence like
$$ 0 \longrightarrow \mathcal{O}_{\mathbf{P}^2}(-3) \buildrel \rm ^{t}\pmatrix{ f_{1} & f_{2} & f_{3} \cr} \over \longrightarrow \mathcal{O}_{\mathbf{P}^2}(-2) \oplus \mathcal{O}_{\mathbf{P}^2}^{2}(-1) \longrightarrow E \longrightarrow 0 $$ 
where $ f_{1} $ is a linear form and $ f_{2}, f_{3} $ are quadratic forms. \\
On the unique \emph{jumping line} of $ E $, which is $ \{f_{1} = 0\} $, the linear series given by $ f_{2} $ and $ f_{3} $ has two distinct double points, which we denote by $ P_{1} $ and $ P_{2} $. Then the map given by 
$$ \mathbf{M}_{\mathbf{P}^2}(-1,2) \longrightarrow \sigma_{2}(\nu_{2}(\mathbf{P}^2)) - \nu_{2}(\mathbf{P}^2) $$
$$ E \longmapsto \{P_{1},P_{2}\} $$
is an isomorphism, which concludes the proof.
\end{proof}

\begin{remark}
The previous theorem implies that an element in $ E \in \mathbf{M}_{\mathbf{P}^2}(-1,2) $ is characterized by $ 4 $ parameters, while a conic and two lines in the projective plane need $ 9 $ parameters to be described. So $ \mathcal{D} = \{r_{1},r_{2},C\} $ as above is not a Torelli arrangement. 
\end{remark}

\begin{remark}
The jumping line of $ \Omega_{\mathbf{P}^2}^{1}(\log \mathcal{D}) $ is $ \{\partial_{2}f = 0\} $ and it is the polar line with respect to $ C $ of $  r_{1} \cap r_{2} = [0,0,1] $: indeed, the equation 
$$ (0,0,1)  \pmatrix{ a_{00} & a_{01} & a_{02} \cr a_{01} & a_{11} & a_{12} \cr a_{02} & a_{12} & a_{22} \cr} \pmatrix{ x_{0} \cr x_{1} \cr x_{2} \cr } = 0 $$
reduces to $ \{\partial_{2}f = 0\} $.
Moreover, the linear series on this line is given by $ r_{1} \cup s_{2} $ and $ r_{2} \cup s_{1} $, where $ s_{2} $ is the polar line with respect to $ C $ of $ \{\partial_{2}f = 0\} \cap r_{2} = [a_{22}, 0, -a_{02}]$ and $ s_{1} $ is the polar line with respect to $ C $ of $ \{\partial_{2}f = 0\} \cap r_{1} = [0, a_{22}, -a_{2}] $, that is $  s_{2} = \{a_{22} \partial_{0}f - a_{02} \partial_{2}f = 0\} $ and $ s_{1} = \{a_{22} \partial_{1}f - a_{12} \partial_{2}f = 0\} $. The logarithmic bundle $ \Omega_{\mathbf{P}^2}^{1}(\log \mathcal{D}) $ corresponds to the two intersection points $ \{P_{1},P_{2}\} $ of $ C $ and $ \{\partial_{2}f = 0\} $ (see the figure below). 
\end{remark}

\vspace{1.0cm}

\begin{figure}[h]
    \centering
    \includegraphics[width=95mm]{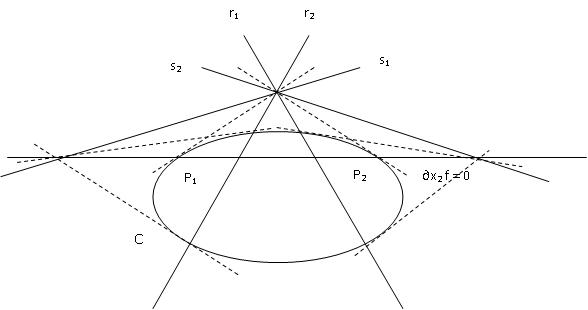}
    \caption{$ \Omega_{\mathbf{P}^2}^{1}(\log \mathcal{D}) $ corresponds to $ \{P_{1}, P_{2}\} $}
\end{figure}

\vfill\eject

\begin{corollary}\label{C:2lines-conic}
Let $ \mathcal{D} = \{r_{1}, r_{2}, C\} $ and $ \mathcal{D'} = \{r'_{1}, r'_{2}, C'\} $ be arrangements with normal crossings in $ \mathbf{P}^2 $ made of a smooth conic and two lines. \\
Then $ \Omega_{\mathbf{P}^2}^{1}(\log \mathcal{D}) \cong \Omega_{\mathbf{P}^2}^{1}(\log \mathcal{D'}) $ if and only if $ \{P_{1},P_{2}\} = \{P'_{1},P'_{2}\} $.
\end{corollary}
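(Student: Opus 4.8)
The plan is to derive the statement directly from the moduli-space description established in the theorem above (the one identifying $\mathbf{M}_{\mathbf{P}^2}(-1,2)$ with $\sigma_{2}(\nu_{2}(\mathbf{P}^2)) - \nu_{2}(\mathbf{P}^2)$) together with the identification of the pair $\{P_{1},P_{2}\}$ recorded in the preceding remark. First I would observe, from the minimal resolution (\ref{eq:rescon2lines}) and the Chern class computation $c_{1}(\Omega_{\mathbf{P}^2}^{1}(\log \mathcal{D})) = 1$, $c_{2}(\Omega_{\mathbf{P}^2}^{1}(\log \mathcal{D})) = 2$ given just above, that the normalized bundle $\Omega_{\mathbf{P}^2}^{1}(\log \mathcal{D})(-1)$ is a stable rank-$2$ bundle lying in $\mathbf{M}_{\mathbf{P}^2}(-1,2)$, and likewise for $\mathcal{D}'$. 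Since tensoring by the line bundle $\mathcal{O}_{\mathbf{P}^2}(-1)$ is an invertible operation, the isomorphism $\Omega_{\mathbf{P}^2}^{1}(\log \mathcal{D}) \cong \Omega_{\mathbf{P}^2}^{1}(\log \mathcal{D'})$ holds if and only if the two normalized bundles are isomorphic as elements of $\mathbf{M}_{\mathbf{P}^2}(-1,2)$.

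Next I would invoke the moduli theorem above, which asserts that the map sending a bundle $E$ to the pair of double points $\{P_{1},P_{2}\}$ on its unique jumping line is an \emph{isomorphism}, hence in particular a bijection. Therefore two elements of $\mathbf{M}_{\mathbf{P}^2}(-1,2)$ are isomorphic precisely when they are carried to the same unordered pair $\{P_{1},P_{2}\}$. By the remark preceding this corollary, the pair attached to $\Omega_{\mathbf{P}^2}^{1}(\log \mathcal{D})$ is exactly the set $C \cap \{\partial_{2}f = 0\}$, i.e.\ the two points $\{P_{1},P_{2}\}$ named in the statement, and similarly $\{P'_{1},P'_{2}\}$ is the pair attached to $\Omega_{\mathbf{P}^2}^{1}(\log \mathcal{D'})$. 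Chaining these equivalences yields both implications simultaneously.

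There is essentially no obstacle here: all the content sits in the moduli theorem and the remark, and the corollary is a formal consequence of the bijectivity of $\pi$. The only point requiring minor care is to confirm that the pair of double points produced abstractly by the moduli isomorphism genuinely coincides with the geometrically described intersection $C \cap \{\partial_{2}f = 0\}$; this is precisely what the preceding remark verifies, by exhibiting the jumping line $\{\partial_{2}f = 0\}$ and the linear series cut out on it by $r_{1}\cup s_{2}$ and $r_{2}\cup s_{1}$. Accordingly I would cite that computation rather than reproduce it, and the proof reduces to two lines: the normalized bundles lie in $\mathbf{M}_{\mathbf{P}^2}(-1,2)$, and the bijection $E \mapsto \{P_{1},P_{2}\}$ turns bundle isomorphism into equality of point pairs.
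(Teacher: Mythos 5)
Your argument is correct and is exactly the route the paper intends: the corollary is stated as an immediate consequence of the preceding theorem (the isomorphism $\mathbf{M}_{\mathbf{P}^2}(-1,2)\to\sigma_{2}(\nu_{2}(\mathbf{P}^2))-\nu_{2}(\mathbf{P}^2)$, $E\mapsto\{P_{1},P_{2}\}$) together with the remark identifying $\{P_{1},P_{2}\}$ as $C\cap\{\partial_{2}f=0\}$, and your normalization step plus the bijectivity of that map is precisely the implicit derivation. Nothing is missing.
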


\begin{figure}[h]
    \centering
    \includegraphics[width=80mm]{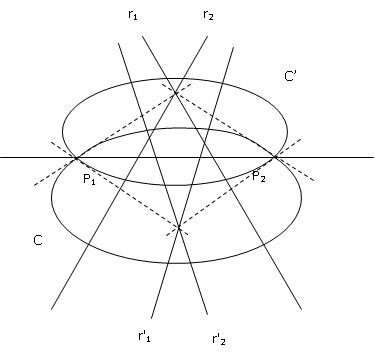}
    \caption{$ \mathcal{D} $ and $ \mathcal{D'} $ with isomorphic logarithmic bundles}
\end{figure}

\vfill\eject

\section{Some remarks about the case of a conic and three lines}

Let $ \mathcal{D} = \{r_{1},r_{2},r_{3}, C\} $ be an arrangement with normal crossings in $ \mathbf{P}^2 $ made of three lines and a smooth conic, let say $ r_{1} = \{\alpha_{0}x_{0}+\alpha_{1}x_{1}+\alpha_{2}x_{2} = 0\} $, $ r_{2} = \{\beta_{0}x_{0}+\beta_{1}x_{1}+\beta_{2}x_{2} = 0\} $,  $ r_{3} = \{\gamma_{0}x_{0}+\gamma_{1}x_{1}+\gamma_{2}x_{2} = 0\} $ and $ C = \{f = 0\} $ where $ f =  \displaystyle{ \sum_{i, j = 0}^{2}d_{i j} x_{i} x_{j}} $.

\begin{figure}[h]
    \centering
    \includegraphics[width=70mm]{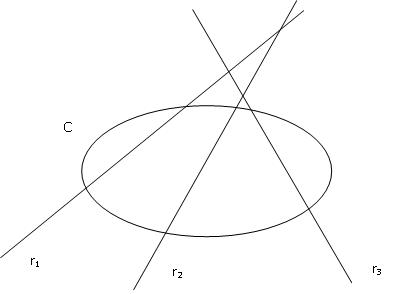}
    \caption{A smooth conic and three lines with normal crossings}
\end{figure}

Ancona's theorem implies that the corresponding logarithmic bunlde $ \Omega_{\mathbf{P}^2}^{1}(\log \mathcal{D}) $ has the following exact sequence:

$$ 0 \longrightarrow \mathcal{O}_{\mathbf{P}^2}(-1)^{3} \oplus \mathcal{O}_{\mathbf{P}^2}(-2)  \buildrel \rm M \over \longrightarrow \mathcal{O}_{\mathbf{P}^2}(-1)^{3} \oplus \mathcal{O}_{\mathbf{P}^2}^{3} \longrightarrow \Omega_{\mathbf{P}^2}^{1}(\log \mathcal{D}) \longrightarrow 0 $$
with
$$ M = \pmatrix{ 
\alpha_{0} & \beta_{0} & \gamma_{0} & 2 \displaystyle{ \sum_{j = 0}^{2}d_{0 j} x_{j} }  \cr 
\alpha_{1} & \beta_{1} & \gamma_{1} & 2 \displaystyle{ \sum_{j = 0}^{2}d_{1 j} x_{j} } \cr 
\alpha_{2} & \beta_{2} & \gamma_{2} & 2 \displaystyle{ \sum_{j = 0}^{2}d_{2 j} x_{j} }  \cr 
\alpha_{0}x_{0}+\alpha_{1}x_{1}+\alpha_{2}x_{2} & 0 & 0 & 0 \cr
0 & \beta_{0}x_{0}+\beta_{1}x_{1}+\beta_{2}x_{2} & 0 & 0 \cr
0 & 0 & \gamma_{0}x_{0}+\gamma_{1}x_{1}+\gamma_{2}x_{2} & 0 \cr
}. $$

In order to get a minimal resolution for $ \Omega_{\mathbf{P}^2}^{1}(\log \mathcal{D}) $, we can make a change of coordinates such that $ r_{1} = \{x_{0} = 0\} $, $ r_{2} = \{x_{1} = 0\} $, $ r_{3} = \{x_{2} = 0\} $ (we still denote with $ f $ a quadratic form defining $ C $). By using linear algebra computations we get the minimal resolution

\begin{equation}\label{eq:3lines-conic}
0 \longrightarrow \mathcal{O}_{\mathbf{P}^2}(-2) \buildrel \rm \overline{M} \over \longrightarrow \mathcal{O}_{\mathbf{P}^2}^{3} \longrightarrow \Omega_{\mathbf{P}^2}^{1}(\log \mathcal{D}) \longrightarrow 0
\end{equation}
where
$$ \overline{M} = \pmatrix{ 
-x_{0}\partial_{0}f  \cr 
-x_{1}\partial_{1}f \cr 
-x_{2}\partial_{2}f\cr 
}. $$

From (\ref{eq:3lines-conic}) it's not hard to prove that $ \Omega_{\mathbf{P}^2}^{1}(\log \mathcal{D}) $ is a stable bundle with Chern classes $ c_{1} = 2 $ and $ c_{2} = 4 $. In particular, its normalized bundle $ \Omega_{\mathbf{P}^2}^{1}(\log \mathcal{D})(-1) $ belongs to the moduli space $ \mathbf{M}_{\mathbf{P}^2}(0,3) $, which, according to (\ref{eq:dimmoduli}), has dimension $ 9 $. Thus, since the number of parameters associated to three lines and a conic is $ 11 $, such arrangement can't be of Torelli type. 

\begin{remark}
From a geometrical point of view, each quadratic form in $ \overline{M} $ represent the union of $ r_{i} $ with the polar line with respect to $ C $ of the intersection point of $ r_{j} $ and $ r_{k} $, for different $ i,j,k \in \{1,2,3\} $.
\end{remark}

\begin{remark}
If we do the tensor product of (\ref{eq:3lines-conic}) with $ \mathcal{O}_{\mathbf{P}^2}(-1) $ we get that $ \Omega_{\mathbf{P}^2}^{1}(\log \mathcal{D})(-1) $ has the following short exact sequence:
$$ 0 \longrightarrow \mathcal{O}_{\mathbf{P}^2}(-3) \buildrel \rm \overline{M} \over \longrightarrow \mathcal{O}_{\mathbf{P}^2}(-1)^{3} \longrightarrow \Omega_{\mathbf{P}^2}^{1}(\log \mathcal{D})(-1) \longrightarrow 0 $$
where $ \overline{M} $ is the matrix introduced above. Theorem \ref{T:Ancona} asserts that, if $ \mathcal{D'} = \{D\} $ is an arrangement made of one smooth cubic curve in $ \mathbf{P}^2 $, then the associated logarithmic bundle $ \Omega_{\mathbf{P}^2}^{1}(\log \mathcal{D'}) $ admits an exact sequence like the one for $ \Omega_{\mathbf{P}^2}^{1}(\log \mathcal{D})(-1) $, which is given by the three partial derivatives of a cubic polynomial defining $ D $. So an interesting problem is to understand if there is a smooth cubic curve in the projective plane corresponding to three lines and a smooth conic. 
\end{remark}

We have the following:

\begin{theorem}\label{T:3lines-conic}
Let $ \mathcal{D} $ be the arrangement with normal crossings given by $ \{x_{0}x_{1}x_{2}f = 0\} $, where $ f = \displaystyle{ \sum_{i, j = 0}^{2}d_{i j} x_{i} x_{j}}$. Then there exists $ \mathcal{D'} = \{D\} $, where $ D \subset \mathbf{P}^2 $ is a smooth cubic curve, such that 
$$ \Omega_{\mathbf{P}^2}^{1}(\log \mathcal{D}) \cong \Omega_{\mathbf{P}^2}^{1}(\log \mathcal{D'})(1). $$
\end{theorem}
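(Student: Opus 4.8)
The plan is to compare minimal free resolutions. By the computation preceding the statement, $\Omega_{\mathbf{P}^2}^{1}(\log \mathcal{D})$ sits in
$$0 \longrightarrow \mathcal{O}_{\mathbf{P}^2}(-2) \buildrel \overline{M} \over \longrightarrow \mathcal{O}_{\mathbf{P}^2}^{3} \longrightarrow \Omega_{\mathbf{P}^2}^{1}(\log \mathcal{D}) \longrightarrow 0, \qquad \overline{M} = {}^{t}(-x_{0}\partial_{0}f,\, -x_{1}\partial_{1}f,\, -x_{2}\partial_{2}f).$$
On the other hand, applying Theorem~\ref{T:Ancona} to $\mathcal{D}' = \{D\}$ with $D = \{g=0\}$, $g$ a cubic form, and twisting the dualized sequence by $\mathcal{O}_{\mathbf{P}^2}(1)$, the bundle $\Omega_{\mathbf{P}^2}^{1}(\log \mathcal{D}')(1)$ is the cokernel of the map ${}^{t}(\partial_{0}g, \partial_{1}g, \partial_{2}g): \mathcal{O}_{\mathbf{P}^2}(-2) \to \mathcal{O}_{\mathbf{P}^2}^{3}$. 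Both resolutions are minimal (all entries are quadrics, hence non-units), so any module isomorphism lifts to an isomorphism of complexes; since $\mathrm{End}(\mathcal{O}_{\mathbf{P}^2}(-2)) = \mathbf{C}$ and $\mathrm{End}(\mathcal{O}_{\mathbf{P}^2}^{3}) = M_{3}(\mathbf{C})$, I would reduce the desired isomorphism to the existence of a constant matrix $Q \in GL_{3}(\mathbf{C})$ with
$${}^{t}(\partial_{0}g, \partial_{1}g, \partial_{2}g) = Q\cdot {}^{t}(x_{0}\partial_{0}f,\, x_{1}\partial_{1}f,\, x_{2}\partial_{2}f)$$
(signs being harmless, as they are absorbed into $Q$). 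Equivalently, I must produce a \emph{smooth} cubic $g$ all of whose partials lie in the net of conics $V = \langle x_{0}\partial_{0}f, x_{1}\partial_{1}f, x_{2}\partial_{2}f\rangle$, with the corresponding transition matrix invertible.

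Writing $w_{i} = x_{i}\partial_{i}f$ and $H = (d_{ij})$ for the invertible matrix of $C$, I would first record two structural facts. First, $w_{0}+w_{1}+w_{2} = 2f$ by Euler's relation, so $f \in V$, and the $w_{i}$ are linearly independent because the coefficient of $x_{i}^{2}$ in $w_{i}$ is $2d_{ii} = 2f(e_{i}) \neq 0$ (no vertex of the triangle lies on $C$, by normal crossings). Second---and this is the key geometric input---the column $\overline{M}$ has no common zero on $\mathbf{P}^2$: this is exactly the local freeness of $\Omega_{\mathbf{P}^2}^{1}(\log \mathcal{D})$, which holds because $\mathcal{D}$ has normal crossings. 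Hence the net $V$ is base-point-free and defines a surjective morphism $\mathbf{P}^2 \to \mathbf{P}^2$. This yields a clean dictionary: for a nonzero $g$ with $\nabla g = Q\,{}^{t}(w_{0},w_{1},w_{2})$, the curve $\{g=0\}$ is smooth if and only if $Q$ is invertible. Indeed, if $Q$ is invertible then $\nabla g$ is nowhere zero; while if $Q$ drops rank, surjectivity of the morphism attached to $V$ forces a point at which $\overline{M}$ lands in $\ker Q$, i.e.\ a singular point of $\{g=0\}$.

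The theorem therefore reduces to showing that the space $\mathcal{Q}$ of matrices $Q$ for which $Q\,{}^{t}(w_{0},w_{1},w_{2})$ is an exact gradient contains an invertible element. Imposing integrability (symmetry of the Jacobian of $Q\,{}^{t}(w_{0},w_{1},w_{2})$) gives the linear system $d_{km}(Q_{ik}+Q_{im}) = d_{im}(Q_{ki}+Q_{km})$ for all $i,k,m$; I would solve it explicitly, expressing the off-diagonal entries through the diagonal ones and checking that $\dim \mathcal{Q} = 3$ (geometrically, $\mathcal{Q}$ parametrizes the cubics whose net of first polar conics is $V$). The main obstacle is precisely to verify that $\det|_{\mathcal{Q}}$---a cubic form in the three diagonal parameters---does not vanish identically for \emph{any} conic $C$ meeting the three lines with normal crossings, so that a smooth representative $g$ always exists. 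I expect to settle this by an explicit determinant computation calibrated against the diagonal model $f = x_{0}^{2}+x_{1}^{2}+x_{2}^{2}$, where $\mathcal{Q}$ consists of the diagonal matrices and $g = x_{0}^{3}+x_{1}^{3}+x_{2}^{3}$ is manifestly smooth with invertible $Q$; the normal-crossings inequalities (each line non-tangent to $C$ and avoiding its vertices) should be exactly what keeps the relevant determinant nonzero in general.
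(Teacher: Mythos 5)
Your strategy coincides with the paper's: both proofs reduce the statement to producing a cubic $g$ and a constant matrix $Q$ with $\nabla g = Q\cdot{}^{t}(x_{0}\partial_{0}f,\,x_{1}\partial_{1}f,\,x_{2}\partial_{2}f)$, and both extract $Q$ from the linear system expressing symmetry of the second partials. Two of your refinements are genuine improvements on the paper's write-up. First, the dictionary ``$\{g=0\}$ smooth $\Leftrightarrow Q$ invertible,'' obtained from base-point-freeness of the net $V=\langle x_{0}\partial_{0}f,x_{1}\partial_{1}f,x_{2}\partial_{2}f\rangle$ (which is indeed equivalent to the normal-crossings hypothesis), supplies exactly the verification the paper omits: the paper integrates the system and exhibits $g$, but never checks that $D$ is smooth nor that the coefficient matrix is invertible, and both are needed for the asserted isomorphism of cokernels. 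Second, your count $\dim\mathcal{Q}=3$ looks right, whereas the paper claims the $9\times 9$ integrability matrix has rank $8$: already for $f=x_{0}^{2}+x_{1}^{2}+x_{2}^{2}$ a direct check gives rank $6$, with $\mathcal{Q}$ equal to the space of diagonal matrices.

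The genuine gap is the step you yourself flag as the main obstacle: you never prove that $\det|_{\mathcal{Q}}$ is not identically zero, i.e.\ that $\mathcal{Q}$ contains an invertible element, for an \emph{arbitrary} normal-crossings configuration. Calibration against the diagonal model cannot close this: the subgroup of $PGL(3,\mathbf{C})$ preserving the triangle $\{x_{0}x_{1}x_{2}=0\}$ is only the torus extended by permutations, so a general conic meeting the triangle with normal crossings is not diagonalizable without moving the lines, and semicontinuity of the closed condition ``$\det|_{\mathcal{Q}}\equiv 0$'' in the coefficients $d_{ij}$ would only yield the statement for generic $f$. Nor is there a soft way out: a singular $Q\in\mathcal{Q}$ corresponds to a cubic in the $3$-dimensional space $W=\{g:\partial_{i}g\in V\ \forall i\}$ that is a cone, and a priori $W$ could consist entirely of cones --- base-point-freeness of $V$ excludes this only for a $g$ whose polars span all of $V$, which is precisely what you are trying to produce. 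So the non-vanishing of the cubic form $\det|_{\mathcal{Q}}$ must be verified explicitly in terms of the $d_{ij}$ under the normal-crossings inequalities; until that computation is carried out the proof is incomplete. (To be fair, the paper's own proof stops at essentially the same point, so your proposal, once this verification is added, would be the more complete argument.)
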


\begin{proof}
We want to prove that there exists a homogeneous polynomial $ g $ of degree $ 3 $ in the variables $ x_{0}, x_{1}, x_{2} $ such that 
$$ <\partial_{0}g, \partial_{1}g, \partial_{2}g > = <-x_{0}\partial_{0}f, -x_{1}\partial_{1}f, -x_{2}\partial_{2}f> $$
holds. 
This is equivalent to say that the partial derivatives of $ g $ have to satisfy  
\begin{equation}\label{eq:cubica0}
\partial_{0}g = a_{0}(-x_{0}\partial_{0}f)+a_{1}(-x_{1}\partial_{1}f)+a_{2}(-x_{2}\partial_{2}f)
\end{equation}
\begin{equation}\label{eq:cubica1}
\partial_{1}g = b_{0}(-x_{0}\partial_{0}f)+b_{1}(-x_{1}\partial_{1}f)+b_{2}(-x_{2}\partial_{2}f) 
\end{equation}
\begin{equation}\label{eq:cubica2}
\partial_{2}g = c_{0}(-x_{0}\partial_{0}f)+c_{1}(-x_{1}\partial_{1}f)+c_{2}(-x_{2}\partial_{2}f)
\end{equation}
for certain complex coefficients $ a_{0},a_{1},a_{2},b_{0},b_{1},b_{2},c_{0},c_{1},c_{2} $. Since, by Schwarz's theorem, $ \partial_{i}\partial_{j}g = \partial_{j}\partial_{i}g $ for all $ i,j \in \{0,1,2\} $, the previous conditions become
$$ a_{0}\partial_{1}(x_{0}\partial_{0}f)+a_{1}\partial_{1}(x_{1}\partial_{1}f)+a_{2}\partial_{1}(x_{2}\partial_{2}f)= b_{0}\partial_{0}(x_{0}\partial_{0}f)+b_{1}\partial_{0}(x_{1}\partial_{1}f)+b_{2}\partial_{0}(x_{2}\partial_{2}f) $$
$$ a_{0}\partial_{2}(x_{0}\partial_{0}f)+a_{1}\partial_{2}(x_{1}\partial_{1}f)+a_{2}\partial_{2}(x_{2}\partial_{2}f)= c_{0}\partial_{0}(x_{0}\partial_{0}f)+c_{1}\partial_{0}(x_{1}\partial_{1}f)+c_{2}\partial_{0}(x_{2}\partial_{2}f) $$
$$ b_{0}\partial_{2}(x_{0}\partial_{0}f)+b_{1}\partial_{2}(x_{1}\partial_{1}f)+b_{2}\partial_{2}(x_{2}\partial_{2}f)= c_{0}\partial_{1}(x_{0}\partial_{0}f)+c_{1}\partial_{1}(x_{1}\partial_{1}f)+c_{2}\partial_{1}(x_{2}\partial_{2}f). $$
Let denote by $ \{a_{ij}\}, \{b_{ij}\}, \{c_{ij}\} $ the coefficients of $ x_{k}\partial_{k}f $ for $ k \in \{0,1,2\} $; by using the identity principle for polynomials we get the following system of nine equations with variables $ a_{0},a_{1},a_{2},b_{0},b_{1},b_{2},c_{0},c_{1},c_{2} $:

\vfill\eject

$$ a_{01}a_{0}+b_{01}a_{1}+c_{01}a_{2}=a_{00}b_{0}+b_{00}b_{1}+c_{00}b_{2} $$
$$ a_{11}a_{0}+b_{11}a_{1}+c_{11}a_{2}=a_{01}b_{0}+b_{01}b_{1}+c_{01}b_{2} $$
$$ a_{12}a_{0}+b_{12}a_{1}+c_{12}a_{2}=a_{02}b_{0}+b_{02}b_{1}+c_{02}b_{2} $$
$$ a_{02}a_{0}+b_{02}a_{1}+c_{02}a_{2}=a_{00}c_{0}+b_{00}c_{1}+c_{00}c_{2} $$
$$ a_{12}a_{0}+b_{12}a_{1}+c_{12}a_{2}=a_{01}c_{0}+b_{01}c_{1}+c_{01}c_{2} $$
$$ a_{22}a_{0}+b_{22}a_{1}+c_{22}a_{2}=a_{02}c_{0}+b_{02}c_{1}+c_{02}c_{2} $$
$$ a_{02}b_{0}+b_{02}b_{1}+c_{02}b_{2}=a_{01}c_{0}+b_{01}c_{1}+c_{01}c_{2} $$
$$ a_{12}b_{0}+b_{12}b_{1}+c_{12}b_{2}=a_{11}c_{0}+b_{11}c_{1}+c_{11}c_{2} $$
$$ a_{22}b_{0}+b_{22}b_{1}+c_{22}b_{2}=a_{12}c_{0}+b_{12}c_{1}+c_{12}c_{2}. $$
We remark that $ a_{ij}, b_{ij}, c_{ij} $ depend on the coefficients $ d_{ij} $ of the conic in our arrangement. In this sense the $ 9 $ by $ 9 $ matrix associated to the system turns to be the following:
$$ \pmatrix{ 
d_{01} & d_{01} & 0 & -2d_{00} & 0 & 0 & 0 & 0 & 0 \cr 
0 & 2d_{11} & 0 & -d_{01} & -d_{01} & 0 & 0 & 0 & 0 \cr 
0 & d_{12} & d_{12} & -d_{02} & 0 & -d_{02} & 0 & 0 & 0 \cr 
d_{02} & 0 & d_{02} & 0 & 0 & 0 & -2d_{00} & 0 & 0 \cr
0 & d_{12} & d_{12} & 0 & 0 & 0 & -d_{01} & -d_{01} & 0 \cr
0 & 0 & 2d_{22} & 0 & 0 & 0 & -d_{02} & 0 & -d_{02} \cr
0 & 0 & 0 & d_{02} & 0 & d_{02} & -d_{01} & -d_{01} & 0 \cr
0 & 0 & 0 & 0 & d_{12} & d_{12} & 0 & -2d_{11} & 0 \cr
0 & 0 & 0 & 0 & 0 & 2d_{22} & 0 & -d_{12} & -d_{12} \cr
}. $$
It's not hard to prove that the rank of this matrix is $ 8 $, that is the dimension of the space of solutions of the aforementioned system is $ \infty^{1} $. So assume that $ \overline{a}_{0},\overline{a}_{1},\overline{a}_{2},\overline{b}_{0},\overline{b}_{1},\overline{b}_{2},\overline{c}_{0},\overline{c}_{1},\overline{c}_{2} $ solve our system, we want to find a cubic polynomial $ g $ such that conditions (\ref{eq:cubica0}), (\ref{eq:cubica1}), (\ref{eq:cubica2}) are satisfied. Let integrate (\ref{eq:cubica0}) with respect to $ x_{0} $, we get
\begin{equation}\label{eq:cubicax0}
g(x_{0},x_{1},x_{2})=-2\overline{a}_{0}\left({x_{0}^3 \over 3}a_{00}+{x_{0}^2x_{1} \over 2}a_{01}+{x_{0}^2x_{2} \over 2}a_{02} \right)+
\end{equation}
$$ \quad\quad\quad\quad\quad -2\overline{a}_{1}\left({x_{0}^2x_{1} \over 2}a_{01}+x_{1}^2x_{0}a_{11}+x_{0}x_{1}x_{2}a_{12} \right)+ $$ 
$$ \quad -2\overline{a}_{2}\left({x_{0}^2x_{2} \over 2}a_{02}+x_{0}x_{1}x_{2}a_{12}+x_{0}x_{2}^2a_{22} \right)+h(x_{1},x_{2}) $$
where $ h $ is a function of $ x_{1},x_{2} $ to be determined. 
If we equate the expression of the partial derivative of $ g $ with respect to $ x_{1} $ coming from (\ref{eq:cubicax0}) with the one in (\ref{eq:cubica1}) and then we integrate with respect to $ x_{1} $ we get the following expression for $ h $:
$$ h(x_{1},x_{2}) = \overline{a}_{0}x_{0}^2x_{1}a_{01}+2\overline{a}_{1}\left({x_{0}^2x_{1} \over 2}a_{01}+x_{0}x_{1}^2a_{11}+x_{0}x_{1}x_{2}a_{12} \right) + 2\overline{a}_{2}x_{0}x_{1}x_{2}a_{12} + $$
$$ \,\,\, -\overline{b}_{0}\left(2x_{0}^2x_{1}a_{00}+x_{0}x_{1}^2a_{01}+2x_{0}x_{1}x_{2}a_{02} \right)  -\overline{b}_{1}\left(x_{0}x_{1}^2a_{01}+2{x_{1}^3 \over 3}a_{11}+x_{1}^2x_{2}a_{12} \right) + $$
\begin{equation}\label{eq:cubicax1}
-\overline{b}_{2}\left(2x_{0}x_{1}x_{2}a_{02}+x_{1}^2x_{2}a_{12}+2x_{1}x_{2}^{2}a_{22} \right) + i(x_{2}) \quad\quad\quad\quad\quad\quad\quad\quad\quad \,\,
\end{equation}
where we have to determine the function $ i(x_{2}) $.
Finally, if we compare the partial derivative of (\ref{eq:cubicax0}) with respect to $ x_{2} $ with (\ref{eq:cubica2}), using also  (\ref{eq:cubicax1}) and then we integrate with respect to $ x_{2} $, we can find explicitly $ i $, so that
$$ g(x_{0},x_{1},x_{2}) = - {2 \over 3}\overline{a}_{0}a_{00}x_{0}^3-2\overline{b}_{0}a_{00}x_{0}^2x_{1}-2\overline{a}_{1}a_{11}x_{0}x_{1}^2- {2 \over 3}\overline{b}_{1}a_{11}x_{1}^3-2\overline{a}_{2}a_{22}x_{0}x_{2}^2 + $$
$$ -2\overline{c}_{0}a_{00}x_{0}^2x_{2}-2(\overline{a}_{1}+\overline{a}_{2})a_{12}x_{0}x_{1}x_{2}-2\overline{c}_{1}a_{11}x_{1}^2x_{2}
-2\overline{b}_{2}a_{22}x_{1}x_{2}^2- {2 \over 3}\overline{c}_{2}a_{22}x_{2}^3 $$
is the required polynomial.
\end{proof}

\begin{remark}
From the proof of theorem \ref{T:3lines-conic} it follows also Hermite's theorem (1868), which asserts that a net of conics can be regarded as the net of the polar conics with respect to a given cubic curve, \cite{EC}.
\end{remark}

\begin{remark}
If we require that 
$$ \partial_{i}g = x_{i}\partial_{i}f $$
for all $ i \in \{0,1,2\} $, then necessarily it has to be 
\begin{equation}\label{eq:fermatcubic}
g(x_{0},x_{1},x_{2}) = {2 \over 3}a_{00}x_{0}^3+{2 \over 3}a_{11}x_{1}^3+{2 \over 3}a_{22}x_{2}^3, 
\end{equation}
provided that the conic is given in diagonal form by
$$ f(x_{0},x_{1},x_{2}) = a_{00}x_{0}^2+a_{11}x_{1}^2+a_{22}x_{2}^2. $$
So, let $ \{x_{0}x_{1}x_{2}f=0\} $ and $ \{x_{0}x_{1}x_{2}f'=0\} $ be two arrangements with normal crossings in $ \mathbf{P}^2 $ each of which made of three lines and a smooth conic defined by a diagonalized quadratic form. Each arrangement corresponds to a logarithmic bundle which is isomorphic to the logarithmic bundle of a smooth cubic like the one in (\ref{eq:fermatcubic}). As we can see in remark 4.8, two cubics which are both Fermat yield isomorphic logarithmic bundles. Thus our line-conic arrangements have isomorphic logarithmic bundles too.
\end{remark}

\addcontentsline{toc}{chapter}{\bibname}

\end{document}